\newtheorem{theorem}{Theorem}[section]
\newtheorem{lemma}[theorem]{Lemma}
\newtheorem{corollary}[theorem]{Corollary}
\newtheorem{proposition}[theorem]{Proposition}
\theoremstyle{definition}
\newtheorem{definition}[theorem]{Definition}
\newtheorem{example}[theorem]{Example}
\theoremstyle{remark}
\newtheorem{remark}[theorem]{Remark}
\newcommand\R{\mathbb{R}}
\newcommand\Z{\mathbb{Z}}
\newcommand\C{\mathbb{C}}
\newcommand\N{\mathbb{N}}
\newcommand{\DNLS}{\mathrm{DNLS}}
\newcommand{\diff}{\mathrm{diff}}
\newcommand\op{\mathrm{op}}
\newcommand\hs{\mathfrak I_2}
\newcommand{\qtq}[1]{\quad\text{#1}\quad}
\newcommand\Schw{\mathcal{S}}
\newcommand\eps{\varepsilon}
\newcommand{\vk}{\varkappa}
\newcommand{\sbrack}[1]{^{[#1]}}
\let\Re=\undefined\DeclareMathOperator{\Re}{Re}
\let\Im=\undefined\DeclareMathOperator{\Im}{Im}
\DeclareMathOperator{\Id}{Id}
\DeclareMathOperator{\cosec}{cosec}
\DeclareMathOperator{\sech}{sech}
\DeclareMathOperator{\tr}{tr}
\DeclareMathOperator{\sgn}{sgn}
\newcommand{\p}{\partial}
\newcommand{\LHS}[1]{\mathrm{LHS}\eqref{#1}}
\newcommand{\RHS}[1]{\mathrm{RHS}\eqref{#1}}
\newcommand{\norm}[1]{{\left\vert\kern-0.25ex\left\vert\kern-0.25ex\left\vert #1\right\vert\kern-0.25ex\right\vert\kern-0.25ex\right\vert}}
\newcommand{\I}{\mathfrak I}
\newcommand{\bbo}{\mathbbm{1}}
\newcommand{\loc}{\mathrm{loc}}
\numberwithin{equation}{section}
\begin{document}

\title[DNLS is well-posed in $L^2(\R)$]{Global well-posedness for the derivative nonlinear Schr\"odinger equation in $L^2(\R)$}

\author[B.~Harrop-Griffiths]{Benjamin Harrop-Griffiths}
\address{Department of Mathematics, University of California, Los Angeles, CA 90095, USA}
\email{harropgriffiths@math.ucla.edu }

\author[R.~Killip]{Rowan Killip}
\address{Department of Mathematics, University of California, Los Angeles, CA 90095, USA}
\email{killip@math.ucla.edu}

\author[M.~Ntekoume]{Maria Ntekoume}
\address{Department of Mathematics, Rice University, Houston, TX 77005-1892, USA}
\email{maria.ntekoume@rice.edu}

\author[M.~Vi\c{s}an]{Monica Vi\c{s}an}
\address{Department of Mathematics, University of California, Los Angeles, CA 90095, USA}
\email{visan@math.ucla.edu}

\begin{abstract}
We prove that the derivative nonlinear Schr\"odinger equation in one space dimension is globally well-posed on the line in $L^2(\R)$, which is the scaling-critical space for this equation.  
\end{abstract}

\maketitle

\tableofcontents

\section{Introduction} \label{sec;introduction}

The derivative nonlinear Schr\"odinger equation
\begin{equation}\tag{DNLS}\label{DNLS} 
    i\partial_t q + q'' +i \bigl(|q|^2 q\bigr)'=0  
\end{equation}
describes the evolution of a complex-valued field $q$ defined on the line $\R$.  Here and below, primes indicate spatial derivatives.

Physical applications of \eqref{DNLS} are reviewed briefly in subsection~\ref{SS:gauge} below.  There, we also discuss certain well-documented changes of variables that convert \eqref{DNLS} to other evolutions of interest in the physical sciences. 

One of the most basic questions we should ask of any model is whether it is well-posed: Do solutions exist? Are they unique?  Do they depend continuously on the initial data?  Without such properties, it is unclear whether the model is capable of making experimentally falsifiable predictions.  The well-posedness question also forms an important benchmark in our understanding of an equation.  Gaps between well- and ill-posedness results leave open the possibility that there are basic physical processes --- instabilities and/or stabilizing mechanisms --- that remain undiscovered.

The principal goal of this paper is to show that \eqref{DNLS} is globally well-posed in $L^2(\R)$:

\begin{theorem}\label{t:main} The \eqref{DNLS} evolution is globally well-posed in \(L^2(\R)\). 
Precisely, there is a jointly continuous map \(\Phi:\R\times L^2(\R)\to L^2(\R)\) that agrees with the data-to-solution map when restricted to Schwartz-class initial data.
\end{theorem}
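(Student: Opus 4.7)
My plan is to follow the \emph{method of commuting flows} developed by Killip--Vi\c{s}an and collaborators for integrable dispersive PDEs at critical regularity. The idea is to exploit the complete integrability of \eqref{DNLS} to build a family of approximating Hamiltonian flows that are well-posed at low regularity, commute with \eqref{DNLS} on smooth data, and converge to \eqref{DNLS} in an appropriate sense. Continuous dependence in $L^2(\R)$ is then obtained from this package together with an \emph{equicontinuity} statement that stands in for the compactness that fails at the critical scaling.

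First, I would construct, from the Lax pair of \eqref{DNLS}, a generating functional $\alpha(\vk;q)$ for conservation laws, real-analytic on a suitable subset of $\{\vk\in\C : \Re\vk > 0\}\times L^2(\R)$. Its large-$\vk$ asymptotic expansion should recover the polynomial conserved quantities of the DNLS hierarchy, while its $\vk$-derivatives generate a commuting family of Hamiltonian flows on $L^2$. From $\alpha$ I would then extract the crucial a priori information: uniform $L^2$-bounds, and, more importantly, \emph{equicontinuity in $L^2$} of the orbit of any bounded subset of $\Schw(\R)$ under the \eqref{DNLS} flow; that is, the high-frequency portion of the solution is uniformly small in $L^2$, uniformly in time and uniformly for initial data in the set.

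Next, I would introduce regularized Hamiltonian flows $e^{tJH_\vk}$ whose data-to-solution maps are jointly continuous on $\R\times L^2(\R)$, verify that these flows commute with \eqref{DNLS} on Schwartz data, and show that as $\vk\to\infty$ the $H_\vk$-flow converges to the \eqref{DNLS}-flow in $L^2$ on bounded Schwartz sets. Combining this with the equicontinuity from the previous step, a standard diagonal argument extends the \eqref{DNLS} flow uniquely to a jointly continuous map $\Phi:\R\times L^2(\R)\to L^2(\R)$ agreeing with the classical flow on $\Schw(\R)$.

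The principal obstacle will be establishing equicontinuity in the scaling-critical norm $L^2(\R)$. Since $L^2$ sits precisely at the scaling threshold, no soft compactness argument is available; control of the high-frequency tail of $q$ must come entirely from the precise structure of $\alpha(\vk;q)$, and this forces a delicate combinatorial expansion whose individual terms have to be estimated in a scaling-sharp fashion. A secondary difficulty, specific to \eqref{DNLS}, is that the derivative nonlinearity renders the underlying Lax operator non-self-adjoint and requires a gauge-type renormalization in the construction of $\alpha$; particular care is needed to ensure that $\alpha$ is actually defined on \emph{all} of $L^2(\R)$, rather than only in a small-mass regime in which the conclusion would hold only below some threshold.
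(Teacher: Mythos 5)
Your outline reproduces the correct skeleton of the argument --- commuting flows generated by a perturbation-determinant functional, plus $L^2$-equicontinuity --- but it mislocates the difficulty and omits the two ingredients that constitute essentially the entire content of the paper.

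First, the equicontinuity statement you single out as the principal obstacle is not proved here; it is imported wholesale from the prior work \cite{HGKV:equi} (Theorem~\ref{T:HGKV:equi}), and the a priori $L^2$ bound is just conservation of mass. The actual crux is the step you dispatch in one clause: showing that the difference flow $e^{tJ\nabla(H-H_\kappa)}$ converges to the identity as $\kappa\to\infty$. For \eqref{DNLS} this cannot be done by the ``first-generation'' commuting-flow scheme you describe: the derivative nonlinearity $\partial_x(|q|^2q)$ does not make sense pointwise in time for $q\in L^2$, and the Green's-function change of variables that rescues NLS and mKdV fails here (see \eqref{dt g12}). The paper instead proves a local smoothing estimate for the difference flow (Proposition~\ref{p: diff local smoothing}) via coercive \emph{microscopic} conservation laws for $\Im\rho(\vk)$, and this requires the entire paraproduct machinery of Sections~\ref{S:Pre}--\ref{sect: diff local smoothing}. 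Without some substitute for this, your step ``show that the $H_\vk$-flow converges to the DNLS flow'' has no proof.

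Second, because that convergence is obtained only locally in space and with a loss of derivatives (recovered via equicontinuity), one must separately prove \emph{tightness} of the orbits (Proposition~\ref{P:tight}) to upgrade local convergence to convergence in $L^2(\R)$; tightness in turn needs the refined, frequency-parametrized local smoothing estimate \eqref{refined local smoothing}, not merely \eqref{E:naiveLS}. Your proposal contains no tightness step. Relatedly, your claim of convergence ``on bounded Schwartz sets'' is too strong: local smoothing, and hence the difference-flow convergence, genuinely fails on $L^2$-bounded sets (Proposition~\ref{P:no smoothing}, via rescaled solitons) and holds only on equicontinuous ones --- which is precisely why the initial data fed to the difference flow, namely $e^{tJ\nabla H_\kappa}q(0)$, must be controlled through conservation of $a(k;q)$ and Theorem~\ref{T:HGKV:equi}.
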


It is not wanton abstraction to define the data-to-solution map as an extension from Schwartz-class initial data; indeed, this is the textbook approach to defining the Fourier transform on $L^2(\R)$ and is widespread in nonlinear PDE.  The heart of the matter is to prove key metric properties that allow one to extend the mapping to general elements of $L^2$ and then to ensure that the extension retains the many good properties of its Schwartz-class restriction.

A relatively small fraction of this paper would suffice to show that $L^2$-precompact sets of Schwartz initial data are mapped under the flow to $C_t([-T,T];L^2(\R))$-precompact sets of orbits. (Here $T>0$ must be finite, but is otherwise arbitrary.)  This would be a new result and it trivially yields the existence of solutions; however, it goes no way to justifying uniqueness, nor continuous dependence on the initial data.  This thinking helps us appreciate the uniqueness statement embedded in Theorem~\ref{t:main}: no matter how we approximate an $L^2$ initial data by a sequence of Schwartz initial data, the corresponding trajectories will converge and they will converge to the same limit!  In particular, our solutions have the group property.

Theorem~\ref{t:main} implicitly asserts that Schwartz initial data lead to global unique solutions.  This is true.  While uniqueness of smooth solutions is easily verified via the Gr\"onwall inequality, the existence of global solutions for large Schwartz-class initial data is, in fact, a very recent result! (See the discussion in subsection \ref{SS:prior}.)  Although the existence of such solutions is not a prerequisite for our methods, building on this result leads to a much clearer exposition.  Moreover, without the triumphs of these authors, which we celebrate in subsection~\ref{SS:prior}, we would not have had the courage to pursue the results of this paper.

Next, we wish to discuss why Theorem~\ref{t:main} considers initial data in $L^2(\R)$.  There are several reasons that make $L^2(\R)$ a natural space in which to study \eqref{DNLS}.  First, the $L^2$-norm is conserved by the flow; indeed, we have the microscopic conservation law
\begin{align}\label{micro M}
    \partial_t  |q|^2+ \partial_x \bigl[2\Im (q' \bar q) +\tfrac{3}{2} |q|^4 \bigr]=0 . 
\end{align}
Second, it is a scale-invariant space: if $q(t,x)$ is a smooth solution to \eqref{DNLS}, then so too is 
\begin{align}\label{scaling}
q_\lambda(t,x) :=  \sqrt{\lambda} \,q(\lambda^2 t,\lambda x)  
\end{align}
for every $\lambda>0$.  Notice that this transformation does not affect the $L^2$ norm of the initial data (nor indeed at any later time).

Long-standing physical intuition dictates that dispersive equations will be ill-posed below the scaling-critical regularity.  For the case of \eqref{DNLS}, this is justified by the self-similar solutions constructed in \cite{MR4079021,MR818186}.  Beyond proving that well-posedness fails in $H^s(\R)$ with $s<0$, these solutions even show that it fails in weak-$L^2$, which is a scale-invariant space!

It is a matter of some pride for us that we are able to treat \eqref{DNLS} in the most natural scale-invariant space.  It is only quite recently that global-in-time solutions could be constructed for large data in scaling-critical spaces for any kind of dispersive PDE.   Moreover, we are dealing with a focusing nonlinearity (e.g. soliton solutions abound).  Many focusing dispersive equations do not admit large-data global solutions; it is typical for wave collapse to occur above a certain threshold size (as measured in scaling-critical spaces).

There is an obvious scapegoat here: \eqref{DNLS} is completely integrable, \cite{MR464963}.  However scaling-critical well-posedness does not seem to be the norm for such models: it fails for KdV, NLS, and mKdV!  The phenomenology of \eqref{DNLS} becomes even more curious when we endeavor to find a quantitative expression of the continuous dependence of the solution on its initial data.  As discussed below, we know that when $s<\frac12$, the data-to-solution map cannot be uniformly continuous on \emph{any} neighborhood of the origin in $H^s(\R)$.  This is quite different from the behavior of the mass- or energy-critical NLS, for example, where the data-to-solution map is real analytic (cf. \cite{MR3098643}).

It is perhaps better to compare \eqref{DNLS} to other models with derivative nonlinearity.  For the notoriously difficult two-dimensional wave maps equation, for example, Tataru \cite{MR2130618} proved that the data-to-solution map (defined on scaling-critical balls) is Lipschitz in lower regularity norms.  We will show in Proposition~\ref{P:noGron} that this fails for \eqref{DNLS} --- again \eqref{DNLS} appears \emph{less} continuous! Complete integrability, it seems, is not a stern parent that keeps its flows safe and orderly; rather it is permissive and allows its solutions to become quite wild before issuing the rebuke of ill-posedness. 

Earlier we singled out the $H^s(\R)$ family of spaces in our discussion of well- and ill-posedness.  Already the natural prerequisite that the \emph{linear} Schr\"odinger equation be well-posed is quite restrictive; this precludes the consideration of $L^p$-based Sobolev spaces with $p\neq 2$.  As we will see below, $H^s(\R)$ spaces are both the most natural and most studied classes of initial data; indeed, they arise from the consideration of conservation laws for \eqref{DNLS}.  Building on Theorem~\ref{t:main}, we will prove 

\begin{corollary}\label{C:main} \eqref{DNLS} is globally well-posed in \(H^s(\R)\) for every $s\geq 0$. 
\end{corollary}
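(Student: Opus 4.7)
The plan is to reduce $H^s$ well-posedness to Theorem~\ref{t:main} via two ingredients: an a priori $H^s$ bound for smooth solutions, together with an equicontinuity statement uniform over $H^s$-bounded sets of initial data.

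For Schwartz initial data, the first step is the persistence estimate
\[
\sup_{|t|\le T}\|\Phi(t,q_0)\|_{H^s}\le C\bigl(T,\|q_0\|_{H^s}\bigr).
\]
When $s$ is a nonnegative integer this comes from the \eqref{DNLS} hierarchy: each level of the hierarchy provides a polynomial conservation law whose leading term is $\|q\|_{H^n}^2$, with remainder controlled by lower Sobolev norms, so induction from the $L^2$ conservation \eqref{micro M} closes the estimate. For arbitrary real $s\ge 0$ one appeals instead to an $H^s$-scale conserved quantity of the type constructed via the perturbation-determinant formalism, which is by now a staple of the integrable PDE literature and supplies a functional comparable to $\|q\|_{H^s}^2$ that is preserved along the flow.

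The second step is to upgrade this to equicontinuity:
\[
\lim_{N\to\infty}\sup_{\|q_0\|_{H^s}\le R}\sup_{|t|\le T}\bigl\|P_{>N}\Phi(t,q_0)\bigr\|_{H^s}=0,
\]
where $P_{>N}$ is the Littlewood-Paley projector onto frequencies $|\xi|>N$. This is obtained by applying the persistence bound at regularity $s+\delta$ for a small $\delta>0$ after a mild high-frequency mollification of the initial datum, trading the Fourier cutoff against an $N^{-\delta}$ factor.

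With these two estimates in hand, the $H^s$ flow is defined by density. Given $q_0\in H^s(\R)$, choose Schwartz $q_0^n\to q_0$ in $H^s$; Theorem~\ref{t:main} yields $\Phi(t,q_0^n)\to \Phi(t,q_0)$ in $L^2$, and equicontinuity upgrades this to $H^s$ convergence, since the low-frequency pieces converge in $L^2$ hence in $H^s$ by Bernstein, while the high-frequency pieces are uniformly small. Uniqueness and the group property descend from the $L^2$ statement, and continuous dependence in $H^s$ is proved identically: an $H^s$-convergent sequence is $H^s$-bounded, so equicontinuity applies and improves the $L^2$ convergence provided by Theorem~\ref{t:main} to $H^s$ convergence. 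The main obstacle is producing the a priori $H^s$ bound together with its uniform equicontinuity upgrade at non-integer $s$; once these are established, the remaining steps are routine soft-analytic consequences.
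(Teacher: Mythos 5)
Your soft-analytic framework --- approximate in $H^s$, use Theorem~\ref{t:main} for $L^2$ convergence, upgrade to $H^s$ via equicontinuity of the orbits --- is the same skeleton the paper uses for $0\leq s<\tfrac12$. The gap lies in how you propose to supply the two hard inputs. First, your equicontinuity statement is false as written: at $t=0$ the supremum of $\|P_{>N}q_0\|_{H^s}$ over the ball $\|q_0\|_{H^s}\leq R$ equals $R$ for every $N$ (take data concentrated at frequency $2N$), so no statement uniform over balls can hold; the correct assertion is that $H^s$-\emph{equicontinuous} sets of data lead to $H^s$-equicontinuous ensembles of orbits. More importantly, your proposed derivation --- mollify the datum and apply persistence at regularity $s+\delta$ --- cannot work: $\Phi(t,\cdot)$ does not commute with frequency truncation, and, as Proposition~\ref{P:noGron} and the failure of uniform continuity on bounded sets for $s<\tfrac12$ \cite{MR1837253} show, there is no stability estimate with which to compare $\Phi(t,q_0)$ to $\Phi(t,P_{\leq M}q_0)$. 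Preservation of equicontinuity under the flow is precisely the deep theorem of \cite{KNV,HGKV:equi} (cf.\ Theorem~\ref{T:HGKV:equi}), proved via the transmission coefficient; the paper cites it rather than rederiving it.

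Second, the a priori $H^s$ bounds are not routine either. For integer $s$ the induction on polynomial conservation laws does not close for large data: these laws cease to be coercive once $M(q)\geq 4\pi$ (the algebraic soliton \eqref{alg s} has $M=4\pi$ while all other polynomial conserved quantities vanish, and scaling makes this fatal), and already the global $H^1$ bound from $H_2$ requires $L^2$-equicontinuity as an input \cite{KNV}. For $s\in(0,\tfrac12)$ the perturbation-determinant conserved quantities are only defined and coercive for $\kappa$ large depending on the equicontinuity properties of the whole orbit, so they too rest on \cite{HGKV:equi}; the bounds for $s\geq\tfrac12$ come from \cite{BLP}, which uses the same result as its base step. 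The paper's actual proof disposes of $s\geq\tfrac12$ by citing Takaoka's local theory together with the a priori bounds of \cite{BLP,BP}, and of $0\leq s<\tfrac12$ by combining Theorem~\ref{t:main} with the $H^s$-equicontinuity of \cite{KNV,HGKV:equi}; none of these inputs can be replaced by the soft arguments you sketch.
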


Prior work in this direction is discussed at length in the next subsection.  It is evident that Theorem~\ref{t:main} guarantees the existence and uniqueness of solutions for data in $H^s$ with $s\geq 0$.  That such solutions remain bounded in $H^s$ is known as \emph{persistence of regularity} and may be deduced as a consequence of conservation laws.  To complete the proof of well-posedness, one must upgrade continuous dependence from the $L^2$ metric to the $H^s$ metric.  As demonstrated in several prior works of the authors \cite{MR4304314, harropgriffiths2020sharp, KNV, MR3990604}, this is easily done if one can verify that $H^s$-equicontinuous sets of initial data lead to $H^s$-equicontinuous ensembles of orbits.

\begin{definition}
A subset $Q$ of $H^s(\R)$ is \emph{$H^s(\R)$-equicontinuous} if for every $\eps>0$ there is a $\delta>0$ so that
$$
\sup_{q\in Q}\, \sup_{|y|<\delta} \, \bigl\| q(x+y) - q(x) \bigr\|_{H^s_x} < \eps.
$$
\end{definition}

This definition extends naturally to any translation-invariant Banach space of functions on any group.  For bounded continuous functions on $\R^d$, one recovers the notion of equicontinuity familiar from the Arzel\`a--Ascoli Theorem.  Indeed, this more general notion of equicontinuity was introduced precisely to formulate the analogous compactness theorem in $L^p(\R^d)$ spaces; see \cite{MRiesz}.

We will also need the second key requirement for compactness, albeit only in the $L^2$ setting:

\begin{definition}\label{D:tight}
We say that $Q\subseteq L^2(\R)$ is \emph{tight} if $$
\limsup_{R\to\infty} \ \sup_{q\in Q}\, \int_{|x|\geq R} |q(x)|^2\,dx =0.
$$
\end{definition}

The transportation of $L^2$ norm is expressed by \eqref{micro M}.  As is characteristic of dispersive equations, we see that the flux of the conserved quantity involves more derivatives than the conserved quantity itself.  While this is an obstacle in our path to proving tightness, it is also the key property of microscopic conservation laws that provides for local smoothing estimates.

To formulate local smoothing estimates, we must first agree on how to localize the solution in space.  We will do this through the Schwartz-class function
\begin{equation}\label{psi}
\psi(x) := \sqrt{\sech(\tfrac x{99})} \qtq{and its translates} \psi_\mu(x) := \psi(x - \mu).
\end{equation}
There is nothing terribly special about this choice.  The fact that it has slow exponential decay (relative to unit scale) is quite convenient; beyond this, it is merely the case that this choice has served us well in the prior work \cite{harropgriffiths2020sharp}.

\begin{theorem}[Local smoothing]\label{t: local smoothing}
Let $Q\subset \Schw(\R)$ be both $L^2$-bounded and equicontinuous.  For each $T>0$, solutions $q(t)$ to \eqref{DNLS} with initial data $q(0)\in Q$ satisfy
\begin{align}\label{E:naiveLS}
\smash[t]{   \sup_{\mu\in \R}\ \int_{-T}^T\|\psi_\mu^{12} q(t)\|_{H^\frac12}^2\,dt\lesssim_{T,Q} \|q(0)\|_{L^2}^2.   }
\end{align}
\end{theorem}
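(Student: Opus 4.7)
The approach is to extract the local smoothing from a family of microscopic conservation laws associated with the Kaup--Newell Lax pair of \eqref{DNLS}. For each large spectral parameter $\kappa$, one has a density $\rho(\kappa;q)$ and a flux $j(\kappa;q)$, polynomial in $q$, $\bar q$ and their derivatives, satisfying $\partial_t \rho(\kappa;q)+\partial_x j(\kappa;q)=0$ along smooth solutions. These objects are obtained by expanding the logarithm of the perturbation determinant of the Lax operator, and their leading-order behaviour in $\kappa$ reproduces \eqref{micro M}. After integrating in $\kappa$ against a suitable weight, the quadratic-in-$q$ part of $j$ recovers a half derivative of $q$, which is the mechanism that turns a conservation law into a smoothing estimate, in the spirit of the authors' earlier work on NLS and mKdV.

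Given this, I would proceed in the following steps. First, multiply the microscopic conservation law by $\psi_\mu^{24}(x)$ and integrate in $x$ and $t\in[-T,T]$. The fundamental theorem of calculus in time and integration by parts in space yield the identity
\begin{equation*}
\int_{-T}^T\int_{\R} (\psi_\mu^{24})'(x)\, j(\kappa;q(t,x))\,dx\,dt = \int_{\R} \psi_\mu^{24}\rho(\kappa;q(T))\,dx - \int_{\R} \psi_\mu^{24}\rho(\kappa;q(-T))\,dx.
\end{equation*}
Second, estimate the right-hand side: since $\rho(\kappa;q)$ is at the $L^2$ level to leading order, and $L^2$ is conserved, the boundary terms are bounded uniformly in $\mu$ by $\|q(0)\|_{L^2}^2$ plus corrections which equicontinuity of $Q$ renders harmless. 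Third, integrate the resulting identity against an appropriate weight in $\kappa$ and extract from the left-hand side a positive-definite quadratic form controlling $\int_{-T}^T\|\psi_\mu^{12}q(t)\|_{H^{1/2}}^2\,dt$. The slow exponential decay of $\psi_\mu$ is essential here: $(\psi_\mu^{24})'$ is pointwise comparable to $\psi_\mu^{24}$, so the weight factor distributes cleanly onto two factors of $\psi_\mu^{12}$, matching the form of the target norm.

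The main obstacle is the control of the nonlinear contributions to $j(\kappa;q)$, which for \eqref{DNLS} contain both many copies of $q$ \emph{and} spatial derivatives, owing to the derivative cubic nonlinearity. These must be shown to be dominated --- uniformly over the translation parameter $\mu$, over $\kappa$ in the relevant range, and over $q\in Q$ --- by the quadratic term one is trying to bound. This is where the equicontinuity hypothesis on $Q$ is crucial: it forces the high-frequency part of $q$ to be small, so that after a Littlewood--Paley decomposition of the cubic and higher flux contributions, one can absorb them into a small multiple of the $H^{1/2}$ target via a bootstrap, leaving a remainder controlled by $\|q(0)\|_{L^2}^2$ with an implicit constant depending only on $T$ and the modulus of $L^2$-equicontinuity of $Q$. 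Propagating equicontinuity along the flow (so that the same small high-frequency tail is available for all $t\in[-T,T]$) is the companion issue that must be handled in parallel; it rests on the same perturbation-determinant formalism.
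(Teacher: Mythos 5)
Your overall architecture is the one the paper uses: a microscopic conservation law $\partial_t\rho(\vk)+\partial_x j(\vk)=0$ coming from the Kaup--Newell perturbation determinant, paired with a spatial weight, integrated over the spectral parameter $\vk$ to manufacture the half derivative, with the higher-order flux contributions bootstrapped against the target norm using the smallness supplied by equicontinuity (implemented in the paper by rescaling to a $\delta$-good set, Corollary~\ref{C:descendants}), and with propagation of equicontinuity along the flow handled by Theorem~\ref{T:HGKV:equi}. The reduction of the $[-T,T]$ statement to unit time intervals by covering is also as in the paper.

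There is, however, one step in your setup that fails as written: the choice of weight. You pair the density $\rho$ with $\psi_\mu^{24}$, so that after integrating by parts the flux $j$ is paired with $(\psi_\mu^{24})'$, and you then assert that $(\psi_\mu^{24})'$ is pointwise comparable to $\psi_\mu^{24}$ in order to extract a positive-definite quadratic form from the left-hand side. This is false: $\psi_\mu^{24}(x)=\sech^{12}(\tfrac{x-\mu}{99})$ is even about $x=\mu$, so its derivative vanishes at $x=\mu$ and changes sign there; only the one-sided bound $|(\psi_\mu^{24})'|\lesssim\psi_\mu^{24}$ holds. Since the coercivity of the identity lives entirely in the quadratic part of the \emph{flux} (schematically $\Im j\sbrack{2}\approx 2\Re\{\tfrac{q'}{4\vk^2-\p^2}\bar q'\}+\p_x(\cdots)$, which is nonnegative only after integration against a \emph{positive} weight), your quadratic form is not sign-definite and the bootstrap cannot close. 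The standard repair --- and what the paper does in \eqref{phi h}--\eqref{IBP in dt rho} --- is to reverse the roles: pair $\rho$ with the bounded monotone antiderivative $\phi_\mu(x)=\int_0^x\psi_\mu^{24}(y)\,dy$, so that the flux carries the positive bump $\psi_\mu^{24}$ and the boundary terms carry the bounded $\phi_\mu$. A secondary inaccuracy: for \eqref{DNLS} the density and flux are not polynomial in $q$ and its derivatives but infinite series of paraproducts built from the diagonal Green's function; this does not change your plan qualitatively, but it is why the tail of the series must be controlled by operator ($\I_p$) bounds in addition to the Littlewood--Paley analysis of the low-order terms.
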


\begin{corollary}\label{C:distrib}
The solutions constructed in Theorem~\ref{t:main} are distributional solutions; indeed, the data-to-solution map is continuous as a mapping of $L^2(\R)$ into \(L^3_{\mathrm{loc}}(\R\times\R)\). 
\end{corollary}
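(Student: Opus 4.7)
The plan is to combine Theorem~\ref{t:main}'s $C_tL^2$ continuity with the local smoothing bound of Theorem~\ref{t: local smoothing} to upgrade continuity to $L^3_{\loc}$, and then use this $L^3_{\loc}$ convergence to pass to the limit in the cubic term of the weak formulation of \eqref{DNLS}.

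First I would extract a local $L^2_t H^{1/2}_x$ estimate from the weighted bound \eqref{E:naiveLS}.  For a compact $K\subset\R$ and a smooth cutoff $\chi$ with $\chi\equiv 1$ on $K$, I would write $\chi = (\chi\psi_\mu^{-12})\cdot\psi_\mu^{12}$ with $\mu$ chosen so that $\chi\psi_\mu^{-12}$ is smooth with bounded derivatives; since multiplication by such a factor preserves $H^{1/2}$, \eqref{E:naiveLS} gives $\|\chi q\|_{L^2_t H^{1/2}_x}\lesssim_{T,K,Q}\|q(0)\|_{L^2}$ uniformly over $L^2$-bounded equicontinuous $Q\subset\Schw(\R)$.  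Combined with the pointwise $L^2$ conservation, H\"older-in-time with $\theta = 2/3$ interpolates these bounds to yield
$$
\|\chi q\|_{L^3_t H^{1/3}_x}\lesssim \|\chi q\|_{L^\infty_t L^2_x}^{1/3}\|\chi q\|_{L^2_t H^{1/2}_x}^{2/3},
$$
and the Sobolev embedding $H^{1/3}(\R)\hookrightarrow L^6(\R)$ then produces a uniform $L^3_{\loc}(\R\times\R)$ bound for $q$.

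To promote this bound into continuity, I would take $q_0^n\to q_0$ in $L^2$, pass to Schwartz approximants, and apply the preceding to the family of \emph{differences} $\{q_0^n - q_0^m\}$; this family is $L^2$-bounded and equicontinuous because convergent sequences are automatically so.  The result is a uniform $L^2_t H^{1/2}_{\loc}$ bound on $q^n - q^m$.  Inserting $\|q^n - q^m\|_{C_t L^2}\to0$ (from Theorem~\ref{t:main}) into the same interpolation forces $\|q^n - q^m\|_{L^3_{\loc}}\to0$, so $q^n\to q$ in $L^3_{\loc}$, yielding continuity of the data-to-solution map into $L^3_{\loc}(\R\times\R)$.

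For the distributional statement, I would note that each Schwartz approximant $q^n$ satisfies \eqref{DNLS} classically, so for $\varphi\in C_c^\infty(\R_t\times\R_x)$ integration by parts gives
$$
\int\!\!\int\bigl[-i\varphi_t q^n + \varphi'' q^n - i\varphi' |q^n|^2 q^n\bigr]\,dx\,dt = 0.
$$
The linear terms pass to the limit using $C_tL^2$ convergence alone.  The cubic term needs the $L^3_{\loc}$ convergence from the previous step, via the elementary inequality $\bigl\||u|^2u - |v|^2v\bigr\|_{L^1(K)}\lesssim (\|u\|_{L^3(K)}+\|v\|_{L^3(K)})^2\|u-v\|_{L^3(K)}$, which gives $|q^n|^2q^n\to|q|^2q$ in $L^1_{\loc}$.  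The identity therefore survives the limit and $q$ satisfies \eqref{DNLS} distributionally.  The real work in this corollary is done by Theorem~\ref{t: local smoothing}; what remains is the interpolation and density bookkeeping above, which presents no essential obstacle.
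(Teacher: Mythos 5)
Your proposal follows essentially the same route as the paper: Theorem~\ref{t:main} supplies $C_tL^2$ convergence of Schwartz approximants, Theorem~\ref{t: local smoothing} supplies uniform local $L^2_tH^{1/2}_x$ bounds, and an interpolation/Gagliardo--Nirenberg step converts these into $L^3_{\loc}$ convergence, after which the cubic term in the weak formulation passes to the limit. (The paper uses the single inequality $\|\psi^8[q_n-q]\|_{L^3_{t,x}}^3\lesssim\|q_n-q\|_{L^\infty_tL^2_x}\{\|\psi^{12}q_n\|_{L^2_tH^{1/2}}^2+\|\psi^{12}q\|_{L^2_tH^{1/2}}^2\}$ rather than your $L^3_tH^{1/3}\hookrightarrow L^3_tL^6$ chain, but this is cosmetic.)

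One step is misstated, though the conclusion survives: you propose to ``apply the preceding to the family of differences $\{q_0^n-q_0^m\}$'' to obtain a uniform $L^2_tH^{1/2}_{\loc}$ bound on $q^n-q^m$. Theorem~\ref{t: local smoothing} bounds \emph{solutions} emanating from an equicontinuous set of data; since \eqref{DNLS} is nonlinear, the evolution of $q_0^n-q_0^m$ is not $q^n-q^m$, so the theorem cannot be applied to the differences as data. What you actually need is immediate anyway: the set $\{q_0^n\}\cup\{q_0\}$ is precompact, hence bounded and equicontinuous, so each $q^n$ (and $q$, by Fatou) obeys the uniform local smoothing bound, and the bound on $q^n-q^m$ follows from the triangle inequality. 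With that correction the argument is complete and matches the paper's.
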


The first striking thing about Theorem~\ref{t: local smoothing} is the fact that the estimate is only claimed for equicontinuous sets, not balls.  This is of necessity, as we will show in Proposition~\ref{P:no smoothing}, and reiterates the non-perturbative nature of our analysis.

Other than proving Corollary~\ref{C:distrib}, Theorem~\ref{t: local smoothing} will play no role in the analysis.  It is not strong enough!  For example, it is not sufficient to prove tightness.  For that purpose, we will need the stronger estimate \eqref{refined local smoothing} expressed in terms of our local smoothing spaces $X^{1/2}_\kappa$ introduced in subsection~\ref{S:ls}.  In essence, these spaces capture the local smoothing norm living at frequencies $|\xi|\geq \kappa$.  In this way, \eqref{refined local smoothing} expresses that there is little local smoothing norm at high frequencies and consequently, little transportation of the $L^2$ norm by the high frequencies.


\subsection{Prior work}\label{SS:prior} Local well-posedness of \eqref{DNLS} was first proved in $H^s(\R)$ for $s>\tfrac32$ via energy methods in \cite{MR621533,MR634894}.  Subsequently, this was improved to $s\geq \frac12$ by Takaoka in \cite{MR1693278} via contraction mapping in $X^{s,b}$ spaces.  The solution so constructed is a real-analytic function of the initial data.

As explained in \cite[\S7]{MR1836810}, the results of \cite{MR1693278} show that the data-to-solution map cannot be real-analytic (or even $C^3$) on $H^s(\R)$ for any $s<\frac12$.  Indeed, by analyzing the family of solitons reviewed in subsection~\ref{SS:solitons}, it was shown in \cite{MR1837253} that the data-to-solution map cannot even be uniformly continuous (on bounded sets) in $H^s(\R)$ for $s<\frac12$.

Contraction mapping arguments have also been applied in other function spaces.  Local well-posedness of \eqref{DNLS} in certain Fourier--Lebesgue and modulation spaces was shown in \cite{MR2181058} and \cite{MR4256461}, respectively.  In both cases, the spaces are based on $s\geq \frac12$ number of derivatives.  It is noted in \cite{MR4256461} that if fewer derivatives are used, the data-to-solution map cannot be smooth.

We discussed earlier how this irregularity of the data-to-solution map challenges the naivest notions of complete integrability.  It also has profound implications in terms of methods.  For a generation now, work on well-posedness problems for dispersive PDE has been dominated by contraction mapping arguments in increasingly sophisticated spaces, employing ever subtler harmonic analysis tools.  By their very nature, solutions built by contraction mapping will be real-analytic functions of their initial data.  The poor regularity of the data-to-solution map in the setting of Theorem~\ref{t:main} is a strong signal that very different methods will be needed.

Let us turn now to the question of \emph{global} well-posedness. The standing paradigm here is to extend local-in-time results by employing exact (or approximate) conservation laws.  As a completely integrable system, \eqref{DNLS} has a multitude of exact conservation laws.  The most basic three are
\begin{align}
M(q) &:= \int |q(x)|^2\,dx,  \\
H(q)&:=-\tfrac{1}{2}\int i (q \bar q'-\bar q q')+|q|^4 \, dx, \label{H}\\
H_2(q)&:=\int |q'|^2 +\tfrac{3}{4} i |q|^2 (q \bar q'-\bar q q') + \tfrac 12 |q|^6\, dx.
\end{align}
The functional $H(q)$ will be called the Hamiltonian since it generates the \eqref{DNLS} dynamics in concert with the Poisson structure
\begin{align}\label{Poisson}
\{ F , G \} := \int \tfrac{\delta F}{\delta q} \bigl(\tfrac{\delta G}{\delta\bar q}\bigr)'
		+ \tfrac{\delta F}{\delta \bar q} \bigl(\tfrac{\delta G}{\delta q}\bigr)'\,dx .
\end{align}

The problem with these conservation laws is that they are \emph{not} coercive for large initial data, specifically, when $M(q)$ is large.  It was observed in \cite{MR1152001} that coercivity does hold if $M(q)<2\pi$, which was used to obtain global well-posedness in $H^1(\R)$ under this restriction.  The subsequent works \cite{MR1871414, MR1950826,MR2823664,MR1836810} ultimately led to $H^s$-well-posedness for $s\geq \frac12$ under the $M(q)<2\pi$ restriction.

Later, Wu showed that the $2\pi$ barrier was illusory and that a priori bounds could be obtained under the weaker restriction $M(q)<4\pi$; see \cite{MR3198590,MR3393674} and \cite{MR3668587}.  Global $H^s$-well-posedness for $s\geq\frac12$ and $M(q)<4\pi$ was then shown in \cite{MR3583477}.

The $4\pi$ barrier is certainly not illusory: Algebraic solitons (see \eqref{alg s}) are explicit solutions of \eqref{DNLS} with $M(q)=4\pi$, but for which all other polynomial conservation laws vanish.  Applying the symmetry \eqref{scaling} to these algebraic solitons, we see that the polynomial conservation laws alone cannot provide the kind of control that is needed; see the discussion surrounding \eqref{coercive fails}.  Because of such obstructions, the behavior of large-data solutions to \eqref{DNLS} was for a long time a terra incognita.

The first definitive evidence that large data do not blow up was provided via the inverse scattering approach; see \cite{MR3913998,MR4042219,MR4149070,MR3706093,MR3563476,MR3702542,MR3862117,saalmann2017global}.  Among these works, we wish to single out \cite{MR4149070} as not only constructing solutions (without any spectral hypotheses), but also for proving continuous dependence on the initial data.  Concretely, they proved that \eqref{DNLS} is globally well-posed in $H^{2,2}(\R)=\{f\in H^2 : x^2 f\in L^2\}$.  Combined with the local-in-time arguments in \cite{MR1152001}, this result shows that \eqref{DNLS} is globally well-posed in Schwartz space.

Strong spatial decay requirements are a prerequisite for the inverse scattering approach as we understand it today.  Currently, there is no satisfactory theory of forward nor inverse scattering in any $H^s(\R)$ space (not only for \eqref{DNLS}, but also for KdV, NLS, and mKdV).  On the other hand, one of the major strengths of the inverse scattering method is its ability to describe the long-time behavior of solutions.  Indeed, a soliton resolution result for generic data in $H^{2,2}(\R)$ was proved in \cite{MR3858827}; see also \cite{MR4042219,MR3739932}.

The large-data impasse in Sobolev spaces was dramatically broken by Bahouri and Perelman in the recent paper \cite{BP}.  By synthesizing the existing well-posedness theory with an in-depth analysis of the transmission coefficient, they proved that \eqref{DNLS} is globally well-posed in $H^{1/2}(\R)$. 

For what follows, it is more convenient to discuss the reciprocal of the transmission coefficient and to define this quantity, $a(k;q)$, via a Fredholm determinant. For $\kappa>0$, we first define
\begin{align}\label{Lambda}
    \Lambda(\kappa;q) :=(\kappa-\partial)^{-\frac 12} q (\kappa+\partial)^{-\frac 12} \qtq{and}
    	\Gamma(\kappa;q):=(\kappa+\partial)^{-\frac 12} \bar q (\kappa-\partial)^{-\frac 12},
\end{align}
which extend to $\kappa<0$ via $\Lambda(-\kappa;q) = - \Gamma(\kappa;\bar q)$.  By Lemma~\ref{L:HS} below (reproduced from \cite{MR3820439}), both $\Lambda$ and $\Gamma$ are Hilbert--Schmidt operators; thus we may define
\begin{align}\label{a defn}
a(i\kappa ;q) := \det\bigl[ 1 - i\kappa \Lambda\Gamma\bigr] .
\end{align}

This expression originates from a perturbation determinant based on the Lax pair for \eqref{DNLS} discovered in \cite{MR464963}.  In particular, it is conserved under the \eqref{DNLS} flow; see \cite{klaus2020priori} for a proof of this.

It follows from \eqref{a defn} that $k\mapsto a(k;q)$ extends to a holomorphic function in both the upper and lower half-planes.  While this extension was essential for \cite{BP} and the paper \cite{HGKV:equi} that we will discuss shortly, we will not need this here and so restrict our attention to the case where $k=i\kappa$ is purely imaginary.

The central problem overcome by \cite{BP} was the ineffectual nature of the conservation laws attendant to \eqref{DNLS}; however, this solution did not provide new conservation laws with which to fill the void.  In particular, \cite{BP} does not provide a priori control on lower regularity norms, nor the means to address the question of equicontinuity in such spaces.

Using the ideas of \cite{BP} as a jumping-off point, the paper \cite{HGKV:equi} shows that \eqref{DNLS} does preserve $L^2$-equicontinuity.  Note that this assertion takes the form of an a priori bound on Schwartz-class solutions since solutions were not known to exist for merely $L^2$ initial data.  In fact (and this will be important for us), the paper \cite{HGKV:equi} shows that this equicontinuity property is enjoyed by any flow preserving the perturbation determinant \eqref{a defn} and so by the \emph{entire} \eqref{DNLS} hierarchy:

\begin{theorem}[\cite{HGKV:equi}]\label{T:HGKV:equi}
Let $Q\subseteq \Schw(\R)$ be \(L^2\)-bounded and equicontinuous.  Then
\begin{align}\label{QsI}
Q_* = \bigl\{q\in \Schw(\R): a(k;q)\equiv a(k;\tilde q) \text{ for some $\tilde q\in Q$}\bigr\}
\end{align}
is also \(L^2\)-bounded and equicontinuous.
\end{theorem}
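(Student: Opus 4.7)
The plan is to exploit the fact that, by definition, $Q_*$ consists of Schwartz functions sharing the perturbation determinant with some element of $Q$; hence every functional extracted from $a(\cdot;q)$ is constant on these fibers. For $L^2$-boundedness, the leading coefficient in the asymptotic expansion \eqref{Asymp of A} of $A(\kappa;q)$ as $\kappa\to\infty$ is exactly $\tfrac{i}{2}M(q)$, so matching $a(k;q)\equiv a(k;\tilde q)$ at large imaginary $k$ yields $M(\tilde q)=M(q)$ and hence $\sup_{\tilde q\in Q_*}\|\tilde q\|_{L^2}^2\leq\sup_{q\in Q}\|q\|_{L^2}^2$.

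The substantive content is the equicontinuity of $Q_*$. I would work with the real-valued conserved functional $\alpha(\kappa;q):=-\Im\log a(i\kappa;q)$, computed via the series \eqref{alpha defn} whenever $\|i\kappa\Lambda\Gamma\|_{\op}<1$. A direct Fourier-side computation of its leading $\ell=1$ term yields
\[
2\alpha_1(\kappa;q)=\int\frac{4\kappa^2}{\xi^2+4\kappa^2}|\hat q(\xi)|^2\,\tfrac{d\xi}{2\pi},\qquad M(q)-2\alpha_1(\kappa;q)=\int\frac{\xi^2}{\xi^2+4\kappa^2}|\hat q(\xi)|^2\,\tfrac{d\xi}{2\pi}=:E(\kappa;q).
\]
Uniform vanishing of $E(\kappa;q)$ as $\kappa\to\infty$ over $q\in Q$ is precisely the quantitative formulation of $L^2$-equicontinuity; the goal is therefore to show $\sup_{\tilde q\in Q_*}E(\kappa;\tilde q)\to 0$.

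The key ingredient is a remainder bound of the form
\[
|\alpha(\kappa;q)-\alpha_1(\kappa;q)|\leq C(\|q\|_{L^2})\,E(\kappa;q)\qquad\text{whenever}\qquad\|i\kappa\Lambda\Gamma\|_{\op}\leq\tfrac12,
\]
obtained from the trace inequality $|\tr[(i\kappa\Lambda\Gamma)^\ell]|\leq\|i\kappa\Lambda\Gamma\|_{\op}^{\ell-2}\|i\kappa\Lambda\Gamma\|_{\hs}^2$ for $\ell\geq 2$, together with a Hilbert--Schmidt bound of the form $\|i\kappa\Lambda\Gamma\|_{\hs}^2\lesssim (1+\|q\|_{L^2}^2)\,E(\kappa;q)$ and an operator-norm bound $\|i\kappa\Lambda\Gamma\|_{\op}\to 0$ as $\kappa\to\infty$ uniformly on $L^2$-equicontinuous sets. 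Applied to both $q\in Q$ and any matched $\tilde q\in Q_*$, and combined with the identities $M(\tilde q)=M(q)$ and $\alpha(\kappa;\tilde q)=\alpha(\kappa;q)$, this yields
\[
E(\kappa;\tilde q)\leq E(\kappa;q)+C\bigl(E(\kappa;q)+E(\kappa;\tilde q)\bigr),
\]
whereupon the $\tilde q$-term is absorbed on the left. Since every $\tilde q\in\Schw$ has $E(\kappa;\tilde q)\to 0$ as $\kappa\to\infty$ and $\kappa\mapsto E(\kappa;\tilde q)$ is continuous, a standard continuity argument in $\kappa$ propagates the improved inequality $E(\kappa;\tilde q)\leq C'E(\kappa;q)$ from very large $\kappa$ down to every $\kappa\geq\kappa_0(Q)$ (the latter determined solely by the equicontinuity modulus of $Q$), and the conclusion follows.

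The main obstacle is the operator-norm estimate: one needs $\|i\kappa\Lambda\Gamma\|_{\op}\to 0$ uniformly over $L^2$-bounded, $L^2$-equicontinuous sets. The naive Hilbert--Schmidt bound $\|\kappa\Lambda\|_{\hs}^2\lesssim\|q\|_{L^2}^2$ is too crude, as it provides no decay in $\kappa$, so one must split $q$ at frequency $\sim\kappa$, place the high-frequency piece into $E(\kappa;q)$, and genuinely exploit the smoothing of the symbols $(\kappa\pm i\xi)^{-1/2}$ on the low-frequency piece. A secondary subtlety is uniformity of the bootstrap across $Q_*$: each individual $\tilde q$ may require a different starting $\kappa$ for the continuity argument, but the \emph{improved} inequality and its applicability threshold depend only on $\|q\|_{L^2}$ and the equicontinuity modulus of $Q$, which is what makes the final estimate uniform.
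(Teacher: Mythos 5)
This theorem is not proved in the present paper: it is imported wholesale from \cite{HGKV:equi}, so there is no in-paper argument to compare against, and it is in fact the deepest external input the paper relies on. Judged on its own terms, your proposal correctly handles $L^2$-boundedness and correctly identifies the coercive quadratic term $\alpha_1$ and the quantity $E(\kappa;q)=M(q)-2\alpha_1(\kappa;q)$ as the right measure of equicontinuity (this is indeed the starting point of \cite{KNV,HGKV:equi}). But the two steps carrying all the weight fail. First, the remainder bound $|\alpha-\alpha_1|\le C(\|q\|_{L^2})\,E(\kappa;q)$ cannot be absorbed. The $\ell=2$ and $\ell=3$ terms of the series already contribute at the \emph{same} order in $\kappa$ as $E$ itself: by \eqref{Asymp of A}, for fixed Schwartz $q$ one has $2\alpha(\kappa;q)-2\alpha_1(\kappa;q)\to\tfrac1{4\kappa^2}\bigl(\|q'\|_{L^2}^2-H_2(q)\bigr)$ while $E(\kappa;q)\sim\tfrac1{4\kappa^2}\|q'\|_{L^2}^2$, and Gagliardo--Nirenberg only yields $\bigl|\|q'\|_{L^2}^2-H_2(q)\bigr|\lesssim\bigl(\|q\|_{L^2}^2+\|q\|_{L^2}^4\bigr)\|q'\|_{L^2}^2$. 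Thus the best constant is $C\sim\|q\|_{L^2}^2+\|q\|_{L^2}^4$, which exceeds $1$ exactly in the large-mass regime where the theorem is hard; absorption then fails, and this is precisely the $4\pi$ obstruction the paper discusses (for the algebraic soliton one has $\alpha\equiv0$ and $H_2=0$ yet $2\alpha_1\to 4\pi$, so the ``remainder'' tends to $-4\pi$ while $E\to0$). Your intermediate claim $\|i\kappa\Lambda\Gamma\|_{\hs}^2\lesssim(1+\|q\|_{L^2}^2)E(\kappa;q)$ is also unjustified: what Lemma~\ref{L:HS} and Corollary~\ref{C:large kappa} give is $\|i\kappa\Lambda\Gamma\|_{\hs}^2\lesssim\|q\|_{E_{\sigma,\kappa}^\sigma}^2\|q\|_{L^2}^2$, and $\|q\|_{E_{\sigma,\kappa}^\sigma}^2$ is \emph{not} dominated by $E(\kappa;q)$ (take $\hat q$ concentrated at $|\xi|\sim\sqrt\kappa$: then $E\sim\kappa^{-1}M$ but $\|q\|_{E_{\sigma,\kappa}^\sigma}^2\sim\kappa^{-\sigma}M$ with $\sigma<\tfrac12$).

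Second, the continuity-in-$\kappa$ bootstrap does not close. Its asserted output $E(\kappa;\tilde q)\le C'E(\kappa;q)$ is false for large $\kappa$ with any uniform $C'$, since the ratio tends to $\|\tilde q'\|_{L^2}^2/\|q'\|_{L^2}^2$, which is unbounded over $\tilde q\in Q_*$; so there is no base case from which to propagate downward. More structurally, verifying the hypothesis $\sqrt\kappa\|\Lambda(\tilde q)\|_{\op}\le\tfrac12$ at scale $\kappa$ requires smallness of $\|\tilde q_{>N}\|_{L^2}$ for some $N\ll\kappa$, i.e.\ tail information at scales strictly \emph{below} the current one, which a downward induction has not yet produced. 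This circularity is exactly why the problem was open for large data, and it is why the paper stresses that the holomorphic extension of $k\mapsto a(k;q)$ into the full half-planes ``was essential for \cite{BP} and the paper \cite{HGKV:equi}'': the actual proof must exploit $a$ off the imaginary ray (its zeros and the attendant trace/argument identities), not merely the series \eqref{alpha defn} on the ray where it converges.
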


A key motivation for addressing the equicontinuity question in \cite{HGKV:equi} is that it unlocks a large number of tools in the study of \eqref{DNLS}; it was this realization that lead \cite{KNV} to the explicit formulation of this equicontinuity problem.   

The first tools unlocked by the equicontinuity property are low-regularity conservation laws, specifically conservation laws at the level of $H^s$ for $0<s<\frac12$.  Such laws were first derived in \cite{klaus2020priori} following the approach of \cite{MR3820439}; however, they were only applicable to small solutions.  The realm of applicability was first raised to $M(q)<4\pi$ in \cite{KNV} by proving equicontinuity in that regime and then to arbitrarily large solutions in \cite{HGKV:equi}.

Equicontinuity also unlocks higher regularity conservation laws for large data.  In \cite{KNV}, $L^2$-equicontinuity and the conservation of $H_2(q)$ are shown to provide global $H^1(\R)$ bounds.  In \cite{BLP}, the result of \cite{HGKV:equi} is used as the base step of an inductive argument to cover $H^s(\R)$ spaces for all $s\geq \frac12$.  This brings closure to the question of coercive conservation laws: we now know that $H^s$-bounded sets of Schwartz-class initial data lead to $H^s$-bounded solutions for all $s\geq 0$.

To prove local smoothing, we need \emph{microscopic} conservation laws such as \eqref{micro M}, rather than mere conserved quantities.  Note that \eqref{micro M} itself is useless for this purpose because the current is not coercive.  Already for the proof of \eqref{E:naiveLS}, we need scaling-critical coercive microscopic conservation laws and the full proof of well-posedness will require even more subtle estimates.


Just such microscopic conservation laws were worked out in \cite{tang2020microscopic} and will be recapitulated in Proposition~\ref{P:micro laws}.  The structure of these laws closely resembles those of the NLS/mKdV hierarchy presented in \cite{harropgriffiths2020sharp}.   There is a good reason for this: the Kaup--Newell Lax operator for \eqref{DNLS} can be written as
\begin{align}\label{L defn}
   L(\kappa;q) := \begin{bmatrix}  1 &0\\ 0 &-1\end{bmatrix}
   \begin{bmatrix}\kappa -\partial & \sqrt\kappa q\\
   i\sqrt\kappa \bar q & \kappa +\partial\end{bmatrix},
\end{align}
which closely resembles the AKNS--ZS Lax operator of the NLS/mKdV hierarchy.  We will only discuss this operator for $\kappa\in\R$.  Throughout this paper,
$$
\sqrt\kappa=i\sqrt{|\kappa|} \qtq{when} \kappa<0.
$$

There is one more prior result that we wish to discuss, namely,  global well-posedness in $H^{1/6}(\R)$.  This was first shown in \cite{KNV} for initial data satisfying $M(q)<4\pi$, a restriction that was removed in \cite{HGKV:equi}.  This result was shown using the first-generation method of commuting flows introduced in \cite{MR3990604} and reviewed below.

\subsection{Description of the method}\label{SS:method} 
The principal problem we must address in order to prove Theorem~\ref{t:main} is this: given $T>0$ and an $L^2$-convergent sequence $q_n(0)$ of Schwartz initial data, show that the corresponding (Schwartz-class) solutions $q_n(t)$ converge in $L^2(\R)$ uniformly for $|t|\leq T$. 

Given the breakdown of uniform continuity of the data-to-solution map (on bounded sets) and the further instabilities highlighted in Proposition~\ref{P:noGron}, it is difficult to conceive of a method of controlling differences of solutions in terms of their initial data.  It was to address this specific challenge that the method of commuting flows was introduced in \cite{MR3990604}.

To explain the method of commuting flows, let us imagine that we wish to prove well-posedness of the flow generated by a Hamiltonian $H$ in $L^2$; in our case, $H$ is given in \eqref{H}. Central to the method is the construction of a one-parameter family of Hamiltonians $H_\kappa$ whose flows satisfy the following three properties:

(1) they commute with the $H$ flow, 

(2) they are well-posed in the target well-posedness space $L^2$, and 

(3) they converge to the $H$ flow as $\kappa\to \infty$. 

Let us temporarily take for granted the existence of the family of Hamiltonians $H_\kappa$ satisfying these properties.  Their construction for \eqref{DNLS} is quite involved and will be discussed shortly.  Demonstrating that they satisfy the three properties requires almost the entire bulk of this paper.

Property (1), namely commutativity of the flows, can be expressed as 
$$
e^{ t J \nabla H} \circ e^{ s J \nabla H_\kappa} = e^{ s J \nabla H_\kappa} \circ e^{ t J \nabla H}
	= e^{ J\nabla [t  H + sH_\kappa]}  \quad \text{for any $s,t\in \R$,}
$$  
where we adopt the exponential notation for the flow of a vector field and write $J\nabla$ for the symplectic gradient. The relevance of this relation to demonstrating that a sequence of (Schwartz) solutions $q_n(t)$ is Cauchy in $C([-T,T];L^2)$ may be best understood via the following identity: 
\begin{align}
q_n(t) - q_m(t) & =  \bigl[e^{ t J \nabla H_\kappa} q_n(0) - e^{ t J \nabla H_\kappa} q_m(0)  \bigr]
	 + \bigl[ e^{ t J \nabla (H-H_\kappa)} -\Id \bigr]\circ e^{ t J \nabla H_\kappa}  q_n(0)\notag \\
&\qquad - \bigl[ e^{ t J \nabla (H-H_\kappa)} -\Id \bigr]\circ e^{ t J \nabla H_\kappa}   q_m(0) .\label{E:comm and diff}
\end{align}

Property (2) is well-posedness of the $H_\kappa$ flows. This implies that the first term on the right-hand side of \eqref{E:comm and diff} converges to zero as $n, m\to \infty$ for each fixed $\kappa$.  In order to prove that the sequence $q_n(t)$ is Cauchy in $C([-T,T];L^2)$, it remains to show that 
\begin{align}\label{diff to id}
\limsup_{\kappa\to \infty}\ \sup_n\ \sup_{|t|\leq T}  \bigl\|\bigl[ e^{ t J \nabla (H-H_\kappa)} -\Id \bigr]\circ e^{ t J \nabla H_\kappa}   q_n(0)\bigr\|_{L^2} =0.
\end{align}
We will refer to the flow generated by the Hamiltonian $H-H_\kappa$ as the \emph{difference flow}.  Relation \eqref{diff to id} embodies the statement that as  $\kappa\to\infty$, the difference flow converges to the identity.  This is a quantitative interpretation of property (3).  

In implementing the method of commuting flows, we have come to regard properties (1) and (2) as selection criteria for the $H_\kappa$ Hamiltonians, leaving property (3) as the key analytical difficulty that must be faced. 

In our experience, \eqref{diff to id} has always proved to be a very difficult problem.  First, we must acknowledge that the difference flow inherits all the strong instabilities of the original flow.  As the $H_\kappa$ flow is typically a diffeomorphism, it cannot undo these problems.  The one big advance, however, is that we no longer need to control \emph{differences} of solutions: $q_n$ and $q_m$ are now completely decoupled.  This is partially off-set by the fact that the initial data for the difference flow is not $q_n(0)$, but rather $e^{ t J \nabla H_\kappa}  q_n(0)$ where $t$ varies over $[-T,T]$.  As a result, we will need to show that the difference flow converges to the identity uniformly across sets of initial data about which we know very little.  

As in previous works, we will exploit that $\{e^{ t J \nabla H_\kappa}   q_n(0): n\in \N, \, |t|\leq T\}$ inherits equicontinuity from the precompact set $\{q_n(0)\}$. This follows from Theorem~\ref{T:HGKV:equi} and the fact that the $H_\kappa$ flows constructed below conserve $a(i\kappa;q)$.  Refracted through this perspective, property (3)  becomes the following assertion:
\begin{align}\label{diff to id2}
\limsup_{\kappa\to \infty}\sup_{q\in Q}\sup_{|t|\leq T}  \bigl\|\bigl[ e^{ t J \nabla (H-H_\kappa)} -\Id \bigr](q)\bigr\|_{L^2} =0
\end{align}
for any $L^2$-bounded and equicontinuous set $Q\subseteq \Schw(\R)$.

It is natural to seek to prove \eqref{diff to id2} by estimating the difference-flow vector field, that is, the time derivative under this flow.  As a prerequisite, one needs to be able to make sense of the nonlinearity appearing therein --- this includes the nonlinearity under the $H$ flow, which for \eqref{DNLS} is $(|q|^2q)'$.  

The method of commuting flows was applied to \eqref{DNLS} in \cite{KNV}, treating initial data in $H^{1/6}$.  Conservation laws were used to bound the resulting solution in $L_t^\infty H^{1/6}$, which allowed the authors to prove \eqref{diff to id2} with $L^2$ replaced by $H^{-4}$.  The lost derivatives were then recovered using equicontinuity, specifically, the general statement that if a sequence $q_n$ converges in $H^\sigma$ and is equicontinuous in $H^s$ for $s>\sigma$, then it converges in $H^s$. The relevance of $H^{1/6}$ is that it embeds into $L^3$ and this allows us to interpret the nonlinearity $(|q|^2q)'$ as an element of $L^\infty_t H^\sigma$ for any $\sigma<-\frac32$.

Already in the first application of the method of commuting flows in \cite{MR3990604}, which was for KdV in $H^{-1}$, it was not possible to estimate the difference-flow vector field 
directly.   To address this problem, the authors introduced a gauge transformation (a diffeomorphic change of unknown), whose difference-flow dynamics they could estimate pointwise in time (albeit with a sizable loss of derivatives, which were then recovered using equicontinuity).

One advantage of this \emph{first-generation} method of commuting flows, where the difference flow (with or without a gauge) is estimated pointwise in time, is that it works equally well for problems posed both on the line and on the circle.  However, there are models (such as NLS and mKdV) where the threshold regularities for well-posedness are different in the two geometries.  The treatment of these equations in \cite{harropgriffiths2020sharp} necessitated the introduction of a \emph{second-generation} method of commuting flows, based on new \emph{local smoothing} and \emph{tightness} estimates.  

The job of local smoothing estimates is to make sense of the vector field as a spacetime distribution in instances where this cannot be done pointwise in time. As the central problem is to control the \emph{difference flow} by estimating the size of the corresponding vector field, one must develop local smoothing estimates for \emph{this} flow.   This is almost paradoxical: local smoothing is an expression of high-frequency transport; however, our ultimate goal is to demonstrate that the difference flow converges to the identity.  The demonstration of sufficiently strong smoothing estimates for the \eqref{DNLS} difference flow (see Proposition~\ref{p: diff local smoothing}) requires a vast amount of work;  we will return to this topic after the Hamiltonians $H_\kappa$ have been introduced.

Using local smoothing, we will only be able to prove convergence of the difference flow to the identity locally in space (cf. Theorem~\ref{T: diff flow convergence}).  The role of the second new ingredient, tightness, is to overcome this limitation.  As both radiation and solitons move under the \eqref{DNLS} flow, establishing tightness is challenging.  For \eqref{DNLS} this is accomplished in Proposition~\ref{P:tight} below and relies on the subtle control of the high-frequency transportation provided by \eqref{refined local smoothing}.  

\subsection*{The $H_\kappa$ flows and their properties}

To introduce the Hamiltonians $H_\kappa$ for \eqref{DNLS}, we return to the perturbation determinant \eqref{a defn}, or rather, to the closely related quantity
\begin{align}\label{A defn}
A(\kappa;q) := - \sgn(\kappa) \log[a(i\kappa;q)] = - \sgn(\kappa) \log \det\bigl[ 1 - i\kappa \Lambda\Gamma\bigr].
\end{align}
As discussed above, $a(i\kappa;q)$ is conserved by the \eqref{DNLS} flow.  This guarantees that both the real and imaginary parts of the (complex) functional $A(\kappa;q)$ Poisson commute with the \eqref{DNLS} Hamiltonian $H(q)$.

The inclusion of $\sgn(\kappa)$ in \eqref{A defn} ensures that $A(\kappa;q)$ has the same asymptotic expansion as $\kappa\to\pm\infty$.  This expansion shows that $A(\kappa;q)$ encodes all the polynomial conservation laws of \eqref{DNLS}; it begins
\begin{align}\label{Asymp of A}
   A(\kappa;q) = \tfrac{i}{2}M(q) + \tfrac{1}{4\kappa} H(q) - \tfrac{i}{8\kappa^2} H_2(q) + O\bigl(\tfrac1{\kappa^3}\bigr)
\end{align}
for $q\in\Schw(\R)$.  Rearranging this formula leads one to believe that  
\begin{align}\label{Hk defn}
    H_\kappa(q):= 4\kappa\Re A(\kappa;q)
\end{align}
is a good approximation for the \eqref{DNLS} Hamiltonian $H(q)$, at least as $\kappa\to\infty$.  Moreover, the Poisson commutativity of $\Re A(\kappa;q)$ and $H(q)$ noted above guarantees that $H_\kappa$ and $H$ also commute.  This is the sought-after property (1) from our overview of the method of commuting flows.

The preceding discussion has been predicated on the non-vanishing of $a(i\kappa;q)$, so that one may safely take the logarithm in \eqref{A defn}.  This issue is discussed in \cite{KNV}, where it is shown that $A(\kappa;q)$ is well-defined provided $\kappa$ is sufficiently large; however, (by necessity) the restriction on $\kappa$ is not dictated solely by the size of $q$, but also by its frequency distribution.  

Informed by the many computations ahead of us, in this paper we adopt the expedient of using \eqref{scaling} to rescale solutions $q$ so that we may impose a single restriction on $\kappa$, namely, $|\kappa|\geq1$.  The goal of the rescaling is to make $\widehat q(\xi)$ small at frequencies $|\xi|\geq 1$; such smallness is conveniently expressed through the following notion:

\begin{definition}\label{D:good}
Fix $0<\sigma<\frac12$.  Given $\delta>0$, we say that $Q\subseteq \Schw(\R)$ is \emph{$\delta$-good} if it is $L^2$-bounded, $L^2$-equicontinuous, and it satisfies
\begin{align}\label{goodness}
\sup_{q\in Q} \int \frac{|\xi|^{2\sigma}|\hat q(\xi)|^2}{(4+\xi^2)^{\sigma}}\, d\xi \leq \delta^2 .
\end{align}
\end{definition}

Although the parameter $\sigma$ could be frozen once and for all, say $\sigma=\frac14$, we believe that retaining the symbol $\sigma$ makes it easier to check our computations.

As reviewed in Section~\ref{S:Green}, the series \eqref{A defn} converges uniformly on all $\delta$-good sets (once $\delta$ is small enough).  Local well-posedness of the $H_\kappa$ flow for $\delta$-good sets of initial data follows from Picard's Theorem because the corresponding vector field is Lipschitz.  Moreover, these solutions remain Schwartz-class and conserve $A(\vk;q)$ for all $|\vk|\geq 1$.  These assertions were shown in \cite[\S5]{KNV}.

In order to  construct a global-in-time $H_\kappa$ flow, we must ensure that orbits remain $\delta$-good as time progresses.  This is accomplished by combining the fact that the $H_\kappa$ flow preserves $a(i\vk;q)$ together with the following consequence of Theorem~\ref{T:HGKV:equi}:

\begin{corollary} \label{C:descendants}
Let \(Q\subseteq\Schw(\R)\) be an \(L^2\)-bounded and equicontinuous set.  Given $\delta>0$, there exists $\lambda=\lambda(Q, \delta)$ so that the set 
\begin{equation*}
Q_*^\lambda = \bigl\{ \sqrt{\lambda} \,q(\lambda x)  \in \Schw(\R) : a(i\vk;q)\equiv a(i\vk;\tilde q) \text{ for some $\tilde q\in Q$}\bigr\}
	\quad\text{is $\delta$-good. }
\end{equation*}
\end{corollary}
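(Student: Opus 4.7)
The plan is to combine Theorem~\ref{T:HGKV:equi} with an elementary scaling argument that exploits the vanishing of the weight $|\xi|^{2\sigma}/(4+\xi^2)^{\sigma}$ at $\xi=0$. First, apply Theorem~\ref{T:HGKV:equi} to the set $Q$: as noted in the sentence preceding the corollary, the condition $a(i\vk;q)\equiv a(i\vk;\tilde q)$ for all $\vk$ large enough is equivalent to equality of $a(\,\cdot\,;q)$ and $a(\,\cdot\,;\tilde q)$ as holomorphic functions on the upper half-plane. Hence the set
\[
\tilde Q := \bigl\{q\in\Schw(\R) : a(k;q)\equiv a(k;\tilde q)\text{ for some }\tilde q\in Q\bigr\}
\]
is $L^2$-bounded and $L^2$-equicontinuous. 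Both properties are preserved under the scaling $q\mapsto q_\lambda := \sqrt\lambda\, q(\lambda\,\cdot)$ for any fixed $\lambda>0$: the $L^2$-norm is invariant, and
\[
\|q_\lambda(\cdot + y) - q_\lambda(\cdot)\|_{L^2} = \|q(\cdot + \lambda y) - q(\cdot)\|_{L^2}
\]
tends to zero as $y\to 0$ uniformly in $\tilde Q$. Thus $Q_* = \{(\tilde q)_\lambda : \tilde q\in\tilde Q\}$ is $L^2$-bounded and equicontinuous for every $\lambda>0$, independent of how $\lambda$ is chosen.

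It remains to choose $\lambda$ small enough to enforce \eqref{goodness}. Using $\hat q_\lambda(\xi) = \lambda^{-1/2}\hat q(\xi/\lambda)$ and substituting $\eta = \xi/\lambda$, one computes
\[
\int \frac{|\xi|^{2\sigma}|\hat q_\lambda(\xi)|^2}{(4+\xi^2)^{\sigma}}\,d\xi = \int \left(\frac{\lambda^2 \eta^2}{4+\lambda^2 \eta^2}\right)^{\sigma} |\hat q(\eta)|^2\,d\eta.
\]
The weight on the right is bounded by $\min\{1,\,(\lambda|\eta|/2)^{2\sigma}\}$. Splitting the integral at a threshold $N>0$ therefore gives
\[
\int \frac{|\xi|^{2\sigma}|\hat q_\lambda(\xi)|^2}{(4+\xi^2)^{\sigma}}\,d\xi \leq \left(\tfrac{\lambda N}{2}\right)^{2\sigma} \|q\|_{L^2}^2 + \int_{|\eta|>N} |\hat q(\eta)|^2\,d\eta.
\]
By the Riesz--Kolmogorov characterization, $L^2$-equicontinuity of $\tilde Q$ is equivalent to uniform tightness of $\{|\hat q|^2 : q\in\tilde Q\}$ in frequency; so given $\delta>0$ I choose $N$ so that the second term is below $\delta^2/2$ uniformly over $\tilde Q$, and then $\lambda = \lambda(Q,\delta)$ small enough, depending only on $N$ and $\sup_{\tilde Q}\|q\|_{L^2}$, to make the first term below $\delta^2/2$ uniformly as well.

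The serious work is entirely contained in Theorem~\ref{T:HGKV:equi}; given that result, the present corollary is a one-step verification with no real obstacle. The only substantive observation is that shrinking $\lambda$ redistributes all of the mass of $\hat q_\lambda$ into the low-frequency region where the goodness weight vanishes, while equicontinuity supplies a uniform frequency tail bound with which to absorb the remainder.
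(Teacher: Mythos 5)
Your proposal is correct and follows exactly the route the paper intends: the paper offers no written proof beyond the phrase ``Synthesizing Theorem~\ref{T:HGKV:equi} with Definition~\ref{D:good},'' and your argument supplies precisely that synthesis --- Theorem~\ref{T:HGKV:equi} (plus holomorphy to pass from the imaginary ray to global equality of $a$) gives boundedness and equicontinuity of the unscaled set, both of which are scale-invariant, and the change of variables $\eta=\xi/\lambda$ together with the uniform frequency-tail bound furnished by equicontinuity yields \eqref{goodness} for $\lambda$ small. The computation of $\hat q_\lambda$ and the choice of the threshold $N$ before $\lambda$ are both in the right order, so there is nothing to add.
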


Before turning to the difficult topic of analyzing the difference flow, let us pause to summarize the preceding discussion as a theorem.  In particular, this theorem encapsulates properties (1) and (2) of the $H_\kappa$ flows.

\begin{theorem}\label{T:Hk WP}
There exists $\delta_0>0$ sufficiently small so that for any $0<\delta\leq \delta_0$ and \(L^2\)-bounded and equicontinuous set \(Q\subseteq\Schw(\R)\), the $H_\kappa$ flow is globally well-posed on the set $Q^\lambda_*$ in the $L^2$ topology, where $\lambda(Q,\delta)$ is chosen according to Corollary~\ref{C:descendants}.  In particular, solutions remain of Schwartz-class and remain in the set $Q^\lambda_*$. Moreover, the \eqref{DNLS} flow also preserves the set $Q^\lambda_*$ and commutes with the $H_\kappa$ flow.
\end{theorem}

All the assertions made here about the $H_\kappa$ flow were proved in \cite{KNV} contingent on the question of equicontinuity that was subsequently resolved in \cite{HGKV:equi}.  As discussed earlier, the existence of global Schwartz-class solutions to \eqref{DNLS} follows from  \cite{MR1152001,MR4149070}. That such \eqref{DNLS} solutions conserve the transmission coefficients is a classical result.

\subsection*{Analysis of the difference flow} This occupies the bulk of this paper and requires many new insights.

To understand the difference flow, we must first give the explicit form of this evolution.  This relies on the functional derivatives of $H_\kappa$, which are easily deduced from those of $A(\kappa;q)$ given in \eqref{FderivA}.

The functions $g_{12}(x)$ and $g_{21}(x)$ appearing in \eqref{FderivA} are the two off-diagonal entries of the Green's function corresponding to the Lax operator \eqref{L defn} evaluated on the diagonal $x=y$.  Together with a third component $\gamma(x)$, these functions will be recurrent characters in our story and Section~\ref{S:Green} is devoted to a detailed elaboration of their algebraic and analytic properties.  

Combining the functional derivatives with the Poisson structure \eqref{Poisson}, we find an explicit formula for the difference flow evolution:
\begin{equation}\label{404}
i\tfrac{d}{dt}  q =  -q'' - i (|q|^2q)' + 2\kappa \bigl[\sqrt\kappa g_{12}'(\kappa)- \sqrt{-\kappa} g_{12}'(-\kappa)\bigr].
\end{equation}
The goal of this section is to explain how to prove \eqref{diff to id2} for the difference flow given by \eqref{404}. 

Unlike all other terms, the nonlinearity $(|q|^2q)'$ appearing on the right-hand side of \eqref{404} does not make sense pointwise in time for $q\in C_tL^2$ and so we are immediately tasked with finding a remedy.  Despite strenuous efforts, we were unable to find a gauge transformation that would allow us to estimate the resulting difference-flow vector field pointwise in time.   Based on previous successes with the diagonal Green's function \cite{harropgriffiths2020sharp,MR3990604}, it is natural to imagine that this might be a satisfactory  gauge. This idea is refuted by \eqref{dt g12}, as explained there.  Therefore, we are forced to adopt the second-generation method of commuting flows.  

As discussed earlier, the characteristic features of the second-generation method are the use of local smoothing estimates to control the difference flow and the resulting necessity of showing that compact sets of initial data lead to tight ensembles of orbits under the \eqref{DNLS} flow, over bounded time intervals.  The proof  of the tightness statement relies on refined local smoothing estimates for the \eqref{DNLS} flow.

Local smoothing estimates are a direct expression of the dispersive nature of an equation: high frequencies travel rapidly and so spend little time in any fixed spacetime region.  As such, they originate from the linear/dispersive part of the equation.

As discussed in \cite{MR2233925}, there are two standard ways for proving local smoothing estimates for a linear equation: via spacetime Fourier transforms and via monotonicity identities. When the nonlinearity may be treated perturbatively, local smoothing estimates can be transferred directly from the underlying linear flow to the full nonlinear equation.  Correspondingly, it matters little what method one uses for establishing the linear estimates.  In the non-perturbative regime considered in this paper, we have no choice but to pursue an approach based on monotonicity identities for the full nonlinear flow. 

All monotonicity-type identities we know originate from microscopic conservation laws of the form
\begin{align}\label{12:29}
\partial_t \rho+ \nabla_x\cdot \vec \jmath=0.
\end{align}
In one spatial dimension for example, this implies
\begin{align}\label{12:30}
\partial_t \int \tanh(x)\rho(t,x)\, dx  = \int \sech^2(x) j(t,x)\, dx.
\end{align}

In the rare event that one can find such a law with $j\geq 0$, \eqref{12:29} constitutes true monotonicity.  It is more reflective of actual practice however to find a coercive term $j_1$ in the current and then integrate \eqref{12:30} to obtain
\begin{align}\label{12:31}
\int_{-T}^T \int \sech^2(x) j_1(t,x)\, dx \, dt &\leq 2\sup_{|t|\leq T}\Bigl|\int \tanh(x)\rho(t,x)\, dx\Bigr|\notag \\
&\quad+ \int_{-T}^T \int \sech^2(x) [j_1-j](t,x)\, dx \, dt.
\end{align}
The utility of this inequality rests on finding a suitable microscopic conservation law.  First, one must find a density $\rho$ whose integral can be controlled uniformly in time.  Second, one must be able to identify a coercive part $j_1$ of the current that controls the sought-after local smoothing norm. Third, one must be able to control the contribution of $j_1-j$.  

For our analysis we need two one-parameter families of microscopic conservation laws, one for \eqref{DNLS} and one for the difference flow \eqref{404}.  These can be found in Proposition~\ref{P:micro laws}, with the density given in \eqref{rho}.  Local smoothing for \eqref{DNLS} is proved in Proposition~\ref{p:LS} and for the difference flow in Proposition~\ref{p:  diff local smoothing}.

The first task is to estimate the integral of the density $\rho$ in \eqref{rho} uniformly in time.  This is achieved in Lemma~\ref{L:rho}.  The complicated structure of this density makes this a nontrivial task.  Moreover, to prove tightness of orbits under the \eqref{DNLS} flow we need the refined local smoothing estimate \eqref{refined local smoothing}, which requires us to prove that the contribution of $\rho$ converges to zero in the high-frequency regime.
The analysis of $\rho$ in Lemma~\ref{L:rho} relies on the detailed study of the diagonal Green's functions carried out in Section~\ref{S:Green}.  

Our second task is to identify a coercive part in the currents appearing in Proposition~\ref{P:micro laws}.  In our analysis, the quadratic terms $j_\DNLS^{[2]}$ and $j_\diff^{[2]}$ of the currents will play the role of $j_1$ in \eqref{12:31}.  Although these are not sign definite, we are able to demonstrate the requisite coercivity up to acceptable errors.  For the treatment of $j_\DNLS^{[2]}$, see the discussion surrounding \eqref{j2}.  Extracting coercivity from $j_\diff^{[2]}$ requires considerable regrouping and the estimation of many error terms and commutators; see the treatment of \eqref{j2diff}.

The third and most difficult part of obtaining local smoothing estimates is controlling the remainder of the current $j_1-j$.  In defocusing problems, the most dangerous parts of $j_1-j$ typically have a favorable sign.  When the problem treated is subcritical, the second term on RHS\eqref{12:31} can be controlled by interpolating between the LHS\eqref{12:31} and a priori conservation laws.  A small data hypothesis can also provide the smallness needed to bound the contribution of $j_1-j$ by a small fraction of LHS\eqref{12:31}.  The problem studied here has none of these favorable features.  In fact, any of these features would yield local smoothing estimates that depend only on the norm of the initial data; this is ruled out by Proposition~\ref{P:no smoothing}.

In our case, the remainder $j_1-j$ comprises the quartic and higher order terms $j_\DNLS^{[\geq 4]}$ and $j_{\diff}^{[\geq 4]}$.  There is an enormous number of contributions that need to be controlled.  Moreover, these cannot be estimated directly using the $L^2$ norm of $q$ since they involve both derivatives and higher powers of $q$.  Instead, we endeavor to control these contributions using local smoothing and a bootstrap argument.

As we are dealing with a large-data scaling-critical problem, there is no easy source of smallness for closing the bootstrap.  This is one of the key analytical challenges we must overcome in this article.  The subcriticality of the models treated in \cite{harropgriffiths2020sharp} expressed itself through the appearance of negative powers of the large parameter $\kappa$, which provided the requisite smallness.  For \eqref{DNLS}, we are forced  to simultaneously exhibit two copies of the local smoothing norm (to be bootstrapped) and a third factor encoding equicontinuity (the source of smallness) for every single error term.  To achieve this, we must identify and exploit many subtle hidden cancellations in the flow --- see, for example, the carefully curated decompositions of $j_\DNLS^{[\geq 4]}$ and $j_{\diff}^{[\geq 4]}$ appearing in \eqref{jDNLS rewriting} and in the proof of Lemma~\ref{L:rewriting}, respectively.

The analysis of these error terms relies on a large body of work built up in the preceding sections of the paper.  In Section~\ref{S:Pre} we introduce the norms used to quantify both equicontinuity and local smoothing.  We also need to introduce and analyze a Banach algebra $B$ of bounded multiplication operators on our equicontinuity spaces.  This section also contains a suite of basic nonlinear estimates used later in the paper.

Much of Section~\ref{S:Green} is devoted to proving estimates on the diagonal Green's functions.  These arise in several places in our analysis: not only are they an integral part of the microscopic conservation laws, but they also appear in \eqref{404} because they encode the functional derivatives of $A(\kappa;q)$.  The culmination of Section~\ref{S:Green} is the estimation of the diagonal Green's functions and key nonlinear combinations thereof in the equicontinuity and the local smoothing spaces.  We also elucidate the structure of these functions in terms of a new class of paraproducts introduced in this section; see, for example, Lemma~\ref{L3:11} and Proposition~\ref{p:Asymptotics}.  

A second class of paraproducts which incorporates the localizing weights intrinsic to local smoothing estimates is introduced in Section~\ref{S:Local}.  A key feature of our analysis is demonstrating that one may distribute these localizing weights to all entries in these paraproducts. This is important since any one of the input functions in a paraproduct may be the highest frequency term and so will need to be estimated in the local smoothing norm.  The culmination of Section~\ref{S:Local} is the proof of the local smoothing estimates for the \eqref{DNLS} flow stated in Proposition~\ref{p:LS}.

Tightness of orbits for solutions to \eqref{DNLS} is proved in Section~\ref{S:Tight}.  This argument is quite short because of the strength of the estimate \eqref{refined local smoothing} proved in Section~\ref{S:Local}.

Section~\ref{sect: diff local smoothing} contains a proof of local smoothing for the difference flow \eqref{404}.  It is the most demanding part of the paper and relies on all the analysis that precedes it.  Indeed, the needs of this section dictated much of the prior development.

Section~\ref{S:Convergence} combines all that precedes it to prove convergence of the difference flow to the identity, locally in space.  Finally, a short Section~\ref{S:end} deduces all the main theorems from these prerequisites.

\subsection{The soliton menace}\label{SS:solitons}
%
%
In this subsection we present the family of soliton solutions to \eqref{DNLS} and use them to exhibit some of the instabilities of this equation.

For each value of $\theta\in(0,\frac\pi2)$, the function
\begin{align*}
q_0(x;\theta):= \sqrt{ 2\sin(2\theta) }
	\frac{[\cos(\theta)\cosh(x) - i\sin(\theta)\sinh(x) ]^3}{[\cos^2(\theta)\cosh^2(x) + \sin^2(\theta)\sinh^2(x) ]^2}
	e^{ -i x \cot(2\theta) }
\end{align*}
provides initial data for a soliton solution to \eqref{DNLS}.  In understanding the shape of this function, it is useful to note that the central factor can be written as $Z^3/|Z|^4$ with $Z=\cos(\theta)\cosh(x) - i\sin(\theta)\sinh(x)=\cosh(x-i\theta)$.
The soliton with this initial data takes the form
\begin{align}\label{soliton}
q(t,x;\theta) = q_0\bigl( x+2\cot(2\theta) t;\theta\bigr)  e^{i t \cosec^2(2\theta)}.
\end{align}
Further solitons can be obtained by translation, phase rotation, and scaling.

The $\theta\to 0$ limit of this solution exists and is identically zero.  Indeed
\begin{equation}\label{norm of soliton}
 \| q_0 \|_{L^2}^2 = 8 \theta.
\end{equation}
In the form we have presented, the $\theta\to\tfrac\pi2$ limit does not exist.  However by rescaling in accordance with \eqref{scaling}, a limit can be recovered, namely, the algebraic soliton:
\begin{align}\label{alg s}
q(t,x) = q_a(x-t)  e^{it/4}  \qtq{with initial data} q_a(x) = \frac{2(1-ix)}{(1+ix)^2} e^{ix/2}   . 
\end{align}

This solution embodies a key obstruction to coercivity of the polynomial conservation laws.  Indeed, while $M(q_a) = 4\pi$ all other polynomial conserved quantities vanish.  These properties also hold for all rescalings \eqref{scaling} of $q_a$.  However,
\begin{align}\label{coercive fails}
\|q_{a,\lambda}\|_{H^s} \to \infty \qtq{as} \lambda\to \infty
\end{align}
for any $s>0$. 

To see that $q_a$ also witnesses an obstruction to using the perturbation determinant \eqref{a defn} to prove equicontinuity we note that $a(i\kappa;q_{a,\lambda})\equiv 1$ for all $\lambda>0$.  However, $\{q_{a,\lambda}:\, \lambda>0\}$ is \emph{not} an equicontinuous family. 

Let us now turn our attention to the soliton with $\theta=\frac\pi4$, which simplifies to
\begin{align}\label{stat soliton}
q_s(t,x)  = \frac{ 2 e^{it} {[\cosh(x)-i\sinh(x)]}^3}{ {[ \cosh^2(x) + \sinh^2(x) ]}^2} .
\end{align}
The subscript $s$ appearing here emphasizes that this is a \emph{stationary} soliton.  (While it does oscillate in time, it does not propagate through space.)  This property makes it the archenemy of local smoothing and Strichartz estimate.  In particular, our next proposition shows that the inequality \eqref{E:naiveLS} does not hold for sets $Q$ that are merely $L^2$-bounded.  This further emphasizes the non-perturbative nature of the \eqref{DNLS} flow in the $L^2$ topology.

\begin{proposition}\label{P:no smoothing}
Local smoothing and Strichartz norms cannot be controlled solely by the $L^2$ norm of the initial data.  Concretely, there is a sequence of solutions $q_n$ to \eqref{DNLS} satisfying $M(q_n) \equiv 2\pi$ but
\begin{align*}
\int_{-1}^1 \| \sech^{12}(x) q_n(t,x) \|_{H^{1/2}_x}^2 \,dt   \longrightarrow \infty \qtq{and} \int_{-1}^1 \int_{\R}|q_n(t,x)|^6\, dx\,dt  \longrightarrow \infty
\end{align*}
as $n\to \infty$.
\end{proposition}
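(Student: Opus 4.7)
The plan is to exhibit $q_n$ as a sequence of rescalings of the stationary soliton $q_s$ from \eqref{stat soliton}. Because $q_s$ sits still in space and only oscillates a phase in time, its \emph{rescalings} under \eqref{scaling} preserve mass but concentrate at the origin at increasingly small scales, so the cutoff $\sech^{12}(x)$ becomes transparent while the $\dot H^{1/2}$ norm, being supercritical for $L^2$-invariant scaling, blows up.

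\medskip

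\noindent\textbf{Step 1: the sequence.} From \eqref{norm of soliton} with $\theta=\pi/4$ (or a direct computation from \eqref{stat soliton}) one checks $M(q_s)=2\pi$. Fix $\lambda_n\to\infty$ and set
\begin{equation*}
q_n(t,x):= \sqrt{\lambda_n}\, q_s\bigl(\lambda_n^2 t,\,\lambda_n x\bigr) .
\end{equation*}
By \eqref{scaling}, each $q_n$ solves \eqref{DNLS}, and scale invariance of $L^2$ gives $M(q_n)\equiv M(q_s)=2\pi$.

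\medskip

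\noindent\textbf{Step 2: reduction via rescaling.} Fix $t\in[-1,1]$. Using \eqref{stat soliton} one has $q_s(\lambda_n^2 t,y)=e^{i\lambda_n^2 t}q_s(0,y)$, so $|q_s(\lambda_n^2 t,\cdot)|$ is independent of $t$. Writing $h_n(y):=\sech^{12}(y/\lambda_n)\,q_s(0,y)$, one computes
\begin{equation*}
\sech^{12}(x)\,q_n(t,x)=e^{i\lambda_n^2 t}\,\sqrt{\lambda_n}\, h_n(\lambda_n x) .
\end{equation*}
The $L^2$-invariant rescaling $f\mapsto \sqrt{\lambda}f(\lambda\cdot)$ multiplies the $\dot H^{1/2}$ norm by $\sqrt{\lambda}$, so
\begin{equation*}
\bigl\|\sech^{12}(x)\,q_n(t,x)\bigr\|_{\dot H^{1/2}_x}^2 = \lambda_n\,\bigl\|h_n\bigr\|_{\dot H^{1/2}}^2 .
\end{equation*}

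\medskip

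\noindent\textbf{Step 3: lower bound on the unscaled norm.} Since $q_s(0,\cdot)\in\Schw(\R)\setminus\{0\}$ and $\sech^{12}(y/\lambda_n)\to 1$ pointwise (with a uniform bound by $1$), one gets $h_n\to q_s(0,\cdot)$ in $\dot H^{1/2}$; for instance, write $\sech^{12}(y/\lambda_n)-1=O(y^2/\lambda_n^2)$ locally and apply a Schwartz bound, or invoke dominated convergence on the Fourier side after a routine commutator estimate in $H^{1/2}$. In particular,
\begin{equation*}
\liminf_{n\to\infty}\bigl\|h_n\bigr\|_{\dot H^{1/2}}^2 \;\geq\; \tfrac{1}{2}\bigl\|q_s(0,\cdot)\bigr\|_{\dot H^{1/2}}^2 \;=:\; c_0>0 .
\end{equation*}

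\medskip

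\noindent\textbf{Step 4: conclusion.} Combining Steps 2--3, for $n$ large and all $t\in[-1,1]$,
\begin{equation*}
\bigl\|\sech^{12}(x)\,q_n(t,x)\bigr\|_{H^{1/2}_x}^2 \;\geq\; c_0\,\lambda_n ,
\end{equation*}
hence the time integral is at least $2c_0\lambda_n\to\infty$ while $M(q_n)\equiv 2\pi$, as required.

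\medskip

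The only nontrivial point is Step 3, i.e.\ verifying that the slowly varying cutoff $\sech^{12}(y/\lambda_n)$ does not annihilate the $\dot H^{1/2}$ mass of the fixed Schwartz profile $q_s(0,\cdot)$; this is a routine continuity-of-multiplication argument on $H^{1/2}$, and is the only place one has to work. Everything else is purely scaling.
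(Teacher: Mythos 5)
Your proposal is correct and takes exactly the same route as the paper: the paper's proof consists of choosing $q_n$ to be rescalings of the stationary soliton $q_s$ and observing that $M(q_{s,\lambda})\equiv 2\pi$ while the localized $H^{1/2}_x$ spacetime norm is $\approx\lambda$. You have merely supplied the routine details (the scaling of the $\dot H^{1/2}$ norm and the convergence $h_n\to q_s(0,\cdot)$) that the paper leaves implicit.
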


\begin{proof}
We choose the $q_n$ to be rescalings of $q_s$ according to \eqref{scaling} and observe that $M(q_{s,\lambda}) \equiv 2\pi$ but
\begin{equation*}
\int_{-1}^1 \| \sech^{12}(x) q_{s,\lambda}(t,x) \|_{H^{1/2}_x}^2 \,dt \approx \lambda \qtq{and} \int_{-1}^1\int_{\R}|q_{s,\lambda}(t,x)|^6\,dx \,dt  \approx \lambda^2. \qedhere
\end{equation*}
\end{proof}

As the last topic of this section, we demonstrate another instability inherent to \eqref{DNLS}.  Concretely, we will show that it is not possible to prove uniqueness of solutions via Gr\"onwall's inequality in lower regularity spaces. This is a widely successful uniqueness technique and a key ingredient in constructing solutions via compactness/uniqueness arguments; the Gr\"onwall inequality yields Lipschitz dependence in $H^s$ of the data-to-solution map.  However, our next proposition shows failure of Lipschitz dependence, no matter how negative one chooses $s$.

\begin{proposition}\label{P:noGron}
Fix $s\leq 0$. There are times $t_n\to0$ and pairs of solutions $q_n$ and $\tilde q_n$ to \eqref{DNLS} so that 
\begin{align}
\| q_n\|_{L^2} + \| \tilde q_n\|_{L^2} \to 0 \qtq{but}
	\frac{\| q_n(t_n) - \tilde q_n(t_n) \|_{H^s} }{ \| q_n(0) - \tilde q_n(0) \|_{H^s} }  \to \infty.
\end{align} 
\end{proposition}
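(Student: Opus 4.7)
The strategy is to exploit the explicit one-parameter family of solitons \eqref{soliton}: as $\theta\to 0^+$, the soliton mass $8\theta$ vanishes while the group velocity $-2\cot(2\theta)\sim -1/\theta$ and the phase frequency $\cosec^2(2\theta)\sim 1/(4\theta^2)$ both blow up, so a tiny change in $\theta$ produces a rapidly growing relative phase on an arbitrarily short time scale. Concretely, I would take $q_n$ and $\tilde q_n$ to be the solitons \eqref{soliton} with parameters $\theta_n$ and $\theta_n'=\theta_n+\delta_n$, choosing $\theta_n\to 0$ and $\delta_n$ in the regime $\theta_n^3\ll\delta_n\ll\theta_n^2$ (say $\delta_n=\theta_n^{5/2}$); the mass smallness $\|q_n\|_{L^2}+\|\tilde q_n\|_{L^2}\to 0$ is then immediate from \eqref{norm of soliton}.

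The initial difference is estimated by Taylor expansion: $q_n(0)-\tilde q_n(0)=-\delta_n\,\partial_\theta q_0(\cdot;\theta_n)+O(\delta_n^2)$. The dominant term in $\partial_\theta q_0$ comes from differentiating the carrier $e^{-ix\cot(2\theta)}$, yielding $2ix\cosec^2(2\theta_n)\,q_0(\cdot;\theta_n)$, whose $L^2$-norm squared is of order $\theta_n^{-3}$ (since $\cosec^4(2\theta_n)\sim\theta_n^{-4}$ combines with $\int x^2|q_0(x;\theta_n)|^2\,dx\sim\theta_n$). Thus $\|q_n(0)-\tilde q_n(0)\|_{L^2}^2\sim\delta_n^2/\theta_n^3$, which is much smaller than the individual mass $\sim\theta_n$.

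Next I would compute $\langle q_n(t),\tilde q_n(t)\rangle$ directly from \eqref{soliton}. The result factors as an oscillatory phase $e^{i\Phi_n(t)}$ times an overlap integral of the two envelopes translated relative to each other by $c(t):=(\tilde v_n-v_n)t\sim\delta_n t/\theta_n^2$; here $\Phi_n$ has frequency of order $\delta_n/\theta_n^3$, arising as the sum of the DNLS phase difference $\cosec^2(2\theta_n)-\cosec^2(2\theta_n')$ and the carrier-shift contribution $c(t)\cot(2\theta_n')$. Selecting $t_n\sim\pi\theta_n^3/\delta_n$ (which with $\delta_n=\theta_n^{5/2}$ gives $t_n\sim\theta_n^{1/2}\to 0$) pins $\Phi_n(t_n)=\pi$ while leaving the envelope misalignment $|c(t_n)|\sim\theta_n$ negligible on the $O(1)$ envelope scale; destructive interference then yields $\|q_n(t_n)-\tilde q_n(t_n)\|_{L^2}^2\sim 4\|q_n\|_{L^2}^2\sim\theta_n$, so that the $L^2$ ratio is of order $\theta_n^4/\delta_n^2\to\infty$.

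To upgrade to $H^s$ for $s\leq 0$, observe that $q_0(\cdot;\theta_n)$, $\partial_\theta q_0(\cdot;\theta_n)$, and all their soliton-time translates are Fourier-concentrated near $|\xi|\sim 1/\theta_n$ (the soliton evolution acts only by spatial translation and global phase rotation, neither of which moves Fourier mass). Both the numerator and denominator of the ratio therefore acquire the \emph{same} factor $\sim\theta_n^{-2s}$ upon passage from $L^2$ to $H^s$, so the divergence rate is preserved. I expect the main difficulty to be the quantitative control of $\langle q_n(t),\tilde q_n(t)\rangle$: one must verify that $\Phi_n$ genuinely captures the leading oscillation (so $t_n$ can be tuned to give $\Phi_n(t_n)=\pi$), and that all subleading contributions---from derivatives of the envelope, from the $O(\delta_n^2)$ Taylor remainder, and from the slow residual oscillation within the envelope overlap---are absorbed into the leading-order picture above.
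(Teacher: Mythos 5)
Your construction is correct in outline, but it takes a genuinely different route from the paper's. The paper fixes the soliton parameter $\theta_n$ and perturbs the \emph{scaling} in \eqref{scaling}, taking $q_n$ and $\tilde q_n$ to be rescalings of the same soliton by nearby parameters $\lambda_n\neq\tilde\lambda_n$; the growth mechanism is purely kinematic: the two solitons travel at speeds differing by $\approx|\lambda_n-\tilde\lambda_n|\cot(2\theta_n)$, and once their spatial separation diverges (condition \eqref{t_n choosing}) they become asymptotically orthogonal, so the difference at time $t_n$ is comparable to a single soliton. You instead fix the scaling and perturb $\theta$, driving the growth by phase decoherence: the $x$-independent relative phase $t\bigl[\cosec^2(2\theta)-2\cot^2(2\theta)\bigr]=-t\cos(4\theta)\cosec^2(2\theta)$ has $\theta$-derivative $\sim\tfrac12\theta^{-3}$ (no cancellation between the DNLS phase and the carrier-shift term --- this must be checked, and it works out to $2\sin(4\theta)/\sin^4(2\theta)$), so tuning $t_n\sim\theta_n^3/\delta_n$ makes the solitons anti-align while the relative translation $\sim\delta_n t_n/\theta_n^2\sim\theta_n$ stays negligible. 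The separation argument is arguably more robust --- any divergent separation suffices, no phase needs to be hit, and it directly parallels the uniform-discontinuity construction of \cite{MR1837253} --- whereas your argument keeps everything Fourier-concentrated at the single scale $|\xi|\sim\theta_n^{-1}$ with exponentially small tails, which makes the passage from $L^2$ to $H^s$ a clean multiplication by $\theta_n^{-2s}$ on both numerator and denominator and sidesteps the low-frequency case analysis around $s=-\tfrac32$ that the paper's route requires.

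One point in your plan needs more care than you give it. For the numerator you write $\|q_n(t_n)-\tilde q_n(t_n)\|_{H^s}\geq 2\|q_n(t_n)\|_{H^s}-\|q_n(t_n)+\tilde q_n(t_n)\|_{H^s}$, and for $|s|\geq\tfrac12$ the crude bound $\|f\|_{H^s}\leq\|f\|_{L^2}$ applied to the error $q_n(t_n)+\tilde q_n(t_n)$ only gives $o(\theta_n^{1/2})$, which does \emph{not} beat the main term $\sim\theta_n^{|s|+1/2}$. You must apply your Fourier-localization observation to the remainder itself: since $q_n(t_n)+\tilde q_n(t_n)$ is a sum of two functions each Fourier-concentrated near $\xi\approx-\cot(2\theta_n)$ with exponentially small tails, its $H^s$ norm is $\lesssim\theta_n^{|s|}\cdot o(\theta_n^{1/2})+O(e^{-c/\theta_n})$, which is what is needed. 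The same remark applies to the Taylor remainder in the denominator. With that supplied, the argument closes.
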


\begin{proof}
We choose $q_n$ and $\tilde q_n$ to be distinct rescalings of the soliton solution \eqref{soliton} with parameter $\theta_n$.  We first choose $\theta_n\to 0$ to ensure that their $L^2$ norms converge to zero; see \eqref{norm of soliton}.

The key idea to exploit is the fact that $q_n$ and $\tilde q_n$ travel at different speeds.  To ensure that their separation at time $t_n$ diverges, we require that
\begin{align}\label{t_n choosing}
|\lambda_n-\tilde\lambda_n| \cot(2\theta_n) t_n \to \infty \qtq{yet} t_n\to0 \qtq{as} n\to\infty. 
\end{align}

In order to compute the overall size of the norms at the times $0$ and $t_n$, it is convenient to compute the Fourier transform of a soliton exactly.  The key identity is this:
\begin{align}
\int \frac{\cosh(x-i\theta)}{\cosh^2(x+i\theta)}\,e^{-i\xi x}  \,dx
	= \frac{\pi e^{-\theta\xi}}{\cosh(\frac\pi2 \xi)}\bigl[ \cos(2\theta) - \xi\sin(2\theta) \bigr], 
\end{align}
which follows by a simple residue computation.  It follows from this that $\hat q_0(\xi)$ has a simple zero at the origin, yielding three cases: $s<-\frac32$, $s=-\frac32$, and $s>-\frac32$. 

In the regime where \eqref{t_n choosing} and $|\lambda_n-\tilde\lambda_n| \ll \lambda_n$ both hold, elementary (but lengthy) computations show
$$
\tfrac{\lambda_n}{|\lambda_n-\tilde\lambda_n|} \| q_n(0) - \tilde q_n(0) \|_{H^s(\R)}
	\lesssim  \| q_n(0) \|_{H^s(\R)} \approx \| q_n(t_n) - \tilde q_n(t_n) \|_{H^s}.
$$
With this information it is not difficult to choose the necessary parameters.
\end{proof}

\subsection{Equivalent models and their physical origins}\label{SS:gauge} Let us begin by noting that \eqref{DNLS} does not admit a focusing/defocusing dichotomy: the sign of the nonlinearity can be reversed by simply replacing $x\mapsto -x$.  Likewise, the relative coupling of the three terms in \eqref{DNLS} can be freely adjusted by rescaling the space and time variables.

To the best of our knowledge, \eqref{DNLS} first appears in the literature as a model for the propagation of large-wavelength Alfv\'en waves in plasma.  For a further discussion of this scenario, including how this effective model informs our understanding of the stability of such Alfv\'en waves, see \cite{KAWAHARA,Mjolhus,doi:10.1063/1.1693399,MR496220}.

It is easily seen that \eqref{DNLS} does not inherit the Galilean symmetry of the linear Schr\"odinger equation.  Indeed, if $q$ solves \eqref{DNLS}, then 
\begin{align}\label{boost}
v(t,x) = e^{ i k x - i k^2 t} q(t,x-2kt)
\end{align}
solves
\begin{align}\label{Galilei}
	i \partial_t v + v'' + i(|v|^2 v)'  + k |v|^2 v =0.
\end{align}
Here $k\in\R$ is fixed but arbitrary.

This computation indicates that the traditional cubic nonlinear Schr\"odinger equations (both focusing and defocusing) are `embedded' inside \eqref{DNLS} in the limit of large modulation.  While we know of no mathematical work on this embedding, we will describe two physical systems which speak to this phenomenology.

The combined nonlinearities of \eqref{Galilei} arise naturally in nonlinear optics.  While negligible in many experimental scenarios, the derivative nonlinearity becomes physically important in the propagation of short pulses (cf. \cite{PhysRevA.27.1393,PhysRevA.23.1266}).

One early application of the cubic nonlinear Schr\"odinger equation was to modeling amplitude modulations of Alfv\'en waves, with the unknown function describing deviations from a plane wave.  (In our earlier discussion of \eqref{DNLS} as model of Alfv\'en waves, $q$ describes the entire amplitude of the wave, not fluctations.)   One of the key assumptions in deriving this model is that the characteristic length of the modulations far exceeds the carrier wavelength.  As argued in \cite{JPSJ41.265}, the combined nonlinearities of \eqref{Galilei} allow one to extend the realm of applicability of this effective model to include cases where these two length scales are almost comparable.

As part of a search for completely integrable PDE, a different form of derivative nonlinear Schr\"odinger equation was uncovered in \cite{MR544493}, namely,
\begin{equation}\label{CLLeqn}
    i\partial_t q + q'' + i |q|^2 q'=0 .
\end{equation}
It was subsequently discovered (see, e.g. \cite{MR700302}) that this model can be obtained from \eqref{DNLS} via a change of variables.  The change of variables in question takes the following form:
\begin{align}\label{guage change}
w(t,x) = q(t,x) e^{i\nu \Phi} \qtq{with}  \Phi(t,x) = \int_{-\infty}^x |q(t,y)|^2\,dy
\end{align}
and $\nu\in \R$ fixed.  With a little work, we find that $w$ satisfies
\begin{align}\label{guaged DNLS}
i w_t + w'' = 2i(\nu-1)|w|^2 w' + i(2\nu-1)w^2\bar w' - \tfrac12\nu(2\nu-1)|w|^4w .
\end{align}
When $\nu=\frac12$, we recover \eqref{CLLeqn}.  When $\nu=1$, we obtain the Gerdjikov--Ivanov form of derivative NLS; see \cite{GIart}.

The more general nonlinearity presented in \eqref{guaged DNLS}, as well as that arising by further incorporating the Galilei transformation \eqref{boost}, appears naturally in the study of the Benjamin--Feir instability in the theory of water waves; see \cite{doi:10.1146/annurev.fluid.40.111406.102203,MR479013,MR887472}. 

It is easy to see that both changes of variables, \eqref{Galilei} and \eqref{guage change}, are real-analytic diffeomorphisms on $L^2(\R)$; indeed, they are diffeomorphisms on $H^s(\R)$ for every $s\geq 0$.  Thus Theorem~\ref{t:main} guarantees the following:

\begin{corollary}
The evolutions \eqref{Galilei} and \eqref{guaged DNLS} are globally well-posed in $L^2(\R)$.
\end{corollary}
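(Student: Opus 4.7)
The plan is to realize the data-to-solution maps for \eqref{Galilei} and \eqref{guaged DNLS} as conjugations of the \eqref{DNLS} flow $\Phi$ from Theorem~\ref{t:main} by diffeomorphisms of $L^2(\R)$, thereby transferring global well-posedness directly.  I treat the two equations separately, since their changes of variables have rather different characters.

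For \eqref{Galilei}: For each fixed $k\in\R$ and $t\in\R$, define the linear operator $G_k(t):L^2(\R)\to L^2(\R)$ by
$$
[G_k(t)f](x):=e^{ikx-ik^2 t}\,f(x-2kt).
$$
Each $G_k(t)$ is an $L^2$-isometry, and the map $(t,f)\mapsto G_k(t)f$ is jointly continuous because translation and modulation are strongly continuous on $L^2(\R)$.  For $q\in\Schw(\R)$, the computation recalled in \eqref{boost}--\eqref{Galilei} verifies that $v(t):=G_k(t)q(t)$ solves \eqref{Galilei} whenever $q$ solves \eqref{DNLS}.  I therefore define
$$
\Phi_{\mathrm{Gal}}(t,v_0):=G_k(t)\,\Phi\bigl(t,\,G_k(0)^{-1}v_0\bigr),
$$
and joint continuity of $\Phi_{\mathrm{Gal}}:\R\times L^2(\R)\to L^2(\R)$ follows at once from the joint continuity of $\Phi$ and of $G_k(\cdot)^{\pm 1}$, while agreement with the classical data-to-solution map on $\Schw(\R)$ is automatic.

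For \eqref{guaged DNLS}: For $\nu\in\R$, define $\Psi_\nu:L^2(\R)\to L^2(\R)$ by
$$
[\Psi_\nu q](x):=q(x)\exp\!\Bigl(i\nu\!\int_{-\infty}^x\!|q(y)|^2\,dy\Bigr).
$$
Since $|\Psi_\nu q|=|q|$ pointwise, $\Psi_\nu$ is an $L^2$-isometry, and the same identity shows that $\Psi_{-\nu}$ serves as its two-sided inverse.  To establish continuity, suppose $q_n\to q$ in $L^2(\R)$.  Then $|q_n|^2\to|q|^2$ in $L^1(\R)$, so the cumulative integrals $\int_{-\infty}^x|q_n(y)|^2\,dy$ converge to $\int_{-\infty}^x|q(y)|^2\,dy$ uniformly in $x\in\R$, and hence the corresponding unimodular phase factors converge uniformly.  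Decomposing
$$
\Psi_\nu q_n-\Psi_\nu q=(q_n-q)\,e^{i\nu\int_{-\infty}^{\,\cdot}|q_n|^2}+q\Bigl(e^{i\nu\int_{-\infty}^{\,\cdot}|q_n|^2}-e^{i\nu\int_{-\infty}^{\,\cdot}|q|^2}\Bigr),
$$
the first summand tends to $0$ in $L^2$ because the phase factor is unimodular, and the second by dominated convergence with envelope $2|q|$.  For $q\in\Schw(\R)$, the derivation recorded in \eqref{guage change}--\eqref{guaged DNLS} shows that $w(t):=\Psi_\nu q(t)$ solves \eqref{guaged DNLS} whenever $q$ solves \eqref{DNLS}.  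Defining
$$
\Phi_\nu(t,w_0):=\Psi_\nu\circ\Phi(t,\,\cdot\,)\circ\Psi_{-\nu}(w_0)
$$
then produces a jointly continuous extension of the classical flow to all of $L^2(\R)$.

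The main (and only) substantive point is the continuity of the nonlinear gauge map $\Psi_\nu$ on $L^2(\R)$; everything else is bookkeeping.  This is handled by noting that $L^2$-convergence of $q_n$ forces $L^1$-convergence of $|q_n|^2$ and hence \emph{uniform} convergence of the integrated phase $e^{i\nu\int_{-\infty}^x|q|^2\,dy}$ viewed as an $L^\infty$-valued function of $q\in L^2(\R)$.  Once this is in hand, global well-posedness of \eqref{Galilei} and \eqref{guaged DNLS} is a direct consequence of Theorem~\ref{t:main}.
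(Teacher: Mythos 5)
Your proof is correct and takes the same route as the paper, which simply observes that the changes of variables \eqref{boost} and \eqref{guage change} are (real-analytic) diffeomorphisms of $L^2(\R)$ and conjugates the flow of Theorem~\ref{t:main}; you supply the verification that these maps are homeomorphisms of $L^2(\R)$, which is all that is needed to transfer well-posedness.
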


When we look at \eqref{guaged DNLS}, it seems all the more surprising that large data GWP holds, since it fails for the focusing quintic nonlinear Schr\"odinger equation!

\subsection{Acknowledgements} R.K. was supported by NSF grant DMS--1856755 and M.V. by NSF grant DMS--2054194.

\section{Preliminaries}\label{S:Pre} 

Throughout, we will use scaling-homogeneous Littlewood--Paley decompositions with frequency parameters $N\in 2^\Z$.  Concretely, choosing a smooth, non-negative function \(\varphi\) supported on $|\xi|\leq 2$ with $\varphi(\xi)=1$ for $|\xi|\leq 1$, we define $P_{\leq N}$ as the Fourier multiplier with symbol $\varphi(\xi/N)$ and then $P_N = P_{\leq N} - P_{\leq N/2}$.  Observe that 
\[
1 = \sum_{N\in 2^\Z} P_N.
\]
Such decompositions will be ubiquitous and we often adopt the more compact notations $f_N=P_N f$ , $f_{\leq N}=P_{\leq N} f$, and $f_{>N} = [1 - P_{\leq N}]f$.

As a similar expedient, we often write Fourier multipliers under their arguments.  For example, for $\kappa>0$,
$$
\tfrac{q}{\sqrt{4\kappa^2-\partial^2}} := (4\kappa^2-\partial^2)^{-1/2} q \qtq{and} \tfrac{q}{2\kappa\pm\partial} := (2\kappa\pm\partial)^{-1} q .
$$

For \(s\in \R\) and \(|\kappa|\geq 1\) we define the Sobolev space $H^s_\kappa$ as the completion of $\Schw(\R)$ with respect to the norm
\[
\|q\|_{H^s_\kappa}^2 := \int (4\kappa^2+\xi^2)^s|\hat q(\xi)|^2\, d\xi,
\]
and write \(H^s = H^s_1\).

Associated to the localizing function $\psi_\mu$ defined in \eqref{psi}, we have
\begin{equation}\label{psi int}
\int_\R \psi(x-\mu)^{24} \,d\mu = \tfrac{512}7 \qtq{and so} f(x) = \tfrac 7{512}\int_\R f(x) \psi_\mu^{24}(x)\,d\mu.
\end{equation}

\subsection{Equicontinuity spaces}
To quantify the equicontinuity, for \(\sigma,s\in \R\) and \(|\kappa|\geq 1\) we define
\begin{align}\label{D:E}
\|q\|_{E_{s, \kappa}^\sigma}^2 := |\kappa|^{2(s-\sigma)}\| |\partial|^\sigma q\|_{H^{-s}_\kappa}^2= \int \tfrac{|\kappa|^{2(s-\sigma)}|\xi|^{2\sigma}}{(4\kappa^2+\xi^2)^{s}}|\hat q(\xi)|^2\, d\xi,
\end{align}
and take \(E_s^\sigma = E_{s,1}^\sigma\).

We write $B$ for the space of bounded functions that belong to the homogeneous Besov space $\dot B^{\frac12}_{2,\infty}$.  We equip this space with the norm
$$
\|f\|_{B} := \|f\|_{L^\infty} +\sup_{N\in 2^\Z}N^{\frac12}\|f_N\|_{L^2}.
$$
This space is an algebra; see Lemma~\ref{L:alg}.  Moreover, by Lemma~\ref{L:product}, multiplication by functions in $B$ defines a bounded operator on our equicontinuity spaces.

Our next lemma shows how the spaces $E^\sigma_{\sigma,\kappa}$ allow us to track the equicontinuity properties of orbits. 

\begin{lemma}\label{L:basic prop}
Let $Q\subset L^2$ be bounded and equicontinuous. For $\sigma>0$ we have
\begin{align}\label{E equi prop}
\lim_{\kappa\to \infty} \sup_{q\in Q} \, \|q\|_{E_{\sigma, \kappa}^\sigma}=0.
\end{align}
\end{lemma}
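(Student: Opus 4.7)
The plan is to split the defining integral at a frequency threshold $N$ and control the two pieces by contradictory mechanisms: the low-frequency piece will be small because of the large factor of $\kappa$ in the denominator of the weight, while the high-frequency piece will be small because of equicontinuity of $Q$.

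The starting observation is that the weight appearing in \eqref{D:E} with $s=\sigma$ is pointwise bounded by $1$, so
\[
\|q\|_{E^\sigma_{\sigma,\kappa}}^2 = \int \frac{|\xi|^{2\sigma}}{(4\kappa^2+\xi^2)^{\sigma}} |\hat q(\xi)|^2\,d\xi \leq \|q\|_{L^2}^2,
\]
and on the complementary low-frequency region $|\xi|\leq N$ the same weight obeys the stronger bound $|\xi|^{2\sigma}/(4\kappa^2+\xi^2)^\sigma \leq (N/2|\kappa|)^{2\sigma}$.

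The key input is the standard translation of $L^2$-equicontinuity into a Fourier statement: $L^2$-equicontinuity of $Q$ is equivalent (via Plancherel applied to $\|q(\cdot+y)-q\|_{L^2}^2 = \int |e^{iy\xi}-1|^2 |\hat q(\xi)|^2\,d\xi$ and a routine Riesz--Kolmogorov argument) to uniform decay of the high-frequency tail, namely
\[
\lim_{N\to\infty} \sup_{q\in Q}\,\int_{|\xi|>N} |\hat q(\xi)|^2\,d\xi = 0.
\]

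Given $\varepsilon>0$, first use the equicontinuity characterization above to select $N$ so large that $\sup_{q\in Q}\int_{|\xi|>N}|\hat q(\xi)|^2\,d\xi<\varepsilon$. On this high-frequency region we simply discard the weight and get a uniform bound by $\varepsilon$. Next, with $N$ now fixed, choose $|\kappa|$ so large that $(N/2|\kappa|)^{2\sigma}\sup_{q\in Q}\|q\|_{L^2}^2<\varepsilon$; this takes care of the low-frequency region. Summing the two estimates yields $\sup_{q\in Q}\|q\|_{E^\sigma_{\sigma,\kappa}}^2<2\varepsilon$ for all sufficiently large $|\kappa|$, which is the desired conclusion.

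There is no real obstacle here; the only nontrivial ingredient is the equivalence between $L^2$-equicontinuity in the translation sense (as in the definition preceding Definition~\ref{D:tight}) and uniform smallness of the high-frequency Fourier tails, and this is a classical Riesz--Kolmogorov-type fact. The use of $\sigma>0$ is essential only in guaranteeing that $(N/|\kappa|)^{2\sigma}\to 0$ as $|\kappa|\to\infty$ for $N$ fixed.
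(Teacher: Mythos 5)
Your proof is correct and follows essentially the same route as the paper: split at a frequency threshold $N$, bound the low-frequency piece by $(N/\kappa)^{2\sigma}\|q\|_{L^2}^2$ and the high-frequency piece by the tail $\|q_{>N}\|_{L^2}^2$, then use equicontinuity to make the tail small uniformly over $Q$ before sending $\kappa\to\infty$. The only difference is that you spell out the (standard) equivalence between translation-equicontinuity and uniform decay of high-frequency Fourier tails, which the paper uses implicitly.
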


\begin{proof}
The claim follows from the following estimate
\begin{align*}
\|q\|_{E_{\sigma, \kappa}^\sigma}\lesssim\|q_{\leq N}\|_{E_{\sigma, \kappa}^\sigma}+ \|q_{>N}\|_{E_{\sigma, \kappa}^\sigma}\lesssim \bigl(\tfrac{N}{\kappa}\bigr)^\sigma \|q\|_{L^2}+ \|q_{>N}\|_{L^2},
\end{align*}
by choosing the frequency $N\in 2^\Z$ appropriately.
\end{proof}

\begin{lemma}\label{l:E int}
For $s>\sigma> 0$, $\beta\in \R$, and \(\kappa\geq 1\),
\begin{align}\label{E int}
\int_\kappa^\infty \|q\|_{E_{s, \varkappa}^{\sigma+\beta}}^2 \, \varkappa^{2\beta}\,\tfrac{d\varkappa}{\varkappa}\sim \kappa^{2\beta}\|q\|_{E_{\sigma, \kappa}^{\sigma+\beta}}^2.
\end{align}

Further, if \(\kappa\geq 2\) and \(I_\kappa = [1,\frac\kappa 2]\cup[2\kappa,\infty)\) then
\begin{align}\label{E int 2}
\int_{I_\kappa}\|q\|_{E_{s, \varkappa}^{\sigma+\beta}}^2\,  \varkappa^{2\beta}\,\tfrac{d\varkappa}{\varkappa}\sim \|q\|_{E_{\sigma}^{\sigma+\beta}}^2.
\end{align}
\end{lemma}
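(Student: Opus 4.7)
The plan is to unfold the Fourier-side definition of $\|q\|_{E^{\sigma+\beta}_{s,\varkappa}}^2$, swap the $\varkappa$- and $\xi$-integrals by Fubini, and reduce each claim to a pointwise (in $\xi$) estimate on a scalar $\varkappa$-integral. Since
\[
\varkappa^{2\beta}\|q\|_{E^{\sigma+\beta}_{s,\varkappa}}^2 = \int \tfrac{\varkappa^{2(s-\sigma)}|\xi|^{2(\sigma+\beta)}}{(4\varkappa^2+\xi^2)^s}\,|\hat q(\xi)|^2\,d\xi,
\]
both identities reduce to showing that the kernel
\[
F_E(\xi) := \int_E \tfrac{\varkappa^{2(s-\sigma)}}{(4\varkappa^2+\xi^2)^s}\,\tfrac{d\varkappa}{\varkappa}
\]
is comparable to $(4\kappa^2+\xi^2)^{-\sigma}$ when $E=[\kappa,\infty)$ (for \eqref{E int}) and to $(4+\xi^2)^{-\sigma}$ when $E=I_\kappa$ (for \eqref{E int 2}). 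Note the $\varkappa^{2\beta}$ weight on the left matches the hidden $\kappa^{-2\beta}$ inside $\|q\|_{E^{\sigma+\beta}_{\sigma,\kappa}}^2$ on the right, so the exponent $\beta$ disappears from the problem.

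For \eqref{E int}, I split into the regimes $|\xi|\lesssim\kappa$ and $|\xi|\gtrsim\kappa$. In the first, $4\varkappa^2+\xi^2\sim\varkappa^2$ throughout the domain of integration, so $F_{[\kappa,\infty)}(\xi)\sim\int_\kappa^\infty \varkappa^{-2\sigma-1}\,d\varkappa\sim\kappa^{-2\sigma}\sim(4\kappa^2+\xi^2)^{-\sigma}$, where $\sigma>0$ delivers convergence at infinity. In the second, the rescaling $\varkappa=|\xi|t$ yields
\[
F_{[\kappa,\infty)}(\xi) = |\xi|^{-2\sigma}\int_{\kappa/|\xi|}^\infty \tfrac{t^{2(s-\sigma)-1}}{(4t^2+1)^s}\,dt,
\]
whose $t$-integral is bounded above by a universal constant (using $s>\sigma$ for convergence at $0$, since $\kappa/|\xi|\lesssim 1$) and below by its contribution on $[1,2]$, giving $F_{[\kappa,\infty)}(\xi)\sim|\xi|^{-2\sigma}\sim(4\kappa^2+\xi^2)^{-\sigma}$.

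For \eqref{E int 2}, the upper bound is immediate by monotonicity from \eqref{E int} specialized to $\kappa=1$, since $I_\kappa\subset[1,\infty)$. For the lower bound I check $F_{I_\kappa}(\xi)\gtrsim(4+\xi^2)^{-\sigma}$ pointwise. When $|\xi|\leq\kappa/4$ or $|\xi|\geq 4\kappa$, the peak of the $\varkappa$-integrand lies outside the excised window $[\kappa/2,2\kappa]$, so the arguments of the previous paragraph applied to $E=[1,\kappa/2]$ or $E=[2\kappa,\infty)$ already deliver the full bound $(4+\xi^2)^{-\sigma}$. In the transitional regime $|\xi|\sim\kappa$, the tail alone satisfies $\int_{2\kappa}^\infty \varkappa^{-2\sigma-1}\,d\varkappa\sim\kappa^{-2\sigma}\sim(4+\xi^2)^{-\sigma}$, which suffices.

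This is a Fubini-plus-calculus statement and no substantive obstacle arises. The only points meriting attention are honoring the two convergence hypotheses ($s>\sigma$ for the rescaled $t$-integral at the origin, $\sigma>0$ for the $\varkappa$-tail at infinity) and performing the mild case analysis for \eqref{E int 2} to rule out loss when the peak of the kernel sits inside the excised logarithmic window around $\varkappa\sim\kappa$.
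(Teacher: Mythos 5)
Your proof is correct and is essentially the paper's argument: the paper decomposes into Littlewood--Paley blocks and evaluates the same one-dimensional $\varkappa$-integral $\int \varkappa^{2(s-\sigma)}(\varkappa+N)^{-2s}\,\frac{d\varkappa}{\varkappa}$ with the same case split ($N\leq\kappa$ versus $N>\kappa$, further splitting at $\varkappa\sim N$), which is just the dyadic form of your pointwise kernel estimate $F_E(\xi)\sim(4\kappa^2+\xi^2)^{-\sigma}$ obtained via Fubini. The substitution of a continuous Fourier variable for dyadic blocks is cosmetic, and your case analysis for \eqref{E int 2} fills in exactly what the paper dismisses as "proved similarly."
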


\begin{proof}
Decomposing into Littlewood--Paley pieces, 
\begin{align*}
\int_\kappa^\infty \|q\|_{E_{s, \varkappa}^{\sigma+\beta}}^2 \, \varkappa^{2\beta}\,\tfrac{d\varkappa}{\varkappa}
&\sim \sum_N\int_\kappa^\infty \tfrac{N^{2(\sigma+\beta)}\varkappa^{2(s-\sigma)}}{(\varkappa+N)^{2s}}\,\tfrac{d\varkappa}{\varkappa}\|q_N\|_{L^2}^2\\
&\sim  \sum_N \tfrac{N^{2(\sigma+\beta)}}{(\kappa+N)^{2\sigma}}\|q_N\|_{L^2}^2 \sim \kappa^{2\beta}\|q\|_{E_{\sigma, \kappa}^{\sigma+\beta}}^2.
\end{align*}
In order to integrate in $\varkappa$ one considers separately the cases $N\leq \kappa$ and $N>\kappa$, breaking the integral into the regions $[\kappa, N]$ and $[N, \infty)$ in the latter case. The estimate \eqref{E int 2} is proved similarly.
\end{proof}

\begin{lemma}\label{L:alg}
For \(0\leq \sigma<\frac12\) and \(\vk\geq 1\) we have the estimates
\begin{align}
\|fg\|_{B}&\lesssim \|f\|_B \|g\|_B,\label{E:alg}\\
\|f\|_{B}&\lesssim \vk^{-\frac12}\bigl[\vk\|f\|_{E_{2\sigma, \vk}^\sigma} + \|f'\|_{E_{2\sigma, \vk}^{\sigma}}\bigr],\label{Sob}\\
\|f\|_{B}&\lesssim \|f\|_{L^\infty} + \|f'\|_{L^1}.\label{tanh}
\end{align}
\end{lemma}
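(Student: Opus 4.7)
I will treat the three estimates in turn, as each relies on different tools.

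For \eqref{E:alg}, I would use a standard Littlewood--Paley paraproduct decomposition, splitting $fg$ into high-low, low-high, and high-high interactions. In the high-low piece $P_M(f_N g_{\leq N/4})$ one has $M\sim N$ by Fourier support, so placing the high factor in $L^2$ (using the Besov part of the $B$ norm) and the low factor in $L^\infty$ gives $\|P_M(f_N g_{\leq N/4})\|_{L^2}\lesssim M^{-1/2}\|f\|_B\|g\|_B$; the low-high piece is symmetric. The high-high piece $P_M(f_N g_N)$ with $N\gtrsim M$ is handled via the Bernstein inequality $\|P_M u\|_{L^2}\lesssim M^{1/2}\|u\|_{L^1}$ together with $\|f_N g_N\|_{L^1}\leq \|f_N\|_{L^2}\|g_N\|_{L^2}\lesssim N^{-1}\|f\|_B\|g\|_B$, whose sum over $N\gtrsim M$ converges and gives $M^{-1/2}\|f\|_B\|g\|_B$. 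The $L^\infty$ part of the $B$ norm of $fg$ is immediate from $\|fg\|_{L^\infty}\leq\|f\|_{L^\infty}\|g\|_{L^\infty}$.

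For \eqref{Sob}, I would first collect the right-hand side into a single weighted $L^2$ norm of $\hat f$. Since $\vk^2+\xi^2\sim 4\vk^2+\xi^2$, the square of the right-hand side is comparable to $\int w(\xi)|\hat f(\xi)|^2\,d\xi$ with $w(\xi)=\vk^{2\sigma-1}|\xi|^{2\sigma}(4\vk^2+\xi^2)^{1-2\sigma}$. A short case analysis at $|\xi|\sim N$, splitting $N\leq \vk$ versus $N>\vk$, yields the pointwise bound $w(\xi)\gtrsim |\xi|$, from which $N\|f_N\|_{L^2}^2\lesssim \int w|\hat f|^2\,d\xi$ is immediate; this controls the Besov half of $\|f\|_B$. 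For the $L^\infty$ half I would use $\|f\|_{L^\infty}\lesssim \|\hat f\|_{L^1}$ and Cauchy--Schwarz, $\|\hat f\|_{L^1}^2\leq \bigl(\int w^{-1}\,d\xi\bigr)\bigl(\int w|\hat f|^2\,d\xi\bigr)$; the substitution $\xi=\vk u$ then shows that $\int w^{-1}\,d\xi$ is $\vk$-independent and finite precisely when $\sigma<\frac12$, since integrability at $\xi=0$ demands $2\sigma<1$ and decay at infinity demands $2-2\sigma>1$.

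For \eqref{tanh}, the $L^\infty$ piece is trivial, so the only task is to prove $N^{1/2}\|f_N\|_{L^2}\lesssim \|f'\|_{L^1}$ uniformly in dyadic $N$. Since the Littlewood--Paley symbol of $P_N$ vanishes near $\xi=0$, I can factor $P_N=Q_N\partial_x$ where $Q_N$ has multiplier $(\varphi(\xi/N)-\varphi(2\xi/N))/(i\xi)$; scaling gives $\|Q_N\|_{L^2}\sim N^{-1/2}$, and Young's inequality then yields $\|f_N\|_{L^2}\leq \|Q_N\|_{L^2}\|f'\|_{L^1}\lesssim N^{-1/2}\|f'\|_{L^1}$.

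The genuinely delicate step, and the main obstacle, is the $L^\infty$ half of \eqref{Sob}: it is the only point at which the strict inequality $\sigma<\frac12$ is actually used and the only estimate that cannot be reduced to pointwise symbol comparison. Once one recognizes that the $L^\infty$ norm is captured by $\|\hat f\|_{L^1}$, the whole problem collapses to a single weighted Cauchy--Schwarz whose integrability condition is exactly $\sigma<\frac12$.
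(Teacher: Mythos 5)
Your proof is correct and follows essentially the same route as the paper's: a paraproduct decomposition with H\"older and Bernstein for \eqref{E:alg}, and the factorization of $P_N$ through a derivative (kernel $NK'(Nx)$, equivalently your $P_N=Q_N\partial_x$) for \eqref{tanh}. The only variation is in \eqref{Sob}, where you run a weighted Cauchy--Schwarz on the Fourier side while the paper sums Bernstein's inequality over dyadic blocks below and above $\vk$; the two computations are equivalent, and in both the condition $\sigma<\tfrac12$ enters only through the convergence needed for the $L^\infty$ bound.
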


\begin{proof}
To verify \eqref{E:alg}, we employ the basic decomposition
\begin{align}\label{Bony}
P_N(fg)= P_N\Bigl[f_{\sim N} g_{\lesssim N} + f_{\lesssim N} g_{\sim N} + \sum_{M>2N} f_M g_M\Bigr].
\end{align}
From Bernstein's and H\"older's  inequalities, we see that
\begin{align*}
\| (fg)_N \|_{L^2} &\lesssim  \| f_{\sim N}\|_{L^2} \| g_{\lesssim N}\|_{L^\infty}  + \|f_{\lesssim N}\|_{L^\infty} \|g_{\sim N}\|_{L^2} + \smash{\sum_{M>2N}} \! N^\frac12 \| f_M \|_{L^2} \| g_M \|_{L^2} \\
&\lesssim   N^{-\frac12} \| f \|_{B} \|g\|_B, 
\end{align*}
from which \eqref{E:alg} then follows easily.

The estimate \eqref{Sob} follows from Bernstein's inequality:
\begin{align*}
\|f\|_{B}\lesssim \vk^{\sigma}\sum_{N\leq |\vk|}N^{\frac12-\sigma}\|f_N\|_{E_{2\sigma,\vk}^\sigma} + \vk^{-\sigma}\sum_{N>|\vk|}N^{\sigma-\frac12}\|f_N'\|_{E_{2\sigma,\vk}^{\sigma}}\lesssim \RHS{Sob}.
\end{align*}

Turning now to \eqref{tanh}, we recall that the convolution kernel associated to $P_N$ is a Schwartz function which integrates to zero.  Indeed, it can be written in the form $N K'(Nx)$ for some Schwartz function $K$.  Integrating by parts, we find
\begin{equation*}
N^{\frac12}\| f_N \|_{L^2} \leq  N^{\frac12}\biggl\| \int  K\bigl(N(x-y)\bigr) f'(y)\,dy \biggr\|_{L^2_x} \leq  \| K \|_{L^2} \|f'\|_{L^1} \lesssim \|f'\|_{L^1} . \qedhere 
\end{equation*}
\end{proof}

\begin{lemma}\label{L:product} Let \(0\leq \sigma<\frac12\), $0\leq s<\sigma + \frac12$, and $|\vk|,\kappa\geq 1$. Then we have the estimate
\begin{equation}\label{E transfer}
|\vk|\|f\|_{E_{s, \vk}^\sigma} + \|f'\|_{E_{s, \vk}^{\sigma}}\sim \|(2\vk - \p)f\|_{E_{s, \vk}^\sigma}
\end{equation}
and the product estimates
\begin{align}
\|fg\|_{E_{s, \kappa}^\sigma} &\lesssim \|f\|_{E_{s, \kappa}^\sigma}\|g\|_{B},\label{E product I}\\
\|fg\|_{E_{s, \kappa}^\sigma} &\lesssim |\vk|^{-\frac12}\|f\|_{E_{s, \kappa}^\sigma}\Bigl[|\vk|\|g\|_{E_{2\sigma, \vk}^\sigma} + \|g'\|_{E_{2\sigma, \vk}^{\sigma}}\Bigr].\label{E product II}
\end{align}
In particular, if \(\phi'\in \Schw\) we have the localization estimate
\begin{align}
\|\phi f\|_{E_{s, \kappa}^\sigma} &\lesssim_\phi \|f\|_{E_{s, \kappa}^\sigma}.\label{E loc}
\end{align}
\end{lemma}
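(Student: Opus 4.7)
I would prove the four estimates in the order \eqref{E transfer}, \eqref{E product I}, \eqref{E product II}, \eqref{E loc}, with \eqref{E product I} being the substantive step.

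For \eqref{E transfer}, a direct Plancherel computation suffices: the symbol $|2\vk-i\xi|^2 = 4\vk^2+\xi^2$ is pointwise comparable to $\vk^2+\xi^2$ (uniformly on $\R$), so inserting this into the definition \eqref{D:E} identifies $\|(2\vk-\partial)f\|_{E^\sigma_{s,\vk}}^2$ and $\vk^2\|f\|_{E^\sigma_{s,\vk}}^2+\|f'\|_{E^\sigma_{s,\vk}}^2$ as weighted $L^2$ norms of $\hat f$ with comparable weights.

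The heart of the matter is the product estimate \eqref{E product I}. The natural approach is the paraproduct decomposition \eqref{Bony}, together with Schur's test on the resulting dyadic bilinear forms. Writing $w_N := \kappa^{s-\sigma}N^\sigma(\kappa+N)^{-s}$, so that $\|f\|_{E_{s,\kappa}^\sigma}^2 \sim \sum_N w_N^2\|f_N\|_{L^2}^2$, the high-low piece is straightforward: H\"older gives $\|P_N[f_{\sim N}g_{\lesssim N}]\|_{L^2} \lesssim \|f_{\sim N}\|_{L^2}\|g\|_B$, and weighted $\ell^2_N$-summation returns $\|f\|_{E_{s,\kappa}^\sigma}\|g\|_B$. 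For the low-high piece I would combine Bernstein on $f_M$ with the $\dot B^{1/2}_{2,\infty}$ content of $B$ on $g_{\sim N}$; for the high-high piece I would combine a Bernstein $L^1\to L^2$ loss on $P_N$ with the same Besov control on $g_M$. In both cases, writing $\|f_M\|_{L^2} = w_M^{-1}(w_M\|f_M\|_{L^2})$ casts the problem as $\ell^2_N$-boundedness of a bilinear form with kernel
\[
K_{\mathrm{lh}}(N,M) \sim \frac{N^{\sigma-\frac12}(\kappa+M)^s M^{\frac12-\sigma}}{(\kappa+N)^s}\ \text{on}\ M\ll N,\qquad K_{\mathrm{hh}}(N,M) \sim \frac{N^{\sigma+\frac12}(\kappa+M)^s}{(\kappa+N)^s M^{\sigma+\frac12}}\ \text{on}\ M>2N.
\]
Schur's test reduces each row and column sum to a geometric dyadic series. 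A case analysis on the three regimes $M,N \lesssim \kappa$, $M \ll \kappa \ll N$, and $\kappa \lesssim M,N$ (where the weight $w_N$ changes asymptotic behaviour) shows these sums are summable precisely under the hypotheses $0 \leq \sigma < \tfrac12$ (binding for $K_{\mathrm{lh}}$, via the exponent $\tfrac12-\sigma$) and $s < \sigma+\tfrac12$ (binding for $K_{\mathrm{hh}}$, via the exponent $s-\sigma-\tfrac12$).

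The remaining two estimates follow immediately. For \eqref{E product II}, apply \eqref{Sob} to $g$ and substitute into \eqref{E product I}. For \eqref{E loc}, observe that $\phi'\in\Schw\subset L^1$ and that $\phi$ is bounded as a primitive of $\phi'$; hence \eqref{tanh} yields $\|\phi\|_B\lesssim_\phi 1$, and \eqref{E product I} completes the argument. The main obstacle is the Schur verification for \eqref{E product I}: the three-regime split is unavoidable because $w_N$ transitions at $N\sim\kappa$, and the two numeric restrictions $\sigma<\tfrac12$ and $s<\sigma+\tfrac12$ are saturated in different paraproduct blocks, so neither hypothesis can be relaxed.
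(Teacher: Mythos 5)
Your proposal is correct and follows essentially the same route as the paper: Plancherel for \eqref{E transfer}, the Bony decomposition \eqref{Bony} with a direct H\"older bound on the high--low block and Schur's test (with exactly the kernels you identify, up to whether one indexes by the output frequency or by the two $f$-frequencies after expanding the square) on the low--high and high--high blocks for \eqref{E product I}, and then \eqref{Sob} and \eqref{tanh} respectively to deduce \eqref{E product II} and \eqref{E loc}. Your remarks on where $\sigma<\tfrac12$ and $s<\sigma+\tfrac12$ enter match the paper's computation.
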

\begin{proof}
The estimate \eqref{E transfer} follows from
\[
\|(2\vk - \p)f\|_{E_{s, \vk}^\sigma}^2 = 4|\vk|^2\|f\|_{E_{s, \vk}^\sigma}^2 + \|f'\|_{E_{s, \vk}^\sigma}^2.
\]

For the product estimate \eqref{E product I}, we have
\begin{align*}
\|fg\|_{E_{s, \kappa}^\sigma} ^2 \sim \sum_N \tfrac{\kappa^{2(s-\sigma)}N^{2\sigma}}{(\kappa+N)^{2s}} \|P_N(fg)\|_{L^2}^2.
\end{align*}
Decomposing as in \eqref{Bony} and using the H\"older and Bernstein inequalities, we estimate
\begin{align*}
\|fg\|_{E_{s, \kappa}^\sigma} ^2 &\lesssim \sum_N \tfrac{\kappa^{2(s-\sigma)}N^{2\sigma}}{(\kappa+N)^{2s}} \|f_N\|_{L^2}^2 \|g_{\ll N}\|_{L^\infty}^2\\
&\quad+\sum_N \tfrac{\kappa^{2(s-\sigma)}N^{2\sigma}}{(\kappa+N)^{2s}} \Bigl[ \sum_{M\ll N}M^{\frac12} \|f_M\|_{L^2}\Bigr]^2\|g_N\|_{L^2}^2\\
&\quad + \sum_N \tfrac{\kappa^{2(s-\sigma)}N^{2\sigma}}{(\kappa+N)^{2s}} \Bigl[ \sum_{M>2N}N^{\frac12} \|f_M\|_{L^2}\|g_{\sim M}\|_{L^2}\Bigr]^2.
\end{align*}

The first summand is easily seen to be acceptable. To estimate the second summand, we first sum in $N$ and then apply Schur's test:
\begin{align*}
\sum_N &\tfrac{\kappa^{2(s-\sigma)}N^{2\sigma}}{(\kappa+N)^{2s}} \Bigl[ \sum_{M\ll N}M^{\frac12} \|f_M\|_{L^2}\Bigr]^2\|g_N\|_{L^2}^2\\
&\lesssim \|g\|_{B}^2 \sum_{M_1\leq M_2\ll N} \tfrac{\kappa^{2(s-\sigma)}N^{2\sigma}M_1^{\frac12}M_2^{\frac12}}{N(\kappa+N)^{2s}} \|f_{M_1}\|_{L^2} \|f_{M_2}\|_{L^2}\\
&\lesssim \|g\|_{B}^2 \sum_{M_1\leq M_2} \tfrac{M_1^{\frac12-\sigma}(\kappa+M_1)^s}{M_2^{\frac12-\sigma}(\kappa+M_2)^s} \|f_{M_1}\|_{E_{s, \kappa}^\sigma}  \|f_{M_2}\|_{E_{s, \kappa}^\sigma} \\
&\lesssim \|f\|_{E_{s, \kappa}^\sigma} ^2\|g\|_B^2.
\end{align*}
Arguing similarly, we estimate the remaining summand by
\begin{align*}
&\sum_N \tfrac{\kappa^{2(s-\sigma)}N^{2\sigma}}{(\kappa+N)^{2s}}\Bigl[ \sum_{M>2N}N^{\frac12} \|f_M\|_{L^2}\|g_{\sim M}\|_{L^2}\Bigr]^2\\
&\qquad\lesssim \|g\|_{B}^2 \sum_{N< M_1\leq M_2} \tfrac{\kappa^{2(s-\sigma)}N^{2\sigma+1}M_1^{-\frac12}M_2^{-\frac12}}{(\kappa+N)^{2s}} \|f_{M_1}\|_{L^2} \|f_{M_2}\|_{L^2}\\
&\qquad\lesssim \|g\|_B^2 \sum_{M_1\leq M_2} \tfrac{M_1^{\sigma+\frac12}(\kappa+M_2)^s}{M_2^{\sigma+\frac12}(\kappa+M_1)^s} \|f_{M_1}\|_{E_{s, \kappa}^\sigma} \|f_{M_2}\|_{E_{s, \kappa}^\sigma} \\
&\qquad\lesssim \|f\|_{E_{s, \kappa}^\sigma} ^2\|g\|_B^2.
\end{align*}

The estimate \eqref{E product II} follows as a corollary of \eqref{E product I} and \eqref{Sob}.
The localization estimate \eqref{E loc} follows from \eqref{E product I} and \eqref{tanh}.
\end{proof}

\subsection{Operator estimates} We begin with the basic Hilbert-Schmidt bound for the operators $\Lambda(q)$ and $\Gamma(q)$ introduced in \eqref{Lambda}.  This estimate appeared already in \cite[Lemma~4.1]{MR3820439}:

\begin{lemma}[\cite{MR3820439}]\label{L:HS}
For $q\in L^2$ and $\kappa>0$ we have 
\begin{align}
    \|\Lambda\|_{\hs}^2=\|\Gamma\|_{\hs}^2&\approx  \int_{\mathbb R} \log(4+\tfrac{\xi^2}{\kappa^2}) \frac{|\hat q(\xi)|^2}{\sqrt{4\kappa^2 +\xi^2}}\,d\xi\lesssim \kappa^{-1}\|q\|_{L^2}^2,\label{HS real}\end{align}
\end{lemma}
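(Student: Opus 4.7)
The plan is to compute the Hilbert--Schmidt norms via the integral kernel representation on the Fourier side, after which the result reduces to estimating a fairly explicit two-dimensional integral. Writing $\mathcal{F}$ for the Fourier transform, on the Fourier side the operator $\Lambda$ has the integral kernel
\[
K_\Lambda(\xi,\eta) \;\sim\; \frac{\hat q(\xi-\eta)}{(\kappa-i\xi)^{1/2}(\kappa+i\eta)^{1/2}},
\]
and similarly for $\Gamma$ with $\hat q$ replaced by $\widehat{\bar q}$ and the roles of the two factors in the denominator swapped. Since $\|T\|_{\hs}^2 = \iint |K_T|^2$, we obtain
\[
\|\Lambda\|_{\hs}^2 \;\approx\; \iint \frac{|\hat q(\xi-\eta)|^2}{\sqrt{\kappa^2+\xi^2}\sqrt{\kappa^2+\eta^2}} \, d\xi\, d\eta,
\]
and the identical expression for $\|\Gamma\|_{\hs}^2$ after the substitution $(\xi,\eta)\mapsto(-\eta,-\xi)$ and using $\widehat{\bar q}(\xi)=\overline{\hat q(-\xi)}$. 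This establishes the first equality in \eqref{HS real}.

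Next, substituting $\zeta = \xi - \eta$ factors the integral over $q$ from the integral in the remaining variable:
\[
\|\Lambda\|_{\hs}^2 \;\approx\; \int |\hat q(\zeta)|^2 \, I(\zeta;\kappa) \, d\zeta, \qquad I(\zeta;\kappa) := \int \frac{d\eta}{\sqrt{\kappa^2+(\eta+\zeta)^2}\sqrt{\kappa^2+\eta^2}}.
\]
The task is therefore to show $I(\zeta;\kappa) \sim \log\bigl(4+\tfrac{\zeta^2}{\kappa^2}\bigr)\bigl/\sqrt{4\kappa^2+\zeta^2}$. Rescaling via $\eta = \kappa u$, $\zeta = \kappa z$ reduces this to a universal one-variable integral $\int (1+u^2)^{-1/2}(1+(u+z)^2)^{-1/2}du$, which one evaluates by splitting the region of integration into the two zones $|u|\lesssim |z|/2$ and $|u+z|\lesssim |z|/2$ where one factor can be pulled out, and the complementary tails (which are $O(1/|z|)$). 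Each zone contributes $\sim \log(2+|z|)/(1+|z|)$, giving the claimed asymptotic for $I$ uniformly in $z$. Substituting back yields the two-sided equivalence stated in \eqref{HS real}.

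The final inequality in \eqref{HS real} follows from the elementary bound
\[
\frac{\log(4+\tfrac{\xi^2}{\kappa^2})}{\sqrt{4\kappa^2+\xi^2}} \;\lesssim\; \frac{1}{\kappa},
\]
which separates into two cases: when $|\xi|\leq \kappa$ both the numerator and the reciprocal of the denominator are $\lesssim 1$ and $\lesssim 1/\kappa$ respectively; when $|\xi|>\kappa$, setting $t = |\xi|/\kappa \geq 1$ reduces the claim to $\log(4+t^2)\lesssim t$, which is clear. The only nontrivial point is the extraction of the logarithm in $I(\zeta;\kappa)$; I expect the rest of the argument to consist of bookkeeping. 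Since the lemma is quoted verbatim from \cite{MR3820439}, one could alternatively cite that reference and omit the details.
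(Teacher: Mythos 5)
Your proof is correct. The paper itself gives no proof of this lemma --- it is quoted verbatim from \cite[Lemma~4.1]{MR3820439} --- and your argument (computing $\|\cdot\|_{\hs}^2$ as the $L^2$ norm of the Fourier-side kernel, factoring out $|\hat q(\zeta)|^2$, and estimating the universal integral $J(z)=\int (1+u^2)^{-1/2}(1+(u+z)^2)^{-1/2}\,du \approx \log(4+z^2)/\sqrt{4+z^2}$ by the standard zone decomposition) is precisely the computation carried out in that reference, so there is nothing to add.
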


Using this lemma as our basic tool, we obtain the following basic estimates when $q\in L^2$ is frequency localized.

\begin{lemma}[Operator estimates]\label{L:op est}
For $|\kappa|\geq 1$, \(0\leq \sigma<\frac12\), and \(0\leq s<\sigma+\frac12\) we have
\begin{align}
&\|\Lambda(q_N)\|_{\hs}= \|\Gamma(q_N)\|_{\hs} \approx \sqrt{\tfrac{1}{|\kappa| +N} \log\left(4+\tfrac{N^2}{\kappa^2}\right)} \|q_N\|_{L^2}, \label{hilbert_schmidt}\\
&\|\Lambda(q_N)\|_{\op}= \|\Gamma(q_N)\|_{\op} \lesssim \tfrac{\sqrt{N}}{|\kappa|+N}\sqrt{\log\left(4+\tfrac{N^2}{\kappa^2}\right)}  \|q_N\|_{L^2}\label{operator}\\
&\sum_{M\leq N}\|\Lambda(f_M) \|_{\op} \lesssim \tfrac{N}{|\kappa| +N}\log^{\frac32}\bigl(4+\tfrac{N^2}{\kappa^2}\bigr)\sup_{M\in 2^\Z} M^{-\frac12}\|f_M\|_{L^2},\label{op by L1}\\
&\sum_{M\leq N}\|\Lambda(f_M) \|_{\op} \lesssim \tfrac{|\kappa|^{-\frac12}N^{\frac12-\sigma}}{(|\kappa|+N)^{\frac12-\sigma}} \|f\|_{E_{s, \kappa}^\sigma},
\label{op by E}\\
&\sum_{M\leq N}\|\Lambda(f_M) \|_{\op} \lesssim \tfrac{N^{\frac12-\sigma}(1 + N)^s}{|\kappa|^{\frac12+\sigma-s}(|\kappa|+N)^{\frac12+s-\sigma}} \|f\|_{E_{s}^\sigma}\label{op by E 2}.
\end{align}
\end{lemma}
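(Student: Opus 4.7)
My plan is to establish \eqref{hilbert_schmidt} and \eqref{operator} first, from which the three summed bounds \eqref{op by L1}, \eqref{op by E}, \eqref{op by E 2} follow by dyadic summation. I would begin by reducing the $\Gamma$-bounds to the $\Lambda$-bounds: the reflection $Uf(x) := f(-x)$ is unitary on $L^2$ and satisfies $U^{-1}(\kappa\mp\partial)^{-1/2}U = (\kappa\pm\partial)^{-1/2}$, whence $U^{-1}\Gamma(q_N)U = \Lambda(\overline{q_N(-\cdot)})$ with $\|\overline{q_N(-\cdot)}\|_{L^2}=\|q_N\|_{L^2}$, so every Hilbert--Schmidt and operator-norm bound transfers from $\Lambda$ to $\Gamma$.

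For \eqref{hilbert_schmidt}, I would substitute $q_N$ into the integral expression supplied by Lemma~\ref{L:HS}. Because $\hat q_N$ is supported in $|\xi|\sim N$, both $\log(4+\xi^2/\kappa^2)$ and $(4\kappa^2+\xi^2)^{1/2}$ are essentially constant on this set and may be replaced by $\log(4+N^2/\kappa^2)$ and $|\kappa|+N$, respectively, yielding the claim.

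For \eqref{operator} I would split at $N\sim|\kappa|$. When $N\geq|\kappa|$ we have $|\kappa|+N\sim N$, so $\tfrac{1}{\sqrt{|\kappa|+N}}\sim \tfrac{\sqrt N}{|\kappa|+N}$, and the trivial inequality $\|\cdot\|_\op\le\|\cdot\|_\hs$ combined with \eqref{hilbert_schmidt} gives the bound. When $N\leq|\kappa|$ the logarithmic factor is $O(1)$ and it suffices to prove $\|\Lambda(q_N)\|_\op\lesssim \sqrt N\,|\kappa|^{-1}\|q_N\|_{L^2}$; this I would obtain by factoring $\Lambda(q_N) = (\kappa-\partial)^{-1/2}\circ q_N\circ(\kappa+\partial)^{-1/2}$ and composing the multiplier bounds $\|(\kappa\pm\partial)^{-1/2}\|_{L^2\to L^2} = |\kappa|^{-1/2}$ (the multiplier $(\kappa\pm i\xi)^{-1/2}$ has modulus $(\kappa^2+\xi^2)^{-1/4}$, maximized at $\xi=0$) with Bernstein's inequality $\|q_N\|_{L^\infty}\lesssim \sqrt N\,\|q_N\|_{L^2}$.

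For the three summed bounds I would apply \eqref{operator} termwise and perform a routine dyadic summation. For \eqref{op by L1}, I would write the summand as $\tfrac{M}{|\kappa|+M}\sqrt{\log(4+M^2/\kappa^2)}\cdot M^{-1/2}\|f_M\|_{L^2}$, pull out the supremum, and split $\sum_{M\leq N}\tfrac{M}{|\kappa|+M}\sqrt{\log(4+M^2/\kappa^2)}$ at $M\sim|\kappa|$: the portion $M\leq|\kappa|$ is a geometric sum dominated by $\min(N,|\kappa|)/|\kappa|$, while the portion $|\kappa|<M\leq N$ (relevant only if $N>|\kappa|$) is $\sum_{k\leq\log_2(N/|\kappa|)}\sqrt{k}\lesssim \log^{3/2}(N/|\kappa|)$. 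For \eqref{op by E} and \eqref{op by E 2}, Cauchy--Schwarz in $M$ pairs $\|f_M\|_{L^2}$ against the natural $E^\sigma_{s,\kappa}$ (resp.\ $E_s^\sigma$) weight; the remaining geometric series in $M$ is then dominated near $M\sim\min(N,|\kappa|)$ for the low-frequency piece and near $M\sim|\kappa|$ for the high-frequency tail, and the constraint $s<\sigma+\tfrac12$ is precisely what makes the high-frequency series $\sum_{M>|\kappa|}M^{2s-1-2\sigma}\log(M/|\kappa|)$ converge. The main delicacy lies in \eqref{operator} for $N\leq|\kappa|$, where the Hilbert--Schmidt bound only gives $|\kappa|^{-1/2}\|q_N\|_{L^2}$ but the claim requires the strictly better $\sqrt N/|\kappa|\,\|q_N\|_{L^2}$; the three-factor Bernstein estimate supplies the missing gain. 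Beyond this, the remainder of the proof is careful but mechanical dyadic bookkeeping.
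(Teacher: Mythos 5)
Your proposal is correct and follows essentially the same route as the paper: \eqref{hilbert_schmidt} by specializing the integral formula of Lemma~\ref{L:HS} to the dyadic support, \eqref{operator} by interpolating between the Hilbert--Schmidt bound for $N\gtrsim|\kappa|$ and the factorization $\|(\kappa-\partial)^{-1/2}\|_{\op}\|q_N\|_{L^\infty}\|(\kappa+\partial)^{-1/2}\|_{\op}\lesssim\sqrt N|\kappa|^{-1}\|q_N\|_{L^2}$ for $N\leq|\kappa|$ (the paper's \eqref{1215}), and the three summed estimates by termwise application of \eqref{operator} and the dyadic bookkeeping you describe, with $s<\sigma+\frac12$ ensuring convergence of the high-frequency tail. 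The only cosmetic difference is your reflection argument for passing from $\Lambda$ to $\Gamma$, which the paper leaves implicit (one can also simply note $\Gamma(q)=\Lambda(q)^*$).
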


\begin{proof}
The estimate \eqref{hilbert_schmidt} follows immediately from Lemma~\ref{L:HS}.  The estimate \eqref{operator} follows from \eqref{hilbert_schmidt} for \(N>\kappa\) and the Bernstein inequality:
\begin{align}\label{1215}
\|\Lambda(q_N)\|_{\op}\lesssim \tfrac1{|\kappa|} \|q_N\|_{L^\infty}\lesssim \tfrac{\sqrt N}{|\kappa|}\|q_N\|_{L^2},
\end{align}
for \(N\leq \kappa\).


Claim \eqref{op by L1} follows from \eqref{operator}:
\begin{align*}
\sum_{M\leq N}\|\Lambda(f_M) \|_{\op}&\lesssim \sum_{M\leq N}\tfrac{M}{|\kappa|+M} \log^{\frac12}\left(4+\tfrac{M^2}{\kappa^2}\right)M^{-\frac12}\|f_M\|_{L^2} \\
&\lesssim \tfrac{N}{|\kappa| +N} \log^{\frac32}\left(4+\tfrac{N^2}{\kappa^2}\right)\sup_{M\in 2^\Z} M^{-\frac12}\|f_M\|_{L^2},
\end{align*}
as does \eqref{op by E}:
\begin{align*}
\sum_{M\leq N}\|\Lambda(f_M) \|_{\op}\lesssim \sum_{M\leq N}\tfrac{M^{\frac12}}{|\kappa|+M}\log^{\frac12}\left(4+\tfrac{M^2}{\kappa^2}\right)\|f_M\|_{L^2} \lesssim \tfrac{|\kappa|^{-\frac12}N^{\frac12-\sigma}}{(|\kappa|+N)^{\frac12-\sigma}} \|f\|_{E_{s, \kappa}^\sigma}
\end{align*}
and \eqref{op by E 2}:
\begin{equation*}
\sum_{M\leq N}\|\Lambda(f_M) \|_{\op}\lesssim \sum_{M\leq N}\tfrac{M^{\frac12-\sigma}(1+M)^s}{|\kappa|+M}\log^{\frac12}\left(4+\tfrac{M^2}{\kappa^2}\right)\|f_M\|_{E^\sigma_s} \lesssim \text{RHS\eqref{op by E 2}}.\qedhere
\end{equation*}
\end{proof}

\begin{corollary} \label{C:large kappa}
For $q\in L^2$ and \(0\leq \sigma<\frac12\),
\begin{align}\label{op norm bound}
\sqrt{\kappa} \bigl\|\Lambda(q)\bigr\|_{\op}\lesssim  \|q\|_{E^\sigma_{\sigma,\kappa}}\quad\text{uniformly for $\kappa\geq 1$}.
\end{align}
In particular, if $Q$ is a bounded and equicontinuous subset of $L^2$,
\begin{align}\label{equi'}
\lim_{\kappa\to \infty} \sup_{q\in Q} \sqrt{\kappa} \|\Lambda(q)\|_{\op}=0.
\end{align}
\end{corollary}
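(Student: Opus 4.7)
The plan is to derive both assertions directly from the operator bound \eqref{op by E} of Lemma~\ref{L:op est} together with Lemma~\ref{L:basic prop}. The key observation is that the right-hand side of \eqref{op by E}, applied with $s=\sigma$, is
\[
\tfrac{|\kappa|^{-\frac12}N^{\frac12-\sigma}}{(|\kappa|+N)^{\frac12-\sigma}}\|f\|_{E^\sigma_{\sigma,\kappa}} \leq |\kappa|^{-\frac12}\|f\|_{E^\sigma_{\sigma,\kappa}},
\]
which is uniform in $N\in 2^\Z$ because the $N$-dependent factor is bounded by $1$.

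First I would take $f=q$ and $s=\sigma$ in \eqref{op by E}; since the bound is uniform in $N$, letting $N\to\infty$ along the dyadic scale yields the absolutely convergent series estimate
\[
\sum_{M\in 2^\Z}\|\Lambda(q_M)\|_{\op}\lesssim \kappa^{-\frac12}\|q\|_{E^\sigma_{\sigma,\kappa}},
\]
valid for any $q\in L^2$ and $\kappa\geq 1$ (note that $\|q\|_{E^\sigma_{\sigma,\kappa}}\leq \|q\|_{L^2}$ so the right-hand side is finite). Since $\Lambda$ is linear in $q$ and $q=\sum_{M} q_M$ converges in $L^2$, the triangle inequality in operator norm immediately yields
\[
\|\Lambda(q)\|_{\op}\leq \sum_{M\in 2^\Z}\|\Lambda(q_M)\|_{\op}\lesssim \kappa^{-\frac12}\|q\|_{E^\sigma_{\sigma,\kappa}},
\]
which is \eqref{op norm bound}.

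Next I would combine this with Lemma~\ref{L:basic prop}: for any bounded and equicontinuous set $Q\subset L^2$, we have $\sup_{q\in Q}\|q\|_{E^\sigma_{\sigma,\kappa}}\to 0$ as $\kappa\to\infty$. Multiplying \eqref{op norm bound} by $\sqrt{\kappa}$ and taking the supremum over $q\in Q$ gives
\[
\sup_{q\in Q}\sqrt{\kappa}\,\|\Lambda(q)\|_{\op}\lesssim \sup_{q\in Q}\|q\|_{E^\sigma_{\sigma,\kappa}}\longrightarrow 0,
\]
which establishes \eqref{equi'}. No step presents a real obstacle here; the entire content is already encoded in Lemmas~\ref{L:op est} and \ref{L:basic prop}, and the corollary is essentially a matter of summing the dyadic estimate and recognizing the equicontinuity norm that appears.
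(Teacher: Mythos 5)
Your proof is correct and follows essentially the same route as the paper: both rest on the dyadic operator bound \eqref{op by E} plus Lemma~\ref{L:basic prop}. The only cosmetic difference is that the paper treats the high frequencies as a single block $q_{>\kappa}$ via the Hilbert--Schmidt bound of Lemma~\ref{L:HS}, whereas you sum the dyadic pieces over all $M$ using the uniform-in-$N$ bound in \eqref{op by E}; both work.
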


\begin{proof}
Using \eqref{op by E} and Lemma~\ref{L:HS}, we may bound
\begin{align*}
\sqrt{\kappa} \bigl\|\Lambda(q)\bigr\|_{\op}& \lesssim   \sqrt{\kappa} \sum_{M\leq \kappa} \|\Lambda(q_M)\|_{\op}+\sqrt{\kappa} \, \bigl\|\Lambda(q_{>\kappa} )\bigr\|_{\op}\\
&\lesssim   \|q\|_{E^\sigma_{\sigma,\kappa}}+\|q_{>\kappa} \|_{L^2} \lesssim \|q\|_{E^\sigma_{\sigma,\kappa}},
\end{align*}
which settles \eqref{op norm bound}.  Lemma \ref{L:basic prop} then yields \eqref{equi'}.
\end{proof}

\begin{lemma}
For \(0\leq \sigma<\frac12\) and \(\kappa\geq 1\) we have
\begin{align}
\|\Lambda(\psi_\mu^3 q;\kappa)\|_{\I_8}^8 &\lesssim \kappa^{-5}\Bigl\|\tfrac{\psi_\mu^3 q}{\sqrt{4\kappa^2 - \p^2}}\Bigr\|_{H^{\frac32}}^2\|q\|_{E_\sigma^\sigma}^6.\label{I8}
\end{align}
\end{lemma}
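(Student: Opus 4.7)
The plan is to combine Schatten interpolation with the Hilbert--Schmidt bound from Lemma~\ref{L:HS} and the operator-norm bound from Corollary~\ref{C:large kappa}. Specifically, applying the elementary Schatten interpolation $\|T\|_{\I_8}^8 \leq \|T\|_{\I_2}^2\,\|T\|_{\op}^6$ (a consequence of $s_n^8 \leq s_n^2 (\sup_m s_m)^6$ for singular values) to $T=\Lambda(\psi_\mu^3 q;\kappa)$ reduces matters to bounding the two factors separately.

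For the operator-norm factor, Corollary~\ref{C:large kappa} together with the localization estimate \eqref{E loc} and the monotonicity $\|f\|_{E^\sigma_{\sigma,\kappa}}\leq \|f\|_{E^\sigma_\sigma}$ for $\kappa\geq 1$ yield
\[
\|\Lambda(\psi_\mu^3 q)\|_{\op} \lesssim \kappa^{-1/2}\|\psi_\mu^3 q\|_{E^\sigma_{\sigma,\kappa}} \lesssim \kappa^{-1/2}\|q\|_{E^\sigma_\sigma},
\]
which supplies the factor $\kappa^{-3}\|q\|_{E^\sigma_\sigma}^6$. The remaining piece $\kappa^{-2}\|\psi_\mu^3 q/\sqrt{4\kappa^2-\partial^2}\|_{H^{3/2}}^2$ must then be matched by $\|\Lambda(\psi_\mu^3 q)\|_{\I_2}^2$. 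Lemma~\ref{L:HS} writes
\[
\|\Lambda(\psi_\mu^3 q)\|_{\I_2}^2 \approx \int_\R \log\!\bigl(4+\tfrac{\xi^2}{\kappa^2}\bigr)\,\frac{|\widehat{\psi_\mu^3 q}(\xi)|^2}{\sqrt{4\kappa^2+\xi^2}}\,d\xi,
\]
so the remaining task is a Fourier-side multiplier comparison against $\kappa^{-2}(4+\xi^2)^{3/2}(4\kappa^2+\xi^2)^{-1}$.

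The delicate step is precisely this multiplier comparison. For $|\xi|\gtrsim \kappa$ the pointwise inequality
\[
\log\!\bigl(4+\tfrac{\xi^2}{\kappa^2}\bigr)(4\kappa^2+\xi^2)^{-1/2} \lesssim \kappa^{-2}(4+\xi^2)^{3/2}(4\kappa^2+\xi^2)^{-1}
\]
is elementary, but at low frequencies the naive pointwise version falls short by a factor of $\kappa^3$ (at $\xi=0$ the left side is of order $\kappa^{-1}$ while the right side is of order $\kappa^{-4}$). I expect the resolution to proceed by Littlewood--Paley decomposing $\psi_\mu^3 q$ and treating the two regimes differently: on $|\xi|\gtrsim\kappa$ one uses the pointwise comparison above, while on $|\xi|\ll \kappa$ one sacrifices part of the operator-norm budget and invokes the refined bound \eqref{op by E} together with the Bernstein inequality $\|(\psi_\mu^3 q)_N\|_{L^\infty}\lesssim N^{1/2}\|(\psi_\mu^3 q)_N\|_{L^2}$ to absorb the deficit into extra powers of $\|q\|_{E^\sigma_\sigma}$ and $\kappa^{-1}$. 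Matching the prefactors cleanly through this split, while critically using the restriction $\sigma<\tfrac12$ (so that the geometric sums in the low-frequency Littlewood--Paley pieces converge, as in the proof of Lemma~\ref{L:product}), is the main technical hurdle I foresee.
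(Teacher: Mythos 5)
Your opening move --- the global Schatten interpolation $\|T\|_{\I_8}^8\le\|T\|_{\I_2}^2\|T\|_{\op}^6$ applied to the full operator $T=\Lambda(\psi_\mu^3 q)$ --- commits you to the intermediate inequality $\|T\|_{\I_2}^2\lesssim\kappa^{-2}\bigl\|\tfrac{\psi_\mu^3 q}{\sqrt{4\kappa^2-\p^2}}\bigr\|_{H^{3/2}}^2$, which, as you yourself observe, fails by a factor of $\kappa^3$ at frequencies $|\xi|\ll\kappa$. Once that factorization is made there is no ``operator-norm budget'' left to sacrifice: each of the six operator-norm factors has already been estimated by $\kappa^{-1/2}\|q\|_{E^\sigma_\sigma}$, and no post hoc decomposition of the remaining $\I_2$ factor can recover the deficit. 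A triangle-inequality repair ($\|T\|_{\I_8}\le\sum_N\|\Lambda(f_N)\|_{\I_8}$ followed by per-block interpolation) also fails, for a different reason: it produces an $\ell^1$ sum of eighth roots, whereas the target requires square-summability of the two $H^{3/2}$-weighted factors over the frequency blocks.

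The missing idea is structural. The paper expands $\|\Lambda(\psi_\mu^3q)\|_{\I_8}^8$ as a trace and decomposes into Littlewood--Paley blocks \emph{inside the trace}, using that the trace of a product of frequency-localized factors vanishes unless the two highest frequencies are comparable, $N_1\sim N_2$. This constraint is the linchpin: it allows one to place exactly the two highest-frequency factors in $\I_2$ via \eqref{hilbert_schmidt} (these become the two copies of $\bigl\|\tfrac{P_{N_j}(\psi_\mu^3q)}{\sqrt{4\kappa^2-\p^2}}\bigr\|_{H^{3/2}}$) and the remaining six factors --- all at frequencies $\le N_2$ --- in operator norm via the \emph{cumulative} bound \eqref{op by E 2}. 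For $N\le\kappa$ that bound gives $N^{\frac12-\sigma}(1+N)^\sigma\kappa^{-1}\|q\|_{E_\sigma^\sigma}$ per factor rather than the global $\kappa^{-1/2}\|q\|_{E_\sigma^\sigma}$, a gain of $N^{3-6\sigma}(1+N)^{6\sigma}\kappa^{-3}\lesssim(1+N)^3\kappa^{-3}$ over the six factors, which is exactly what covers the $\kappa^3$ shortfall you identified. Your proposal gestures at \eqref{op by E} and Bernstein but never invokes the $N_1\sim N_2$ constraint, and without it the frequency-decomposed sum does not close.
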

\begin{proof}
Decomposing into Littlewood--Paley pieces, applying \eqref{op by E 2} and \eqref{E loc} at low frequency, and \eqref{hilbert_schmidt} at high frequency, we have
\begin{align*}
&\LHS{I8} = \Bigl|\tr\Bigl\{\Lambda(\psi_\mu^3 q;\kappa)^8  \Bigr\}\Bigr|\\
&\qquad\lesssim \sum_{N_1\sim N_2\geq \dots \geq N_8}\bigl\|\Lambda\bigl(P_{N_1}(\psi_\mu^3 q)\bigr)\bigr\|_{\I_2}\bigl\|\Lambda\bigl(P_{N_2}(\psi_\mu^3 q)\bigr)\bigr\|_{\I_2}\prod_{j=3}^8\bigl\|\Lambda\bigl(P_{N_j}(\psi_\mu^3 q)\bigr)\bigr\|_{\op}\\
&\qquad\lesssim \sum_{N_1\sim N_2} \tfrac{N_1^{3-6\sigma}(1+N_1)^{6\sigma}}{\kappa^3(1+N_1)^{3}(\kappa + N_1)^2}\log\Bigl(4 + \tfrac{N_1^2}{\kappa^2}\Bigr)\Bigl\|\tfrac{P_{N_1}(\psi_\mu^3 q)}{\sqrt{4\kappa^2 - \p^2}}\Bigr\|_{H^{\frac32}}\Bigl\|\tfrac{P_{N_2}(\psi_\mu^3 q)}{\sqrt{4\kappa^2 - \p^2}}\Bigr\|_{H^{\frac32}}\|q\|_{E_\sigma^\sigma}^6\\
&\qquad\lesssim \RHS{I8}.
\end{align*}
The fact that $N_1\sim N_2$ must hold is most evident by computing the trace (which is unitarily invariant) in Fourier variables.
\end{proof}

\subsection{Local smoothing spaces}\label{S:ls}
To control the local smoothing property, for \(s,h\in \R\) and \(|\kappa|\geq 1\) we define
\begin{align}
\|q\|_{F_\kappa^s(h)}^2 &:=\int \bigl\|\tfrac{\psi_{\mu}^{12}q}{\sqrt{4\kappa^2 - \p^2}}\bigr\|_{H^{s+1}}^2\,e^{-\frac1{200}|h - \mu|}\,d\mu,\label{F def}\\
\|q\|_{X_\kappa^s}^2 &:= \sup_{h\in \R}\int_{-1}^1\|q(t)\|_{F_\kappa^s(h)}^2\,dt.\label{X def}
\end{align}
As before, when \(\kappa = 1\) we denote \(F^s(h) = F_1^s(h)\) and \(X^s = X_1^s\).

We note that as a consequence of Lemma~\ref{L:F product} below, we obtain an alternative characterization of \(X_\kappa^s\) that is closer to that used in \cite{harropgriffiths2020sharp}; see Remark~\ref{R:X alt}.  The additional complexity apparent in \eqref{F def} is necessitated by the scaling criticality of the problem.

Multiplicative commutators are an essential tool for repositioning the spatial localization factors within the paraproducts appearing in our analysis of local smoothing estimates. The next lemma, which extends \cite[Lemma 2.8]{harropgriffiths2020sharp}, is our basic workhorse in this task:

\begin{lemma}\label{l:mult comm}
For \(|\vk|,|\kappa|\geq 1\),  \(s,\sigma\in\R\), \(1<p<\infty\), $r\in\Z$, and integer \(|\ell|\leq 24\), all fixed, we have the following uniformly for $\mu\in\R$ and $q\in\Schw(\R):$
\begin{align}
\|(2 \pm \p)^s(2\kappa + \p)^\sigma \psi_\mu^\ell(\vk - \p)^{r} q \|_{L^p}\label{mult comm}
&\sim \| (2\pm\p)^s(2\kappa + \p)^\sigma (\vk - \p)^{r} \psi_\mu^\ell  q \|_{L^p}.
\end{align}
If both \(s,\sigma\in \Z\) then \eqref{mult comm} also holds for \(p\in \{1,\infty\}\).
\end{lemma}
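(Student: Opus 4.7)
The plan is to reformulate the claimed equivalence as a statement about $L^p$-boundedness of a specific conjugation operator, then verify this by explicit computation with resolvents of $\vk - \p$.

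\textbf{Reduction.} By translation invariance of $L^p$ and of every Fourier multiplier involved, I take $\mu = 0$ and write $\phi := \psi^\ell$. Set $b := (\log\phi)' = -(\ell/198)\tanh(x/99)$, which is smooth with all derivatives bounded and satisfies $\|b\|_\infty \leq |\ell|/198 < 1 \leq \min(|\vk|, |\kappa|)$. With $B := (2\pm\p)^s(2\kappa+\p)^\sigma$ and $C := (\vk-\p)^r$, the substitution $h := BC\phi q$ converts the claimed equivalence into the statement that
\[ BDB^{-1}, \qquad D := \phi C\phi^{-1}C^{-1}, \]
is bounded on $L^p$ with bounded inverse, uniformly in the parameters.

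\textbf{Computing $D$.} A direct computation yields the elementary conjugation identities
\[ \phi^{-1}(\vk-\p)\phi = (\vk-\p) - b, \qquad \phi(\vk-\p)\phi^{-1} = (\vk-\p) + b, \]
so for integer $r$,
\[ D = (\vk-\p+b)^r(\vk-\p)^{-r}, \qquad D^{-1} = (\vk-\p)^r(\vk-\p+b)^{-r}. \]
Expanding $(\vk-\p \pm b)^r$ by Leibniz for $r > 0$, or for $r < 0$ by the iterated Neumann series
\[ (\vk-\p\pm b)^{-1} = (\vk-\p)^{-1}\sum_{n\geq 0}\bigl[\mp b(\vk-\p)^{-1}\bigr]^n \]
(absolutely convergent on $L^p$ since $\|b(\vk-\p)^{-1}\|_{L^p\to L^p}\leq \|b\|_\infty/|\vk| < 1$), one writes $D$ and $D^{-1}$ as absolutely convergent sums of terms of the form $c_k(x)(\vk-\p)^{\alpha_k}$ with $\alpha_k \leq 0$ integers and $c_k$ smooth bounded functions built from $b$ and its derivatives. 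Each $(\vk-\p)^{\alpha_k}$ is $L^p$-bounded for all $1\leq p\leq\infty$ (convolution with the exponential Green's function of $L^1$-norm $\leq |\vk|^{\alpha_k}$), so both $D$ and $D^{-1}$ are $L^p$-bounded uniformly in $|\vk|\geq 1$ and $|\ell|\leq 24$.

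\textbf{Conjugating by $B$.} Since $(\vk-\p)^{\alpha_k}$ commutes with $B$, we have $BDB^{-1} = \sum_k (Bc_k B^{-1})(\vk-\p)^{\alpha_k}$ and similarly for $BD^{-1}B^{-1}$. The factors $(\vk-\p)^{\alpha_k}$ are $L^p$-bounded as above, and it remains to bound each $Bc_k B^{-1}$ on $L^p$ for $c_k$ smooth bounded with bounded derivatives. For $1<p<\infty$, this follows from Mihlin's multiplier theorem combined with standard commutator estimates applied to the symbols $(2\pm i\xi)^s(2\kappa+i\xi)^\sigma$, which satisfy Mihlin's hypotheses uniformly in $\kappa\geq 1$. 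For the endpoint cases $p\in\{1,\infty\}$ with $s,\sigma\in\Z$, $B$ itself is a finite composition of differentiations (for positive exponents) and convolutions with exponential Green's function kernels (for negative exponents), and each $Bc_k B^{-1}$ is bounded by direct application of Leibniz's rule and Young's inequality, both of which extend to $L^1$ and $L^\infty$.

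\textbf{Main obstacle.} The decisive quantitative input throughout is the strict inequality $\|b\|_\infty \leq |\ell|/198 < 1 \leq |\vk|$, without which neither the Neumann series for $(\vk-\p\pm b)^{-1}$ would converge nor the resulting expansion of $D$ would be uniformly $L^p$-bounded in $\vk$. This smallness is built into both the specific constant $99$ appearing in the definition $\psi(x) := \sqrt{\sech(x/99)}$ and the restriction $|\ell|\leq 24$, and is the main non-trivial ingredient of the proof; absent this input, the conjugated kernel could grow exponentially and the argument would collapse.
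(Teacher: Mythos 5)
Your strategy is essentially the paper's: reduce the equivalence to $L^p$-boundedness of a conjugated operator and exploit the slow exponential variation of $\psi^\ell$ (quantified by $\|(\log\psi^\ell)'\|_\infty\le 24/198<1\le\vk$) to commute $\psi^\ell$ past the powers of $\vk-\p$. The paper also identifies this smallness as the crux, but packages it differently: it commutes only the \emph{positive} powers of the differential operators toward their inverses (a finite process), and treats $\psi^\ell(\vk-\p)^{-1}\psi^{-\ell}$ as a single block whose explicit kernel $\psi(x)^\ell\psi(y)^{-\ell}e^{\vk(x-y)}\bbo_{x<y}$ is bounded by $e^{-\vk|x-y|/2}$ and hence $L^p$-bounded by Schur's test; and it handles non-integer $s,\sigma$ by complex interpolation from the integer case rather than by Mikhlin plus commutator estimates. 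Your Neumann series is a legitimate substitute for the Schur-test block, and your reduction to the single operator $BDB^{-1}$ with $D=\phi C\phi^{-1}C^{-1}$ is a clean way to organize the argument.

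Two steps need more care as written. First, for $r<0$ the Neumann series gives terms $(\vk-\p)^{-1}[b(\vk-\p)^{-1}]^n$ with the multiplication operators \emph{interleaved}, not in the normal form $c_k(x)(\vk-\p)^{\alpha_k}$ that your conjugation step requires; pulling every $b$ to the left costs an infinite cascade of commutators $[(\vk-\p)^{-1},b]=(\vk-\p)^{-1}b'(\vk-\p)^{-1}$, and you must verify that the resulting double series still converges absolutely with control on enough derivatives of the coefficients $c_k$ to survive conjugation by $B$ (it does, because $\|b^{(m)}\|_\infty\lesssim 99^{-m}$, but this is exactly the ``combinatorially messy'' bookkeeping and is not automatic from what you wrote). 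The paper's Schur-test block avoids this entirely. Second, for non-integer $s,\sigma$ the bound on $[B,c_k]B^{-1}$ must be \emph{uniform in $\kappa\ge1$}; the symbol $(2\kappa+i\xi)^\sigma$ is not uniformly in a fixed Mikhlin class as $\kappa\to\infty$ without renormalization, so ``standard commutator estimates'' need either a $\kappa$-adapted symbolic calculus or the paper's interpolation shortcut. Neither issue breaks the proof, but both should be addressed explicitly.
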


\begin{proof}The Mikhlin multiplier theorem shows that the choice of $\pm$ signs is immaterial and we shall restrict attention to the $+$ case.

Both inequalities can be treated simultaneously through a slightly larger family of estimates involving two parameters $\theta,\nu\in\Z$.  Specifically, adopting the notation
$$
W= (2 + \p)^s (2\kappa + \p)^\sigma (\vk -\partial)^\nu
$$
it suffices to show that for each pair $\theta,\nu\in\Z$,
$$
\| W \psi^\ell (\vk -\partial)^\theta \psi^{-\ell} (\vk -\partial)^{-\theta} W^{-1} q \|_{L^p} \lesssim \| q \|_{L^p} .
$$
(In fact, just the two cases $(\theta,\nu)=(r,0)$ and $(\theta,\nu)=(-r,r)$ are truly needed.)

When $1<p<\infty$, complex interpolation allows us to restrict attention to the case where \(s,\sigma\in \Z\), which we do in what follows.

The next step is to perform additive commutations, moving each positive power of a differential operator toward its inverse, one factor at a time.  Proceeding in this fashion until all positive powers of said differential operators are exhausted leaves a very concrete (but combinatorially very messy) finite linear combination of products of operators from the following list:
$$
\psi^{-\ell}(\partial^m\psi^\ell), \ \psi^{\ell}(\partial^m\psi^{-\ell}),\ (2 + \p)^{-1},\ (2\kappa + \p)^{-1},\ (\vk - \p)^{-1},\ %
\text{and}\ \psi^\ell (\vk -\partial)^{-1} \psi^{-\ell},
$$ 
where $m$ is any integer satisfying $0\leq m \leq |\sigma|+|s|+|\nu|+|\theta|$.  In this way, we see that the proof will be complete if we can show that any operator on the list is $L^p$ bounded for every $1\leq p\leq \infty$.

Boundedness of the first two operators in the list is trivial given our choice of $\psi$.  Boundedness of the remaining operators can be deduced from their explicit kernels.  Indeed, $\psi^\ell (\vk - \p)^{-1}\psi^{-\ell}$ has kernel
\[
K(x,y) = \psi(x)^\ell\psi(y)^{-\ell} e^{\vk(x-y)}\bbo_{x<y}, \qtq{which satisfies} |K(x,y)|\lesssim_\ell e^{-\frac12\vk|x-y|}.
\]
Thus $L^p$ boundedness follows from Schur's test.\end{proof}

As we are dealing with a nonlinear equation, one needs to understand how to estimate products in our local smoothing spaces.   Due to the low regularity of the objects we are treating in this paper, each term in the product must itself satisfy local smoothing estimates in order for the product to be bounded.  This dictates the structure of our basic product estimates below.

\begin{lemma}\label{L:F product}
If \(0\leq \sigma<\frac12\), \(|\vk|,\kappa\geq 1\), \(h\in \R\), \(1\leq \ell\leq 12\) is an integer, and \(\phi'\in \Schw\) then
\begin{align}
|\vk|\|q\|_{F_\kappa^{\frac12}(h)} +  \|q\|_{F_\kappa^{\frac32}(h)} \sim \|(2\vk - \p)q\|_{F_\kappa^{\frac12}(h)}\label{LS transfer}
\end{align}
and we have the product estimates
\begin{align}
\|fg\|_{F_\kappa^{\frac12}(h)} &\lesssim |\vk|^{-\frac12}\Bigl[ \|f\|_{F_\kappa^{\frac12}(h)}\|(2\vk-\p)g\|_{E_{2\sigma,\vk}^\sigma} + \|f\|_{E_{2\sigma,\vk}^\sigma}\|(2\vk-\p)g\|_{F_\kappa^{\frac12}(h)}\Bigr],\label{LS product I}\\
\|fg\|_{F_\kappa^{\frac32}(h)} &\lesssim |\vk|^{-\frac12}\Bigl[\|f\|_{F_\kappa^{\frac32}(h)}\|(2\vk-\p)g\|_{E_{2\sigma,\vk}^\sigma}+
\|(2\vk-\p)f\|_{E_{2\sigma,\vk}^\sigma}\|g\|_{F_\kappa^{\frac32}(h)}\Bigr].\label{LS product II}
\end{align}
We also have the localization estimates
\begin{align}
\int \bigl\|\tfrac{\psi_{\mu}^\ell q}{\sqrt{4\kappa^2 - \p^2}}\bigr\|_{H^{\frac32}}^2\,e^{-\frac1{200}|h-\mu|}\,d\mu &\lesssim  \|q\|_{F_\kappa^{\frac12}(h)}^2,\label{LS loc}\\
\|\phi q\|_{F_\kappa^{\frac12}(h)}&\lesssim_\phi \|q\|_{F_\kappa^{\frac12}(h)}.\label{LS lock}
\end{align}
\end{lemma}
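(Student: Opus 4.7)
The plan is to reduce each of the five estimates to a combination of Littlewood–Paley analysis and the multiplicative commutator lemma \ref{l:mult comm}, which lets us move spatial weights $\psi_\mu^\ell$ past the Fourier multipliers $(2\vk-\p)$ and $(4\kappa^2-\p^2)^{-1/2}$ at essentially no cost. With this flexibility in hand, each estimate reduces to a calculation that closely parallels one already carried out in the excerpt.

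For the transfer estimate \eqref{LS transfer}, I first invoke \eqref{mult comm} to commute $\psi_\mu^{12}$ past $(2\vk-\p)$ and $(4\kappa^2-\p^2)^{-1/2}$ (realizing the latter as $(2\kappa-\p)^{-1/2}(2\kappa+\p)^{-1/2}$), reducing the pointwise-in-$\mu$ estimate to the Plancherel identity $\|(2\vk-\p)g\|_{H^{3/2}}^2 = 4\vk^2\|g\|_{H^{3/2}}^2 + \|g'\|_{H^{3/2}}^2$ applied to $g := \psi_\mu^{12}q/\sqrt{4\kappa^2-\p^2}$. Integrating against $e^{-|h-\mu|/200}\,d\mu$ then yields \eqref{LS transfer}. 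The product estimates \eqref{LS product I} and \eqref{LS product II} follow the blueprint of the proof of \eqref{E product I}: after using \eqref{mult comm} to collect the spatial weight $\psi_\mu^{12}$ onto the $f$ factor, I apply the Bony paraproduct decomposition \eqref{Bony} to $(\psi_\mu^{12}f)g$ and treat the low–high, high–low, and high–high pieces by Bernstein and H\"older followed by Schur's test. The low-frequency factor in each piece is placed in $L^\infty$ via Bernstein combined with \eqref{Sob}, and this is precisely what produces both the prefactor $|\vk|^{-1/2}$ and the equicontinuity norms $\|(2\vk-\p)g\|_{E_{2\sigma,\vk}^\sigma}$; the two terms on each right-hand side arise according to whether $f$ or $g$ carries the high frequency. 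The $\mu$-integration against $e^{-|h-\mu|/200}$ commutes with the whole argument.

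For the localization estimate \eqref{LS loc}, I again use \eqref{mult comm} to pass $\psi_\mu^\ell$ past $(4\kappa^2-\p^2)^{-1/2}$ and then exploit the partition \eqref{psi int} to write
\[
\psi_\mu^\ell(x) \;=\; \tfrac{7}{512}\int \bigl(\psi_\mu^\ell(x)\psi_{\mu'}^{12}(x)\bigr)\,\psi_{\mu'}^{12}(x)\,d\mu'.
\]
A direct maximization using $\psi(y)^2=\sech(y/99)\sim 2e^{-|y|/99}$ gives $\|\psi_\mu^\ell \psi_{\mu'}^{12}\|_{C^k}\lesssim_k e^{-\ell|\mu-\mu'|/198}$, which for $\ell\geq 1$ decays strictly faster than $e^{-|\mu-\mu'|/200}$; combining Minkowski, Cauchy–Schwarz, and the convolution bound $\int e^{-|\mu-\mu'|/198}e^{-|h-\mu|/200}\,d\mu\lesssim e^{-|h-\mu'|/200}$ bounds the left-hand side by $\|q\|_{F_\kappa^{1/2}(h)}^2$ after one final application of \eqref{mult comm} to put $\psi_{\mu'}^{12}$ back inside the operator. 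For \eqref{LS lock}, the assumption $\phi'\in \Schw$ forces $\phi$ to be bounded with all derivatives bounded, so multiplication by $\phi$ is bounded on $H^{3/2}$, and the commutator of $\phi$ with $(4\kappa^2-\p^2)^{-1/2}$ has kernel majorized by $\|\phi'\|_{L^\infty}|x-y|e^{-2\kappa|x-y|}$, hence is $O(\kappa^{-2})$ in operator norm.

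The main obstacle throughout is bookkeeping: ensuring that Lemma \ref{l:mult comm} genuinely covers every rearrangement of the operators $(2\pm\p)^s$, $(2\kappa\pm\p)^\sigma$, $(2\vk-\p)$, and $\psi_\mu^\ell$ that appears once we expand the $H^{3/2}$ and $H^{5/2}$ norms, and tracking the spatial weight consistently through the paraproduct splits in \eqref{LS product I}–\eqref{LS product II}. Once this setup is in place, each estimate reduces either to a Fourier-side Plancherel identity, to the same Littlewood–Paley–Schur computation that yielded \eqref{E product I}, or to the scalar convolution bound used for \eqref{LS loc}.
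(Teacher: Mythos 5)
Your treatment of \eqref{LS transfer} and \eqref{LS loc} matches the paper's: the transfer estimate is exactly the commute-then-Plancherel argument, and your use of \eqref{psi int} plus an exponentially decaying operator bound for multiplication by $\psi_\mu^\ell\psi_{\mu'}^{12}$ (conjugated by $\langle\p\rangle^{3/2}(4\kappa^2-\p^2)^{-1/2}$) is the same mechanism as the paper's Schur test on the kernel $T_{\mu,\nu}$. The problem is in the product estimates. You propose to ``collect the spatial weight $\psi_\mu^{12}$ onto the $f$ factor'' and then run \eqref{Bony} on $(\psi_\mu^{12}f)\,g$. This cannot close the pieces in which $g$ carries the high frequency: there the required output is the \emph{localized} smoothing norm $\|(2\vk-\p)g\|_{F_\kappa^{1/2}(h)}$ (resp.\ $\|g\|_{F_\kappa^{3/2}(h)}$ in \eqref{LS product II}), and no combination of Bernstein, H\"older, and Schur applied to an unweighted $P_{\sim N}g$ can manufacture a spatially weighted norm of $g$ — a global $H^{1/2}$-type norm of $g$ is not on the right-hand side and is not controlled. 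Nor can you regroup after decomposing, since $P_{\lesssim N}(\psi_\mu^{12}f)$ is no longer of the form $\psi_\mu^{12}\cdot(\cdot)$. The fix is to split the weight symmetrically, $\psi_\mu^{12}fg=(\psi_\mu^6 f)(\psi_\mu^6 g)$, run the paraproduct on that, and only at the end use \eqref{LS loc} (with $\ell=6$) to restore the $\psi_\mu^{12}$ demanded by the definition of $F_\kappa^s(h)$ — this is precisely why \eqref{LS loc} is stated for all $1\le\ell\le 12$, and it is the point of the remark preceding the lemma that \emph{each} factor must itself carry local smoothing information.

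A secondary issue concerns \eqref{LS lock}. The bound you need is not that $[\phi,(4\kappa^2-\p^2)^{-1/2}]$ is $O(\kappa^{-2})$ on $L^2$, but that multiplication by $\phi$ is bounded after conjugation by $\langle\p\rangle^{3/2}(4\kappa^2-\p^2)^{-1/2}$, whose inverse is large ($\sim\kappa$) at low frequencies; the unconjugated $L^2$ bound does not transfer. Moreover $\phi$ itself is only bounded (think $\phi=\tanh$), not Schwartz, so $\langle\xi\rangle^{5/2}\hat\phi\notin L^1$ and the direct Schur test fails for $\phi$ alone. The paper sidesteps both problems by conjugating the product $\phi\psi_\mu^6$ — which \emph{is} uniformly Schwartz — and then invoking \eqref{LS loc} with $\ell=6$ to absorb the remaining $\psi_\mu^6$.
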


\begin{proof}
For \eqref{LS transfer}, we first use Lemma~\ref{l:mult comm} to obtain
\begin{align*}
&\|(2\vk - \p)q\|_{F_\kappa^{\frac12}(h)}^2\sim \int \bigl\|(2\vk - \p)\tfrac{\psi_{\mu}^{12}q}{\sqrt{4\kappa^2 - \p^2}}\bigr\|_{H^{\frac32}}^2\,e^{-\frac1{200}|h - \mu|}\,d\mu\\
&\sim 4|\vk|^2\int \bigl\|\tfrac{\psi_{\mu}^{12}q}{\sqrt{4\kappa^2 - \p^2}}\bigr\|_{H^{\frac32}}^2\,e^{-\frac1{200}|h - \mu|}\,d\mu + \int \bigl\|\bigl(\tfrac{\psi_{\mu}^{12}q}{\sqrt{4\kappa^2 - \p^2}}\bigr)'\bigr\|_{H^{\frac32}}^2\,e^{-\frac1{200}|h - \mu|}\,d\mu\\
&\sim |\vk|^2\|q\|_{F_\kappa^{\frac12}(h)}^2 + \|q\|_{F_\kappa^{\frac32}(h)}.
\end{align*}

Using space-translation invariance, it suffices to prove \eqref{LS loc} for \(h=0\). If \(\ell=12\), the claim follows from the definition. Otherwise, define \(T_{\mu,\nu}\colon L^2\to L^2\) to have integral kernel
\[
\tfrac{(4 + \xi^2)^{\frac34}}{(4\kappa^2 + \xi^2)^{\frac12}}\widehat{\psi_{\mu}^\ell \psi_{\nu}^{12}}(\xi - \eta)\tfrac{(4\kappa^2 + \eta^2)^{\frac12}}{(4 + \eta^2)^{\frac34}},
\]
and apply Schur's test to bound
\[
\|T_{\mu,\nu}\|_\op\lesssim\|\langle \xi \rangle^{\frac52} \widehat{\psi_{\mu}^\ell \psi_{\nu}^{12}}(\xi)\|_{L^1_\xi} \lesssim\|\psi_{\mu}^\ell \psi_{\nu}^{12}\|_{H^{\frac72}}
\lesssim e^{-\frac1{200}|\mu-\nu|}.
\]
We then apply \eqref{psi int} to obtain
\begin{align*}
\int \bigl\|&\tfrac{\psi_{\mu}^\ell q}{\sqrt{4\kappa^2 - \p^2}}\bigr\|_{H^{\frac32}}^2\,e^{-\frac1{200}|\mu|}\,d\mu\\
&\lesssim \iiint  \bigl\|\tfrac{\psi_{\mu}^\ell \psi_{\nu_1}^{24}q}{\sqrt{4\kappa^2 - \p^2}}\bigr\|_{H^{\frac32}}\bigl\|\tfrac{\psi_{\mu}^\ell \psi_{\nu_2}^{24}q}{\sqrt{4\kappa^2 - \p^2}}\bigr\|_{H^{\frac32}}\,e^{-\frac1{200}|\mu|}\, d\nu_1\, d\nu_2\,d\mu\\
&\lesssim \iiint \|T_{\mu,\nu_1}\|_{\op}\|T_{\mu,\nu_2}\|_{\op} \bigl\|\tfrac{\psi_{\nu_1}^{12} q}{\sqrt{4\kappa^2 - \p^2}}\bigr\|_{H^{\frac32}} \bigl\|\tfrac{\psi_{\nu_2}^{12} q}{\sqrt{4\kappa^2 - \p^2}}\bigr\|_{H^{\frac32}}\,e^{-\frac1{200}|\mu|}\, d\nu_1\, d\nu_2\,d\mu\\
&\lesssim \iiint e^{-\frac{|\nu_1|+ |\nu_2|}{400}}e^{-\frac{|\mu-\nu_1|+|\mu-\nu_2|}{400}} \bigl\|\tfrac{\psi_{\nu_1}^{12} q}{\sqrt{4\kappa^2 - \p^2}}\bigr\|_{H^{\frac32}} \bigl\|\tfrac{\psi_{\nu_2}^{12} q}{\sqrt{4\kappa^2 - \p^2}}\bigr\|_{H^{\frac32}}\, d\nu_1\, d\nu_2\,d\mu \\
&\lesssim \|q\|_{F_\kappa^{\frac12}(0)}^2.
\end{align*}
In the last step we first integrated in $\mu$ and then used $L^2$ boundedness of the resulting convolution operator.

Turning next to the product estimates \eqref{LS product I} and \eqref{LS product II}, we take \(s \in\{ \frac12,\frac32\}\), \(\mu\in\R\), and use \eqref{Bony} to estimate
\begin{align*}
&\bigl\|\tfrac 1{\sqrt{4\kappa^2 - \p^2}}(\psi_{\mu}^{12}fg)\bigr\|_{H^{s+1}}^2\\
&\qquad\lesssim \sum_N \tfrac{(1+N)^{2s+2}}{(\kappa+N)^2} \|P_{\sim N}(\psi_{\mu}^6f)\|_{L^2}^2 \|P_{\lesssim N}(\psi_{\mu}^6 g)\|_{L^\infty}^2\\
&\qquad\quad+\sum_N \tfrac{(1+N)^{2s+2}}{(\kappa+N)^2} \|P_{\lesssim N}(\psi_{\mu}^6 f)\|_{L^\infty}^2 \|P_{\sim N}(\psi_{\mu}^6 g)\|_{L^2}^2\\
&\qquad\quad + \sum_N \tfrac{(1+N)^{2s+2}}{(\kappa+N)^2} \Bigl[ \sum_{M>2N}N^{\frac12} \|P_M(\psi_{\mu}^6 f)\|_{L^2}\|P_{\sim M}(\psi_{\mu}^6 g)\|_{L^2}\Bigr]^2.
\end{align*}
Recalling \eqref{Sob}, \eqref{E transfer},  and \eqref{E loc}, we bound the first summand by
\begin{align*}
&\sum_N \tfrac{(1+N)^{2s+2}}{(\kappa+N)^2} \|P_N(\psi_{\mu}^6f)\|_{L^2}^2 \|P_{\lesssim N}(\psi_{\mu}^6 g)\|_{L^\infty}^2\\
&\qquad\lesssim |\vk|^{-1}\bigl\|\tfrac{\psi_{\mu}^6 f}{\sqrt{4\kappa^2 - \p^2}}\bigr\|_{H^{s+1}}^2\Bigl[|\vk|\|\psi_{\mu}^6g\|_{E_{2\sigma, \vk}^\sigma} + \|(\psi_{\mu}^6g)'\|_{E_{2\sigma, \vk}^{\sigma}}\Bigr]^2\\
&\qquad\lesssim |\vk|^{-1}\bigl\|\tfrac{\psi_{\mu}^6 f}{\sqrt{4\kappa^2 - \p^2}}\bigr\|_{H^{s+1}}^2\|(2\vk-\p)g\|_{E_{2\sigma, \vk}^\sigma}^2.
\end{align*}
For \eqref{LS product II}, we bound the second summand in a symmetric fashion, reversing the roles of \(f,g\). For \eqref{LS product I}, we instead apply Schur's test with \eqref{E transfer}, \eqref{E loc}, and \eqref{mult comm} to bound
\begin{align*}
&\sum_N \tfrac{(1+N)^3}{(\kappa+N)^2} \Bigl[ \sum_{M\lesssim N}M^{\frac12}\|P_M(\psi_{\mu}^6 f)\|_{L^2}\Bigr]^2 \|P_N(\psi_{\mu}^6g)\|_{L^2}^2\\
&\lesssim \bigl\|\tfrac{(2\vk - \p)(\psi_{\mu}^6g)}{\sqrt{4\kappa^2-\p^2}}\bigr\|_{H^{\frac32}}^2\!\!\sum_{M_1\leq M_2}\! \!\!\tfrac{(\vk+M_1)^{2\sigma} M_1^{\frac12-\sigma}M_2^{\frac12-\sigma}}{|\vk|^{2\sigma}(\vk+M_2)^{2-2\sigma}} \bigl\|P_{M_1}(\psi_\mu^6 f)\bigr\|_{E^\sigma_{2\sigma, \vk}} \bigl\|P_{M_2}(\psi_\mu^6 f)\bigr\|_{E^\sigma_{2\sigma, \vk}}\\
&\lesssim |\vk|^{-1}\bigl\|\tfrac{\psi_{\mu}^6(2\vk - \p)g}{\sqrt{4\kappa^2-\p^2}}\bigr\|_{H^{\frac32}}^2\|f\|^2_{E^\sigma_{2\sigma, \vk}}.
\end{align*}
The third summand is again bounded using Schur's test, \eqref{Sob}, \eqref{E transfer}, and \eqref{E loc}:
\begin{align*}
&\sum_N \tfrac{(1+N)^{2s+2}}{(\kappa+N)^2} \Bigl[ \sum_{M>2N}N^{\frac12} \|P_M(\psi_{\mu}^6 f)\|_{L^2}\bigl\|P_{\sim M}g\|_{L^2}\Bigr]^2\\
&\qquad\lesssim \|g\|_{B}^2\sum_{M_1\leq M_2} \tfrac{(1+M_1)^{s+1}(\kappa + M_2)M_1^{\frac12}}{(1 + M_2)^{s+1}(\kappa + M_1)M_2^{\frac12}}\bigl\|\tfrac{P_{M_1}(\psi_{\mu}^6 f)}{\sqrt{4\kappa^2 - \p^2}}\bigr\|_{H^{s+1}}\bigl\|\tfrac{P_{M_2}(\psi_{\mu}^6 f)}{\sqrt{4\kappa^2 - \p^2}}\bigr\|_{H^{s+1}}\\
&\qquad\lesssim |\vk|^{-1}\bigl\|\tfrac{\psi_{\mu}^6 f}{\sqrt{4\kappa^2 - \p^2}}\bigr\|_{H^{s+1}}^2\|(2\vk-\p)g\|_{E_{2\sigma, \vk}^\sigma}^2.
\end{align*}
The estimates \eqref{LS product I}, \eqref{LS product II} then follow from \eqref{LS loc}.

Applying Schur's test in Fourier variables, we find
$$
\sup_\mu \, \Bigl\| \tfrac{\langle\partial\rangle^{3/2}}{\sqrt{4\kappa^2-\partial^2}} \, \phi\psi_\mu^6 \, \tfrac{\sqrt{4\kappa^2-\partial^2}}{\langle\partial\rangle^{3/2}} \Bigr\|_{\op} \lesssim_\phi 1.
$$
In this way, we see that 
\[
\bigl\|\tfrac{\psi_\mu^{12}\phi q}{\sqrt{4\kappa^2 - \p^2}}\bigr\|_{H^{\frac32}}\lesssim_\phi \bigl\|\tfrac{\psi_\mu^6q}{\sqrt{4\kappa^2 - \p^2}}\bigr\|_{H^{\frac32}},
\]
and consequently, \eqref{LS lock} follows from \eqref{LS loc}.
\end{proof}

\begin{remark}\label{R:X alt} As a consequence of the proof of Lemma~\ref{L:F product}, we note that 
\begin{align}
\|q\|_{X_\kappa^{\frac12}}^2 \sim \sup_{\mu\in \R}\, \bigl\|\tfrac{\psi_\mu^{12}q}{\sqrt{4\kappa^2 - \p^2}}\bigr\|_{L^2_tH^{\frac32}}^2.\label{LS alt}
\end{align}
Indeed, the inequality
\[
\|q\|_{X_\kappa^{\frac12}}^2 \lesssim \sup_{\mu\in \R}\, \bigl\|\tfrac{\psi_\mu^{12}q}{\sqrt{4\kappa^2 - \p^2}}\bigr\|_{L^2_tH^{\frac32}}^2
\]
follows immediately from the definition. For the converse inequality, we argue as in \eqref{LS loc}: in view of \eqref{psi int},
\begin{align*}
\bigl\|\tfrac{\psi_\mu^{12}q}{\sqrt{4\kappa^2 - \p^2}}\bigr\|_{L^2_tH^{\frac32}}^2&\lesssim \iint \|T_{\mu,\nu_1}\|_\op\|T_{\mu,\nu_2}\|_\op\bigl\|\tfrac{\psi_{\nu_1}^{12} q}{\sqrt{4\kappa^2 - \p^2}}\bigr\|_{L^2_tH^{\frac32}}\bigl\|\tfrac{\psi_{\nu_2}^{12} q}{\sqrt{4\kappa^2 - \p^2}}\bigr\|_{L^2_tH^{\frac32}}\,d\nu_1\,d\nu_2\\
&\lesssim \|q\|_{L^2_tF_\kappa^{\frac12}(h)}^2.
\end{align*}
\end{remark}

\section{Green's functions and microscopic conservation laws}\label{S:Green}

By the Green's function, we mean the integral kernel associated to the inverse of the Lax operator presented in \eqref{L defn}.  It is not a given that this operator is invertible; we will rely on the subtle interplay between the spectral parameter $\kappa$ and the equicontinuity properties of $q$.  This same issue was discussed in the introduction in connection with making sense of $A(\kappa;q)$.  Indeed, it formed the central rationale for introducing the notion of a $\delta$-good subset of $L^2(\R)$; see Definition~\ref{D:good}.  Let us begin our discussion by revisiting the construction of $A(\kappa;q)$.

We subsequently take \(0<\sigma<\tfrac12\) to be fixed. If $Q$ is a $\delta$-good subset of $L^2(\R)$, then Corollary~\ref{C:large kappa} shows that
\begin{align}\label{goody yum yum}
|\kappa|^{\frac12} \bigl\| \Lambda(q;\kappa) \bigr\|_{\op} \lesssim \| q \|_{E_\sigma^\sigma} \leq \delta
	\quad\text{uniformly for $|\kappa|\geq 1$ and $q\in Q$.}
\end{align}
As shown in \cite[Lemma~5.1]{KNV}, it follows that if $\delta$ is sufficiently small then
\begin{align}
A(\kappa;q) &= -\sgn(\kappa) \log \det\bigl[ 1 - i\kappa (\kappa-\partial)^{-1} q (\kappa+\partial)^{-1} \bar q\bigr ] \notag\\
&= \sgn(\kappa) \sum_{\ell\geq 1} \tfrac{1}{\ell} \tr\bigl\{ (i \kappa \Lambda \Gamma )^{\ell}\bigr\} \label{A series}
\end{align}
defines a real-analytic function of $\kappa$ and $q$.  Moreover, the domain of this function includes all $|\kappa|\geq 1$ and an $L^2$-neigborhood of $Q$.

It is important to define $A(\kappa;q)$ in such a neighborhood of $Q$ (rather than just on $Q$) to ensure that the functional derivatives are well defined.  We find that
\begin{align}\label{A derivs}
&\int \tfrac{\delta A}{\delta q} f + \tfrac{\delta A}{\delta \bar q} \bar f \,dx 
= \sgn(\kappa) \sum_{m\geq 0} \tr\Bigl\{ \bigl(i\kappa\Lambda(q)\Gamma(q)\bigr)^{m}i\kappa[\Lambda(f)\Gamma(q)+\Lambda(q)\Gamma(f)] \Bigr\}.\!\!
\end{align}
As Lemma~\ref{L:HS} and \eqref{goody yum yum} show, this series defines a bounded linear functional on $f\in L^2$ and correspondingly the functional derivatives exist as $L^2$ functions.

Duality also gives an efficient way to introduce the functions $\gamma(\kappa;q)$, $g_{12}(\kappa;q)$, and $g_{21}(\kappa;q)$ that will be of central importance in what follows: for $a,b,c\in L^2$,
\begin{align}\label{geegee}
\int g_{21}\;\!  b + g_{12}\;\! c +  \gamma\;\!a \,dx = \sgn(\kappa) \tr\Bigl\{ \bigl[\begin{smallmatrix}a\, & \ \ b\\ c\,& \,-a\end{smallmatrix}\bigr] \bigl[ L^{-1} - L_0^{-1}\bigr] \Bigr\} .
\end{align}
Here $L_0$ denotes the Lax operator \eqref{L defn} with $q\equiv 0$.

Lemma~\ref{L:HS} and \eqref{goody yum yum} guarantee that a Neumann expansion of the right-hand side of \eqref{geegee} yields a convergent series for all $a,b,c\in L^2(\R)$.  On comparing these series with those of \eqref{A derivs}, we find
\begin{align}\label{FderivA}
\tfrac{\delta A}{\delta \bar q} = i\sqrt{\kappa}\, g_{12} \qtq{and} \tfrac{\delta A}{\delta q} = -\sqrt{\kappa}\, g_{21}.
\end{align}

It is evident from \eqref{geegee} that $g_{12}$, $g_{21}$, and $\gamma$ are closely connected with the matrix Green's function evaluated on the diagonal (i.e. at the coincidence of the two spatial points).  For the continuity needed to make sense of this directly, see see \cite[Prop.~3.1]{harropgriffiths2020sharp}.

For simplicity of exposition, the discussion above only constructed $g_{12}$, $g_{21}$, and $\gamma$ as $L^2$ functions.  By estimating more carefully (as was done in \cite{KNV}), one finds that the series defining \(g_{12}\), \(g_{21}\), and \(\gamma\) converge in \(H^1\) to real analytic functions of \(q\); moreover, these functions are Schwartz whenever \(q\in \Schw\).

Direct computations also reveal certain basic identities among these functions; see \cite{harropgriffiths2020sharp} or \cite{tang2020microscopic}.  Concretely, we have
\begin{align}\label{sym}
g_{12}(\kappa) = -\overline{g_{21} (-\kappa)}, \quad \gamma(\kappa) = \overline{\gamma (-\kappa)},  \qtq{and} A(\kappa) =  -\overline{A(-\kappa)}
\end{align}
as well as
\begin{align}
g_{12}'&=2\kappa g_{12} -\kappa^{\frac12}q(\gamma +1)\label{g12 deriv}\\
g_{21}'&=-2\kappa g_{21} -i\kappa^{\frac12}\bar q(\gamma +1) \label{g21 deriv}\\
\gamma'&= 2\kappa^{\frac12}(qg_{21}+i\bar q g_{12}). \label{gamma deriv}
\end{align}
Lastly, we have the quadratic identity
\begin{align}\label{quadratic id}
2g_{12}g_{21} + \tfrac12 \gamma^2 +\gamma=0,
\end{align}
which can be proved by differentiating the left-hand side and applying \eqref{g12 deriv}--\eqref{gamma deriv}.

Using these relations, we may write
\begin{equation}\label{useful IDs}
\begin{gathered}
g_{12} = \tfrac{\sqrt\kappa}{2\kappa-\partial} \bigl [ q(\gamma+1)\bigr], \qtq{}g_{21} = \tfrac{-i\sqrt\kappa}{2\kappa+\partial} \bigl [ \bar q(\gamma+1)\bigr], \qtq{} \gamma = - \tfrac{4g_{12}g_{21}}{2+\gamma},\\
\tfrac{g_{12}}{2+\gamma} = \tfrac{\sqrt\kappa}{2(2\kappa-\partial)} \bigl[ q + 4i\bar q\, \bigl(\tfrac{g_{12}}{2+\gamma}\bigr)^2 \bigr],
\qtq{and} \tfrac{g_{21}}{2+\gamma} = \tfrac{-i\sqrt\kappa}{2(2\kappa+\partial)} \bigl[ \bar q - 4iq\, \bigl(\tfrac{g_{21}}{2+\gamma}\bigr)^2 \bigr].
\end{gathered}
\end{equation}
While it is very elementary to check these last two identities, it is much less obvious that they are key to efficiently treating a lot of what follows.

To prove our theorems, we require bounds for these functions in the \(B\), \(E\), and \(F\) spaces introduced in Section~\ref{S:Pre}.  We start with the following:

\begin{lemma}\label{l:g H}
For $\delta$ sufficiently small and $Q\subset \Schw$ a $\delta$-good set, the following estimates hold uniformly for \(q\in Q\) and $|\kappa|\geq 1:$
\begin{align}
\|\gamma(\kappa;q)\|_B\lesssim\|q\|_{E_{2\sigma, \kappa}^\sigma}^2&\leq \delta^2,\label{gamma in sup norm}\\
\|g_{12}(\kappa;q)\|_B + \|g_{21}(\kappa;q)\|_B &\lesssim \|q\|_{E_{2\sigma, \kappa}^\sigma},\label{g12 in sup norm}\\
\bigl\|\tfrac{g_{12}(\kappa;q)}{2 + \gamma(\kappa;q)}\bigr\|_B + \bigl\|\tfrac{g_{21}(\kappa;q)}{2 + \gamma(\kappa;q)}\bigr\|_B &\lesssim\|q\|_{E_{2\sigma, \kappa}^\sigma}.\label{g12 frac in sup norm}
\end{align}
\end{lemma}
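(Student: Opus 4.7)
My approach is to bootstrap from the Neumann series representation of $g_{12}$, $g_{21}$, $\gamma$ implicit in \eqref{geegee}, which converges in strong norms thanks to \eqref{goody yum yum}. The series begin at orders $q^1, q^1, q^2$ respectively, with leading terms
\[
g_{12}^{(1)} = \tfrac{\sqrt\kappa}{2\kappa-\partial}q, \qquad g_{21}^{(1)} = \tfrac{-i\sqrt\kappa}{2\kappa+\partial}\bar q, \qquad \gamma^{(2)} = -2 g_{12}^{(1)} g_{21}^{(1)},
\]
and higher-order terms gain successive factors of $\delta^2$ via extra $\Lambda\Gamma$ pairings. Since \eqref{useful IDs} and \eqref{quadratic id} couple the three quantities, I would prove all three bounds together.

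The technical heart is the single linear estimate
\[
\bigl\|\tfrac{\sqrt\kappa}{2\kappa\pm\partial}f\bigr\|_B \lesssim \|f\|_{E_{2\sigma,\kappa}^\sigma} \qquad \text{for } \kappa\geq 1.
\]
The Besov-$L^2$ piece of the $B$ norm follows from the pointwise Fourier-multiplier comparison $\frac{\sqrt{\kappa N}}{\sqrt{4\kappa^2+N^2}} \lesssim \frac{(\kappa N)^\sigma}{(4\kappa^2+N^2)^\sigma}$, immediate by interpolation between $\sigma=0$ and $\sigma=\frac12$. The $L^\infty$ piece follows from Cauchy--Schwarz on the Fourier inversion integral against the weight dual to the $E_{2\sigma,\kappa}^\sigma$ norm; after the substitution $\xi=\kappa u$ the auxiliary integral $\int (4+u^2)^{2\sigma-1}|u|^{-2\sigma}\,du$ is $\kappa$-independent and finite precisely because $0\leq \sigma<\frac12$. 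This is the step I expect to be the main obstacle: the space $E_{2\sigma,\kappa}^\sigma$ sits exactly at the edge of the Sobolev embedding into $L^\infty$, and it is only the simultaneous integrability at $u=0$ and $|u|=\infty$ that keeps the bound $\kappa$-uniform, which is what forces the hypothesis $\sigma<\frac12$ into the lemma.

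With this linear bound secured, I would close the three estimates \eqref{gamma in sup norm}--\eqref{g12 in sup norm} by a single bootstrap on the coupled Neumann series. The algebra property \eqref{E:alg} gives $\|g_{12}^{(1)} g_{21}^{(1)}\|_B \lesssim \|q\|_{E_{2\sigma,\kappa}^\sigma}^2$, while the product estimate \eqref{E product I} shows that multiplication by a $B$-function preserves the class $E_{2\sigma,\kappa}^\sigma$. Combining these with the identity $g_{12} = \tfrac{\sqrt\kappa}{2\kappa-\partial}[q(1+\gamma)]$ from \eqref{useful IDs} and $\gamma = -2g_{12}g_{21} - \tfrac12\gamma^2$ from \eqref{quadratic id} yields a system that contracts on a ball in $B$ for $\delta$ sufficiently small, giving \eqref{gamma in sup norm} and \eqref{g12 in sup norm} simultaneously. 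Finally, the quotient bound \eqref{g12 frac in sup norm} follows by expanding $(2+\gamma)^{-1} = \tfrac12\sum_{n\geq 0}(-\gamma/2)^n$ as a geometric series and applying \eqref{E:alg}, which is valid once $\|\gamma\|_B \lesssim \delta^2 < 2$.
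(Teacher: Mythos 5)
Your proposal is correct in outline but reaches \eqref{gamma in sup norm} by a genuinely different route than the paper. The paper proves \eqref{gamma in sup norm} \emph{first} and \emph{directly}: it pairs $\gamma$ against a dual test function, expands the trace series term by term, and estimates each term using the Hilbert--Schmidt and operator bounds of Lemma~\ref{L:op est} (placing the two comparable highest frequencies in $\hs$ and the rest in operator norm), summing the geometric series in $\ell$ via \eqref{goodness}. Only afterwards does it deduce \eqref{g12 in sup norm} from the identity $g_{12}=\tfrac{\sqrt\kappa}{2\kappa-\p}[q(1+\gamma)]$ together with \eqref{Sob}, \eqref{E transfer}, and \eqref{E product I} --- which is exactly your step, and your ``technical heart'' $\|\tfrac{\sqrt\kappa}{2\kappa\pm\p}f\|_B\lesssim\|f\|_{E^\sigma_{2\sigma,\kappa}}$ is precisely \eqref{Sob} combined with \eqref{E transfer}, already available in Section~\ref{S:Pre}; your interpolation/Cauchy--Schwarz derivation of it is a correct alternative to the paper's dyadic summation. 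By contrast, you obtain \eqref{gamma in sup norm} not from the trace series but from the closed system formed by \eqref{useful IDs} and \eqref{quadratic id}, solved by contraction in $B$. This buys you a proof that never touches Schatten-class estimates for $\gamma$, at the cost of an implicit (fixed-point) rather than explicit (term-by-term) argument; the paper's route is longer but produces the bound constructively and the same machinery is reused later for the $E$ and $F$ norms.

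The one point you must tighten is the initialization of the bootstrap. The inequality system $a\lesssim\eps(1+b)$, $b\lesssim a^2+b^2$ (with $a=\|g_{12}\|_B+\|g_{21}\|_B$, $b=\|\gamma\|_B$, $\eps=\|q\|_{E^\sigma_{2\sigma,\kappa}}$) only yields $b\lesssim\eps^2$ if one already knows $b$ lies below the unstable fixed point, and a contraction on a small ball in $B$ only identifies the \emph{fixed point in that ball} --- you must still argue that the actual $\gamma$ defined by \eqref{geegee} is that fixed point. Both issues are resolved by standard means (e.g.\ replace $q$ by $\theta q$, $\theta\in[0,1]$, note $\|\gamma(\kappa;\theta q)\|_B$ is finite and continuous in $\theta$ since the series converges in $H^1\hookrightarrow B$, and run a continuity argument from $\theta=0$; or prove the bounds inductively on the partial sums $\gamma^{[\leq 2\ell]}$ with a majorant series), but as written the ``contracts on a ball'' step is not yet a proof. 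Everything else --- the leading terms \eqref{linear}, the use of \eqref{E:alg} and \eqref{E product I}, and the geometric-series treatment of $(2+\gamma)^{-1}$ for \eqref{g12 frac in sup norm} --- matches the paper and is sound.
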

\begin{proof}
To prove \eqref{gamma in sup norm}, we argue by duality.  To this end, we test against functions $f$ satisfying $\sup_{M\in 2^\Z} M^{-\frac12}\|f_M\|_{L^2}\leq 1$ and employ 
\begin{align*}
    \langle f, \gamma(\kappa;q)\rangle =& \sgn(\kappa)\sum_{\ell\geq 1}(i\kappa)^{\ell} \tr\Bigl\{ \bigl[ (\kappa-\partial)^{-1}q (\kappa+\partial)^{-1}\bar q \bigr]^{\ell} (\kappa-\partial)^{-1} \bar f \Bigr\}\\
    &+ \sgn(\kappa)\sum_{\ell\geq 1}(i\kappa)^{\ell} \tr\Bigl\{ \bigl[ (\kappa+\partial)^{-1}\bar q(\kappa-\partial)^{-1}q \bigr]^{\ell} (\kappa+\partial)^{-1} \bar f \Bigr\}.
\end{align*}

Let us first observe that by \eqref{op by E} and \eqref{goodness}, there exists $C\geq 1$ so that
$$
\sup_{N\in 2^\Z} \,|\kappa|^{\frac12} \sum_{M\leq N} \|\Lambda (q_M)\|_\op\leq C \delta, 
$$
uniformly in $|\kappa|\geq 1$.  Note also that by Lemma~\ref{L:op est} and our assumptions on $f$,
$$
\sum_{M\leq N} \|\Lambda (f_M)\|_\op \lesssim  \tfrac{N}{|\kappa| +N}\log^{\frac32}\bigl(4+\tfrac{N^2}{\kappa^2}\bigr)
 \ \ \text{and} \ \ %
 \|\Lambda(f_N)\|_{\hs} \lesssim  \sqrt{\tfrac{N}{|\kappa| +N} \log\bigl(4+\tfrac{N^2}{\kappa^2}\bigr)}  .
$$

Decomposing into Littlewood--Paley pieces, using that the two highest frequencies must be comparable, together with Lemma~\ref{L:op est} and the preceding estimates,  we find that
\begin{align*}
&\left|(i\kappa)^{\ell}\tr\Bigl\{ \bigl[ (\kappa-\partial)^{-1}q (\kappa+\partial)^{-1}\bar q \bigr]^{\ell} (\kappa-\partial)^{-1} \bar f \Bigr\}\right|\\
&\quad  \lesssim (C \delta)^{2(\ell-1)} \!
	\sum_{N_1\sim N_2}  \tfrac{|\kappa| N_1}{|\kappa| +N_1}\log^{\frac32}\bigl(4+\tfrac{N_1^2}{\kappa^2}\bigr) \|\Lambda(q_{N_1})\|_{\hs}\|\Lambda(q_{N_2})\|_{\hs}  \\
&\qquad + (C \delta)^{2(\ell-1)}  \!
	\sum_{N_1\sim N_2 \geq N_3} |\kappa| \|\Lambda(q_{N_1})\|_{\hs}\|\Lambda(f_{N_2})\|_{\hs}  \|\Lambda(q_{N_3})\|_{\op} \\
&\quad \lesssim (C \delta)^{2(\ell-1)} \!
	\sum_{N_1\sim N_2} \tfrac{|\kappa| N_1}{(|\kappa| +N_1)^2} \log^{\frac52}\bigl(4+\tfrac{N_1^2}{|\kappa|^2}\bigr)\|q_{N_1}\|_{L^2}\|q_{N_2}\|_{L^2} \\
&\quad \quad + (C \delta)^{2(\ell-1)} \! \sum_{N_1\sim N_2} \tfrac{|\kappa|^{\frac12}N_1^{1-\sigma}}{(|\kappa| +N_1)^{\frac32-\sigma}}
	 \log\bigl(4+\tfrac{N_1^2}{|\kappa|^2}\bigr)\|q_{N_1}\|_{L^2} \|q\|_{E_{2\sigma, \kappa}^\sigma}\\
&\quad\lesssim (C \delta)^{2(\ell-1)} \|q\|_{E_{2\sigma, \kappa}^\sigma}^2 .
\end{align*}
Choosing $\delta$ sufficiently small, we may sum in $\ell\geq 1$ and so deduce \eqref{gamma in sup norm}.

Further reducing \(\delta\), if necessary, the estimates \eqref{gamma in sup norm} and \eqref{E:alg} allow us to sum the geometric series and so obtain
\begin{equation}\label{gamma frac in B}
\bigl\|\tfrac1{2+\gamma}\bigr\|_B\lesssim 1.
\end{equation}

Regarding \eqref{g12 in sup norm}, we use \eqref{Sob}, \eqref{E transfer}, \eqref{useful IDs}, and \eqref{E product I} to obtain
\[
\|g_{12} \|_{B} \lesssim |\kappa|^{-\frac12} \| (2\kappa-\p)g_{12} \|_{E_{2\sigma, \kappa}^\sigma}
	\lesssim \bigl[ 1 + \| \gamma\|_B \bigr] \| q \|_{E_{2\sigma, \kappa}^\sigma}.
\]
The bound  \eqref{g12 in sup norm} then follows from \eqref{gamma in sup norm} and \eqref{sym}.

The estimate \eqref{g12 frac in sup norm} follows directly from \eqref{g12 in sup norm}, \eqref{gamma frac in B}, and \eqref{E:alg}.
\end{proof}

\begin{lemma}\label{l:g E}
For $\delta$ sufficiently small and $Q\subset \Schw$ a $\delta$-good set, the following estimates hold uniformly for \(q\in Q\) and \(|\vk|,\kappa\geq 1:\)
\begin{align}
|\vk|\|g_{12}(\vk)\|_{E_{s, \kappa}^\sigma} + \|g_{12}'(\vk)\|_{E_{s,\kappa}^{\sigma}}&\lesssim |\vk|^{\frac12}\|q\|_{E_{s, \kappa}^\sigma},\label{g12 in E}\\
|\vk|\|\gamma(\vk)\|_{E_{s, \kappa}^\sigma}  + \|\gamma'(\vk)\|_{E_{s,\kappa}^{\sigma}} &\lesssim |\vk|^{\frac12}\|q\|_{E_{s, \kappa}^\sigma} \|q\|_{E_{2\sigma, \vk}^\sigma}, \label{gamma in E}\\
|\vk|\bigl\|\tfrac{g_{12}(\vk)}{2 + \gamma(\vk)}\bigr\|_{E_{s, \kappa}^\sigma}  + \bigl\|\bigl(\tfrac{g_{12}}{2 + \gamma}\bigr)'(\vk)\bigr\|_{E_{s,\kappa}^{\sigma}}&\lesssim |\vk|^{\frac12}\|q\|_{E_{s, \kappa}^\sigma}.\label{g12 frac in E}
\end{align}
Moreover, in view of \eqref{sym}, $g_{21}$ also satisfies the estimates \eqref{g12 in E} and \eqref{g12 frac in E}.
\end{lemma}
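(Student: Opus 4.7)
The plan is to establish the three estimates sequentially—first \eqref{g12 in E}, then \eqref{gamma in E}, then \eqref{g12 frac in E}—using the pointwise identities \eqref{useful IDs}–\eqref{gamma deriv} together with the transference formula and the product estimates from Lemma~\ref{L:product}. A preliminary observation is that Plancherel gives
\[
\|(2\vk - \p)f\|_{E_{s,\kappa}^\sigma}^2 = 4\vk^2\|f\|_{E_{s,\kappa}^\sigma}^2 + \|f'\|_{E_{s,\kappa}^\sigma}^2,
\]
so the transference identity \eqref{E transfer} continues to hold with the left-hand-side space $E_{s,\vk}^\sigma$ replaced by $E_{s,\kappa}^\sigma$ for an arbitrary second parameter $\kappa\geq 1$. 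Moreover, the elementary inequality $(4\vk^2 + \xi^2)^{2\sigma} \geq (4\vk^2)^\sigma(4+\xi^2)^\sigma$ valid for $\vk\geq 1$ shows that $\delta$-goodness yields the uniform control $\|q\|_{E_{2\sigma,\vk}^\sigma} \lesssim \|q\|_{E_\sigma^\sigma} \leq \delta$, which will repeatedly provide the smallness needed to close self-referential estimates.

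For \eqref{g12 in E}, the identity $(2\vk - \p)g_{12} = \sqrt{\vk}\,q(\gamma + 1)$ coming from \eqref{g12 deriv} (or equivalently \eqref{useful IDs}) combined with transference reduces matters to bounding $\|q(\gamma+1)\|_{E_{s,\kappa}^\sigma}$. The product estimate \eqref{E product I} together with \eqref{gamma in sup norm} yields $\|q(\gamma+1)\|_{E_{s,\kappa}^\sigma} \lesssim \|q\|_{E_{s,\kappa}^\sigma}(1 + \|\gamma\|_B) \lesssim \|q\|_{E_{s,\kappa}^\sigma}$, which is exactly what is needed. For the first half of \eqref{gamma in E}, rewrite $\gamma = -4g_{12}g_{21}/(2+\gamma)$ using \eqref{useful IDs}, apply \eqref{E product I} with the bound \eqref{g12 frac in sup norm}, and plug in the $\|g_{12}\|_{E_{s,\kappa}^\sigma}$ bound just obtained. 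For the derivative half, the identity $\gamma' = 2\sqrt{\vk}(qg_{21} + i\bar q g_{12})$ is best handled by the sharper product estimate \eqref{E product II}, which requires controlling $|\vk|\|g_{21}\|_{E_{2\sigma,\vk}^\sigma} + \|g_{21}'\|_{E_{2\sigma,\vk}^\sigma}$: this is precisely \eqref{g12 in E} applied with $\kappa$ set equal to $\vk$ and $s = 2\sigma$, combined with the symmetry \eqref{sym}.

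For \eqref{g12 frac in E}, the efficient input is the second identity in \eqref{useful IDs}, namely
\[
(2\vk - \p)\bigl(\tfrac{g_{12}}{2+\gamma}\bigr) = \tfrac{\sqrt{\vk}}{2}\bigl[q + 4i\bar q\bigl(\tfrac{g_{12}}{2+\gamma}\bigr)^2\bigr].
\]
Transference converts the left-hand side of \eqref{g12 frac in E} into $\sqrt{\vk}\bigl\|q + 4i\bar q(\tfrac{g_{12}}{2+\gamma})^2\bigr\|_{E_{s,\kappa}^\sigma}$. Invoking \eqref{E product I} and the algebra property \eqref{E:alg} bounds the quadratic term by $\|q\|_{E_{s,\kappa}^\sigma}\|\tfrac{g_{12}}{2+\gamma}\|_B^2$, which by \eqref{g12 frac in sup norm} is at most $\lesssim \delta^2 \|q\|_{E_{s,\kappa}^\sigma}$. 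No bootstrap is needed because the smallness comes directly from $\delta$-goodness rather than from the quantity being estimated. The main technical obstacle throughout is simply the bookkeeping forced by the two distinct parameters $\kappa$ and $\vk$, and in particular making sure that when the product estimate \eqref{E product II} is used, the factor sitting in an $E_{\cdot,\vk}^\sigma$ space is one whose $(2\vk - \p)$-norm is already controlled, which is exactly the structure provided by \eqref{g12 in E} at its self-referential choice $\kappa = \vk$.
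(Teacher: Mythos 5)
Your argument is correct and follows essentially the same route as the paper: the first two estimates are proved exactly as in the text (the identity $(2\vk-\p)g_{12}=\sqrt\vk\,q(1+\gamma)$ with \eqref{E product I} and \eqref{gamma in sup norm}, then the quadratic identity for $\gamma$ and \eqref{E product II} for $\gamma'$), and your explicit remarks that \eqref{E transfer} decouples the two parameters via Plancherel and that $\|q\|_{E_{2\sigma,\vk}^\sigma}\lesssim\|q\|_{E_\sigma^\sigma}\leq\delta$ are exactly the facts the paper uses implicitly. The only variation is in \eqref{g12 frac in E}, where you invoke the fixed-point identity $\tfrac{g_{12}}{2+\gamma}=\tfrac{\sqrt\vk}{2(2\vk-\p)}\bigl[q+4i\bar q(\tfrac{g_{12}}{2+\gamma})^2\bigr]$ from \eqref{useful IDs} together with \eqref{g12 frac in sup norm}, whereas the paper applies the quotient rule to $(2\vk-\p)\tfrac{g_{12}}{2+\gamma}$ and uses \eqref{gamma in E}; both close without a bootstrap, and your version is the same device the paper itself deploys for the analogous $F$-space bound in Lemma~\ref{l:g F}.
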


\begin{proof}
In view of \eqref{g12 deriv}, \eqref{E product I} and \eqref{gamma in sup norm},
\begin{align*}
\|(2\vk -\p)g_{12}\|_{E_{s,\kappa}^\sigma} &\lesssim |\vk|^{\frac12}\|q(1+\gamma)\|_{E_{s,\kappa}^\sigma}
\lesssim |\vk|^{\frac12}\|q\|_{E_{s,\kappa}^\sigma}\Bigl[1 + \|\gamma\|_{B}\Bigr]\lesssim |\vk|^{\frac12}\|q\|_{E_{s,\kappa}^\sigma}.
\end{align*}
The estimate \eqref{g12 in E} then follows from \eqref{E transfer}.

Recalling $-\gamma=\tfrac{4g_{12}g_{21}}{2+\gamma}$ from \eqref{useful IDs}, the estimates \eqref{g12 in E} and \eqref{g12 frac in sup norm} show that
\begin{align*}
 |\vk| \|\gamma(\vk)\|_{E_{s,\kappa}^\sigma} \lesssim  |\vk| \| g_{12}(\vk) \|_{E_{s,\kappa}^\sigma} \bigl\| \tfrac{g_{21}(\vk)}{2+\gamma(\vk)} \bigr\|_B
 	\lesssim |\vk|^{\frac12} \|q\|_{E_{s, \kappa}^\sigma}  \|q\|_{E_{2\sigma, \vk}^\sigma}.
\end{align*}
To complete the proof of \eqref{gamma in E}, we complement this with the estimate
\[
\|\gamma'(\vk)\|_{E_{s,\kappa}^{\sigma}}\lesssim |\vk|^{\frac12}\|q\|_{E_{2\sigma,\vk}^\sigma}\|q\|_{E_{s,\kappa}^\sigma},
\]
for which we employed \eqref{gamma deriv}, \eqref{E product II}, and \eqref{g12 in E}. 

Using \eqref{E product I}, the fact that $B$ is an algebra, and the estimates \eqref{g12 frac in sup norm}, \eqref{gamma frac in B}, \eqref{g12 in E}, \eqref{gamma in E}, and \eqref{goodness}, we obtain
\begin{align*}
\bigl\| (2\vk-\p)\tfrac{g_{12}}{2+\gamma} \bigr\|_{E^\sigma_{s,\kappa}}
&\lesssim \bigl\| (2\vk-\p) g_{12}\bigr\|_{E^\sigma_{s,\kappa}} \bigl\| \tfrac1{2+\gamma} \bigr\|_{B}
	+ \bigl\| \gamma' \bigr\|_{E^\sigma_{s,\kappa}} \bigl\| \tfrac{g_{12}}{(2+\gamma)^2} \bigr\|_{B}\\
&\lesssim |\vk|^{\frac12} \|q\|_{E^\sigma_{s,\kappa}} \bigl(1+ \|q\|_{E^{\sigma}_{2\sigma, \vk}}^2\bigr)\lesssim |\vk|^{\frac12} \|q\|_{E^\sigma_{s,\kappa}}.
\end{align*}
The estimate \eqref{g12 frac in E} now follows from \eqref{E transfer}.
\end{proof}

\begin{lemma}\label{l:g F}
For $\delta$ sufficiently small and $Q\subset \Schw$ a $\delta$-good set, the following estimates hold uniformly for \(q\in Q\), \(|\vk|,\kappa\geq 1\), and \(h\in \R:\)
\begin{align}
|\vk|\|g_{12}(\vk)\|_{F_\kappa^{\frac12}(h)} + \|g_{12}(\vk)\|_{F_\kappa^{\frac32}(h)}&\lesssim |\vk|^{\frac12}\|q\|_{F_\kappa^{\frac12}(h)},\label{g12 in LS}\\
|\vk|\|\gamma(\vk)\|_{F_\kappa^{\frac12}(h)} + \|\gamma(\vk)\|_{F_\kappa^{\frac32}(h)} &\lesssim |\vk|^{\frac12} \|q\|_{F_\kappa^{\frac12}(h)}\|q\|_{E_{2\sigma,\vk}^\sigma},\label{gamma in LS}\\
|\vk|\bigl\|\tfrac{g_{12}(\vk)}{2 + \gamma(\vk)}\bigr\|_{F_\kappa^{\frac12}(h)} + \bigl\|\tfrac{g_{12}(\vk)}{2 + \gamma(\vk)}\bigr\|_{F_\kappa^{\frac32}(h)}&\lesssim |\vk|^{\frac12}\|q\|_{F_\kappa^{\frac12}(h)}.\label{g12 frac in LS}
\end{align}

\end{lemma}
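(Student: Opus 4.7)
The plan is to mirror the proof of Lemma~\ref{l:g E} but with the local smoothing product estimates \eqref{LS product I}, \eqref{LS product II} in place of \eqref{E product I}, retaining the algebraic identities \eqref{g12 deriv}--\eqref{gamma deriv}, \eqref{useful IDs}, and the transfer identity \eqref{LS transfer} as the backbone. Throughout, I rely on the fact that on Schwartz data all the quantities appearing on the left-hand sides are a priori finite, which permits absorbing $O(\delta^2)$ factors at the end.

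First I would establish \eqref{g12 frac in LS}. Setting $u = g_{12}/(2+\gamma)$, the identity $(2\vk-\p)u = \tfrac{\vk^{1/2}}{2}[q + 4i\bar q u^2]$ from \eqref{useful IDs}, combined with \eqref{LS transfer}, reduces everything to bounding $\|\bar q u^2\|_{F_\kappa^{1/2}(h)}$. I would apply \eqref{LS product I} with $f=\bar q$, $g=u^2$. The factor $\|(2\vk-\p)u^2\|_{E_{2\sigma,\vk}^\sigma}$ I control by \eqref{E product I}, \eqref{g12 frac in sup norm}, and \eqref{g12 frac in E}, which yields $\lesssim \vk^{1/2}\|q\|_{E_{2\sigma,\vk}^\sigma}^2$. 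The factor $\|(2\vk-\p)u^2\|_{F_\kappa^{1/2}(h)}$ I bound by applying \eqref{LS product I} and \eqref{LS product II} with $f=g=u$, combined again with \eqref{g12 frac in E} and \eqref{g12 frac in sup norm}, giving $\lesssim \|q\|_{E_{2\sigma,\vk}^\sigma}\cdot C$, where $C$ denotes the left-hand side of \eqref{g12 frac in LS}. The resulting inequality $C \lesssim \vk^{1/2}\|q\|_{F_\kappa^{1/2}(h)} + \delta^2 C$ closes for $\delta$ small.

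Next I would prove \eqref{g12 in LS} and \eqref{gamma in LS} via a joint bootstrap. Let $A$ and $B$ denote the left-hand sides of \eqref{g12 in LS} and \eqref{gamma in LS}. Writing $g_{12} = 2u + \gamma u$, the first piece is already handled by step one, and I would estimate $\gamma u$ via \eqref{LS product I} (for $F^{\frac12}$) and \eqref{LS product II} (for $F^{\frac32}$); the key inputs are \eqref{gamma in E}, which produces $\|(2\vk-\p)\gamma\|_{E_{2\sigma,\vk}^\sigma}\lesssim \vk^{\frac12}\|q\|_{E_{2\sigma,\vk}^\sigma}^2$, and the already-established control of $u$. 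This yields $A \lesssim \vk^{\frac12}\|q\|_{F_\kappa^{\frac12}(h)} + \|q\|_{E_{2\sigma,\vk}^\sigma} B$. For $B$, I would use the identity $-\gamma = 4\, g_{12}\cdot\tfrac{g_{21}}{2+\gamma}$ from \eqref{useful IDs} and apply \eqref{LS product I}, \eqref{LS product II} with $f=g_{12}$, $g=g_{21}/(2+\gamma)$: one factor contributes its $F$-norm (controlled by $A$ and by step one, respectively), and the other contributes $\|(2\vk-\p)\tfrac{g_{21}}{2+\gamma}\|_{E_{2\sigma,\vk}^\sigma}\lesssim \vk^{\frac12}\|q\|_{E_{2\sigma,\vk}^\sigma}$ from \eqref{g12 frac in E}, yielding $B \lesssim \|q\|_{E_{2\sigma,\vk}^\sigma}\bigl(A + \vk^{\frac12}\|q\|_{F_\kappa^{\frac12}(h)}\bigr)$. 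Feeding this back into the $A$-bound gives $A \lesssim \vk^{\frac12}\|q\|_{F_\kappa^{\frac12}(h)} + \|q\|_{E_{2\sigma,\vk}^\sigma}^2 A$, which closes for $\delta$ small; substituting back then produces $B$ with the desired extra factor of $\|q\|_{E_{2\sigma,\vk}^\sigma}$.

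The main obstacle is the entanglement between $A$ and $B$: unlike the $E$-space analysis, where $\|\gamma\|_B$ and $\|g_{12}/(2+\gamma)\|_B$ decouple from the derivative-level estimates via the algebra property, the local smoothing product estimates \eqref{LS product I}--\eqref{LS product II} pair each factor's $F$-norm with the \emph{other} factor's $E$-norm of $(2\vk-\p)$-type. Consequently the naive one-by-one chain breaks and one must run a two-variable bootstrap. This is the reason for carefully preserving the sharp $\|q\|_{E_{2\sigma,\vk}^\sigma}$-factor in \eqref{gamma in LS} rather than the cruder $\delta$: absorption is possible only because the $E$-norms of $\gamma$ and $g_{12}$ themselves carry the smallness $\lesssim \delta^2$ and $\lesssim \delta$ respectively, and the Schwartz-class finiteness of $A$ and $C$ justifies the absorption step.
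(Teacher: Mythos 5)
Your proof is correct and follows essentially the same route as the paper: the same identities from \eqref{useful IDs} and \eqref{g12 deriv}, the product/transfer estimates \eqref{LS transfer}--\eqref{LS product II}, the inputs \eqref{gamma in E}, \eqref{g12 frac in E}, \eqref{g12 frac in sup norm}, and closure by absorbing the $O(\delta^2)$ self-terms. The only differences are cosmetic: you prove \eqref{g12 frac in LS} first rather than last, and you use $\gamma=-4g_{12}\tfrac{g_{21}}{2+\gamma}$ in place of the paper's $\gamma=-\tfrac12\gamma^2-2g_{12}g_{21}$, but the coupled $g_{12}$--$\gamma$ bootstrap you describe is exactly the mechanism in the paper (cf.\ \eqref{gamma by g}).
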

\begin{proof}
Using the quadratic identity \eqref{quadratic id} together with \eqref{LS transfer}, \eqref{LS product I}, and \eqref{LS product II}, followed by \eqref{g12 in E} and \eqref{gamma in E} we get
\begin{align*}
&|\vk|\|\gamma(\vk)\|_{F_\kappa^{\frac12}(h)} + \|\gamma(\vk)\|_{F_\kappa^{\frac32}(h)}\\
&\lesssim |\vk|^{-\frac12} \|(2\vk-\p)\gamma\|_{E^\sigma_{2\sigma, \vk}}\|(2\vk-\p)\gamma\|_{F_\kappa^{\frac12}(h)}\\
&\quad + |\vk|^{-\frac12} \max_{\pm \varkappa}\Bigl[|\vk|\|g_{12}\|_{E_{2\sigma, \kappa}^\sigma} + \|g_{12}'\|_{E_{2\sigma,\kappa}^{\sigma}}\Bigr]\max_{\pm \vk}\Bigl[|\vk|\|g_{12}\|_{F_\kappa^{\frac12}(h)} + \|g_{12}\|_{F_\kappa^{\frac32}(h)}\Bigr]\\
&\lesssim \|q\|_{E_{2\sigma,\vk}^\sigma}^2\|(2\vk-\p)\gamma\|_{F_\kappa^{\frac12}(h)} +\|q\|_{E_{2\sigma,\vk}^\sigma}\max_{\pm \vk}\Bigl[|\vk|\|g_{12}\|_{F_\kappa^{\frac12}(h)} + \|g_{12}\|_{F_\kappa^{\frac32}(h)}\Bigr].
\end{align*}
Using \eqref{goodness} we deduce
\begin{align}\label{gamma by g}
\|(2\vk-\p)\gamma\|_{F_\kappa^{\frac12}(h)}\lesssim \|q\|_{E_{2\sigma,\vk}^\sigma}\max_{\pm \vk}\Bigl[|\vk|\|g_{12}\|_{F_\kappa^{\frac12}(h)} + \|g_{12}\|_{F_\kappa^{\frac32}(h)}\Bigr]
\end{align}
and so \eqref{gamma in LS} will follow from \eqref{g12 in LS}.

From \eqref{gamma by g} and \eqref{g12 deriv} together with \eqref{LS transfer}, \eqref{LS product I}, and \eqref{gamma in E}, we obtain
\begin{align*}
&\max_{\pm \vk}\Bigl[|\vk|\|g_{12}\|_{F_\kappa^{\frac12}(h)} + \|g_{12}\|_{F_\kappa^{\frac32}(h)}\Bigr]\\
&\lesssim |\vk|^{\frac12}\|q(1+\gamma)\|_{F_\kappa^{\frac12}(h)}\\
&\lesssim |\vk|^{\frac12}\|q\|_{F_\kappa^{\frac12}(h)} +  \|q\|_{F_\kappa^{\frac12}(h)}  \|(2\vk-\p)\gamma\|_{E^\sigma_{2\sigma, \vk}} + \|q\|_{E^\sigma_{2\sigma, \vk}}\|(2\vk-\p)\gamma\|_{F_\kappa^{\frac12}(h)}\\
&\lesssim  |\vk|^{\frac12}\|q\|_{F_\kappa^{\frac12}(h)}\bigl(1+ \|q\|_{E^{\sigma}_{2\sigma, \vk}}^2\bigr) + \|q\|_{E_{2\sigma,\vk}^\sigma}^2 \max_{\pm \vk}\Bigl[|\vk|\|g_{12}\|_{F_\kappa^{\frac12}(h)} + \|g_{12}\|_{F_\kappa^{\frac32}(h)}\Bigr].
\end{align*}
The estimate \eqref{g12 in LS} then follows from \eqref{goodness}.

It remains to prove \eqref{g12 frac in LS}.   As a preliminary, let us pause to observe that
\begin{align*}
(2\vk-\p) \bigl(\tfrac{g_{12}}{2+\gamma}\bigr)^2 = \tfrac{2 g_{12}}{2+\gamma} \cdot (\vk-\p) \tfrac{g_{12}}{2+\gamma}.
\end{align*}
Using \eqref{E product I}, \eqref{g12 frac in sup norm}, and \eqref{g12 frac in E}, we get
\begin{align}\label{10:11}
\bigl\|(2\vk-\p)\bigl(\tfrac{g_{12}}{2+\gamma}\bigr)^2\bigr\|_{E^\sigma_{2\sigma,\vk}} 
\lesssim \bigl\|\tfrac{g_{12}}{2+\gamma} \bigr\|_B \bigl\| (\vk-\p) \tfrac{g_{12}}{2+\gamma}\bigr\|_{E^\sigma_{2\sigma,\vk}}
\lesssim |\vk|^{\frac12} \|q\|_{E_{2\sigma,\vk}^\sigma}^2,
\end{align}
while by \eqref{LS product I} and \eqref{g12 frac in E}, 
\begin{align}\label{10:12}
\bigl\|(2\vk-\p)\bigl(\tfrac{g_{12}}{2+\gamma}\bigr)^2\bigr\|_{F_\kappa^{\frac12}} 
&\lesssim |\vk|^{-\frac12}\bigl\| (2\vk-\p) \tfrac{g_{12}}{2+\gamma}\bigr\|_{E^\sigma_{2\sigma,\vk}}\bigl\|(2\vk-\p)\tfrac{g_{12}}{2+\gamma}\bigr\|_{F_\kappa^{\frac12}} \notag\\
&\lesssim  \|q\|_{E_{2\sigma,\vk}^\sigma}\bigl\|(2\vk-\p)\tfrac{g_{12}}{2+\gamma}\bigr\|_{F_\kappa^{\frac12}}.
\end{align}
Using \eqref{LS transfer} together with \eqref{useful IDs} followed by \eqref{LS product I}, \eqref{10:11}, and \eqref{10:12} we obtain
\begin{align*}
&\text{LHS\eqref{g12 frac in LS}}
	\lesssim \bigl\| (2\vk-\p)\tfrac{g_{12}}{2+\gamma} \bigr\|_{F_\kappa^{\frac12}}\\
&\quad\lesssim |\vk|^{\frac12}\|q\|_{F_\kappa^{\frac12}}+ \|q\|_{F_\kappa^{\frac12}}\bigl\|(2\vk-\p)\bigl(\tfrac{g_{12}}{2+\gamma}\bigr)^2\bigr\|_{E^\sigma_{2\sigma,\vk}} +   \|q\|_{E^\sigma_{2\sigma,\vk}}\bigl\|(2\vk-\p)\bigl(\tfrac{g_{12}}{2+\gamma}\bigr)^2\bigr\|_{F_\kappa^{\frac12}}\\
&\quad\lesssim |\vk|^{\frac12}\|q\|_{F_\kappa^{\frac12}} \bigl[ 1+ \|q\|_{E_{2\sigma,\vk}^\sigma}^2\bigr]   +\|q\|_{E_{2\sigma,\vk}^\sigma}^2\bigl\|(2\vk-\p)\tfrac{g_{12}}{2+\gamma}\bigr\|_{F_\kappa^{\frac12}}.
\end{align*}
Using \eqref{goodness}, we may absorb the second term on the right-hand side above into the left-hand side, thus settling \eqref{g12 frac in LS}.
\end{proof}

A consequence of Lemmas~\ref{l:g E} and ~\ref{l:g F} is the following:
\begin{corollary}\label{c:g}
Fix $\delta$ sufficiently small and let $Q\subset \Schw$ be a $\delta$-good set.  All functions $f$ from the following list $($and so finite linear combinations thereof $):$ 
\begin{equation}\label{f list}
\begin{gathered}
q,\quad\bar q,\\
\tfrac{2\vk\pm\p}{\sqrt\vk} g_{12}(\vk),\quad\tfrac{2\vk\pm\p}{\sqrt\vk} g_{21}(\vk),\quad \tfrac{2\vk\pm\p}{\sqrt\vk} \tfrac{g_{12}(\vk)}{2 + \gamma(\vk)},\quad\tfrac{2\vk\pm\p}{\sqrt\vk}\tfrac{g_{21}(\vk)}{2 + \gamma(\vk)},\\
\tfrac{2\kappa\pm\p}{\sqrt\kappa} g_{12}(\kappa),\quad\tfrac{2\kappa\pm\p}{\sqrt\kappa} g_{21}(\kappa),\quad \tfrac{2\kappa\pm\p}{\sqrt\kappa} \tfrac{g_{12}(\kappa)}{2 + \gamma(\kappa)},\quad\tfrac{2\kappa\pm\p}{\sqrt\kappa}\tfrac{g_{21}(\kappa)}{2 + \gamma(\kappa)},
\end{gathered}
\end{equation}
satisfy the estimates
\begin{equation}\label{admissible}
\begin{gathered}
\|f\|_{L^2}\lesssim \|q\|_{L^2},\quad \|f\|_{E_{2\sigma,\vk}^\sigma}\lesssim \|q\|_{E_{2\sigma,\vk}^\sigma},\quad\|f\|_{E_\sigma^\sigma}\lesssim \|q\|_{E_\sigma^\sigma},\\
\|f\|_{F^{\frac12}(h)}\lesssim \|q\|_{F^{\frac12}(h)},\qquad\|f\|_{F_\kappa^{\frac12}(h)}\lesssim \|q\|_{F_\kappa^{\frac12}(h)}
\end{gathered}
\end{equation}
uniformly for \(q\in Q\), \(|\vk|,|\kappa|\geq 1\), and \(h\in \R\).
\end{corollary}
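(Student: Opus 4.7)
The bounds for $q$ and $\bar q$ are trivial. For the remaining functions in \eqref{f list}, each has the form $\tfrac{2\vk_* \pm \partial}{\sqrt{\vk_*}}\,h$ where $\vk_* \in \{\vk, \kappa\}$ and $h$ is one of the four functions $g_{12}(\vk_*)$, $g_{21}(\vk_*)$, $\tfrac{g_{12}(\vk_*)}{2+\gamma(\vk_*)}$, or $\tfrac{g_{21}(\vk_*)}{2+\gamma(\vk_*)}$. Using the algebraic identity
\[
(2\vk_*+\partial)\,h = 4\vk_*\, h - (2\vk_*-\partial)\,h,
\]
I reduce to controlling $\bigl\|\tfrac{(2\vk_*-\partial)h}{\sqrt{\vk_*}}\bigr\|_X$ and $\sqrt{|\vk_*|}\|h\|_X$ in each space $X$ on the list.

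For the $E$- and $F$-type estimates, both quantities are directly controlled by the corresponding entries in Lemmas \ref{l:g E} and \ref{l:g F}. In detail, the transfer identity \eqref{E transfer} applied with the derivative parameter $\vk_*$ gives
\[
\bigl\|\tfrac{(2\vk_*-\partial)h}{\sqrt{\vk_*}}\bigr\|_{E^\sigma_{s,\kappa_\flat}} \sim |\vk_*|^{-1/2}\bigl[|\vk_*|\|h\|_{E^\sigma_{s,\kappa_\flat}} + \|h'\|_{E^\sigma_{s,\kappa_\flat}}\bigr],
\]
and Lemma \ref{l:g E} (taking $s=2\sigma,\kappa_\flat=\vk$ for $E^\sigma_{2\sigma,\vk}$ and $s=\sigma,\kappa_\flat=1$ for $E^\sigma_\sigma$) bounds the right-hand side by $\|q\|_{E^\sigma_{s,\kappa_\flat}}$. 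The $F$-analogue uses \eqref{LS transfer} together with Lemma \ref{l:g F} (with $\kappa_\flat\in\{1,\kappa\}$) in exactly the same manner, the role of $\|h'\|_X$ being played by $\|h\|_{F^{3/2}_{\kappa_\flat}(h)}$. For the $\tfrac{g_{12}}{2+\gamma}$ and $\tfrac{g_{21}}{2+\gamma}$ entries, one invokes \eqref{g12 frac in E} and \eqref{g12 frac in LS} in place of \eqref{g12 in E} and \eqref{g12 in LS}.

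The $L^2$ estimate is not directly covered by the preceding lemmas since $\sigma>0$ is fixed there, so I treat it by hand using the identities \eqref{useful IDs}. For example, $\tfrac{(2\vk_*-\partial)\,g_{12}(\vk_*)}{\sqrt{\vk_*}} = q(\gamma(\vk_*)+1)$, whose $L^2$-norm is controlled by $(1+\|\gamma\|_{L^\infty})\|q\|_{L^2} \lesssim \|q\|_{L^2}$ via \eqref{gamma in sup norm}. Combined with the elementary Fourier-multiplier bound $\|(2\vk_*-\partial)^{-1}\|_{L^2\to L^2}\leq (2|\vk_*|)^{-1}$, this yields $\sqrt{|\vk_*|}\|g_{12}(\vk_*)\|_{L^2}\lesssim \|q\|_{L^2}$, whence the $(2\vk_*+\partial)$-case follows. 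The $\tfrac{g_{12}}{2+\gamma}$ and $\tfrac{g_{21}}{2+\gamma}$ entries are handled identically using the corresponding fixed-point identities in \eqref{useful IDs} together with \eqref{g12 frac in sup norm} to absorb the cubic correction; the $g_{21}$ bounds follow by symmetry \eqref{sym} (or via \eqref{g21 deriv}).

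There is no real obstacle here: Lemmas \ref{l:g H}, \ref{l:g E}, and \ref{l:g F} have already carried out the substantive analysis, and the corollary simply consolidates these bounds into a single list of admissible functions to be cited repeatedly in the sequel. The only bookkeeping item requiring care is tracking the two independent spectral parameters ($\vk_*$ attached to $h$ and the parameter labelling the ambient Sobolev space) when invoking the lemmas, since the list in \eqref{f list} mixes the cases $\vk_*=\vk$ and $\vk_*=\kappa$.
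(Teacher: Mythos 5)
Your argument is correct and matches the paper's intent: the corollary is stated there without proof, as a direct consequence of Lemmas~\ref{l:g E} and~\ref{l:g F} combined with the transfer identities \eqref{E transfer} and \eqref{LS transfer} and the elementary rewriting $(2\vk_*+\p)h=4\vk_* h-(2\vk_*-\p)h$, exactly as you spell out. Your separate hands-on treatment of the $L^2$ bound via \eqref{useful IDs}, \eqref{gamma in sup norm}, and \eqref{g12 frac in sup norm} is the right touch, since that case is not literally a specialization of Lemma~\ref{l:g E} (whose superscript $\sigma$ is the fixed one from the $\delta$-good set).
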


As discussed in the introduction, multi-parameter local smoothing estimates are essential for our analysis.  As we are in the non-perturbative regime, the only reasonable approach to proving such estimates is via monotonicity identities.  In all examples that we are aware of, such monotonicity identities stem from a proper understanding of conserved densities and their corresponding currents.  This line of thinking leads inevitably to the problem of finding a microscopic representation for the conservation of $A(\vk;q)$.

It is invariably easy to find microscopic representations for conserved quantities that are polynomial in the underlying field (and its derivatives), such as the mass or energy. However, even in these simple cases there is no universal algorithm for finding such microscopic laws; indeed, this is an ill-posed problem --- the corresponding cohomology class does not have a unique representative.

When the conserved quantity in question is more complex, discovering a microscopic representation becomes truly challenging. All the more so when we need our representative to be coercive, if it is to be useful. This is the case for \(A(\vk;q)\), which is defined as the logarithm of a Fredholm determinant or as an infinite series of traces; see \eqref{A series}. Structurally, each of these traces is a paraproduct in $q$. Nevertheless, just such a microscopic representation was presented in \cite{tang2020microscopic} based on the density
\begin{equation}\label{rho}
\rho(\vk;q) := q\cdot \frac{ig_{21}(\vk;q) }{\sqrt\vk(2+\gamma(\vk;q))} + \bar q \cdot \frac{g_{12}(\vk;q)}{\sqrt\vk(2+\gamma(\vk;q))}.
\end{equation}

In finding \eqref{rho}, the authors of \cite{tang2020microscopic} were very much guided by the analogous form for the AKNS--ZS hierarchy discovered in \cite{harropgriffiths2020sharp}.  The analogue for KdV was found in \cite{MR3990604}, although this is of little assistance. Indeed, these few examples would lead one to believe that the answer will always be a rational function of matrix elements of the diagonal Green's function; this notion is refuted in \cite{MR4320535}.

Once these densities have been discovered, it is not fundamentally difficult to derive the corresponding current (though it may require considerable labour) because the time derivative of the Green's function may be deduced from the Lax pair representation of the flow.  For example, under the \eqref{DNLS} flow,
\begin{equation}\label{dt g12}\begin{aligned}
i \tfrac{d}{dt}g_{12} &= - \bigl[4\vk^2 -2i\vk|q|^2\bigr] g_{12} + \bigl[2\vk^{\frac32} q+i\vk^{\frac12}|q|^2q +\vk^{\frac12}q'\bigr] (1+\gamma)  \\
&=- g_{12}'' - i \bigl( 2|q|^2 g_{12}' + i q^2 g_{21}'\bigr).
\end{aligned}\end{equation}
We include this to illustrate the point made in the introduction that (unlike for NLS and mKdV) this change of variables alone does not allow us to treat $q\in L^2$.  Concretely, we note that for $q\in L^2$ one cannot make sense of the term $|q|^2q$ appearing in the former expression as a distribution.  This cannot be remedied by the other terms because they are distributions!

The following proposition gives the currents associated to the density \eqref{rho} and was proved in~\cite{tang2020microscopic}:

\begin{proposition}\label{P:micro laws} Let $Q\subset \Schw$ be $\delta$-good for $\delta$ sufficiently small.  Under the \eqref{DNLS} flow, we have that $\partial_t \rho(\vk) + \partial_x j_\DNLS(\vk)=0$ for all \(|\vk|\geq 1\), where
\begin{align}\label{DNLS current}
j_\DNLS&=(|q|^2-2i\vk)\rho  +\frac{q'g_{21} +i\bar q'g_{12}}{\sqrt\vk(2+\gamma)} +i |q|^2\notag\\
& = \tfrac{1}{\sqrt\vk}\tfrac{g_{21}}{2+\gamma}\cdot  (2\vk+\partial+i|q|^2)q - \tfrac{i}{\sqrt\vk}\tfrac{g_{12}}{2+\gamma}\cdot  (2\vk-\partial+i|q|^2)\bar q+i |q|^2.
\end{align}
Likewise, for $\kappa\geq1$, $\partial_t \rho(\vk) + \partial_x j_\diff(\vk,\kappa)=0$ under the $H- H_\kappa$ flow.  Here,
\begin{align}\label{j diff}
j_\diff(\vk,\kappa)
&=\tfrac{1}{\sqrt{\vk}} \bigl( \tfrac{g_{21}}{2+\gamma}\bigr)(\vk)\Bigl[ (2\vk+\p+i|q|^2)q - 2\kappa^{\frac52} \bigl( \tfrac{g_{12}(\kappa)}{\kappa-\vk} -\tfrac{ig_{12}(-\kappa)}{\kappa+\vk} \bigr)\Bigr]\\
&\quad -\tfrac{i}{\sqrt{\vk}} \bigl( \tfrac{g_{12}}{2+\gamma}\bigr)(\vk)\Bigl[ (2\vk-\p+i|q|^2)\bar q - 2\kappa^{\frac52} \bigl( \tfrac{g_{21}(-\kappa)}{\kappa+\vk} +\tfrac{ig_{21}(\kappa)}{\kappa-\vk} \bigr)\Bigr]\notag\\
&\quad +i|q|^2 -\tfrac{\kappa^2}{\kappa-\vk} \gamma(\kappa) + \tfrac{\kappa^2}{\kappa+\vk} \gamma(-\kappa).\notag
\end{align}
\end{proposition}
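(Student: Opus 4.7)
The proposition is a verification: the densities and currents were proposed in \cite{tang2020microscopic} (guided by the analogous AKNS--ZS structure in \cite{harropgriffiths2020sharp}); the task is to confirm the conservation laws. My approach is to derive evolution equations for the diagonal Green's function entries $g_{12}(\vk), g_{21}(\vk), \gamma(\vk)$ under each flow, insert them into $\partial_t\rho(\vk)$ via the product rule, and manipulate the result into $-\partial_x j$ using the static identities \eqref{g12 deriv}--\eqref{quadratic id} and \eqref{useful IDs}.

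\textbf{DNLS case.} The starting point is the Lax equation $\partial_t L(\vk) = [P(\vk), L(\vk)]$ for the Kaup--Newell pair, which transfers to $\partial_t G(\vk) = [P(\vk), G(\vk)]$ for the resolvent $G(\vk) = L(\vk)^{-1}$. Reading off the diagonal entries of this matrix identity yields the $t$-evolution of $g_{12}, g_{21}, \gamma$; for $g_{12}$ this is precisely \eqref{dt g12}, and the analogues for $g_{21}$ and $\gamma$ follow either by the same matrix computation or directly by applying the symmetries \eqref{sym} and the quadratic identity \eqref{quadratic id} (which, upon differentiation in $t$, expresses $\partial_t\gamma$ as a bilinear form in $\partial_t g_{12}, \partial_t g_{21}$). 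With these formulas in hand, $\partial_t\rho$ is computed by the product rule, using also that $q$ satisfies \eqref{DNLS}. The resulting expression is an algebraic combination of $q, \bar q, g_{12}, g_{21}, \gamma$ and their spatial derivatives; using \eqref{g12 deriv}--\eqref{gamma deriv} to eliminate $g_{12}'', g_{21}''$ and \eqref{useful IDs} repeatedly to simplify the $(2+\gamma)^{-1}$ factors, I would reorganize the expression as the spatial derivative of the explicit current $j_\DNLS$ in \eqref{DNLS current}. The second equality in \eqref{DNLS current} follows algebraically from \eqref{g12 deriv}--\eqref{gamma deriv} and is the form best suited to subsequent manipulations.

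\textbf{Difference flow.} The $H_\kappa$-flow is Hamiltonian with Lax pair $P_\kappa$ constructed from the resolvent at parameter $\kappa$; concretely, the vector field for $q$ comes from $(\delta H_\kappa/\delta \bar q)'$, which by \eqref{A derivs} involves $g_{12}(\pm\kappa)$ (both signs arising because $H_\kappa = 4\kappa\Re A$). The commutator $[P_\kappa, G(\vk)]$ contains factors $(L(\vk) - L(\kappa))^{-1}$ that, via the resolvent identity, split under partial fractions into pieces carrying the factors $(\kappa-\vk)^{-1}$ and $(\kappa+\vk)^{-1}$ visible in \eqref{j diff}. This calculation produces evolution equations for $g_{12}(\vk), g_{21}(\vk), \gamma(\vk)$ under the $H_\kappa$ flow, which in turn yield a microscopic law $\partial_t\rho(\vk) + \partial_x j_{H_\kappa}(\vk,\kappa) = 0$ by the same verification procedure as in the DNLS case. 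Subtracting yields $\partial_t\rho(\vk) + \partial_x\bigl[j_\DNLS(\vk) - j_{H_\kappa}(\vk,\kappa)\bigr] = 0$ under the $H-H_\kappa$ flow, and a final algebraic check confirms that $j_\DNLS - j_{H_\kappa}$ equals the $j_\diff$ displayed in \eqref{j diff}.

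\textbf{Main obstacle.} The genuine difficulty is not conceptual but algebraic: after computing $\partial_t\rho$, one is faced with a long sum of rational expressions in $(g_{12}, g_{21}, \gamma, q, \bar q)$ and their derivatives, and one must recognize it as an \emph{exact} spatial derivative. As the authors stress, this is cohomological in nature --- no algorithm produces the current, and there is nontrivial latitude in the choice. The key technical trick that organizes the calculation is the reformulation \eqref{useful IDs}, which expresses everything in terms of $g_{12}/(2+\gamma)$ and $g_{21}/(2+\gamma)$; in these variables the algebra of \eqref{quadratic id} becomes trivial and the candidate current in \eqref{DNLS current}, \eqref{j diff} presents itself naturally. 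The partial-fraction bookkeeping in the $H_\kappa$ step is the most intricate part but is rendered manageable by the same rewriting.
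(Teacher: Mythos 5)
Your plan matches the route the paper itself indicates: the paper does not actually write out a proof of Proposition~\ref{P:micro laws}, but recapitulates it from \cite{tang2020microscopic}, supplying only the sample computation \eqref{dt g12} and the remark that the time derivative of the Green's function is deduced from the Lax pair representation of the flow. Your outline --- obtain $\partial_t g_{12}(\vk)$, $\partial_t g_{21}(\vk)$, $\partial_t\gamma(\vk)$ from the Lax equation (with \eqref{sym} and the $t$-differentiated \eqref{quadratic id} supplying the remaining entries, the division by $1+\gamma$ being harmless on a $\delta$-good set by \eqref{gamma in sup norm}), substitute into $\partial_t\rho$, and reorganize via \eqref{g12 deriv}--\eqref{gamma deriv} and \eqref{useful IDs}; then, for the difference flow, read the $H_\kappa$ vector field off \eqref{A derivs} (both signs $\pm\kappa$ entering through $\Re A$) and use partial fractions in the spectral parameter to produce the $(\kappa\mp\vk)^{-1}$ terms --- is precisely this strategy. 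The one caveat is that what you have written is a plan rather than a proof: the entire content of the proposition is the specific algebraic identity, and you have not carried out, even for a single representative term, the computation showing that $\partial_t\rho$ is the exact spatial derivative of the displayed currents. Since the paper likewise omits this verification and defers to the reference, there is no divergence of method to report, but a self-contained proof would require executing the algebra you have only described.
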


In our application of the microscopic conservation laws to the proof of local smoothing, we require a detailed understanding of the structure of the lower order terms (in powers of \(q\)) of the currents. To this end, we adopt the notation from~\cite{harropgriffiths2020sharp} of using square brackets to identify specific terms in power series expansions:
\begin{align*}
g_{12}\sbrack{2\ell+1}(\kappa) &= \sgn(\kappa)\kappa^{\frac12}(i\kappa)^\ell \bigl\langle \delta_x, \bigl[ (\kappa-\partial)^{-1} q(\kappa+\partial)^{-1}\bar q\bigr]^{\ell} (\kappa-\partial)^{-1}q(\kappa+\partial)^{-1}\delta_x\bigr\rangle
\end{align*}
so that
\begin{equation}\label{sbrack notation}
g_{12}\sbrack{\geq 2\ell+1}(\kappa) = \sum_{m=\ell}^\infty g_{12}\sbrack{2m+1}(\kappa)
\qtq{and}
 g_{12}(\kappa) = \sum_{\ell=0}^\infty g_{12}\sbrack{2\ell+1}(\kappa).
\end{equation}
The terms \(g_{21}\sbrack{2\ell+1}\) and \(\gamma\sbrack{2\ell}\) admit correspondingly simple definitions; however, we will also use this notation on more complicated analytic functions of $q$ such as \(\bigl(\frac{g_{12}}{2+\gamma}\bigr)\sbrack{2\ell+1}\), \(\rho\sbrack{2\ell}\), and \(j_{\DNLS}\sbrack{2\ell}\).

Using \eqref{useful IDs}, we can derive the following explicit expressions:
\begin{align}
g_{12}^{[1]}= \tfrac{\sqrt\kappa q}{2\kappa-\partial} ,\qquad  g_{21}^{[1]}= \tfrac{-i\sqrt\kappa \bar q}{2\kappa+\partial}, \qquad \gamma^{[2]} = 2i\kappa \tfrac{q}{2\kappa-\partial} \cdot \tfrac{\bar q}{2\kappa+\partial}, \label{linear}\\
g_{12}^{[3]}= \tfrac{2i\kappa^{\frac32}}{2\kappa-\partial}\bigl [ q \cdot \tfrac{q}{2\kappa-\partial} \cdot \tfrac{\bar q}{2\kappa+\partial} \bigr] \qtq{and} g_{21}^{[3]}= \tfrac{2\kappa^{\frac32}}{2\kappa+\partial}\bigl [ \bar q \cdot \tfrac{q}{2\kappa-\partial} \cdot \tfrac{\bar q}{2\kappa+\partial} \bigr], \label{cubic}
\end{align}
as well as
\begin{align}
\tfrac{1}{\sqrt\kappa}\bigl(\tfrac{g_{12}}{2+\gamma}\bigr)^{[\geq 3]} &= \tfrac{2i}{2\kappa - \p}\Bigl[\bar q \bigl(\tfrac{g_{12}}{2 + \gamma}\bigr)^2\Bigr], \label{g12 frac geq3}\\
\tfrac{1}{\sqrt\kappa}\bigl(\tfrac{g_{21}}{2+\gamma}\bigr)^{[\geq 3]} &= \tfrac{-2}{2\kappa + \p}\Bigl[q \bigl(\tfrac{g_{21}}{2 + \gamma}\bigr)^2\Bigr].\label{g21 frac geq3}
\end{align}

To represent our expansions for higher order terms in a concise form, we introduce a space of paraproducts. We begin by introducing a generating set of operators:
\begin{align}\label{G}
\mathcal G:=\Bigl\{\mathrm{Id}\Bigr\} \cup \Bigl\{\tfrac{2\vk}{2\vk\pm \p}:\, |\vk|\geq 1\Bigr\}.
\end{align}
One may regard the elements of $\mathcal G$ as letters in an alphabet.  We then define the set $\mathcal G^\star$ to be the set of (finite) words built from the alphabet $\mathcal G$.
Specifically, elements in $\mathcal G^\star$ are finite products of the form
\begin{equation}\label{spelling}
L_1L_2\dots L_n\quad\text{where \(L_i\in \mathcal G\)}.
\end{equation}
Note that each factor $L_i$ may have a different parameter $\vk_i$.

\begin{example}
For all \(|\vk|\geq 1\), the operator
\(
\frac{4\vk^2}{4\vk^2 - \p^2} = \frac{2\vk}{2\vk - \p}\tfrac{2\vk}{2\vk+\p} \in \mathcal G^\star
\)
is a word over the alphabet \(\mathcal G\).
\end{example}

We say that the paraproduct $m$ belongs to the class $S(1)$ if it admits a representation as a finite linear combination of paraproducts $m_i$ satisfying
\[
m_i[f] = T_if \quad\text{where $T_i\in \mathcal G^\star$}. 
\]

We will frequently consider families of paraproducts depending continuously on one or more parameters. For example, all the paraproducts in Proposition~\ref{p:Asymptotics} depend continuously on the parameter \(\vk\). When this is the case, we will say such a family is in \(S(1)\) only if the coefficients in this linear combination are uniformly bounded in the parameter(s).

\begin{example}\label{EXs1}
For all \(|\vk|\geq 1\), the operators
\[
\tfrac\p{2\vk+\p} = \mathrm{Id} - \tfrac{2\vk}{2\vk+\p}\qtq{and}\tfrac{2\vk - \p}{2\vk + \p} = 2\tfrac{2\vk}{2\vk + \p} - \mathrm{Id}
\]
are elements of \(S(1)\).
\end{example}

For \(n\geq 2\), we inductively define $S(n)$ as linear combinations of paraproducts that admit the representation
\begin{equation}\label{def: S(n)}
m[f_1,\dots,f_n] = T\Biggl[\,\prod_{j=1}^J m_j\bigl[f_{\sigma(n_{j-1} + 1)},\dots,f_{\sigma(n_j)}\bigr]\Biggr],
\end{equation}
where \(0=n_0<\dots<n_J=n\) are integers, \(m_j\in S(n_j-n_{j-1})\), \(T\in \mathcal G^\star\), and \(\sigma\in \mathfrak S_n\) is a permutation. The product appearing in \eqref{def: S(n)} is a pointwise product of functions (and not a composition of operators). As in the case \(n=1\), we require the coefficients in the linear combination to be uniformly bounded in any parameters.

This definition is clearly symmetric, in the sense that whenever \(m\in S(n)\) and \(\sigma\in \mathfrak S_n\) we have
\begin{equation}\label{parasym}
 m\bigl[f_{\sigma(1)},\dots,f_{\sigma(n)}\bigr]\in S(n).
\end{equation}
Further, by induction on \(n\), our definition is consistent with interior products, in the sense that if \(2\leq k\leq n\), \(m_1\in S(k)\), and \(m_2\in S(n+1-k)\) then
\begin{equation}\label{paraint}
 m_1\bigl[f_1,\dots,f_{k-1},m_2[f_{k},\dots,f_n]\bigr]\in S(n).
\end{equation}

We illustrate our paraproduct classes with an example motivated by \eqref{cubic}:
\begin{example}\label{EX0}
For all \(|\vk|\geq 1\), the paraproduct
\[
m[f_1,f_2,f_3] = \tfrac{8\vk^3}{2\vk - \p}\Bigl[f_1\tfrac{f_2}{2\vk - \p}\tfrac{f_3}{2\vk + \p}\Bigr]
\]
is an element of \(S(3)\) that can be expressed in the form \eqref{def: S(n)} with \(\sigma = \mathrm{Id}\),
\[
T = \tfrac{2\vk}{2\vk-\p},\qquad m_1[f_1] = f_1,\qquad m_2[f_2] = \tfrac{2\vk}{2\vk - \p}f_2,\qquad m_3[f_3] = \tfrac{2\vk}{2\vk + \p}f_3.
\]
\end{example}

While paraproducts are often regarded as multilinear objects, our use of them here is closer to that of a polynomial in a single variable, namely, $q$.  More accurately, our paraproducts will solely be populated by the objects appearing in the list presented in Corollary~\ref{c:g}.  Moreover, in estimating these paraproducts, we will only be employing the information \eqref{admissible} about these objects.  With these considerations in mind, we will frequently employ the expedient of writing paraproducts as \(m[f,\dots,f]\).  Similarly, if an expression involves paraproducts \(m_1,\dots,m_k\in S(n)\), we simply denote each paraproduct by \(m\) as in, e.g., \eqref{g12 3 expansion} below.

We demonstrate this notation with two examples from the proof of Proposition~\ref{p:Asymptotics} below:

\begin{example}\label{EX1} If \(|\vk|\geq 1\) the paraproduct
\[
m[f_1,f_2,f_3] = 4\vk^2\tfrac{f_1}{2\vk + \p}\tfrac{f_2}{2\vk+\p}f_3= \tfrac{2\vk}{2\vk + \p}f_1\cdot \tfrac{2\vk}{2\vk+\p}f_2\cdot f_3
\]
is an element of \(S(3)\). We may then write
\begin{align}
\tfrac{8i\vk^3}{2\vk - \p}\Bigl[\tfrac q{4\vk^2 - \p^2}\bigl|\tfrac{q'}{4\vk^2 - \p^2}\bigr|^2\Bigr] &= \tfrac{2i\vk}{2\vk-\p}m\Bigl[\tfrac q{2\vk  -\p},\tfrac{q'}{2\vk - \p},\tfrac{\bar q'}{4\vk^2 - \p^2}\Bigr]\label{9:50}\\
&= \tfrac{2i\vk}{2\vk-\p}m\Bigl[\tfrac{f}{2\vk  -\p},\tfrac{f'}{2\vk - \p},\tfrac{f'}{4\vk^2 - \p^2}\Bigr],\notag
\end{align}
where each \(f\) represents either \(q\) or \(\bar q\), which are both elements of the list \eqref{f list}. 
\end{example}

One should observe that using the definition of $S(3)$, the expression \eqref{9:50} could be further simplified to read
$$
m_1\Bigl[\tfrac{f}{2\vk  -\p},f,\tfrac{f'}{4\vk^2 - \p^2}\Bigr] \qtq{or even} (2\vk)^{-2} m_2\bigl[f,f,f\bigr]
$$ 
with $m_1,m_2\in S(3)$. However, we will need the additional structure appearing in \eqref{9:50} when we apply Proposition~\ref{p:Asymptotics}.

\begin{example}\label{EX2} If \(|\vk|\geq 1\) then, recalling Example~\ref{EXs1}, the paraproduct
\[
m[f_1,f_2,f_3] = 2i\bigl(\tfrac{2\vk-\p}{2\vk+\p}f_1\bigr)f_2f_3
\]
is again an element of \(S(3)\). From \eqref{linear}, we then have
\begin{align}\label{9:51}
\tfrac1{2\vk - \p}\bigl[\gamma\sbrack 2(\vk) \tfrac{q''}{4\vk^2 - \p^2}\bigr] &= \tfrac{2i\vk}{2\vk - \p}\bigl[\tfrac{\bar q}{2\vk+\p} \tfrac q{2\vk - \p} \tfrac{q''}{4\vk^2 - \p^2}\bigr]\\
& = \tfrac{\vk}{2\vk-\p}m\Bigl[\tfrac{\bar q}{2\vk  -\p},\tfrac{q}{2\vk - \p},\tfrac{q''}{4\vk^2 - \p^2}\Bigr] \notag\\
&=  \tfrac{\vk}{2\vk-\p}m\Bigl[\tfrac f{2\vk  -\p},\tfrac f{2\vk - \p},\tfrac{f''}{4\vk^2 - \p^2}\Bigr],\notag
\end{align}
where each \(f\) represents an element of the list \eqref{f list}.
\end{example}

Just as in the case of \eqref{9:50}, one could further simplify \eqref{9:51} to read
$$
m_1\Bigl[\tfrac{f}{2\vk  -\p},\tfrac{f}{2\vk  -\p},f\Bigr] \qtq{or} (2\vk)^{-2} m_2\bigl[f,f,f\bigr]
$$ 
with $m_1,m_2\in S(3)$.  The additional structure in \eqref{9:51} will be exploited later.

As a first application of our paraproducts we have the following:
\begin{lemma}\label{L3:11}
For \(\ell\geq 1\) and $|\vk|\geq 1$ we have the representations
\begin{align}
\tfrac1{\sqrt\vk}g_{12}\sbrack{2\ell+1}(\vk) &= \tfrac\vk{2\vk - \p}m\Bigl[\underbrace{f,\dots,f}_\ell,\underbrace{\tfrac f{2\vk - \p},\dots,\tfrac f{2\vk - \p}}_{\ell+1}\Bigr],\label{paraproduct for g}\\
\tfrac1{\sqrt\vk}g_{12}\sbrack{\geq 2\ell+1}(\vk) &= \tfrac\vk{2\vk - \p}m\Bigl[\underbrace{f,\dots,f}_\ell,\underbrace{\tfrac f{2\vk - \p},\dots,\tfrac f{2\vk - \p}}_{\ell+1}\Bigr],\label{paraproduct for g geq}\\
\tfrac1{\sqrt\vk}\bigl(\tfrac{g_{21}}{2+\gamma}\bigr)\sbrack{2\ell+1}(\vk) &= \tfrac\vk{2\vk + \p}m\Bigl[\underbrace{f,\dots,f}_\ell,\underbrace{\tfrac f{2\vk - \p},\dots,\tfrac f{2\vk - \p}}_{\ell+1}\Bigr],\label{paraproduct for frac}\\
\tfrac1{\sqrt\vk}\bigl(\tfrac{g_{21}}{2+\gamma}\bigr)\sbrack{\geq 2\ell+1}(\vk) &= \tfrac\vk{2\vk + \p}m\Bigl[\underbrace{f,\dots,f}_\ell,\underbrace{\tfrac f{2\vk - \p},\dots,\tfrac f{2\vk - \p}}_{\ell+1}\Bigr],\label{paraproduct for frac geq}
\end{align}
where \(m\in S(2\ell+1)\) and each $f$ represents an element from the list \eqref{f list}. Similar representations hold for $g_{21}$ and $\frac{g_{12}}{2+\gamma}$.
\end{lemma}

\begin{proof}
We prove all four identities simultaneously by strong induction on \(\ell\).

When \(\ell = 1\), the identity \eqref{paraproduct for g} follows from \eqref{cubic}. Using \eqref{useful IDs} and \eqref{linear}, we may write
$$
\tfrac1{\sqrt\vk}\bigl(\tfrac{g_{21}}{2+\gamma}\bigr)\sbrack{3}(\vk) = \tfrac{\vk}{2(2\vk+\p)} \Bigl[q \cdot \tfrac{ \bar q}{2\vk+\p}\cdot \tfrac{ \bar q}{2\vk+\p}\Bigr],
$$
which gives \eqref{paraproduct for frac} when $\ell=1$. Using \eqref{useful IDs} we may also write
\begin{align*}
\tfrac1{\sqrt\vk}g_{12}\sbrack{\geq 3}(\vk) = -\tfrac{4\vk}{2\vk-\p}\Bigl[ q\cdot \tfrac1{2\vk-\p}\bigl(\tfrac{2\vk-\p}{\sqrt\vk} g_{12}\bigr)\cdot \tfrac1{2\vk+\p}\bigl(\tfrac{2\vk+\p}{\sqrt\vk}\tfrac{g_{21}}{2+\gamma}\bigr)\Bigr],\\
\tfrac1{\sqrt\vk}\bigl(\tfrac{g_{21}}{2+\gamma}\bigr)\sbrack{\geq 3}(\vk)= -\tfrac{2\vk}{2\vk+\p}\Bigl[ q\cdot \tfrac1{2\vk+\p}\bigl(\tfrac{2\vk+\p}{\sqrt\vk} \tfrac{g_{21}}{2+\gamma}\bigr)\cdot \tfrac1{2\vk+\p}\bigl(\tfrac{2\vk+\p}{\sqrt\vk}\tfrac{g_{21}}{2+\gamma}\bigr)\Bigr];
\end{align*}
the identities \eqref{paraproduct for g geq} and \eqref{paraproduct for frac geq} for $\ell=1$ then follow from Corollary~\ref{c:g}.

Now assume that \eqref{paraproduct for g}--\eqref{paraproduct for frac geq} are true for all \(0\leq m\leq \ell-1\). Using \eqref{useful IDs} we see that
\begin{align*}
\tfrac1{\sqrt\vk}g_{12}\sbrack{2\ell+1}(\vk) &= -\sum_{n=0}^{\ell-1}\tfrac{4\vk}{2\vk - \p}\Bigl[q\,\tfrac1{\sqrt\vk}g_{12}\sbrack{2n+1}\,\tfrac1{\sqrt\vk}\bigl(\tfrac{g_{21}}{2+\gamma}\bigr)\sbrack{2(\ell-1 - n)+1}\Bigr],\\
\tfrac1{\sqrt\vk}g_{12}\sbrack{\geq 2\ell+1}(\vk) &= -\sum_{n=0}^{\ell-2}\tfrac{4\vk}{2\vk - \p}\Bigl[q\,\tfrac1{\sqrt\vk}g_{12}\sbrack{2n+1}\,\tfrac1{\sqrt\vk}\bigl(\tfrac{g_{21}}{2+\gamma}\bigr)\sbrack{\geq 2(\ell-1 - n)+1}\Bigr]\\
&\quad - \tfrac{4\vk}{2\vk - \p}\Bigl[q\tfrac1{\sqrt\vk}g_{12}\sbrack{\geq 2\ell-1}\tfrac1{\sqrt\vk}\bigl(\tfrac{g_{21}}{2+\gamma}\bigr)\Bigr],\\
\tfrac1{\sqrt\vk}\bigl(\tfrac{g_{21}}{2+\gamma}\bigr)\sbrack{2\ell+1}(\vk) &= -\sum_{n=0}^{\ell-1}\tfrac{2\vk}{2\vk + \p}\Bigl[q\,\tfrac1{\sqrt\vk}\bigl(\tfrac{g_{21}}{2+\gamma}\bigr)\sbrack{2n+1}\,\tfrac1{\sqrt\vk}\bigl(\tfrac{g_{21}}{2+\gamma}\bigr)\sbrack{2(\ell-1 - n)+1}\Bigr],\\
\tfrac1{\sqrt\vk}\bigl(\tfrac{g_{21}}{2+\gamma}\bigr)\sbrack{\geq 2\ell+1}(\vk) &= -\sum_{n=0}^{\ell-2}\tfrac{2\vk}{2\vk + \p}\Bigl[q\,\tfrac1{\sqrt\vk}\bigl(\tfrac{g_{12}}{2+\gamma}\bigr)\sbrack{2n+1}\,\tfrac1{\sqrt\vk}\bigl(\tfrac{g_{21}}{2+\gamma}\bigr)\sbrack{\geq 2(\ell-1 - n)+1}\Bigr]\\
&\quad - \tfrac{2\vk}{2\vk - \p}\Bigl[q\tfrac1{\sqrt\vk}\bigl(\tfrac{g_{21}}{2+\gamma}\bigr)\sbrack{\geq 2\ell-1}\tfrac1{\sqrt\vk}\bigl(\tfrac{g_{21}}{2+\gamma}\bigr)\Bigr],
\end{align*}
and hence \eqref{paraproduct for g} through \eqref{paraproduct for frac geq} follow from the inductive hypothesis, \eqref{parasym}, and \eqref{paraint}.
\end{proof}

As a second application, we have the following:

\begin{proposition}\label{p:Asymptotics}
Denote \(u = \frac q{4\vk^2 - \p^2}\). Then we have the representations
\begin{align}\label{g12 3 expansion}
\tfrac1{\sqrt\vk}g_{12}\sbrack 3(\vk) &= 16i\vk^4 |u|^2u + 24i\vk^3|u|^2u' +\tfrac\vk{2\vk - \p}\,m\Bigl [\tfrac{f}{2\vk  -\p},\tfrac{f}{2\vk - \p},\tfrac{f''}{4\vk^2 - \p^2}\Bigr] \notag\\
&\quad+ \tfrac\vk{2\vk - \p}\,m\Bigl [\tfrac{f}{2\vk  -\p},\tfrac{f'}{2\vk - \p},\tfrac{f'}{4\vk^2 - \p^2}\Bigr]\notag\\
&= 16i\vk^4 |u|^2u + 24i\vk^3|u|^2u' + 8i\vk^2|u|^2u''\\
&\quad + 8i\vk^2|u'|^2u + 12i\vk^2(u')^2\bar u + \tfrac1{2\vk - \p}\,m\Bigl [\tfrac{f}{2\vk  -\p},\tfrac{f'}{2\vk - \p},\tfrac{f''}{4\vk^2 - \p^2}\Bigr]\notag\\
&\quad + \tfrac{\p}{2\vk - \p}\,m\Bigl [\tfrac f{2\vk - \p},\tfrac{f}{2\vk - \p},\tfrac{f''}{4\vk^2 - \p^2}\Bigr] + \tfrac{\p}{2\vk - \p}\,m\Bigl [\tfrac f{2\vk - \p},\tfrac{f'}{2\vk - \p},\tfrac{f'}{4\vk^2 - \p^2}\Bigr],\notag
\end{align}
\begin{align}\label{gamma 4 expansion}
&\gamma\sbrack 4(\vk)= - 96\vk^6|u|^4 - 96\vk^5 |u|^2\bigl(u'\bar u - u\bar u'\bigr)\\
&\qquad \qquad + m\Bigl[\tfrac f{2\vk - \p},\tfrac f{2\vk - \p},\tfrac{f'}{2\vk - \p},\tfrac{f'}{2\vk - \p}\Bigr] + m\Bigl[f,\tfrac f{2\vk - \p},\tfrac{f}{2\vk - \p},\tfrac{f''}{4\vk^2 - \p^2}\Bigr],\notag
\end{align}
\begin{align}\label{g12 5 expansion}
\tfrac1{\sqrt\vk}g_{12}\sbrack 5(\vk) &= - 192\vk^7 |u|^4u - 480 \vk^6 |u|^4u' + \tfrac1{2\vk - \p}m\Bigl[f,\tfrac f{2\vk - \p},\tfrac f{2\vk - \p},\tfrac{f'}{2\vk - \p},\tfrac{f'}{2\vk - \p}\Bigr]\notag\\
&\quad + \tfrac1{2\vk - \p}m\Bigl[f,f,\tfrac f{2\vk - \p},\tfrac{f}{2\vk - \p},\tfrac{f''}{4\vk^2 - \p^2}\Bigr],
\end{align}
\begin{align}
&\gamma\sbrack 6(\vk) = -1280i \vk^9|u|^6 + m\Bigl[f,f,\tfrac f{2\vk - \p},\tfrac f{2\vk - \p},\tfrac f{2\vk - \p},\tfrac{f'}{2\vk - \p}\Bigr],\label{gamma 6 expansion}\\
&\tfrac1{\sqrt\vk}g_{12}\sbrack 7(\vk) = - 2560i\vk^{10} |u|^6 u + \tfrac1{2\vk - \p}m\Bigl[f,f,f,\tfrac f{2\vk - \p},\tfrac f{2\vk - \p},\tfrac f{2\vk - \p},\tfrac{f'}{2\vk - \p}\Bigr].\label{g12 7 expansion}
\end{align}

Further, we have the representations
\begin{align}\label{g21 frac 3 expansion}
\tfrac1{\sqrt\vk}\bigl(\tfrac{g_{21}}{2 + \gamma}\bigr)\sbrack 3(\vk)
&=4\vk^4|u|^2\bar u+ \tfrac{\vk}{2\vk + \p}\,m\Bigl [\tfrac{f}{2\vk  -\p},\tfrac{f}{2\vk  -\p},\tfrac{f'}{2\vk - \p}\Bigr]\notag\\
&=4\vk^4|u|^2\bar u - 8\vk^3|u|^2\bar u' - 2\vk^3\bar u^2 u' + \tfrac1{2\vk + \p}\,m\Bigl [f,\tfrac{f}{2\vk  -\p},\tfrac{f''}{4\vk^2 - \p^2}\Bigr]\\
&\quad + \tfrac1{2\vk + \p}\,m\Bigl [\tfrac f{2\vk -\p},\tfrac{f'}{2\vk  -\p},\tfrac{f'}{2\vk - \p}\Bigr]\notag\\
&= 4\vk^4|u|^2\bar u - 8\vk^3|u|^2\bar u' - 2\vk^3\bar u^2 u' +4\vk^2|u|^2\bar u''\notag\\
&\quad + 6\vk^2|u'|^2\bar u + 5\vk^2(\bar u')^2u + \tfrac1{2\vk + \p}\,m\Bigl [\tfrac{f}{2\vk  -\p},\tfrac{f'}{2\vk - \p},\tfrac{f''}{4\vk^2 - \p^2}\Bigr]\notag\\
&\quad + \tfrac{\p}{2\vk + \p}\,m\Bigl [\tfrac f{2\vk - \p},\tfrac{f}{2\vk - \p},\tfrac{f''}{4\vk^2 - \p^2}\Bigr] + \tfrac{\p}{2\vk + \p}\,m\Bigl [\tfrac f{2\vk - \p},\tfrac{f'}{2\vk - \p},\tfrac{f'}{4\vk^2 - \p^2}\Bigr],\notag
\\
\label{g21 frac 5 expansion}
\tfrac1{\sqrt\vk}\bigl(\tfrac{g_{21}}{2 + \gamma}\bigr)\sbrack 5(\vk) 
&= 32i\vk^7|u|^4\bar u+ \tfrac\vk{2\vk + \p}m\Bigl[f,\tfrac f{2\vk - \p},\tfrac f{2\vk - \p},\tfrac{f}{2\vk - \p},\tfrac{f'}{2\vk - \p}\Bigr]\notag\\
&= 32i\vk^7|u|^4\bar u -128i\vk^6|u|^4\bar u' - 48i\vk^6|u|^2\bar u^2u'\\
&\quad+\tfrac1{2\vk + \p}m\Bigl[f,f,\tfrac f{2\vk - \p},\tfrac{f}{2\vk - \p},\tfrac{f''}{4\vk^2 - \p^2}\Bigr]\notag\\
&\quad + \tfrac1{2\vk + \p}m\Bigl[f,\tfrac f{2\vk - \p},\tfrac f{2\vk - \p},\tfrac{f'}{2\vk - \p},\tfrac{f'}{2\vk - \p}\Bigr],\notag
\\
\label{g21 frac 7 expansion}
\tfrac1{\sqrt\vk}\bigl(\tfrac{g_{21}}{2+\gamma}\bigr)\sbrack 7(\vk)&= -320 \vk^{10}|u|^6 \bar u + \tfrac1{2\vk + \p}m\Bigl[f,f,f,\tfrac f{2\vk - \p},\tfrac f{2\vk - \p},\tfrac f{2\vk - \p},\tfrac{f'}{2\vk - \p}\Bigr].
\end{align}

Throughout, the paraproduct \(m\) lies in \(S(n)\) for an appropriate integer \(n\), each $f$ represents an element from the list \eqref{f list}, and identical expressions hold with \(\vk\) replaced by \(-\vk\).
\end{proposition}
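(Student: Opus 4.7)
The plan is to extract the explicit leading-in-$\vk$ polynomials by systematic use of the substitution $u := q/(4\vk^2 - \partial^2)$ and the telescoping identities
\[
\tfrac{2\vk}{2\vk - \partial} = 1 + \tfrac{\partial}{2\vk - \partial}, \qquad \tfrac{2\vk}{2\vk + \partial} = 1 - \tfrac{\partial}{2\vk + \partial}.
\]
The substitution yields $q = (2\vk - \partial)(2\vk + \partial)u$, $\tfrac{q}{2\vk - \partial} = 2\vk u + u'$, and $\tfrac{\bar q}{2\vk + \partial} = 2\vk \bar u - \bar u'$, so substituting in the explicit formula \eqref{cubic} for $g_{12}\sbrack{3}$ turns its argument into a polynomial in $u, \bar u$ and their derivatives with coefficients of definite powers of $\vk$. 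Iterating the telescoping identity on the outer operator $\tfrac{2i\vk}{2\vk - \partial}$ then peels off the explicit $\vk^k$-polynomial terms in \eqref{g12 3 expansion} one order at a time; what remains after each peel is either of the form $\tfrac{\partial^j}{2\vk - \partial}[\textrm{polynomial in $u$ slots}]$ or has one factor absorbed into $\tfrac{1}{4\vk^2 - \partial^2}$, both of which lie in the symbol class $S(3)$. Estimate \eqref{g21 frac 3 expansion} is derived identically starting from \eqref{g21 frac geq3}, using that the leading piece of $\tfrac{g_{21}}{2+\gamma}$ is $\tfrac{g_{21}\sbrack{1}}{2}$.

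For the higher-order expansions \eqref{g12 5 expansion}, \eqref{g12 7 expansion}, \eqref{g21 frac 5 expansion}, and \eqref{g21 frac 7 expansion}, I use the recursion obtained in the proof of \eqref{paraproduct for g}--\eqref{paraproduct for frac geq}: each $g_{12}\sbrack{2\ell+1}$ is $\tfrac{\vk}{2\vk - \partial}$ applied to a sum of $q$ times a product of a lower-order $g_{12}\sbrack{2n+1}$ with a lower-order $(\tfrac{g_{21}}{2+\gamma})\sbrack{2m+1}$ satisfying $n + m + 1 = \ell$, and analogously for $(\tfrac{g_{21}}{2+\gamma})\sbrack{2\ell+1}$. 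Substituting the leading-order expansions already obtained and iterating the telescoping identity one more time extracts the stated $\vk^{2\ell+2}|u|^{2\ell}u$ and $\vk^{2\ell+1}|u|^{2\ell}u'$ terms; all other contributions are absorbed into the displayed symbol classes $S(2\ell+1)$.

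The $\gamma$ expansions \eqref{gamma 4 expansion} and \eqref{gamma 6 expansion} follow from the quadratic identity $\gamma(2+\gamma) = -4 g_{12} g_{21}$ (cf.~\eqref{useful IDs}), which at order $2\ell$ reads
\[
\gamma\sbrack{2\ell} = -2\!\!\!\!\sum_{j+k=\ell-1}\!\!\!\! g_{12}\sbrack{2j+1}\,g_{21}\sbrack{2k+1} \;-\; \tfrac{1}{2}\!\!\!\!\sum_{\substack{j+k=\ell\\ j,k\geq 1}}\!\!\!\! \gamma\sbrack{2j}\,\gamma\sbrack{2k}.
\]
Plugging in the expansions just derived and collecting powers of $\vk$ produces the explicit monomials: for instance, the $\vk^6|u|^4$ coefficient in $\gamma\sbrack{4}$ comes from combining $-2 g_{12}\sbrack{1} g_{21}\sbrack{3}$ and its symmetric counterpart (each contributing $-64\vk^6|u|^4$ from the product $2\vk^{3/2}u \cdot 16\vk^{9/2}|u|^2\bar u$) with $-\tfrac12(\gamma\sbrack{2})^2 \approx +32\vk^6|u|^4$, giving the stated coefficient $-96\vk^6$. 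The companion statement with $\vk \mapsto -\vk$ follows because the roles of $(2\vk-\partial)$ and $(2\vk+\partial)$ interchange under the sign flip, which is precisely matched by \eqref{sym}.

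The main obstacle is pure bookkeeping. Each iteration of the telescoping identity roughly doubles the number of terms that must be sorted into \emph{closed} (matching a listed monomial) or \emph{remainder} (absorbed into a symbol class), and at each step one must verify both that no unlisted monomial $\vk^k u^a (\bar u)^b$ survives outside the paraproducts and that every remainder has the precise arrangement of slots required by the announced $S(n)$ symbol. This task is made manageable by three structural facts: the closure of $\mathcal G^\star$ under composition, the observation that $\tfrac{\partial}{2\vk\pm\partial}$ differs from an element of $\mathcal G^\star$ by a constant multiple of the identity (so prefactors of this form are always admissible), and the permutation and interior-product closure properties \eqref{parasym}, \eqref{paraint}, which permit free rearrangement of the slots in the remainder paraproducts.
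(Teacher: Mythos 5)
Your proposal is correct and follows essentially the same route as the paper: start from the explicit formulas \eqref{linear}, \eqref{cubic} and the identities \eqref{useful IDs}, peel off leading terms by geometrically expanding $\tfrac1{2\vk\mp\p}$ in powers of $\tfrac{\p}{2\vk\mp\p}$, obtain $\gamma\sbrack{4}$ and $\gamma\sbrack6$ from the quadratic identity \eqref{quadratic id}, and dispose of remainders via the closure properties of the symbol classes and \eqref{sym}. The only (immaterial) variation is that for $g_{12}\sbrack5$ and $g_{12}\sbrack7$ the paper uses $g_{12}\sbrack{2\ell+1}=\tfrac{\sqrt\vk}{2\vk-\p}\bigl[\gamma\sbrack{2\ell}q\bigr]$ rather than your product recursion in terms of lower-order $g_{12}$ and $\tfrac{g_{21}}{2+\gamma}$; both yield the same coefficients, as your spot-check of the $-96\vk^6|u|^4$ term confirms.
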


\begin{proof}
Using \eqref{linear}, we derive the identity
\begin{equation}\label{gamma 2 expansion}
\gamma\sbrack 2 = 8i\vk^3 |u|^2 + 4i\vk^2(u'\bar u - u\bar u') - 2i\vk|u'|^2.
\end{equation}

To obtain the first expansion for $\tfrac1{\sqrt\vk}g_{12}\sbrack 3(\vk)$ in \eqref{g12 3 expansion}, we use \eqref{useful IDs} to write
\begin{align*}
\tfrac1{\sqrt\vk}g_{12}\sbrack 3&= \tfrac{4\vk^2}{2\vk - \p}\bigl[u\gamma\sbrack 2\bigr] - \tfrac 1{2\vk - \p}\bigl[u''\gamma\sbrack 2\bigr]\\
&= \tfrac{32i\vk^5}{2\vk - \p}\bigl[|u|^2u\bigr] + \tfrac{16i\vk^4}{2\vk - \p}\bigl[(u'\bar u - u\bar u')u\bigr] -\tfrac{8i\vk^3}{2\vk - \p}\bigl[|u'|^2u\bigr] - \tfrac1{2\vk - \p}\bigl[\gamma\sbrack 2 u''\bigr].
\end{align*}
Recalling Examples~\ref{EX1} and~\ref{EX2}, the last two summands are of the form 
\begin{align}\label{10:13}
\tfrac{2\vk i}{2\vk-\p}m\Bigl[\tfrac{f}{2\vk  -\p},\tfrac{f'}{2\vk - \p},\tfrac{f'}{4\vk^2 - \p^2}\Bigr]+\tfrac{\vk}{2\vk-\p}m\Bigl[\tfrac{f}{2\vk  -\p},\tfrac{f}{2\vk - \p},\tfrac{f''}{4\vk^2 - \p^2}\Bigr] 
\end{align}
and so acceptable.

For the remaining summands we first use that
\begin{equation}\label{1/2vk-p exp}
\tfrac1{2\vk - \p} = \tfrac1{2\vk} + \tfrac1{2\vk}\tfrac\p{2\vk - \p}
\end{equation}
to write
\[
\tfrac{16i\vk^4}{2\vk - \p}\bigl[(u'\bar u - u\bar u')u\bigr] = 8i\vk^3(u'\bar u - u\bar u')u + \tfrac{8i\vk^3\p}{2\vk - \p}\bigl[(u'\bar u - u\bar u')u\bigr],
\]
where we note that the second summand  is of the form \eqref{10:13} and so acceptable. Expanding further, we have
\begin{equation}\label{1/2vk-p exp 2}
\tfrac1{2\vk - \p} = \tfrac1{2\vk} + \tfrac\p{4\vk^2} + \tfrac1{4\vk^2}\tfrac{\p^2}{2\vk - \p},
\end{equation}
which we apply to get
\[
\tfrac{32i\vk^5}{2\vk - \p}\bigl[|u|^2u\bigr] = 16i\vk^4 |u|^2u +16i\vk^3|u|^2u' + 8i\vk^3u^2\bar u' + \tfrac{8i\vk^3\p^2}{2\vk - \p}\bigl[|u|^2u\bigr],
\]
where the final summand is again of the form \eqref{10:13}. This completes the proof of the first expansion recorded in \eqref{g12 3 expansion}.

We now turn to the second expression for $\tfrac1{\sqrt\vk}g_{12}\sbrack 3(\vk)$ in \eqref{g12 3 expansion}.  We again use \eqref{useful IDs} to write
\begin{align*}
\tfrac1{\sqrt\vk}g_{12}\sbrack 3
&= \tfrac{32i\vk^5}{2\vk - \p}\bigl[|u|^2u\bigr] + \tfrac{16i\vk^4}{2\vk - \p}\bigl[(u'\bar u - u\bar u')u\bigr] -\tfrac{8i\vk^3}{2\vk - \p}\bigl[|u'|^2u+|u|^2u''\bigr]\\
&\quad - \tfrac1{2\vk - \p}\bigl[(\gamma\sbrack 2 - 8i\vk^3|u|^2)u''\bigr].
\end{align*}
By \eqref{gamma 2 expansion}, the final summand is seen to be of the form $\frac1{2\vk-\p}m[\tfrac{f}{2\vk  -\p},\tfrac{f'}{2\vk - \p},\tfrac{f''}{4\vk^2 - \p^2}]$, and so acceptable. We then use \eqref{1/2vk-p exp} to write
\begin{align*}
-\tfrac{8i\vk^3}{2\vk - \p}\bigl[|u'|^2u+|u|^2u''\bigr] = -4i\vk^2\bigl[|u'|^2u + |u|^2u''\bigr] -\tfrac{4i\vk^2\p}{2\vk - \p}\bigl[|u'|^2u + |u|^2u''\bigr]
\end{align*}
and note that the final summand is of the form
\begin{align}\label{10:14}
\tfrac{\p}{2\vk-\p}m\Bigl[\tfrac{f}{2\vk  -\p},\tfrac{f}{2\vk - \p},\tfrac{f''}{4\vk^2 - \p^2}\Bigr] + \tfrac{\p}{2\vk-\p}m\Bigl[\tfrac{f}{2\vk  -\p},\tfrac{f'}{2\vk - \p},\tfrac{f'}{4\vk^2 - \p^2}\Bigr],
\end{align}
and so acceptable.  Applying \eqref{1/2vk-p exp 2}, we similarly have
\begin{align*}
\tfrac{16i\vk^4}{2\vk - \p}\bigl[(u'\bar u - u\bar u')u\bigr] &= 8i\vk^3\bigl[(u'\bar u - u\bar u')u\bigr] + 4i\vk^2\bigl[(u'\bar u - u\bar u')u\bigr]'  + \tfrac{4i\vk^2\p^2}{2\vk - \p}\bigl[(u'\bar u - u\bar u')u\bigr],
\end{align*}
where the final summand is again of the form \eqref{10:14}. Expanding even further, we have
\[
\tfrac1{2\vk - \p} = \tfrac1{2\vk} + \tfrac\p{4\vk^2} + \tfrac{\p^2}{8\vk^3} + \tfrac1{8\vk^3}\tfrac{\p^3}{2\vk - \p},
\]
which we apply to the remaining term to get
\begin{align*}
\tfrac{32i\vk^5}{2\vk - \p}\bigl[|u|^2u\bigr] &= 16i\vk^4|u|^2u + 8i\vk^3\bigl[|u|^2u\bigr]' + 4i\vk^2\bigl[|u|^2u\bigr]'' + \tfrac{4i\vk^2\p^3}{2\vk - \p}\bigl[|u|^2u\bigr].
\end{align*}
The final term is once again of the form \eqref{10:14} and so acceptable. Combining these expressions, one obtains the second expansion recorded in \eqref{g12 3 expansion}.

Turning next to \(\gamma\sbrack 4\), we use \eqref{quadratic id} to write
\begin{align*}
\gamma\sbrack 4 = -\tfrac12\bigl[\gamma\sbrack 2\big]^2 - 2g_{12}\sbrack 3g_{21}\sbrack 1 - 2g_{12}\sbrack 1g_{21}\sbrack 3.
\end{align*}
For the first summand, we use \eqref{gamma 2 expansion} to express
\[
-\tfrac12\bigl[\gamma\sbrack 2\big]^2 = 32\vk^6 |u|^4 +32 \vk^5 |u|^2(u'\bar u - u\bar u') + m\Bigl[\tfrac f{2\vk - \p},\tfrac f{2\vk - \p},\tfrac{f'}{2\vk - \p},\tfrac{f'}{2\vk - \p}\Bigr].
\]
For the second summand, we use \eqref{g12 3 expansion} and \eqref{linear} to write
\begin{align*}
-2 g_{12}\sbrack 3\,g_{21}\sbrack 1 &= -32\vk^5|u|^2u\,\tfrac{\bar q}{2\vk +\p} -48\vk^4|u|^2u'\,\tfrac{\bar q}{2\vk + \p}\\
&\quad + m\Bigl[\tfrac f{2\vk - \p},\tfrac f{2\vk - \p},\tfrac{f'}{2\vk - \p},\tfrac{f'}{2\vk - \p}\Bigr] + m\Bigl[f,\tfrac f{2\vk - \p},\tfrac{f}{2\vk - \p},\tfrac{f''}{4\vk^2 - \p^2}\Bigr].
\end{align*}
Writing
\begin{equation}\label{bar q to bar u}
\tfrac{\bar q}{2\vk + \p} = 2\vk\bar u - \bar u',
\end{equation}
in the first two terms, we obtain
\begin{align*}
-2g_{12}\sbrack 3\,g_{21}\sbrack 1 &= -64\vk^6|u|^4+32\vk^5|u|^2u\bar u'-96\vk^5|u|^2u'\bar u\\
&\quad + m\Bigl[\tfrac f{2\vk - \p},\tfrac f{2\vk - \p},\tfrac{f'}{2\vk - \p},\tfrac{f'}{2\vk - \p}\Bigr] + m\Bigl[f,\tfrac f{2\vk - \p},\tfrac{f}{2\vk - \p},\tfrac{f''}{4\vk^2 - \p^2}\Bigr].
\end{align*}
Thus, using \eqref{sym} we also have
\begin{align*}
-2 g_{12}\sbrack 1\,g_{21}\sbrack 3 &= -64\vk^6|u|^4-32\vk^5|u|^2\bar u u' + 96\vk^5|u|^2\bar u'u\\
&\quad + m\Bigl[\tfrac f{2\vk - \p},\tfrac f{2\vk - \p},\tfrac{f'}{2\vk - \p},\tfrac{f'}{2\vk - \p}\Bigr] + m\Bigl[f,\tfrac f{2\vk - \p},\tfrac{f}{2\vk - \p},\tfrac{f''}{4\vk^2 - \p^2}\Bigr].
\end{align*}
Combining these expressions gives us \eqref{gamma 4 expansion}.

Next, consider \eqref{g12 5 expansion}. Using \eqref{useful IDs} and then applying \eqref{gamma 4 expansion}, we find
\begin{align*}
\tfrac1{\sqrt\vk}g_{12}\sbrack 5 &= \tfrac1{2\vk - \p}\Bigl[\gamma\sbrack 4\,q\Bigr]\\
&= \tfrac1{2\vk - \p}\Bigl[-96\vk^6|u|^4q-96\vk^5|u|^2(u'\bar u - u\bar u')q\Bigr]\\
&\quad  + \tfrac1{2\vk - \p}m\Bigl[f,\tfrac f{2\vk - \p},\tfrac f{2\vk - \p},\tfrac{f'}{2\vk - \p},\tfrac{f'}{2\vk - \p}\Bigr]\\
&\quad + \tfrac1{2\vk - \p}m\Bigl[f,f,\tfrac f{2\vk - \p},\tfrac{f}{2\vk - \p},\tfrac{f''}{4\vk^2 - \p^2}\Bigr].
\end{align*}
For the first term, we employ \eqref{1/2vk-p exp 2} to write
\begin{align*}
- \tfrac{96\vk^6}{2\vk - \p}\Bigl[|u|^4\,q\Bigr] &= -192\vk^7|u|^4u - 96\vk^6(|u|^4u)' - \tfrac{96\vk^6\p^2}{2\vk - \p}\Bigl[|u|^4u\Bigr] + \tfrac{96\vk^6}{2\vk - \p}\Bigl[ |u|^4\,u''\Bigr],
\end{align*}
where the third and fourth terms are seen to be acceptable. Similarly, for the remaining term we apply \eqref{1/2vk-p exp} to get
\begin{align*}
-\tfrac{96\vk^5}{2\vk -  \p}\Bigl[|u|^2(u'\bar u - u\bar u')\,q\Bigr] &= -192\vk^6|u|^2(u'\bar u - u\bar u')u - \tfrac{192\vk^6\p}{2\vk - \p}\Bigl[|u|^2(u'\bar u - u\bar u')u\Bigr]\\
&\quad + \tfrac{96\vk^5}{2\vk - \p}\Bigl[|u|^2(u'\bar u - u\bar u')u''\Bigr],
\end{align*}
where the second and third summands are similarly acceptable. This completes the proof of \eqref{g12 5 expansion}.

For \(\gamma\sbrack 6\), we again use the quadratic identity \eqref{quadratic id} to write
\begin{align*}
\gamma\sbrack 6 = - \gamma\sbrack 2\gamma\sbrack 4 - 2g_{12}\sbrack 5g_{21}\sbrack 1 - 2g_{12}\sbrack 3g_{21}\sbrack 3 - 2g_{12}\sbrack 1g_{21}\sbrack 5.
\end{align*}
For the first term, we use \eqref{gamma 4 expansion} and \eqref{linear} followed by \eqref{gamma 2 expansion} to obtain
\begin{align*}
- \gamma\sbrack 2\gamma\sbrack 4 &= 96\vk^6|u|^4\,\gamma\sbrack 2 + m\Bigl[f,f,\tfrac f{2\vk - \p},\tfrac f{2\vk - \p},\tfrac f{2\vk - \p},\tfrac{f'}{2\vk - \p}\Bigr]\\
& = 768i\vk^9 |u|^6 +  m\Bigl[f,f,\tfrac f{2\vk - \p},\tfrac f{2\vk - \p},\tfrac f{2\vk - \p},\tfrac{f'}{2\vk - \p}\Bigr].
\end{align*}

Next, applying \eqref{g12 5 expansion} and \eqref{linear} followed by \eqref{bar q to bar u}, we have
\begin{align*}
-2g_{12}\sbrack 5g_{21}\sbrack 1 & = -384i\vk^8|u|^4u\,\tfrac{\bar q}{2\vk + \p} + m\Bigl[f,f,\tfrac f{2\vk - \p},\tfrac f{2\vk - \p},\tfrac f{2\vk - \p},\tfrac{f'}{2\vk - \p}\Bigr]\\
&= -768i\vk^9|u|^6 + m\Bigl[f,f,\tfrac f{2\vk - \p},\tfrac f{2\vk - \p},\tfrac f{2\vk - \p},\tfrac{f'}{2\vk - \p}\Bigr].
\end{align*}
Another application of \eqref{sym} then gives us
\begin{align*}
-2g_{12}\sbrack 1g_{21}\sbrack 5 & = -768i\vk^9 |u|^6 + m\Bigl[f,f,\tfrac f{2\vk - \p},\tfrac f{2\vk - \p},\tfrac f{2\vk - \p},\tfrac{f'}{2\vk - \p}\Bigr].
\end{align*}

For the remaining term, we use the first expansion in \eqref{g12 3 expansion}, \eqref{sym}, and \eqref{cubic} to express
\begin{align*}
- 2g_{12}\sbrack 3g_{21}\sbrack 3 &= -16i\vk^{\frac 92}|u|^2 ug_{21}\sbrack 3-16\vk^{\frac 92}|u|^2 \bar ug_{12}\sbrack 3 + m\Bigl[f,f,\tfrac f{2\vk - \p},\tfrac f{2\vk - \p},\tfrac f{2\vk - \p},\tfrac{f'}{2\vk - \p}\Bigr]\\
&= -512i\vk^9|u|^6 + m\Bigl[f,f,\tfrac f{2\vk - \p},\tfrac f{2\vk - \p},\tfrac f{2\vk - \p},\tfrac{f'}{2\vk - \p}\Bigr].
\end{align*}
Combining all of these expressions gives us \eqref{gamma 6 expansion}.

Using \eqref{useful IDs} and \eqref{gamma 6 expansion}, we may write
\begin{align*}
\tfrac1{\sqrt\vk}g_{12}\sbrack 7 = \tfrac1{2\vk - \p}\Bigl[\gamma\sbrack 6\,q\Bigr]
&= \tfrac1{2\vk-\p} [ -1280 i\vk^9|u|^6q]  + \tfrac1{2\vk - \p}m\Bigl[f,f,f,\tfrac f{2\vk - \p},\tfrac f{2\vk - \p},\tfrac f{2\vk - \p},\tfrac{f'}{2\vk - \p}\Bigr]\\
& = -2560i\vk^{10}|u|^6u+ \tfrac1{2\vk - \p}m\Bigl[f,f,f,\tfrac f{2\vk - \p},\tfrac f{2\vk - \p},\tfrac f{2\vk - \p},\tfrac{f'}{2\vk - \p}\Bigr],
\end{align*}
which settles \eqref{g12 7 expansion}.

Using \eqref{useful IDs}, \eqref{linear}, and \eqref{bar q to bar u} we may write
\begin{align*}
\tfrac1{\sqrt\vk}\bigl(\tfrac{g_{21}}{2+\gamma}\bigr)\sbrack3 &= \tfrac \vk{2(2\vk + \p)}\Bigl[q \bigl(\tfrac {\bar q}{2\vk + \p}\bigr)^2\Bigr]\\
&= \tfrac{2\vk^3}{2\vk + \p}\Bigl[u \bigl(\tfrac {\bar q}{2\vk + \p}\bigr)^2\Bigr] - \tfrac \vk{2(2\vk + \p)}\Bigl[u'' \bigl(\tfrac {\bar q}{2\vk + \p}\bigr)^2\Bigr]\\
&= \tfrac{8\vk^5}{2\vk + \p}\Bigl[|u|^2\bar u\Bigr] - \tfrac{8\vk^4}{2\vk + \p}\Bigl[|u|^2\bar u'\Bigr] + \tfrac{2\vk^3}{2\vk + \p}\Bigl[u(\bar u')^2\Bigr] - \tfrac \vk{2(2\vk + \p)}\Bigl[u'' \bigl(\tfrac {\bar q}{2\vk + \p}\bigr)^2\Bigr]\!,
\end{align*}
from which the first expansion in \eqref{g21 frac 3 expansion} follows easily using that
\[
\tfrac1{2\vk + \p} = \tfrac1{2\vk} - \tfrac\p{2\vk(2\vk + \p)}.
\]

To obtain the second expression in \eqref{g21 frac 3 expansion}, we expand even further
\begin{align*}
\tfrac1{\sqrt\vk}\bigl(\tfrac{g_{21}}{2+\gamma}\bigr)\sbrack3 &= \tfrac{8\vk^5}{2\vk + \p}\Bigl[|u|^2\bar u\Bigr] - \tfrac{8\vk^4}{2\vk + \p}\Bigl[|u|^2\bar u'\Bigr] + \tfrac{2\vk^3}{2\vk + \p}\Bigl[u(\bar u')^2\Bigr] - \tfrac{2\vk^3}{2\vk + \p}\Bigl[\bar u^2 u''\Bigr]\\
&\quad + \tfrac1{2\vk+\p}m\Bigl[\tfrac f{2\vk - \p},\tfrac{f'}{2\vk - \p},\tfrac{f''}{4\vk^2 - \p^2}\Bigr]
\end{align*}
and use that
\[
\tfrac1{2\vk + \p} = \tfrac1{2\vk} - \tfrac\p{4\vk^2} + \tfrac{\p^2}{4\vk^2(2\vk + \p)}
\]
for the first summand and that
\[
\tfrac1{2\vk + \p} = \tfrac1{2\vk} - \tfrac\p{2\vk(2\vk + \p)}
\]
for the second summand. The last expression in \eqref{g21 frac 3 expansion} is derived by expanding \(\frac1{2\vk + \p}\) one further degree for each summand.

For \eqref{g21 frac 5 expansion} we again use \eqref{useful IDs}, \eqref{linear}, and \eqref{bar q to bar u} to obtain
\begin{align*}
\tfrac1{\sqrt\vk}\bigl(\tfrac{g_{21}}{2+\gamma}\bigr)\sbrack5 &= \tfrac{2i\vk}{2\vk + \p}\Bigl[q \tfrac {\bar q}{2\vk + \p}\tfrac1{\sqrt\vk}\bigl(\tfrac{g_{21}}{2+\gamma}\bigr)\sbrack 3\Bigr]\\
&= \tfrac{8i\vk^3}{2\vk + \p}\Bigl[u \tfrac {\bar q}{2\vk + \p}\tfrac1{\sqrt\vk}\bigl(\tfrac{g_{21}}{2+\gamma}\bigr)\sbrack 3\Bigr] - \tfrac{2i\vk}{2\vk + \p}\Bigl[u'' \tfrac {\bar q}{2\vk + \p}\tfrac1{\sqrt\vk}\bigl(\tfrac{g_{21}}{2+\gamma}\bigr)\sbrack 3\Bigr]\\
&= \tfrac{16i\vk^4}{2\vk + \p}\Bigl[|u|^2\tfrac1{\sqrt\vk}\bigl(\tfrac{g_{21}}{2+\gamma}\bigr)\sbrack 3\Bigr]-\tfrac{8i\vk^3}{2\vk + \p}\Bigl[u\bar u'\tfrac1{\sqrt\vk}\bigl(\tfrac{g_{21}}{2+\gamma}\bigr)\sbrack 3\Bigr] \\
&\quad- \tfrac{2i\vk}{2\vk + \p}\Bigl[u'' \tfrac {\bar q}{2\vk + \p}\tfrac1{\sqrt\vk}\bigl(\tfrac{g_{21}}{2+\gamma}\bigr)\sbrack 3\Bigr].
\end{align*}
We then expand \(\frac1{2\vk + \p}\) and use \eqref{g21 frac 3 expansion} and to obtain \eqref{g21 frac 5 expansion}.

Similarly, for \eqref{g21 frac 7 expansion} we use \eqref{g21 frac 3 expansion} and \eqref{g21 frac 5 expansion} to write
\begin{align*}
\tfrac1{\sqrt\vk}\bigl(\tfrac{g_{21}}{2+\gamma}\bigr)\sbrack7 &= \tfrac{2i\vk}{2\vk + \p}\Bigl[q \tfrac{\bar q}{2\vk +\p}\tfrac1{\sqrt\vk}\bigl(\tfrac{g_{21}}{2+\gamma}\bigr)\sbrack 5\Bigr]-\tfrac{2\vk}{2\vk + \p}\biggl[q \Bigl[\tfrac1{\sqrt\vk}\bigl(\tfrac{g_{21}}{2+\gamma}\bigr)\sbrack 3\Bigr]^2\biggr]\\
&= -\tfrac{64\vk^8}{2\vk+\p}\Bigl[ q\tfrac{\bar q}{2\vk +\p} |u|^4\bar u\Bigr]-\tfrac{32\vk^9}{2\vk+\p}\Bigl[ q|u|^4\bar u^2\Bigr] \\
&\quad+\tfrac1{2\vk + \p}m\Bigl[f,f,f,\tfrac f{2\vk - \p},\tfrac f{2\vk - \p},\tfrac f{2\vk - \p},\tfrac{f'}{2\vk - \p}\Bigr],
\end{align*}
to which we apply \eqref{bar q to bar u}.
\end{proof}

\section{Local smoothing for the DNLS}\label{S:Local}

In this section we prove local smoothing for Schwartz solutions of \eqref{DNLS}:

\begin{proposition}[Local smoothing for the DNLS]\label{p:LS}
Let \(Q\subset \Schw\) be an \(L^2\) bounded and equicontinuous set such that
\[
Q_* = \bigl\{e^{tJ\nabla H}q:|t|\leq 1\text{ and }q\in Q\bigr\}
\]
is a $\delta$-good set for a sufficiently small $\delta$. Then the local smoothing estimate
\begin{equation}\label{local smoothing}
\|q\|_{X^{\frac12}}\lesssim \|q(0)\|_{L^2}
\end{equation}
holds uniformly for $q(0)\in Q$.

Further, equicontinuity holds in the local smoothing topology, in the sense that
\begin{equation}\label{refined local smoothing}
\lim_{\kappa\to \infty}\sup_{q(0)\in Q}\|q\|_{X_\kappa^{\frac12}}=0.
\end{equation}
\end{proposition}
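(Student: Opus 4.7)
The plan is to extract \eqref{local smoothing} and \eqref{refined local smoothing} from the microscopic conservation law of Proposition~\ref{P:micro laws} by running a virial (monotonicity) argument, with the leading coercive term produced by the quadratic part of $j_\DNLS$ and all higher-order remainders absorbed via the smallness inherent in $\delta$-goodness.

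Set $\phi_h$ to be any primitive of $x\mapsto \psi_h(x)^{24}$ (which is bounded since $\psi^{24}\in L^1$), and introduce the virial
\begin{align*}
V_{h,\vk}(t) := \int \phi_h(x)\,\rho(\vk;q(t,x))\,dx,
\end{align*}
for $|\vk|\ge 1$ to be chosen later ($\vk\sim 1$ for \eqref{local smoothing}, $\vk=\kappa\to\infty$ for \eqref{refined local smoothing}).  From the shape of $\rho(\vk)$ in \eqref{rho} and the estimates of Lemma~\ref{l:g H}--Lemma~\ref{l:g E} (together with Corollary~\ref{c:g}), one checks the uniform bound $|V_{h,\vk}(t)|\lesssim \vk^{-1}\|q(t)\|_{L^2}^2$, and moreover the smallness
\begin{align*}
\sup_{h\in\R}\,|V_{h,\vk}(t)| \xrightarrow[\vk\to\infty]{} 0 \qquad \text{uniformly for $q(t)\in Q_*$,}
\end{align*}
since equicontinuity of $Q_*$ makes $\int \vk(4\vk^2+\xi^2)^{-1}|\hat q|^2\,d\xi$ tend to zero as $\vk\to\infty$.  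Conservation of mass on $Q_*$ shows that the supremum only needs to be taken at $t=\pm 1$.

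Next, integrate the microscopic identity $\partial_t\rho+\partial_x j_\DNLS=0$ against $\phi_h$ over $t\in[-1,1]$ and integrate by parts in $x$:
\begin{align*}
V_{h,\vk}(1)-V_{h,\vk}(-1) \;=\; \int_{-1}^{1}\!\!\int_\R \phi_h'(x)\,j_\DNLS(\vk;q(t,x))\,dx\,dt .
\end{align*}
The task is now to show that the RHS is bounded below (after taking $\sup_h$) by a positive constant times $\vk^{-1}\|q\|_{F_\vk^{1/2}(h)}^2$, modulo errors that can be absorbed.  Writing $u=q/(4\vk^2-\partial^2)$ as in Proposition~\ref{p:Asymptotics}, one computes that the quadratic-in-$q$ part of $j_\DNLS$, after integration by parts against $\phi_h'=\psi_h^{24}$ and using the identity $(2\vk\pm\partial)(2\vk\mp\partial)=4\vk^2-\partial^2$, reduces to an expression of the form
\begin{align*}
\int \phi_h'\, j_\DNLS^{[2]}(\vk)\,dx \;\gtrsim\; \vk^{-1}\bigl\|\psi_h^{12}\,q\big/\sqrt{4\vk^2-\partial^2}\bigr\|_{H^{3/2}}^{2}
\end{align*}
with a positive leading constant (the extra derivatives over $\rho$ arise from the factor $(2\vk\pm\partial)q$ inside $j_\DNLS$).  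Integrating in $t$ and using the alternative characterization of $X_\vk^{1/2}$ from Remark~\ref{R:X alt}, this lower bound matches the $F_\vk^{1/2}(h)$ norm exactly.

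The main obstacle is to show that every higher-order contribution to $\int\phi_h' j_\DNLS\,dx$ can be written as a paraproduct of the form given by Proposition~\ref{p:Asymptotics} and estimated, after integration by parts to distribute derivatives and Lemma~\ref{l:mult comm} to commute $\psi_\mu$ factors, as a product of two copies of $F_\vk^{1/2}(h)$ (or $F_\vk^{3/2}(h)$, transferred via \eqref{LS transfer}) times additional copies of $E_{2\sigma,\vk}^\sigma$ from Corollary~\ref{c:g}, each of which is $\le\delta$ by \eqref{goodness}.  Concretely, the paraproduct representations \eqref{paraproduct for g}--\eqref{paraproduct for frac geq} combined with the product estimates \eqref{LS product I}, \eqref{LS product II}, \eqref{E product I}, and Lemma~\ref{l:g F} yield
\begin{align*}
\Bigl|\int_{-1}^{1}\!\!\int \phi_h'\,\bigl[j_\DNLS(\vk)-j_\DNLS^{[2]}(\vk)\bigr]\,dx\,dt\Bigr| \;\lesssim\; \vk^{-1}\,\delta^2\,\|q\|_{X_\vk^{1/2}}^2 .
\end{align*}
Combining the upper bound on the time boundary terms, the lower bound on the leading term, and the absorbable error bound, one arrives at
\begin{align*}
\|q\|_{F_\vk^{1/2}(h)}^2 \;\lesssim\; \|q(0)\|_{L^2}^2 \;+\; o_{\vk\to\infty}(1) \;+\; \delta^2\,\|q\|_{X_\vk^{1/2}}^2 ,
\end{align*}
uniformly in $h$; taking $\sup_h$ and choosing $\delta$ small enough (using smallness of $Q_*$) absorbs the last term.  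Setting $\vk=1$ yields \eqref{local smoothing}, while sending $\vk=\kappa\to\infty$ and using the vanishing of $V_{h,\kappa}(\pm1)$ on $Q_*$ yields \eqref{refined local smoothing}.  I expect the bookkeeping for the nonresonant paraproduct estimates (for the sextic and higher terms in $j_\DNLS$ coming from $g_{12}\sbrack{\ge 5}$, $\gamma\sbrack{\ge 4}$, $(g_{21}/(2+\gamma))\sbrack{\ge 3}$) to be the main obstacle, since each such term must \emph{simultaneously} supply two local-smoothing factors and at least one $E_{2\sigma,\vk}^\sigma$ factor to achieve the form above; this is precisely where Proposition~\ref{p:Asymptotics} was designed to give enough structural information.
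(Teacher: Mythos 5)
Your overall architecture (integrate $\partial_t\rho+\partial_x j_\DNLS=0$ against $\phi_h$, bound the boundary terms via the smallness of $\rho$, extract coercivity from $j_\DNLS^{[2]}$, absorb the remainder) is the paper's strategy, but there is a genuine gap at the heart of it: the claimed lower bound
\[
\int \phi_h'\, j_\DNLS^{[2]}(\vk)\,dx \;\gtrsim\; \vk^{-1}\bigl\|\tfrac{\psi_h^{12}q}{\sqrt{4\vk^2-\partial^2}}\bigr\|_{H^{3/2}}^{2}
\]
at a \emph{fixed} spectral parameter $\vk$ is false. Computing $\Im j_\DNLS^{[2]}=2\Re\{\tfrac{q'}{4\vk^2-\p^2}\bar q'\}-\Re\{\tfrac{q'}{4\vk^2-\p^2}\bar q\}'$ and integrating against $\psi_\mu^{24}$ yields, up to commutator errors, $2\|(\psi_\mu^{12}q)'\|_{H^{-1}_\vk}^2=2\int\frac{\xi^2}{4\vk^2+\xi^2}|\widehat{\psi_\mu^{12}q}|^2\,d\xi$, which is bounded above by $\|q\|_{L^2}^2$ and, at frequencies $|\xi|\gg\vk$, falls short of the quantity $\int\frac{(4+\xi^2)^{3/2}}{4\vk^2+\xi^2}|\widehat{\psi_\mu^{12}q}|^2\,d\xi$ you want by a full factor of $\langle\xi\rangle$. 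No single value of $\vk$ produces the half-derivative gain; it is obtained only by integrating the conservation-law identity over $\vk\in[\kappa,\infty)$ and using $\int_\kappa^\infty\frac{\xi^2}{4\vk^2+\xi^2}\,d\vk\sim\frac{\xi^2}{\kappa+|\xi|}$, i.e.\ Lemma~\ref{l:E int}. Your proposal treats $\vk$ as a single parameter throughout ($\vk\sim1$ or $\vk=\kappa$) and never performs this integration, so the proof does not close. Once the identity is integrated in $\vk$, the boundary terms and the remainder $j_\DNLS^{[\geq4]}$ must also be integrated in $\vk$, which is exactly what \eqref{rho 1241} and \eqref{jDNLS geq4 int} supply.

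A secondary issue concerns the absorption scheme for \eqref{refined local smoothing}. The paper bounds the remainder by $\|q\|_{X^{1/2}}^2\|q\|_{L^\infty_t E^\sigma_{\sigma,\kappa}}^2$, i.e.\ in terms of the \emph{already established} $\kappa=1$ local smoothing norm multiplied by an equicontinuity factor that vanishes as $\kappa\to\infty$ by \eqref{E equi prop}; the decay in $\kappa$ comes from that factor, not from absorbing a $\delta^2$-multiple of $\|q\|_{X_\kappa^{1/2}}^2$. Your version of the error bound, $\delta^2\|q\|_{X_\vk^{1/2}}^2$ with $\delta$ fixed, would (even granting coercivity) only reproduce uniform boundedness of $\|q\|_{X_\kappa^{1/2}}$ and then leave you needing the vanishing to come entirely from the boundary terms, which is not how the estimate is structured; you should run the bootstrap at $\kappa=1$ first to get \eqref{local smoothing}, and then feed that bound back in with the vanishing factor $\|q\|_{E^\sigma_{\sigma,\kappa}}$ to obtain \eqref{refined local smoothing}.
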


We remind the reader that Corollary~\ref{C:descendants} guarantees that for any $L^2$ bounded and equicontinuous set \(Q\), there is a uniform rescaling so that the corresponding \(Q_*\) is \(\delta\)-good. For the remainder of this section, we fix \(Q_*\subset \Schw\) satisfying the hypotheses of Proposition~\ref{p:LS}.  

Our proof of Proposition~\ref{p:LS} rests on the microscopic conservation law presented in Proposition~\ref{P:micro laws}. Taking
\begin{equation}\label{phi h}
\phi_\mu(x) = \int_0^x \psi_\mu(y)^{24}\,dy,
\end{equation}
and integrating by parts, we obtain
\begin{equation}\label{IBP in dt rho}
\Im\int_{-1}^1\int j_{\DNLS}(\vk;q(t))\,\psi_\mu^{24}\,dx\,dt =\Im\int\rho(\vk;q(t))\phi_\mu\,dx\Big|_{t=-1}^{t=1}.
\end{equation}

To bound the right-hand side of this expression, we use the following:

\begin{lemma}[Estimate for \(\rho\)]\label{L:rho}
The following estimates hold uniformly for \(q\in Q_*\), \(\kappa,\vk\geq 1\), and \(\mu\in \R:\)
\begin{gather}\label{rho bound}
\left|\int \Im \rho(\varkappa;q)\,\phi_\mu\,dx\right| \lesssim \vk^{-1}\|q\|_{E_{2\sigma,\vk}^\sigma}^2\Bigl[1 + \|q\|_{L^2}^2\Bigr],\\
\label{rho 1241}
\int_\kappa^\infty\int \left|\Im\int\rho(\vk;q)\phi_\mu\,dx\right|\,e^{-\frac1{200}|h-\mu|}\,d\mu\,d\vk\lesssim\|q\|_{E_{\sigma,\kappa}^\sigma}^2\Bigl[1 + \|q\|_{L^2}^2\Bigr].
\end{gather}
\end{lemma}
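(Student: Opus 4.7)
The plan is to establish \eqref{rho bound} first, then deduce \eqref{rho 1241} by integrating in $\vk$ using Lemma~\ref{l:E int}. For \eqref{rho bound}, split $\rho = \rho\sbrack{2} + (\rho - \rho\sbrack{2})$ using the series expansion from Proposition~\ref{p:Asymptotics}.

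For the quadratic piece, the identities \eqref{linear} together with the reality of the kernels of $(2\vk\pm\p)^{-1}$ yield the algebraic identity
\[
\Im \rho\sbrack{2} = \Im\Bigl[\bar q\cdot \tfrac{q'}{4\vk^2-\p^2}\Bigr].
\]
Writing $\int\phi_\mu\Im\rho\sbrack{2}\,dx$ via Plancherel and antisymmetrizing the bilinear kernel in $(\xi_1,\xi_2)$ replaces the single multiplier $\xi_2/(4\vk^2+\xi_2^2)$ by the symmetric average $\tfrac12\bigl[\xi_1/(4\vk^2+\xi_1^2)+\xi_2/(4\vk^2+\xi_2^2)\bigr]$. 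The pointwise bound
\[
\tfrac{|\xi|}{4\vk^2+\xi^2}\leq \vk^{2\sigma-1}\cdot \tfrac{|\xi|^{2\sigma}}{(4\vk^2+\xi^2)^{2\sigma}}\quad(0\leq\sigma\leq\tfrac12),
\]
which follows from $4\vk^2+\xi^2\geq 4\vk|\xi|$, converts each copy of the symbol into the weight of $\|\cdot\|_{E^\sigma_{2\sigma,\vk}}^2$, with a factor of $\vk^{-1}$ to spare.  A Schur-type test then produces the required bound $\vk^{-1}\|q\|_{E^\sigma_{2\sigma,\vk}}^2$. Because $\phi_\mu\notin L^1(\R)$, decompose $\phi_\mu = c_\mu + \Phi_\mu$ with $c_\mu$ the average of the limits at $\pm\infty$ and $\Phi_\mu$ a tanh-like profile whose derivative $\psi_\mu^{24}$ is Schwartz.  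The constant contribution is the conserved quantity $c_\mu \int\Im\rho\sbrack{2}\,dx = c_\mu\int \tfrac{\xi|\hat q(\xi)|^2}{2\pi(4\vk^2+\xi^2)}\,d\xi$, bounded directly by $\vk^{-1}\|q\|_{E^\sigma_{2\sigma,\vk}}^2$ via the same pointwise inequality.  The $\Phi_\mu$ contribution is handled by decomposing $\hat\Phi_\mu$ into its Hilbert-type principal-value part (treated by boundedness of the Hilbert transform on weighted $L^2$) and a Schwartz remainder (handled by Schur's test against the decaying kernel).

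For the higher-order tail, write $\rho - \rho\sbrack{2} = \bar q\cdot A + q\cdot B$ where
\[
A = \tfrac{g_{12}}{\sqrt\vk(2+\gamma)} - \tfrac{g_{12}\sbrack{1}}{2\sqrt\vk},\qquad B = \tfrac{ig_{21}}{\sqrt\vk(2+\gamma)} - \tfrac{ig_{21}\sbrack{1}}{2\sqrt\vk},
\]
so that each factor is at least cubic in $q$.  Using the identity $g_{12}\sbrack{\geq 3} = \sqrt\vk(2\vk-\p)^{-1}[q\gamma]$ from \eqref{useful IDs} with $\|\gamma\|_{L^\infty}\lesssim \|q\|_{E^\sigma_{2\sigma,\vk}}^2$ from Lemma~\ref{l:g H} and the $L^2$ bound on $(2\vk-\p)^{-1}$ yields $\|A\|_{L^2}+\|B\|_{L^2}\lesssim \vk^{-1}\|q\|_{E^\sigma_{2\sigma,\vk}}^2\|q\|_{L^2}$.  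Pairing with $q,\bar q\in L^2$ and $\|\phi_\mu\|_{L^\infty}\lesssim 1$ then gives $\bigl|\int\phi_\mu(\rho - \rho\sbrack{2})\,dx\bigr|\lesssim \vk^{-1}\|q\|_{E^\sigma_{2\sigma,\vk}}^2\|q\|_{L^2}^2$, which combines with the quadratic bound to give \eqref{rho bound}.

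For \eqref{rho 1241}, the $\mu$-integral of $e^{-|h-\mu|/200}$ is a universal constant, and Lemma~\ref{l:E int} applied with $s=2\sigma$, $\beta=0$ yields $\int_\kappa^\infty\|q\|_{E^\sigma_{2\sigma,\vk}}^2\,\vk^{-1}\,d\vk\sim \|q\|_{E^\sigma_{\sigma,\kappa}}^2$; factoring out $(1+\|q\|_{L^2}^2)$ and integrating \eqref{rho bound} completes the proof.  The principal obstacle is the quadratic piece: the naive estimate $\|\phi_\mu\|_\infty\|\bar q\|_{L^2}\|\tfrac{q'}{4\vk^2-\p^2}\|_{L^2}\lesssim \vk^{-1}\|q\|_{L^2}^2$ is not integrable in $\vk$ over $[\kappa,\infty)$ and so is useless for \eqref{rho 1241}; extracting the $E^\sigma_{2\sigma,\vk}$ smallness factor demands the Fourier-side bilinear analysis above, with a careful splitting of the non-integrable $\phi_\mu$ into constant, Hilbert, and Schwartz parts.
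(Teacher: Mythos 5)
Your proof is correct in outline and reaches the right bounds; the higher-order tail and the $\vk$-integration step are essentially the paper's argument (the tail is reduced to $\bigl(\tfrac{g_{12}}{2+\gamma}\bigr)^{[\geq3]}$, estimated in $L^2$ with a spare $\vk^{-1}$ via $\|\gamma\|_{L^\infty}\lesssim\|q\|_{E^\sigma_{2\sigma,\vk}}^2$ and $\bigl\|\tfrac1{2+\gamma}\bigr\|_{L^\infty}\lesssim1$, then paired against $\phi_\mu q\in L^2$; and \eqref{rho 1241} is exactly \eqref{rho bound} plus Lemma~\ref{l:E int}). Where you genuinely diverge is the quadratic piece. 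The paper does not symmetrize or go to the Fourier side at all: it writes the pairing as $\int\overline{(\phi_\mu q)}\,\tfrac{q'}{4\vk^2-\p^2}\,dx$, applies Cauchy--Schwarz with the weight $\tfrac{|\xi|}{4\vk^2+\xi^2}$ split evenly between the two factors to get $\vk^{-1}\|\phi_\mu q\|_{E^{1/2}_{1,\vk}}\|q\|_{E^{1/2}_{1,\vk}}$, and then disposes of the non-decaying $\phi_\mu$ with the localization estimate \eqref{E loc} (which needs only $\phi_\mu'=\psi_\mu^{24}\in\Schw$), followed by the same pointwise inequality $\tfrac{\vk|\xi|}{4\vk^2+\xi^2}\lesssim\bigl(\tfrac{\vk|\xi|}{4\vk^2+\xi^2}\bigr)^{2\sigma}$ that you use. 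This buys a three-line argument in place of your constant/Hilbert/Schwartz trichotomy.

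Two loose ends in your version need attention if you insist on the Fourier-side route. First, after subtracting $c_\mu\sgn$ from $\Phi_\mu$ the remainder has a jump at the origin, so its Fourier transform decays only like $|\xi|^{-1}$ and is not Schwartz (nor in $L^1_\xi$, which Schur's test wants); you must instead subtract a \emph{smooth} profile such as a multiple of $\arctan(x-x_0)$, whose transform is a truncated Hilbert kernel, leaving a genuinely Schwartz remainder. Second, you cannot insert the pointwise symbol bound before confronting the principal-value kernel by taking absolute values, since that destroys the cancellation; the correct order is to write the symmetrized symbol as $\vk^{-1}w(\xi_j)$ times a bounded function with $w(\xi)=\bigl(\tfrac{\vk|\xi|}{4\vk^2+\xi^2}\bigr)^{2\sigma}$, and then invoke boundedness of the Hilbert transform on $L^2(w)$; this works because $w$ is an $A_2$ weight with characteristic uniform in $\vk$ by scaling (the unsymmetrized weight $\tfrac{|\xi|}{4\vk^2+\xi^2}$ is the borderline power $|\xi|^1$ near the origin and is \emph{not} $A_2$, so the exponent $2\sigma<1$ is essential here). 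With those repairs your argument closes, but the paper's use of \eqref{E loc} renders all of it unnecessary.
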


\begin{proof}
A computation yields
\begin{align*}
\Im \int \rho\sbrack{2}(\varkappa;q) \, \phi_\mu\, dx = \tfrac1{2i} \int \bar q \, \phi_\mu\,  \tfrac{q'}{4\varkappa^2-\partial^2}\, dx - \tfrac1{2i} \int q \,\phi_\mu \,\tfrac{\bar q'}{4\varkappa^2-\partial^2}\, dx,
\end{align*}
so using \eqref{E loc} we get
\begin{align*}
\Biggl|\Im \int \rho\sbrack{2}(\varkappa;q) \, \phi_\mu\, dx\Biggr| &\leq \vk^{-1}\|\phi_\mu q\|_{E_{1,\vk}^{1/2}}\|q\|_{E_{1,\vk}^{1/2}} \lesssim\vk^{-1}\|q\|_{E_{2\sigma,\vk}^\sigma}^2.
\end{align*}

Turning to the higher order terms, we use \eqref{g12 frac geq3} and \eqref{g21 frac geq3} to write
\[
\rho\sbrack{\geq 4}(\varkappa;q) = q\cdot \tfrac{-2i}{2\vk + \p}\Bigl[ q \bigl(\tfrac{g_{21}}{2 + \gamma}\bigr)^2\Bigr]
	+ \bar q\cdot \tfrac{2i}{2\vk - \p}\Bigl[\bar q \bigl(\tfrac{g_{12}}{2 + \gamma}\bigr)^2\Bigr].
\]
Thus, by \eqref{g12 frac in sup norm} and the fact that \(\|\tfrac 1{2\vk \pm \p}\|_{\op}\lesssim \vk^{-1}\), we obtain
\begin{align*}
\biggl|\Im \int \rho\sbrack{\geq4}(\varkappa;q) \, \phi_\mu\, dx\biggr| &\lesssim \vk^{-1} \| \phi_\mu q\|_{L^2} \| q\|_{L^2}
	\Bigl[ \bigl\|\tfrac{g_{12}}{2 + \gamma} \bigr\|_{L^\infty}^2 + \bigl\|\tfrac{g_{21}}{2 + \gamma} \bigr\|_{L^\infty}^2 \Bigr] \\
&\lesssim \vk^{-1}\|q\|_{L^2}^2 \|q\|_{E_{2\sigma,\vk}^\sigma}^2,
\end{align*}
which completes the proof of \eqref{rho bound}.

The estimate \eqref{rho 1241} follows from \eqref{rho bound} and Lemma \ref{l:E int}.
\end{proof}

Turning to the left-hand side of \eqref{IBP in dt rho}, our main challenge will be to control the remainder terms \(j_{\DNLS}\sbrack{\geq 4}\). To do so, we need to distribute the exponential weight \(\psi_\mu^{24}\) across the arguments of paraproducts in \(S(n)\). To accomplish this, we introduce a modified space of paraproducts, \(S_{\loc}(n)\), which involve a spatial parameter \(\mu\in \R\). While this construction will aid our proof of Proposition~\ref{p:LS}, its true value will only become clear when we turn to the significantly more involved problem of obtaining local smoothing estimates for the difference flow in Section~\ref{sect: diff local smoothing}.

We define an extended set of generators
\begin{align}\label{Gloc}
\mathcal G_{\loc} =\mathcal G \cup  \Bigl\{S^{\pm 1}\psi_\mu^\ell S^{\mp 1} \psi_\mu^{-\ell}, \ \psi_\mu^\ell S^{\pm 1} \psi_\mu^{-\ell}S^{\mp 1} : \, |\ell|\leq 24\text{ is an integer and } S\in \mathcal G^\star\Bigr\},
\end{align}
which are $\mu$-dependent operators.  Recall that the set $\mathcal G$ was defined in \eqref{G} and generates the set $\mathcal G^\star$ of finite words over the alphabet $\mathcal G$, as in \eqref{spelling}. We similarly take \(\mathcal G_\loc^\star\) to be the set of words over the alphabet \(\mathcal G_\loc\). 

\begin{example}\label{EXloc0}
If \(|\vk|\geq 1\) and \(|\ell|\leq 24\) is an integer then
\[
\psi_\mu^\ell \tfrac{2\vk}{2\vk + \p}\psi_\mu^{-\ell} = \bigl(\psi_\mu^\ell \tfrac{2\vk}{2\vk + \p}\psi_\mu^{-\ell}\tfrac{2\vk + \p}{2\vk}\bigr)\tfrac{2\vk}{2\vk + \p}\in \mathcal G_{\loc}^\star.
\]
\end{example}

Paralleling the construction of $S(n)$, we say that a paraproduct $m \in S_{\loc}(1)$ if it admits a representation as a finite linear combination of elements of $\mathcal G_\loc^\star$.

For \(n\geq 2\), we inductively define $S_{\loc}(n)$ as finite linear combinations of paraproducts that admit the representation
\begin{align}\label{mu mess}
m[f_1,\dots,f_n] &= T\Biggl[\,\prod_{j=1}^J m_j[f_{\sigma(n_{j-1} + 1)},\dots,f_{\sigma(n_j)}]\Biggr],
\end{align}
where \(0=n_0<\dots<n_J=n\) are integers, \(m_j\in S_{\loc}(n_j-n_{j-1})\), \(T\in\mathcal G_{\loc}^\star\), and \(\sigma\in \mathfrak S_n\).  On both sides of \eqref{mu mess}, all paraproducts are evaluated at a common value of $\mu$. This will be the standing convention whenever we combine paraproducts in \(S_\loc(n)\).

As in the case of \(S(n)\), we require all coefficients in these linear combinations to be uniformly bounded in any parameters.

\begin{example}\label{EX0loc}
Recall from Example~\ref{EX0} that for \(|\vk|\geq1\), the paraproduct
\[
m[f_1,f_2,f_3] = \tfrac{8\vk^3}{2\vk - \p}\Bigl[f_1\tfrac{f_2}{2\vk - \p}\tfrac{f_3}{2\vk + \p}\Bigr]
\]
is in \(S(3)\). We may write
\[
\psi_\mu^{24}m[f_1,f_2,f_3] = \widetilde m[\psi_\mu^8f_1,\psi_\mu^8f_2,\psi_\mu^8f_3],
\]
where \(\widetilde m\in S_{\loc}(3)\) has representation
\[
\widetilde m[f_1,f_2,f_3] = \psi_\mu^{24}\tfrac{8\vk^3}{2\vk - \p}\Bigl[\psi_\mu^{-8}f_1\cdot \tfrac{\psi_\mu^{-8}f_2}{2\vk - \p} \cdot\tfrac{\psi_\mu^{-8}f_3}{2\vk + \p}\Bigr],
\]
which can be expressed as in \eqref{mu mess} with \(\sigma = \mathrm{Id}\), \(T = \psi_\mu^{24}\tfrac{2\vk}{2\vk-\p}\psi_\mu^{-24}\),
\[
m_1[f_1] = f_1,\qquad m_2[f_2] = \psi_\mu^8\tfrac{2\vk}{2\vk - \p}(\psi_\mu^{-8}f_2),\qquad m_3[f_3] = \psi_\mu^8\tfrac{2\vk}{2\vk + \p}(\psi_\mu^{-8}f_3).
\]
\end{example}

Example~\ref{EX0loc} demonstrates one of the key motivations for the introduction of this class of paraproducts; this is codified in property (i) of Lemma~\ref{l:paraproperties}. It is mandated by the necessity of employing local smoothing estimates on each and every argument of our paraproducts.  

We first record a result which will be used in the proof of Lemma~\ref{l:paraproperties}.

\begin{lemma}\label{L:conjugate}
If \(T\in\mathcal G_{\loc}^\star\), then the conjugated operators
\begin{align}\label{1888}
\tfrac{2+\p}2 T \tfrac2{2+\p},  \quad  \tfrac{2-\p}2 T\tfrac 2{2-\p}, \quad \tfrac 2{2+\p} T\tfrac{2+\p}2\qtq{and} \tfrac 2{2-\p}T\tfrac{2-\p}2
\end{align}
belong to $S_\loc(1)$.
\end{lemma}

\begin{proof}
By definition, any $T\in \mathcal G^\star_{\loc}$ may be written as a finite product of the generators \(\mathcal G_{\loc}\).  As conjugating a product is equivalent to conjugating each factor, it suffices to verify the claim in the case $T\in\mathcal G_{\loc}$.

The elements of $\mathcal G_{\loc}$ come in five kinds.  The easiest case to deal with is $T\in\mathcal G$ because then $T$ commutes with $2\pm\p$ and all four operators in \eqref{1888} are all equal to $T$.  In what follows, we will treat the first two operators in \eqref{1888} for each of the four remaining kinds of generators in $\mathcal G_{\loc}$.  The remaining two operators in \eqref{1888} may be treated in a parallel manner.

To unify our treatment of the first two operators in \eqref{1888}, we will show that 
\begin{equation}\label{claimsdepartment}
\tfrac{2\kappa + \p}{2\kappa} T\tfrac{2\kappa}{2\kappa + \p} \quad\text{is a linear combination of words in} \,\,\mathcal G_\loc^\star
\end{equation}
for any \(|\kappa|\geq 1\), whenever $T=S^{\pm 1}\psi_\mu^\ell S^{\mp 1} \psi_\mu^{-\ell}$ or $T=\psi_\mu^\ell S^{\pm 1} \psi_\mu^{-\ell}S^{\mp 1}$ with $S\in \mathcal G^\star\backslash\{\mathrm{Id}\}$ and $|\ell|\leq 24$.

If \(T =S^{-1}\psi_\mu^\ell S \psi_\mu^{-\ell}\), we write
\[
\tfrac{2\kappa + \p}{2\kappa} T \tfrac{2\kappa}{2\kappa + \p}  = \Bigl[\bigl( S \tfrac{2\kappa}{2\kappa + \p}\bigr)^{-1}\psi_\mu^\ell \bigl( S \tfrac{2\kappa}{2\kappa + \p}\bigr) \psi_\mu^{-\ell}\Bigr]\Bigl[ \psi_\mu^{\ell} \tfrac{2\kappa + \p}{2\kappa} \psi_\mu^{-\ell}\tfrac{2\kappa}{2\kappa + \p}\Bigr]  \in \mathcal G_{\loc}^*.
\]
Similarly, if \(T =\psi_\mu^\ell S^{-1} \psi_\mu^{-\ell}S \) we may write
\[
\tfrac{2\kappa + \p}{2\kappa} T \tfrac{2\kappa}{2\kappa + \p}  = \Bigl[\tfrac{2\kappa + \p}{2\kappa} \psi_\mu^\ell \tfrac{2\kappa}{2\kappa + \p} \psi_\mu^{-\ell}\Bigr]\Bigl[ \psi_\mu^{\ell}  \bigl(S \tfrac{2\kappa}{2\kappa + \p} \bigr)^{-1} \psi_\mu^{-\ell}\bigl(S \tfrac{2\kappa}{2\kappa + \p} \bigr)\Bigr] \in \mathcal G_{\loc}^*.
\]

Next we consider the case $T=S\psi_\mu^\ell S^{-1} \psi_\mu^{-\ell}$. By the definition of $\mathcal G^\star$, we may write $S = \frac{2\vk}{2\vk+\p}\widetilde S$ or $S = \frac{2\vk}{2\vk-\p}\widetilde S$ for some \(|\vk|\geq 1\) and \(\widetilde S\in \mathcal G^\star\).  We present the details in the case \(S = \frac{2\vk}{2\vk+\p}\widetilde S\); the remaining case can be treated analogously. 

Using the identity
\[
\tfrac{2\kappa + \p}{2\kappa}\tfrac{2\vk}{2\vk+\p} = \tfrac{2\vk}{2\vk+\p} + \tfrac\vk\kappa\tfrac{\p}{2\vk+\p}
\]
and the symmetric identity with \(\vk\leftrightarrow\kappa\), we may express
\begin{align}
\tfrac{2\kappa+\p}{2\kappa}S\psi_\mu^\ell S^{-1}\tfrac{2\kappa}{2\kappa+\p}
&= S\psi_\mu^\ell S^{-1}\tfrac{2\kappa}{2\kappa+\p} + \tfrac\vk\kappa\tfrac\p{2\vk+\p}\widetilde S\psi_\mu^\ell\widetilde S^{-1}\tfrac{2\vk+\p}{2\vk}\tfrac{2\kappa}{2\kappa+\p}\notag\\
&= S\psi_\mu^\ell S^{-1}\tfrac{2\kappa}{2\kappa+\p} + \tfrac\vk\kappa\tfrac\p{2\vk+\p}\widetilde S\psi_\mu^\ell\widetilde S^{-1} \Bigl[\tfrac{2\kappa}{2\kappa+\p} + \tfrac\kappa\vk\tfrac\p{2\kappa+\p}\Bigr]\notag\\
&=S\psi_\mu^\ell S^{-1}\tfrac{2\kappa}{2\kappa+\p}+ \Bigl[\tfrac{2\kappa+\p}{2\kappa}\tfrac{2\vk}{2\vk+\p} - \tfrac{2\vk}{2\vk+\p}\Bigr]\widetilde S\psi_\mu^\ell\widetilde S^{-1} \tfrac{2\kappa}{2\kappa+\p}\notag\\
&\quad + \tfrac\p{2\vk+\p}\widetilde S\psi_\mu^\ell\widetilde S^{-1}\tfrac\p{2\kappa+\p}\notag\\
&=S\psi_\mu^\ell S^{-1}\tfrac{2\kappa}{2\kappa+\p}+ \tfrac{2\kappa+\p}{2\kappa}\tfrac{2\vk}{2\vk+\p}\widetilde S\psi_\mu^\ell\widetilde S^{-1} \tfrac{2\kappa}{2\kappa+\p}\label{switch2}\\
&\quad  - \tfrac{2\vk}{2\vk+\p}\widetilde S\psi_\mu^\ell\widetilde S^{-1} \tfrac{2\kappa}{2\kappa+\p} + \tfrac\p{2\vk+\p}\widetilde S\psi_\mu^\ell\widetilde S^{-1}\tfrac\p{2\kappa+\p}.\notag
\end{align}
We then apply \eqref{switch2} to get
\begin{align}
\tfrac{2\kappa+\p}{2\kappa} T\tfrac{2\kappa}{2\kappa+\p} &= \Bigl[\tfrac{2\kappa+\p}{2\kappa}S\psi_\mu^\ell S^{-1}\tfrac{2\kappa}{2\kappa+\p}\Bigr]\Bigl[\tfrac{2\kappa+\p}{2\kappa}\psi_\mu^{-\ell}\tfrac{2\kappa}{2\kappa+\p}\Bigr]\notag\\
& = T\tfrac{2\kappa}{2\kappa+\p} + \tfrac{2\vk}{2\vk+\p}\Bigl[\tfrac{2\kappa+\p}{2\kappa}\widetilde T\tfrac{2\kappa}{2\kappa+\p}\Bigr] - \tfrac{2\vk}{2\vk+\p}\widetilde T\tfrac{2\kappa}{2\kappa+\p}\notag\\
&\quad + \tfrac\p{2\vk+\p}\widetilde T\Bigl[\psi_\mu^\ell\tfrac\p{2\vk+\p}\psi_\mu^{-\ell}\Bigr]\Bigl[\psi_\mu^\ell \tfrac{2\kappa+\p}{2\kappa}\psi_\mu^{-\ell}\tfrac{2\kappa}{2\kappa+\p}\Bigr],\label{switcheroo}
\end{align}
where \(\widetilde T = \widetilde S\psi_\mu^\ell \widetilde S^{-1}\psi_\mu^{-\ell}\in \mathcal G_\loc^\star\).  By using Examples~\ref{EXs1} and~\ref{EXloc0}, we see that
\begin{align}\label{1788}
\tfrac\p{2\vk+\p} = \mathrm{Id} - \tfrac{2\vk}{2\vk+\p}  \qtq{and}
	\psi_\mu^\ell\tfrac\p{2\vk+\p}\psi_\mu^{-\ell} = \mathrm{Id} - \bigl[\psi_\mu^\ell\tfrac{2\vk}{2\vk+\p}\psi_\mu^{-\ell}\tfrac{2\vk+\p}{2\vk} \bigr] \tfrac{2\vk}{2\vk+\p}
\end{align}
are linear combinations of words in \(\mathcal G_\loc^\star\). As a consequence, \eqref{switcheroo} shows that if \(\tfrac{2\kappa+\p}{2\kappa}\widetilde T\tfrac{2\kappa}{2\kappa+\p}\) is a linear combination of words in \(\mathcal G_\loc^\star\) then so is \(\tfrac{2\kappa+\p}{2\kappa}T\tfrac{2\kappa}{2\kappa+\p}\). In this case, \eqref{claimsdepartment} follows by induction on the minimal number of letters required to spell \(S\) as in the sense \eqref{spelling}.  The base case corresponds to taking \(\widetilde T = \mathrm{Id}\).

Finally, we consider the case \(T = \psi_\mu^\ell S\psi_\mu^{-\ell}S^{-1}\). Arguing as in the previous case and using \eqref{switch2} (with \(\ell\) replaced by \(-\ell\)), we get
\begin{align*}
\tfrac{2\kappa+\p}{2\kappa}T\tfrac{2\kappa}{2\kappa+\p} &= \Bigl[\tfrac{2\kappa+\p}{2\kappa}\psi_\mu^\ell\tfrac{2\kappa}{2\kappa+\p}\Bigr]\Bigl[\tfrac{2\kappa+\p}{2\kappa}S\psi_\mu^{-\ell}S^{-1}\tfrac{2\kappa}{2\kappa+\p}\Bigr]\\
&= \Bigl[\tfrac{2\kappa+\p}{2\kappa}\psi_\mu^\ell\tfrac{2\kappa}{2\kappa+\p}\psi_\mu^{-\ell}\Bigr]T\tfrac{2\kappa}{2\kappa+\p} + \Bigl[\tfrac{2\kappa+\p}{2\kappa}\psi_\mu^\ell\tfrac{2\vk}{2\vk+\p}\psi_\mu^{-\ell}\tfrac{2\kappa}{2\kappa+\p}\Bigr]\tfrac{2\kappa+\p}{2\kappa}\widetilde T \tfrac{2\kappa}{2\kappa+\p}\\
&\quad  - \Bigl[\tfrac{2\kappa+\p}{2\kappa}\psi_\mu^\ell\tfrac{2\kappa}{2\kappa+\p}\psi_\mu^{-\ell}\Bigr]
	\Bigl[\psi_\mu^\ell\tfrac{2\vk}{2\vk+\p}\psi_\mu^{-\ell} \tfrac{2\vk+\p}{2\vk}\Bigr] \tfrac{2\vk}{2\vk+\p} \widetilde T \tfrac{2\kappa}{2\kappa+\p}\\
&\quad + \Bigl[\tfrac{2\kappa+\p}{2\kappa}\psi_\mu^\ell\tfrac{2\kappa}{2\kappa+\p}\psi_\mu^{-\ell}\Bigr]\Bigl[\psi_\mu^\ell\tfrac\p{2\vk+\p}\psi_\mu^{-\ell}\Bigr]\widetilde T\tfrac\p{2\kappa+\p},
\end{align*}
where \(\widetilde T = \psi_\mu^\ell \widetilde S\psi_\mu^{-\ell}\widetilde S^{-1}\in \mathcal G_\loc^\star\). Using \eqref{1788} and observing that
\[
\tfrac{2\kappa+\p}{2\kappa}\psi_\mu^\ell\tfrac{2\vk}{2\vk+\p}\psi_\mu^{-\ell}\tfrac{2\kappa}{2\kappa+\p} = \tfrac{2\vk}{2\vk+\p}\Bigl[\tfrac{2\kappa+\p}{2\kappa}\tfrac{2\vk+\p}{2\vk}\psi_\mu^\ell\tfrac{2\vk}{2\vk+\p}\tfrac{2\kappa}{2\kappa+\p}\psi_\mu^{-\ell}\Bigr]\Bigl[\psi_\mu^\ell\tfrac{2\kappa+\p}{2\kappa}\psi_\mu^{-\ell}\tfrac{2\kappa}{2\kappa+\p}\Bigr]
\]
is a word over the alphabet \(\mathcal G_\loc\), we again see that whenever \(\tfrac{2\kappa+\p}{2\kappa}\widetilde T\tfrac{2\kappa}{2\kappa+\p}\) is a linear combination of words in \(\mathcal G_\loc^\star\), so is \(\tfrac{2\kappa+\p}{2\kappa}T\tfrac{2\kappa}{2\kappa+\p}\). The proof of \eqref{claimsdepartment} in this case is completed by inducting on the minimal number of letters required to spell \(S\).
\end{proof}

\begin{lemma}[Properties of \(S_{\loc}(n)\)]\label{l:paraproperties}~
\begin{itemize}
\item[(i)] \emph{(Distribution of exponential weights)} If \(m\in S(n)\) then for any non-negative \(\ell_0+\dots+\ell_n=24\) we have
\begin{align*}
m[f_1,\dots,f_n]\,\psi_\mu^{24} &= \psi_\mu^{\ell_0} m_1[\psi_\mu^{\ell_1} f_1,\dots, \psi_\mu^{\ell_n} f_n],\\
m[f_1,\dots,f_n\,\psi_\mu^{24}] &= \psi_\mu^{\ell_0}m_2[\psi_\mu^{\ell_1} f_1,\dots, \psi_\mu^{\ell_n} f_n],
\end{align*}
where \(m_j\in S_{\loc}(n)\).
\smallskip
\item[(ii)] \emph{(Symmetry)} If \(m\in S_{\loc}(n)\) and \(\sigma\in \mathfrak S_n\) then
\[
 m[f_{\sigma(1)},\dots,f_{\sigma(n)}]\in S_{\loc}(n).
\]
\item[(iii)] \emph{(Interior products)} If \(2\leq k\leq n\), \(m_1\in S_{\loc}(k)\), and \(m_2\in S_{\loc}(n+1-k)\) then
\[
 m_1\bigl[f_1,\dots,f_{k-1},m_2[f_{k},\dots,f_n]\bigr]\in S_{\loc}(n).
\]
\item[(iv)]\emph{(Leibniz rule)} If \(n\geq 2\) and \(m\in S_{\loc}(n)\) then
\begin{align}
m[f_1',\dots,f_{n-1},f_n] &= \p m_1[f_1,\dots,f_n] + m_2[f_1,(2-\p)f_2,f_3,\dots,f_n]\label{paraLeibniz}\\
&\quad +\dots+ m_n[f_1,\dots,f_{n-1},(2-\p)f_n],\notag
\end{align}
where \(m_1,\dots,m_n\in S_{\loc}(n)\).
\item[(v)] \emph{(H\"older's inequality)} If \(m\in S_{\loc}(n)\) and \(1\leq p,p_j\leq \infty\) are so that \(\frac1p = \frac1{p_1}+\dots+\frac1{p_n}\) then
\begin{equation}\label{paraHolder}
\|m[f_1,\dots,f_n]\|_{L^p}\lesssim \prod_{j=1}^n\|f_j\|_{L^{p_j}},
\end{equation}
uniformly in \(\mu\).
\end{itemize}
\end{lemma}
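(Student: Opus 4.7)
The plan is to prove (i)--(v) by induction on $n$, exploiting the recursive definition of $S_\loc(n)$.

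I start with (v), the easiest. Each generator in $\mathcal G_\loc$ has an explicit Schwartz kernel with integrable off-diagonal decay (the elements of $\mathcal G$ being convolution against $2\vk\,\mathbbm{1}_\mp(x)e^{\mp\vk x}$, and the $\psi_\mu^{\pm\ell}$ factors being cancelled in pairs inside the composite generators), so each is bounded on $L^p$ for every $1\le p\le\infty$, uniformly in $\mu$ and in the spectral parameters. Consequently every word $T\in\mathcal G_\loc^\star$ is $L^p$-bounded. For $n=1$ this gives \eqref{paraHolder} directly. For $n\ge 2$, apply the inductive hypothesis to each constituent block $m_j$ at exponents $r_j\in[1,\infty]$ chosen so that $\sum_j \tfrac{1}{r_j}=\tfrac{1}{p}$; H\"older combines these into an $L^p$-estimate for $\prod_j m_j[\cdots]$, after which $L^p$-boundedness of the outer $T$ closes the argument.

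The pivot point is (i), which rests on the single-generator identity
\[
\psi_\mu^{-\ell_0}\,S\,\psi_\mu^{\ell_0} \;=\; S\cdot\bigl(S^{-1}\psi_\mu^{-\ell_0}S\psi_\mu^{\ell_0}\bigr)\qquad (S\in\mathcal G),
\]
whose right factor is, by construction, one of the generators of $\mathcal G_\loc$. Iterating letter by letter, any word $T\in\mathcal G^\star$ conjugated by $\psi_\mu^{\pm\ell_0}$ belongs to $\mathcal G_\loc^\star$. For $n=1$ we write $T(f)\psi_\mu^{24}=\psi_\mu^{\ell_0}\bigl(\psi_\mu^{-\ell_0}T\psi_\mu^{\ell_0}\bigr)\bigl(\psi_\mu^{\ell_1}f\bigr)$ with $\ell_0+\ell_1=24$, and similarly when $\psi_\mu^{24}$ is attached to the argument. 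For $n\ge 2$, the same commutation moves $\psi_\mu^{24}$ past the outer $T$; then $\psi_\mu^{24-\ell_0}$ is distributed across the constituent blocks $m_j$ by the inductive hypothesis, which produces $m_1\in S_\loc(n)$.

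Properties (ii) and (iii) are purely structural. For (iii), the nested expression fits the recursive template for $S_\loc(n)$ with $m_2$ acting as a constituent block inside $m_1$ and the outer operator trivially equal to the identity. For (ii), any permutation decomposes into transpositions: intra-block ones are absorbed by the inductive symmetry of the corresponding $m_j$, while cross-block ones amount to reordering two $m_j$'s in the outer pointwise product, which is harmless since pointwise multiplication is commutative.

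Property (iv), the Leibniz rule, is the main technical obstacle. Starting from $\p(f_1\cdots f_n)=\sum_j f_1\cdots f_j'\cdots f_n$ and substituting $f_j'=2f_j-(2-\p)f_j$ for $j\ge 2$, one obtains
\[
f_1'f_2\cdots f_n \;=\; \p(f_1\cdots f_n) + \sum_{j=2}^n f_1\cdots(2-\p)f_j\cdots f_n - 2(n-1)f_1\cdots f_n.
\]
Applying the outer $T\in\mathcal G^\star$, the first term becomes $\p m_1[f_1,\ldots,f_n]$ since $T\p=\p T$ (all elements of $\mathcal G^\star$ are Fourier multipliers). The leftover $-2(n-1)T(f_1\cdots f_n)$ is absorbed using $f_j=\tfrac{1}{2}\cdot\tfrac{2}{2-\p}\bigl[(2-\p)f_j\bigr]$, where $\tfrac{2}{2-\p}\in\mathcal G$; this rewrites the leftover as a genuine $S_\loc(n)$ paraproduct in the variables $(f_1,\ldots,(2-\p)f_j,\ldots,f_n)$, which is then folded into $m_j$. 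For composite $m$ (where the outer $T$ wraps a product of blocks $m_j$), the outer derivative passes through $T$ and is distributed among the $m_j$'s via the inductive Leibniz rule applied blockwise. The principal bookkeeping burden throughout is confirming that each resulting operator still lies in $\mathcal G_\loc^\star$---precisely the content established in step (i).
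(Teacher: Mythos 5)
Your treatment of (i), (ii), (iii), and (v) follows essentially the same (inductive, structural) route as the paper's one-line proof: conjugating a word in $\mathcal G^\star$ by a power of $\psi_\mu$ produces a word in $\mathcal G_{\loc}$ by construction, and the uniform $L^p$-boundedness of the composite generators is exactly what Lemma~\ref{l:mult comm} supplies. Two small slips there: your displayed $n=1$ formula in (i) actually evaluates to $T(\psi_\mu^{24}f)$, i.e.\ the \emph{second} identity; for $T(f)\psi_\mu^{24}$ one conjugates from the other side, $\psi_\mu^{\ell_1}T\psi_\mu^{-\ell_1}$. And in (ii), a transposition exchanging arguments of two \emph{different} blocks is neither an intra-block permutation nor a reordering of whole blocks, so your generating set of transpositions does not exhaust $\mathfrak S_n$ --- though the paper is no more explicit on this point.

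The genuine gap is in (iv). You move $\p$ through the outer word $T$ on the grounds that ``all elements of $\mathcal G^\star$ are Fourier multipliers.'' But the lemma concerns $S_{\loc}(n)$, not $S(n)$: the words there are built from the $\mu$-dependent generators $S^{\pm1}\psi_\mu^\ell S^{\mp1}\psi_\mu^{-\ell}$ and $\psi_\mu^\ell S^{\pm1}\psi_\mu^{-\ell}S^{\mp1}$, which are multiplication-conjugated and do \emph{not} commute with $\p$. This is not a peripheral case: the Leibniz rule is invoked in Lemmas~\ref{l:paraproducts I} and~\ref{l:paraproducts II} precisely after the weights have been distributed via (i), so the symbols in play lie in $S_{\loc}(n)\setminus S(n)$. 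The paper resolves the non-commutativity with the operator identities
\[
\p T\tfrac2{2-\p} = \tfrac{2+\p}2 T\tfrac2{2+\p}\tfrac{2+\p}{2-\p} - \tfrac{2-\p}2 T\tfrac2{2-\p}
\qquad\text{and}\qquad
\tfrac2{2-\p}T\p = \tfrac2{2+\p}\tfrac{2+\p}{2-\p}T\tfrac{2+\p}2 - \tfrac2{2-\p}T\tfrac{2-\p}2,
\]
valid for an \emph{arbitrary} operator $T$, which trade a derivative across any word in $\mathcal G_{\loc}$ at the cost of replacing it by a linear combination of words in $\mathcal G_{\loc}$ with $(2\pm\p)$ factors landing on the arguments. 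Your proof needs this device (or an equivalent one) in two places: to extract the leading term $\p m_1[f_1,\dots,f_n]$ through a non-multiplier outer word, and already at the innermost level, since $T_1(f_1')\neq (T_1 f_1)'$ for a $\mu$-dependent word $T_1$, so the ordinary product rule for $\p(f_1\cdots f_n)$ that your computation starts from is not directly applicable. The arithmetic identity $f_j'=2f_j-(2-\p)f_j$ and the absorption of the leftover $-2(n-1)$ term are fine, but they do not address this commutation problem, which is the actual content of part (iv).
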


\begin{proof}
Part (ii) is an immediate consequence of the definition. Part (iii) follows from an easy induction in $k$.

We turn now to the proof of part (i), which we prove by induction on $n$. We will prove the very slightly stronger statement that for all \(m\in S(n)\), and non-negative integers \(\ell_0 + \dots + \ell_n = \ell\leq 24\) we may find \(\widetilde m\in S_{\loc}(n)\) so that
\begin{equation}\label{locIDloc1}
m[f_1,\dots,f_n]\,\psi_\mu^{\ell} = \psi_\mu^{\ell_0} \widetilde m[\psi_\mu^{\ell_1} f_1,\dots, \psi_\mu^{\ell_n} f_n].
\end{equation}

For the base case of \eqref{locIDloc1} we write \(m\in S(1)\) as a linear combination
\[
m[f] = \sum_{i=1}^n c_iT_if,
\]
where \(c_i\in \C\) and \(T_i\in \mathcal G^\star\). In this case, \eqref{locIDloc1} follows from writing
\[
m[f]\,\psi_\mu^{\ell}=  \psi_\mu^{\ell_0} \sum_{i=1}^nc_i(\psi_\mu^{\ell_1}T_i\psi_\mu^{-\ell_1})[\psi_\mu^{\ell_1}f] =  \psi_\mu^{\ell_0} \widetilde m[\psi_\mu^{\ell_1}f] 
\]
and noting that \(\psi_\mu^{\ell_1}T_i\psi_\mu^{-\ell_1}=[\psi_\mu^{\ell_1}T_i\psi_\mu^{-\ell_1}T_i^{-1}]\;\! T_i \in \mathcal G_{\loc}^\star\).

For the inductive step, we fix $N\geq 2$ and assume that \eqref{locIDloc1} is true for all \(1\leq n\leq N-1\). We recall that elements of \(S(N)\) are linear combinations of paraproducts with the representation
\[
m[f_1,\dots,f_N] = T\Biggl[\,\prod_{j=1}^J m_j[f_{\sigma(n_{j-1} + 1)}\,,\,\dots\,,\,f_{\sigma(n_j)}]\Biggr],
\]
where \(0=n_0<\dots<n_J=N\), \(m_j\in S(n_j-n_{j-1})\), \(T\in \mathcal G^\star\), and \(\sigma\in \mathfrak S_N\). Without loss of generality, we assume \(\sigma = \mathrm{Id}\). 
Applying the inductive hypothesis we write
\begin{align*}
m[f_1,\dots,f_N]\,\psi_\mu^{\ell}
& = \psi_\mu^{\ell_0}(\psi_\mu^{\ell-\ell_0}T_i\psi_\mu^{\ell_0-\ell})\Biggl[\psi_\mu^{\ell-\ell_0}\prod_{j=1}^J m_j[f_{n_{j-1} + 1}\,,\,\dots\,,\,f_{n_j}]\Biggr]\\
& = \psi_\mu^{\ell_0} \widetilde T\Biggl[\,\prod_{j=1}^J \widetilde m_j\Bigl[\psi_\mu^{\ell_{n_{j-1}+1}}f_{n_{j-1} + 1}\,,\,\dots\,,\,\psi_\mu^{\ell_{n_j}}f_{n_j}\Bigr]\Biggr]
\end{align*}
where \(\widetilde T = \psi_\mu^{\ell - \ell_0}T\psi_\mu^{\ell_0-\ell}\in \mathcal G_\loc^\star\) and \(\widetilde m_j\in S_{\loc}(n_j - n_{j-1})\).
From~\eqref{mu mess}, it is then clear that \(\widetilde m\in S_\loc(N)\). The proof of the inductive step is completed by considering linear combinations of paraproducts of this form.

We now turn to part (iv), which is also proved by induction on $n$.  All the requisite ideas can be understood most transparently from the treatment of the base case $n=2$.  

Given a word \(T\in \mathcal G_\loc^\star\), we express
\begin{align}
\tfrac2{2-\p}T\p &=  \tfrac 2{2+\p} T\tfrac{2+\p}2\tfrac{2+\p}{2-\p}-\tfrac 2{2-\p}T\tfrac{2-\p} 2,\label{loco2}\\
\p T\tfrac2{2-\p} &=  \tfrac{2+\p} 2T\tfrac 2{2+\p}\tfrac{2+\p}{2-\p}-\tfrac{2-\p} 2T\tfrac 2{2-\p}.\label{loco1}
\end{align}
By Example~\ref{EXs1} (with \(\vk=-1\)) the operator \(\frac{2+\p}{2-\p}\) is a linear combination of words in \(\mathcal G^\star\). Combining this with Lemma~\ref{L:conjugate}, we see that both \(\LHS{loco2}\) and \(\LHS{loco1}\) are linear combinations of words in $\mathcal G_\loc^\star$.

For the base step $n=2$, it suffices to consider \(m\in S_{\loc}(2)\) that can be expressed as
\[
m[f_1,f_2] = T\bigl[h_1[f_1]\cdot h_2[f_2]\bigr],
\]
where \(T \in \mathcal G_\loc^\star\) and \(h_1,h_2\in S_\loc(1)\).  We define the paraproducts \(k_1,k_2\) via
\[
k_1[f] = \tfrac1{2-\p} h_1[f'],\qquad k_2[f] = \p h_2\bigl[\tfrac{f}{2-\p}\bigr].
\]
By \eqref{loco2} and \eqref{loco1} we see that \(k_1,k_2\in S_{\loc}(1)\). We also define
\[
\ell[f] = h_2\bigl[\tfrac 2{2-\p}f\bigr]
\]
and have \(\ell\in S_{\loc}(1)\) by definition. 

We then compute
\begin{align*}
m[f_1',f_2] &= T\bigl[(2-\p)k_1[f_1]\cdot h_2[f_2]\bigr]\\
&= T(2-\p)\bigl[k_1[f_1]\cdot h_2[f_2]\bigr] + T\bigl[k_1[f_1]\cdot\p h_2[f_2]\bigr]\\
&= (2-\p) S\bigl[k_1[f_1]\cdot h_2[f_2]\bigr] + T\bigl[k_1[f_1]\cdot k_2[(2-\p)f_2]\bigr]\\
&= \!\underbrace{-\p S\bigl[k_1[f_1]\cdot h_2[f_2]\bigr]}_{=\p m_1[f_1,f_2]} \!+\! \underbrace{S\bigl[k_1[f_1]\cdot \ell[(2-\p)f_2]\bigr] \!+\! T\bigl[k_1[f_1]\cdot k_2[(2-\p)f_2]\bigr]}_{= m_2[f_1,(2-\p)f_2]},
\end{align*}
where \(S = \tfrac2{2-\p}T\tfrac{2-\p}2\in\mathcal G_{\loc}^\star\) by \eqref{claimsdepartment} and \(m_1,m_2\in S_{\loc}(2)\) by definition.

To prove part (v), we first apply Lemma~\ref{l:mult comm} to see that every element of \(\mathcal G_{\loc}^\star\) is bounded on \(L^p\) whenever \(1\leq p\leq\infty\). The claim follows from a final induction on~\(n\).
\end{proof}

To state our paraproduct estimates for \eqref{DNLS}, it will once again be convenient to employ the convention that if \(m\in S_{\loc}(n)\) and \(f_1,\dots,f_n\) satisfy the estimates \eqref{admissible}, then we denote the expression \(m[f_1,\dots,f_n]\) by \(m[f,\dots,f]\).  Moreover, if an expression involves several paraproducts \(m_1,\dots,m_k\in S_{\loc}(n)\), then we denote each paraproduct by \(m\).

With this convention in hand, we turn to our paraproduct estimates for \eqref{DNLS}:

\begin{lemma}\label{l:paraproducts I} Let \(m\in S_{\loc}(4)\) and \(f\) satisfy \eqref{admissible}.  Then, the following estimates hold uniformly for  \(h\in \R\) and \(|\vk|\geq 1:\)
\begin{align}\label{m4 type II bound}
&\int\biggl|\int m\Bigl[\psi_\mu^6f,\psi_\mu^6f,\tfrac{\psi_\mu^6f}{2\vk - \p},\tfrac{2-\p}{2\vk - \p}(\psi_\mu^6f)\Bigr]\,dx\biggr|\,e^{-\frac1{200}|h-\mu|}\,d\mu\\
&\qquad+\int\biggl|\int m\Bigl[\psi_\mu^6f,\psi_\mu^6f,\psi_\mu^6f,\tfrac{2-\p}{4\vk^2 - \p^2}(\psi_\mu^6f)\Bigr]\,dx\biggr|\,e^{-\frac1{200}|h-\mu|}\,d\mu\notag\\
&\qquad\qquad\lesssim |\vk|^{-1}\|q\|_{F^{\frac12}(h)}^2\|q\|_{E_{2\sigma, \vk}^\sigma}^2.\notag
\end{align}
Further, if \(m\in S_{\loc}(6)\) then uniformly for  \(h\in \R\) and \(|\vk|\geq 1:\)
\begin{align}\label{m4 type III bound}
&\int \biggl|\int m\Bigl[\psi_\mu^4f,\psi_\mu^4f,\psi_\mu^4f,\psi_\mu^4f,\tfrac{\psi_\mu^4f}{2\vk - \p},\tfrac{\psi_\mu^4f}{2\vk - \p}\Bigr]\,dx\biggr|\,e^{-\frac1{200}|h-\mu|}\,d\mu\\
&\qquad\lesssim|\vk|^{-1}\|q\|_{F^{\frac12}(h)}^2\|q\|_{E_{\sigma}^\sigma}^2\|q\|_{E_{2\sigma, \vk}^\sigma}^2.\notag
\end{align}
\end{lemma}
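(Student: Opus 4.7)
Both estimates will follow from the paraproduct H\"older inequality \eqref{paraHolder}, in which two of the arguments are placed in $L^2$ and the rest in $L^\infty$.  For \eqref{m4 type II bound}, in the first integrand I would put the two factors carrying the denominator, namely $\tfrac{\psi_\mu^6 f}{2\vk-\p}$ and $\tfrac{(2-\p)\psi_\mu^6 f}{2\vk-\p}$, in $L^2$; in the second integrand I would put $\psi_\mu^6 f$ and $\tfrac{(2-\p)\psi_\mu^6 f}{4\vk^2-\p^2}$ in $L^2$.  For \eqref{m4 type III bound}, the $L^2$ pair will be the two factors $\tfrac{\psi_\mu^4 f}{2\vk-\p}$ and the remaining four factors $\psi_\mu^4 f$ will be estimated in $L^\infty$.

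\textbf{$L^2$ factors.}  The core computation is an elementary Plancherel bound, uniform in $|\vk|\geq 1$:
\[
\bigl\|\tfrac{g}{2\vk-\p}\bigr\|_{L^2}+\bigl\|\tfrac{(2-\p)g}{2\vk-\p}\bigr\|_{L^2}+|\vk|^{\frac12}\bigl\|\tfrac{(2-\p)g}{4\vk^2-\p^2}\bigr\|_{L^2}\lesssim |\vk|^{-\frac12}\bigl\|\tfrac{g}{\sqrt{4-\p^2}}\bigr\|_{H^{\frac32}}.
\]
Each of these reduces to a pointwise Fourier inequality $|m_\vk(\xi)|^2\lesssim\tfrac{(4+\xi^2)^{1/2}}{|\vk|}$ for the corresponding multiplier, verified by splitting into $|\xi|\leq|\vk|$ and $|\xi|>|\vk|$.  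Applied with $g=\psi_\mu^\ell f$, this produces a factor of $|\vk|^{-1/2}$ together with a localized $H^{3/2}$-norm.  Cauchy--Schwarz in $\mu$ against the exponential weight $e^{-|h-\mu|/200}$, combined with the localization estimate \eqref{LS loc} and Corollary~\ref{c:g} to pass from $f$ to $q$, then converts each such $L^2$ factor into a copy of $\|q\|_{F^{1/2}(h)}$.  The residual $\|\psi_\mu^6 f\|_{L^2}$ appearing in the second term of \eqref{m4 type II bound} is handled by the same mechanism via $\|\psi_\mu^6 f\|_{L^2}\lesssim \|\tfrac{\psi_\mu^6 f}{\sqrt{4-\p^2}}\|_{H^{3/2}}$.

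\textbf{$L^\infty$ factors.}  Each pure factor $\psi_\mu^\ell f$ is controlled via the embedding $B\hookrightarrow L^\infty$ and the algebra estimate \eqref{E:alg}, using the uniform bound $\|\psi_\mu^\ell\|_B\lesssim_\ell 1$, which itself follows from \eqref{tanh} and the Schwartz decay of $\psi$.  The remaining $\|f\|_B$ is estimated via the Sobolev-type embedding \eqref{Sob} and the admissibility hypothesis \eqref{admissible} on $f$: for the bounds \eqref{m4 type II bound} we apply \eqref{Sob} at the $\vk$-scale to extract $\|q\|_{E^\sigma_{2\sigma,\vk}}$, while for the sextic estimate \eqref{m4 type III bound} two of the four $L^\infty$ factors are estimated at the fixed unit scale (thereby producing the two copies of $\|q\|_{E^\sigma_\sigma}$) and the other two at the $\vk$-scale (producing $\|q\|_{E^\sigma_{2\sigma,\vk}}^2$).

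\textbf{Main obstacle.}  The hardest point is matching the powers of $|\vk|$ exactly: each denominator $\tfrac{1}{2\vk-\p}$ must contribute precisely $|\vk|^{-1/2}$ in $L^2$, and the two denominators must combine to give the announced $|\vk|^{-1}$.  A subtler difficulty is that \eqref{Sob} a priori requires control of $\|f'\|_{E^\sigma_{2\sigma,\vk}}$, which is not listed in the admissibility bound \eqref{admissible}.  This is resolved by pushing the derivative onto the smooth factor $\psi^\ell_\mu$ using the Leibniz rule \eqref{paraLeibniz} together with the multiplicative commutator estimate of Lemma~\ref{l:mult comm}, so that every norm of $f$ that actually appears after the dust settles is one of the admissible norms from \eqref{admissible}.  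The bookkeeping in the sextic estimate is notationally heavier but structurally identical.
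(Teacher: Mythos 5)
There is a genuine gap, and it is structural: you assign the $L^2$ and $L^\infty$ roles to \emph{fixed} factors of the paraproduct, whereas the proof requires assigning them by \emph{frequency rank} after a Littlewood--Paley decomposition. An admissible $f$ (take $f=q$ itself) is only $L^2$-normalized, so $\|\psi_\mu^6 f\|_{L^\infty}$ is not controlled by any norm in \eqref{admissible}. The embedding \eqref{Sob} that you invoke to repair this requires $\|f'\|_{E^{\sigma}_{2\sigma,\vk}}$, which at high frequency is comparable to an $H^{1-\sigma}$ norm of $q$ and is genuinely unavailable; it cannot be ``pushed onto $\psi_\mu^\ell$'' via \eqref{paraLeibniz}, because it is intrinsic to the Sobolev embedding of $f$ into $L^\infty$ rather than a derivative appearing in the paraproduct. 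Moreover, even if one grants \eqref{Sob}, the powers of $\vk$ do not close: each $L^\infty$ factor then costs $\vk^{1/2}\|q\|_{E^{\sigma}_{2\sigma,\vk}}$ (the dominant term $\vk^{-1/2}\cdot\vk\|f\|_{E^\sigma_{2\sigma,\vk}}$ in \eqref{Sob}), and two such factors contribute $\vk\,\|q\|_{E^\sigma_{2\sigma,\vk}}^2$, exactly cancelling the $|\vk|^{-1}$ you extracted from the two $L^2$ factors. Your Plancherel bounds for the multipliers $\tfrac1{2\vk-\p}$, $\tfrac{2-\p}{2\vk-\p}$, $\tfrac{2-\p}{4\vk^2-\p^2}$ are correct in isolation, but the $|\vk|^{-1}$ in \eqref{m4 type II bound}--\eqref{m4 type III bound} is not supposed to come from there alone.

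The paper's argument decomposes every argument into Littlewood--Paley pieces, places the \emph{two highest-frequency} pieces (whichever factors they happen to lie in) in $L^2$ so that \eqref{LS loc} converts them into $\|q\|_{F^{1/2}(h)}^2$, and estimates every lower-frequency piece in $L^\infty$ by Bernstein at each dyadic scale via Lemma~\ref{L:Fmu}. The constraint $M\leq N_2$ is what makes the sums $\sum_{M\leq N_2}\|(\psi_\mu^\ell f)_M\|_{L^\infty}$ converge with only admissible norms of $q$ appearing, and the weights $\tfrac1{|\vk|+M}$ attached to the low-frequency factors are what produce the $|\vk|^{-1}$; when the derivative $\tfrac{2-\p}{2\vk-\p}$ lands on the top frequency one first integrates by parts with \eqref{paraLeibniz} to move it off. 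This interplay between the frequency of the $L^\infty$ factors and the smoothing weights on the $L^2$ factors is the content of the lemma, and a fixed $L^2$/$L^\infty$ assignment cannot reproduce it.
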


\begin{proof}
To prove these estimates, we decompose each $f$ into Littlewood--Paley pieces and estimate the two highest frequencies in $L^2$ with a view to employ \eqref{LS loc}.  To estimate the remaining lower frequency pieces, we rely on the following lemma.

\begin{lemma}\label{L:Fmu}
For any $f$ satisfying \eqref{admissible}, we have
\begin{align}
\|(\psi_\mu^\ell f)_N\|_{L^\infty}&\lesssim N^{\frac12-\sigma}(1+N)^\sigma\|q\|_{E_\sigma^\sigma},\label{Fmu-E1}\\
\|(\psi_\mu^\ell f)_N\|_{L^\infty}&\lesssim |\vk|^{-\sigma}N^{\frac12-\sigma}(|\vk|+N)^{2\sigma}\|q\|_{E_{2\sigma,\vk}^\sigma},\label{Fmu-E2}
\end{align}
uniformly for \(0\leq \ell\leq 12\) and \(N\in 2^\Z\).  In particular,
\begin{align}
\sum_{M\leq N}\tfrac{1}{|\vk|+M}\|(\psi_\mu^\ell f)_M\|_{L^\infty}&\lesssim \tfrac{N^{\frac12-\sigma}(1+N)^\sigma}{|\vk|^{\frac12} (|\vk|+N)^{\frac12}}\|q\|_{E_\sigma^\sigma},\label{Rock}\\
\sum_{M\leq N}\tfrac{1}{|\vk|+M}\|(\psi_\mu^\ell f)_M\|_{L^\infty}&\lesssim \tfrac{N^{\frac12}}{|\vk|^{\frac12} (|\vk|+N)^{\frac12}}\|q\|_{E_{2\sigma,\vk}^\sigma}.\label{Hard Place}
\end{align}
\end{lemma}

\begin{proof}
The bounds \eqref{Fmu-E1} and \eqref{Fmu-E2} follow easily from Bernstein's inequality, \eqref{E loc}, and \eqref{admissible}.  To obtain the last two bounds, one considers separately the contribution from $M\leq |\vk|$ and $M>|\vk|$.
\end{proof}

We start by considering \eqref{m4 type III bound}. Decomposing into Littlewood--Paley pieces and employing Lemma~\ref{L:Fmu}, we find
\begin{align*}
&\int m\Bigl[\psi_\mu^4f,\psi_\mu^4f,\psi_\mu^4f,\psi_\mu^4f,\tfrac{\psi_\mu^4 f}{2\vk - \p},\tfrac{\psi_\mu^4 f}{2\vk - \p}\Bigr]\,dx\\
&\qquad\lesssim\sum_{N_1\geq  \dots\geq N_6} \tfrac1{(|\vk|+N_5)(|\vk|+N_6)}\prod_{j=1}^2\|P_{N_j}(\psi_\mu^4f)\|_{L^2}\prod_{j=3}^6\|P_{N_j}(\psi_\mu^4f)\|_{L^\infty}\\
&\qquad\lesssim \sum_{N_1\geq N_2} \tfrac{|\vk|^{-1}N_2^{2-4\sigma}(1+N_2)^{2\sigma}}{(|\vk|+N_2)^{1-2\sigma}}\|P_{N_1}(\psi_\mu^4 f)\|_{L^2}\|P_{N_2}(\psi_\mu^4 f)\|_{L^2}\|q\|_{E_{\sigma}^\sigma}^2\|q\|_{E_{2\sigma, \vk}^\sigma}^2\\
&\qquad\lesssim |\vk|^{-1}\|\psi_\mu^4 f\|_{H^{\frac12}}^2\|q\|_{E_{\sigma}^\sigma}^2\|q\|_{E_{2\sigma, \vk}^\sigma}^2.
\end{align*}
In view of \eqref{LS loc} and \eqref{admissible}, this contribution is acceptable.

Next we consider the first term on $\LHS{m4 type II bound}$. Decomposing once again into Littlewood--Paley pieces, we have
\begin{align*}
&m\Bigl[\psi_\mu^6f,\psi_\mu^6f,\tfrac{\psi_\mu^6f}{2\vk - \p},\tfrac{2-\p}{2\vk - \p}\bigl(\psi_\mu^6f\bigr)\Bigr]\\
&\qquad= \sum_{M_1,M_2,M_3,M_4}m\Bigl[P_{M_1}(\psi_\mu^6f),P_{M_2}(\psi_\mu^6f),\tfrac{P_{M_3}(\psi_\mu^6f)}{2\vk - \p},\tfrac{2-\p}{2\vk - \p}P_{M_4}(\psi_\mu^6f)\Bigr].
\end{align*}
To continue, we split the sum into two pieces: the first where \(M_4 < \max\{M_j\}\) and the second where \(M_4 = \max\{M_j\}\).

For the first summand, we apply \eqref{paraHolder} to estimate the terms with the two highest frequencies in \(L^2\) and the remaining terms in \(L^\infty\). We then apply Bernstein's inequality followed by \eqref{Hard Place} to the \(N_4\)-term and \eqref{Fmu-E2} to the \(N_3\)-term to estimate
\begin{align*}
&\sum_{\substack{M_1,M_2,M_3,M_4\\M_4<\max\{M_j\}}}\left|\int m\Bigl[P_{M_1}(\psi_\mu^6f),P_{M_2}(\psi_\mu^6f),\tfrac{P_{M_3}(\psi_\mu^6f)}{2\vk - \p},\tfrac{2-\p}{2\vk - \p}P_{M_4}(\psi_\mu^6f)\Bigr]\,dx\right|\\
&\qquad\lesssim \sum_{N_1\geq N_2\geq N_3\geq N_4}\tfrac{(1+N_2)}{(|\vk| + N_2)(|\vk| + N_4)}\prod_{j=1}^2\|P_{N_j}(\psi_\mu^6 f)\|_{L^2}\prod_{j=3}^4\|P_{N_j}(\psi_\mu^6 f)\|_{L^\infty}\\
&\qquad\lesssim \sum_{N_1\geq N_2\geq N_3}\tfrac{(1+N_2)N_3^{1-\sigma}}{|\vk|^{\frac12+\sigma}(|\vk| + N_2)(|\vk| + N_3)^{\frac12-2\sigma}}\|P_{N_1}(\psi_\mu^6 f)\|_{L^2}\|P_{N_2}(\psi_\mu^6 f)\|_{L^2}\|q\|_{E_{2\sigma,\vk}^\sigma}^2\\
&\qquad\lesssim\sum_{N_1\geq N_2}\tfrac{(1+N_2)^{2-\sigma}}{|\vk|^{\frac12+ \sigma}(|\vk| + N_2)^{\frac32-2\sigma}}\|P_{N_1}(\psi_\mu^6 f)\|_{L^2}\|P_{N_2}(\psi_\mu^6 f)\|_{L^2}\|q\|_{E_{2\sigma,\vk}^\sigma}^2\\
&\qquad\lesssim\sum_{N_1\geq N_2}\tfrac{(1+N_2)^{1-\sigma}}{|\vk|^{\frac12+ \sigma}(|\vk| + N_2)^{\frac32-2\sigma}}\bigl(\tfrac{1+N_2}{1+N_1}\bigr)^{\frac12}\|P_{N_1}(\psi_\mu^6 f)\|_{H^{\frac12}}\|P_{N_2}(\psi_\mu^6 f)\|_{H^{\frac12}}\|q\|_{E_{2\sigma,\vk}^\sigma}^2\\
&\qquad\lesssim |\vk|^{-1}\|\psi_\mu^6f\|_{H^{\frac12}}^2\|q\|_{E_{2\sigma,\vk}^\sigma}^2.
\end{align*}
Note that the frequency parameters $N_j$ represent a permutation of the parameters \(M_j\) so as to account for the largest contribution. Integrating with respect to the measure \(e^{-\frac1{200}|h-\mu|}\,d\mu\) and applying \eqref{LS loc} and \eqref{admissible}, we obtain an acceptable contribution.

For the second summand, we first use \eqref{paraLeibniz} to redistribute the derivative:
\begin{align*}
&\sum_{M_4\geq M_1,M_2,M_3}\left|\int m\Bigl[P_{M_1}(\psi_\mu^6f),P_{M_2}(\psi_\mu^6f),\tfrac{P_{M_3}(\psi_\mu^6f)}{2\vk - \p},\tfrac{2-\p}{2\vk - \p}P_{M_4}(\psi_\mu^6f)\Bigr]\,dx\right|\\
&\qquad\leq \sum_{M_4\geq M_1,M_2,M_3}\left|\int m\Bigl[(2-\p)P_{M_1}(\psi_\mu^6f),P_{M_2}(\psi_\mu^6f),\tfrac{P_{M_3}(\psi_\mu^6f)}{2\vk - \p},\tfrac{P_{M_4}(\psi_\mu^6f)}{2\vk - \p}\Bigr]\,dx\right|\\
&\qquad\quad + \!\sum_{M_4\geq M_1,M_2,M_3}\left|\int m\Bigl[P_{M_1}(\psi_\mu^6f),(2-\p)P_{M_2}(\psi_\mu^6f),\tfrac{P_{M_3}(\psi_\mu^6f)}{2\vk - \p},\tfrac{P_{M_4}(\psi_\mu^6f)}{2\vk - \p}\Bigr]\,dx\right|\\
&\qquad\quad + \!\sum_{M_4\geq M_1,M_2,M_3}\left|\int m\Bigl[P_{M_1}(\psi_\mu^6f),P_{M_2}(\psi_\mu^6f),\tfrac{2-\p}{2\vk - \p}P_{M_3}(\psi_\mu^6f),\tfrac{P_{M_4}(\psi_\mu^6f)}{2\vk - \p}\Bigr]\,dx\right|,
\end{align*}
where each \(m_j\in S_{\loc}(4)\). We then proceed as in the first case to estimate each term by
\begin{align*}
&\sum_{N_1\geq N_2\geq N_3\geq N_4}\tfrac{(1+N_2)}{(|\vk| + N_1)(|\vk| + N_4)}\prod_{j=1}^2\|P_{N_j}(\psi_\mu^6 f)\|_{L^2}\prod_{j=3}^4\|P_{N_j}(\psi_\mu^6 f)\|_{L^\infty}\\
&\qquad\lesssim\sum_{N_1\geq N_2}\tfrac{(1+N_2)^{2-\sigma}}{|\vk|^{\sigma+\frac12}(|\vk| + N_1)(|\vk| + N_2)^{\frac12-2\sigma}}\|P_{N_1}(\psi_\mu^6 f)\|_{L^2}\|P_{N_2}(\psi_\mu^6 f)\|_{L^2}\|q\|_{E_{2\sigma,\vk}^\sigma}^2\\
&\qquad\lesssim |\vk|^{-1}\|\psi_\mu^6f\|_{H^{\frac12}}^2\|q\|_{E_{2\sigma,\vk}^\sigma}^2,
\end{align*}
which is seen to be acceptable after an application of \eqref{LS loc} and \eqref{admissible}.

Applying a parallel argument, the second term on $\LHS{m4 type II bound}$ can be bounded by
\[
\sum_{N_1\geq N_2\geq N_3\geq N_4}\tfrac{(1+N_2)}{(|\vk| + N_2)(|\vk| + N_4)}\prod_{j=1}^2\|P_{N_j}(\psi_\mu^6 f)\|_{L^2}\prod_{j=3}^4\|P_{N_j}(\psi_\mu^6 f)\|_{L^\infty},
\]
which is acceptable, as demonstrated above. This completes the proof of \eqref{m4 type II bound}.
\end{proof}

Combining Proposition~\ref{p:Asymptotics} and Lemma~\ref{l:paraproducts I}, we have the following:

\begin{lemma}\label{l:DNLS LS helper} The following estimate holds uniformly for \(q\in Q_*\), \(h\in \R\), and \(\vk\geq 1:\)
\begin{align}\label{jDNLS geq4}
\int\Biggl|\Im\int j_{\DNLS}\sbrack{\geq 4}(\vk)\,\psi_\mu^{24}\,dx\Biggr|\,e^{-\frac1{200}|h - \mu|}\,d\mu\lesssim |\vk|^{-1}\|q\|_{F^{\frac12}(h)}^2\|q\|_{E_{2\sigma,\vk}^\sigma}^2.
\end{align} 
In particular, in view of Lemma~\ref{l:E int}, 
\begin{align}\label{jDNLS geq4 int}
\int_\kappa^\infty\int\Biggl|\Im\int_{-1}^1 \int j_{\DNLS}\sbrack{\geq 4}(\vk)\,\psi_\mu^{24}\,dx\, dt\Biggr|\,e^{-\frac1{200}|h - \mu|}\,d\mu\, d\vk\lesssim \|q\|_{X^{\frac12}}^2\|q\|_{L^\infty_t E_{\sigma,\kappa}^\sigma}^2.
\end{align}
\end{lemma}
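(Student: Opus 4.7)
Starting from the explicit formula for $j_{\DNLS}$, the contribution to $j_\DNLS\sbrack{\geq 4}$ splits into four pieces:
(I)~$\tfrac{1}{\sqrt\vk}\bigl(\tfrac{g_{21}}{2+\gamma}\bigr)\sbrack{\geq 3}(2\vk+\p)q$;
(II)~$\tfrac{i}{\sqrt\vk}\tfrac{g_{21}}{2+\gamma}|q|^2 q$;
(III)~$-\tfrac{i}{\sqrt\vk}\bigl(\tfrac{g_{12}}{2+\gamma}\bigr)\sbrack{\geq 3}(2\vk-\p)\bar q$;
(IV)~$\tfrac{1}{\sqrt\vk}\tfrac{g_{12}}{2+\gamma}|q|^2\bar q$.
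Since $g_{12}/(2+\gamma)$ and $g_{21}/(2+\gamma)$ admit only odd-order expansions beginning at order~$1$ (because $g_{12},g_{21}$ are odd and $\gamma$ is even in $q$), this captures everything of order $\geq 4$; the $i|q|^2$ summand in $j_\DNLS$ is quadratic and drops out.

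\textbf{Cancellation and paraproduct representation.} The order-$4$ parts of (II) and (IV) are $\tfrac{1}{2}\tfrac{\bar q}{2\vk+\p}\cdot q^2\bar q$ and $\tfrac{1}{2}\tfrac{q}{2\vk-\p}\cdot q\bar q^2$, respectively. Using that $\tfrac{1}{2\vk\pm\p}$ commute with complex conjugation (their integral kernels are real), a direct calculation yields
\[
\Im\bigl[(\mathrm{II})\sbrack{4}+(\mathrm{IV})\sbrack{4}\bigr] = -\Im\Bigl[q^2\bar q\cdot \tfrac{\bar q'}{4\vk^2-\p^2}\Bigr].
\]
This is an $S(4)$ paraproduct whose last entry carries the multiplier $\tfrac{\p}{4\vk^2-\p^2}$, which I split via $\tfrac{\p}{4\vk^2-\p^2} = \tfrac{2}{4\vk^2-\p^2} - \tfrac{2-\p}{4\vk^2-\p^2}$ so each piece matches the second form of \eqref{m4 type II bound} (the $\tfrac{2}{4\vk^2-\p^2}$ piece is a strictly better multiplier and satisfies an identical bound). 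For the higher-order contributions of (II)+(IV), I apply \eqref{paraproduct for frac} and \eqref{paraproduct for frac geq} to express $\tfrac{1}{\sqrt\vk}\tfrac{g_{21}}{2+\gamma}$ as $\tfrac{\vk}{2\vk+\p}$-headed paraproducts and combine with $|q|^2 q$ to produce $S(n)$ paraproducts with $n\geq 6$. For (I), the identity \eqref{g21 frac geq3} recasts $\tfrac{1}{\sqrt\vk}\bigl(\tfrac{g_{21}}{2+\gamma}\bigr)\sbrack{\geq 3} = -\tfrac{2}{2\vk+\p}\bigl[q\bigl(\tfrac{g_{21}}{2+\gamma}\bigr)^2\bigr]$; pairing with $(2\vk+\p)q$ and using the operator identity $\tfrac{1}{2\vk+\p}\bigl[F(2\vk+\p)q\bigr] = Fq - \tfrac{1}{2\vk+\p}\bigl[F'q\bigr]$ (valid by Leibniz) eliminates the bare derivative in favor of an admissible $q$-entry and a redistributed derivative on $F = q(g_{21}/(2+\gamma))^2$. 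Further expansion via Proposition~\ref{p:Asymptotics} (and an analogous treatment of (III)) yields a finite sum of $S(4)$ and $S(n\geq 6)$ paraproducts whose entries are drawn from the admissible list \eqref{f list}.

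\textbf{Localization and bound.} Multiplying by $\psi_\mu^{24}$ and invoking Lemma~\ref{l:paraproperties}(i) distributes the weight as $\psi_\mu^{\ell_j}$ factors across the entries, promoting each paraproduct to $S_\loc(n)$. I reshape further via the Leibniz rule \eqref{paraLeibniz} and the telescoping identities \eqref{1/2vk-p exp}, \eqref{1/2vk-p exp 2} so that each $S_\loc(4)$ term matches one of the two forms in \eqref{m4 type II bound} and each $S_\loc(n)$ with $n\geq 6$ matches \eqref{m4 type III bound}. Applying Lemma~\ref{l:paraproducts I} yields a bound of the shape $|\vk|^{-1}\|q\|_{F^{\frac12}(h)}^2\|q\|_{E^\sigma_{2\sigma,\vk}}^2\bigl[1+\|q\|_{E^\sigma_\sigma}^2\bigr]$; $\delta$-goodness of $Q_*$ absorbs the extra $1+\|q\|_{E^\sigma_\sigma}^2$, establishing \eqref{jDNLS geq4}. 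For \eqref{jDNLS geq4 int}, I integrate \eqref{jDNLS geq4} in $t\in[-1,1]$, exchange the order of the $t$- and $\mu$-integrals, bound $\int_{-1}^1\|q(t)\|_{F^{\frac12}(h)}^2\,dt\leq \|q\|_{X^{\frac12}}^2$ from the definition \eqref{X def} (taking $\sup_\mu$ inside), and then integrate in $\vk\in[\kappa,\infty)$, converting $\int_\kappa^\infty\|q\|_{E^\sigma_{2\sigma,\vk}}^2\tfrac{d\vk}{\vk}$ into $\|q\|_{E^\sigma_{\sigma,\kappa}}^2$ via Lemma~\ref{l:E int} with $s=2\sigma$, $\beta=0$.

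\textbf{Main obstacle.} The central technical hurdle lies in Step~2: handling the bare derivative $(2\vk+\p)q$ appearing in (I) so that, when paired with the outer $\tfrac{1}{2\vk+\p}$ from \eqref{g21 frac geq3}, the residual derivative can be redistributed cleanly across paraproduct factors (via Leibniz together with \eqref{1/2vk-p exp} and \eqref{1/2vk-p exp 2}) to produce exactly the smoothing operators $\tfrac{2-\p}{2\vk-\p}$ and $\tfrac{2-\p}{4\vk^2-\p^2}$ that Lemma~\ref{l:paraproducts I} admits. The auxiliary cancellation identified in Step~2 is likewise essential: without extracting the smoothing factor $\tfrac{1}{4\vk^2-\p^2}$ via the imaginary part of $(\mathrm{II})\sbrack{4}+(\mathrm{IV})\sbrack{4}$, one would be left with an un-smoothed term of the shape $|q|^4/\vk$, which cannot be bounded by the claimed right-hand side.
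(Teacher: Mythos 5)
Your overall strategy is the paper's: write $j_\DNLS\sbrack{\geq4}$ using the second form of \eqref{DNLS current}, expand $\tfrac{g_{21}}{2+\gamma}$ and $\tfrac{g_{12}}{2+\gamma}$ via \eqref{g21 frac geq3} and Proposition~\ref{p:Asymptotics}, and reduce to the two paraproduct estimates \eqref{m4 type II bound}, \eqref{m4 type III bound}. Your cancellation $\Im\bigl[(\mathrm{II})\sbrack4+(\mathrm{IV})\sbrack4\bigr]=-\Im\bigl[q^2\bar q\,\tfrac{\bar q'}{4\vk^2-\p^2}\bigr]$ is correct and is exactly the paper's identity $\tfrac1{\sqrt\vk}\bigl(\tfrac{g_{21}}{2+\gamma}\bigr)\sbrack1(\vk)+\tfrac1{\sqrt{-\vk}}\bigl(\tfrac{g_{21}}{2+\gamma}\bigr)\sbrack1(-\vk)=i\bar u'$ seen through \eqref{sym}. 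The final passage from \eqref{jDNLS geq4} to \eqref{jDNLS geq4 int} is also fine.

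There is, however, a genuine gap in your treatment of (I) and (III). You attribute the difficulty there to ``handling the bare derivative $(2\vk+\p)q$'' and claim that Leibniz plus the telescoping identities produce paraproducts matching Lemma~\ref{l:paraproducts I}. But the dangerous piece of (I) is the \emph{derivative-free} part: the $2\vk\cdot q$ factor paired with the leading term $4\vk^4|u|^2\bar u$ of $\tfrac1{\sqrt\vk}\bigl(\tfrac{g_{21}}{2+\gamma}\bigr)\sbrack3$ (see \eqref{g21 frac 3 expansion}) produces $8\vk^5|u|^2\bar u\,q$. This is indeed an $S(4)$ paraproduct with admissible entries, but it carries \emph{no} smoothing numerator $(2-\p)$ anywhere, so it matches neither form in \eqref{m4 type II bound}; and no redistribution of derivatives can create a smoothing factor where there is none. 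Moreover the term is genuinely too large: at a single low frequency $N\lesssim\vk$ it is of size $\vk^{-1}\int|q_N|^4\psi_\mu^{24}$, which exceeds $\vk^{-1}\|q\|_{F^{\frac12}(h)}^2\|q\|_{E^\sigma_{2\sigma,\vk}}^2$ by the factor $\vk^{2\sigma}$ that equicontinuity is supposed to supply. The term is only controllable through a cancellation of the same nature as the one you found for (II)$+$(IV): $\Im\bigl[8\vk^5|u|^2(\bar u\,q+u\,\bar q)\bigr]=0$, equivalently (the paper's formulation) $4\vk^5|u|^2\bar u$ is odd under $\vk\mapsto-\vk$, so it drops out of $\sqrt\vk\bigl(\tfrac{g_{21}}{2+\gamma}\bigr)\sbrack3(\vk)+\sqrt{-\vk}\bigl(\tfrac{g_{21}}{2+\gamma}\bigr)\sbrack3(-\vk)$ — this is precisely the ``In particular'' step yielding \eqref{DNLS frac bound I}. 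Your proposal never invokes this second cancellation, so the reduction of (I) and (III) to Lemma~\ref{l:paraproducts I} as you describe it would fail. (A secondary, smaller issue: the identity $\tfrac1{2\vk+\p}\bigl[F(2\vk+\p)q\bigr]=Fq-\tfrac1{2\vk+\p}[F'q]$ does not apply to (I) as written, since there the resolvent sits inside the first factor of a pointwise product rather than wrapping the product; one must instead integrate by parts against $\psi_\mu^{24}$ as the paper does.)
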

\begin{proof}
Using \eqref{DNLS current} and \eqref{sym}, we may write
\begin{align}\label{jDNLS rewriting}
\Im j_{\DNLS}\sbrack{\geq 4}(\vk) = \Im \Biggl\{&2\Bigl[\sqrt\vk\bigl(\tfrac{g_{21}}{2+\gamma}\bigr)\sbrack{\geq 3}(\vk) + \sqrt{-\vk}\bigl(\tfrac{g_{21}}{2+\gamma}\bigr)\sbrack{\geq 3}(-\vk)\Bigr] q\notag\\
& + \Bigl[\tfrac1{\sqrt\vk}\bigl(\tfrac{g_{21}}{2+\gamma}\bigr)\sbrack{\geq 3}(\vk) + \tfrac1{\sqrt{-\vk}}\bigl(\tfrac{g_{21}}{2+\gamma}\bigr)\sbrack{\geq 3}(-\vk)\Bigr] q'\notag\\
& + i\Bigl[\tfrac1{\sqrt\vk}\bigl(\tfrac{g_{21}}{2+\gamma}\bigr)(\vk) + \tfrac1{\sqrt{-\vk}}\bigl(\tfrac{g_{21}}{2+\gamma}\bigr)(-\vk)\Bigr]|q|^2q\Biggr\}.
\end{align}

Using \eqref{paraproduct for frac geq}, we may integrate by parts to obtain
\begin{align*}
\int \tfrac1{\sqrt\vk}\bigl(\tfrac{g_{21}}{2+\gamma}\bigr)\sbrack{\geq 3}(\vk)\,q'\,\psi_\mu^{24}\,dx &= \int m\Bigl[\psi_\mu^6f,\psi_\mu^6f,\tfrac{\psi_\mu^6f}{2\vk-\p},\tfrac{2-\p}{2\vk - \p}(\psi_\mu^6 f)\Bigr]\,dx,
\end{align*}
where \(m\in S_{\loc}(4)\). We then apply \eqref{m4 type II bound} to obtain
\begin{align}
&\int \Biggl|\int \tfrac1{\sqrt\vk}\bigl(\tfrac{g_{21}}{2+\gamma}\bigr)\sbrack{\geq 3}(\vk)\,q'\,\psi_\mu^{24}\,dx\Biggr|\,e^{-\frac1{200}|h-\mu|}\,d\mu\lesssim  |\vk|^{-1}\|q\|_{F^{\frac12}(h)}^2\|q\|_{E_{2\sigma,\vk}^\sigma}^2.\label{DNLS frac bound II}
\end{align}

Similarly, from \eqref{paraproduct for frac geq} we have
\begin{align*}
\int \tfrac1{\sqrt\vk}\bigl(\tfrac{g_{21}}{2+\gamma}\bigr)\sbrack{\geq 3}(\vk)\,|q|^2q\,\psi_\mu^{24}\,dx &= \int m\Bigl[\psi_\mu^4f,\psi_\mu^4f,\psi_\mu^4f,\psi_\mu^4f,\tfrac{\psi_\mu^4f}{2\vk - \p},\tfrac{\psi_\mu^4f}{2\vk - \p}\Bigr]\,dx,
\end{align*}
where \(m\in S_{\loc}(6)\). Recalling \eqref{linear} and \eqref{bar q to bar u} we obtain
\begin{align*}
&\int \Bigl[\tfrac1{\sqrt\vk}\bigl(\tfrac{g_{21}}{2+\gamma}\bigr)\sbrack{1}(\vk) +\tfrac1{\sqrt{-\vk}}\bigl(\tfrac{g_{21}}{2+\gamma}\bigr)\sbrack{1}(-\vk) \Bigr]\,|q|^2q\,\psi_\mu^{24}\,dx \\
&\qquad =i\int \bar u'|q|^2q\,\psi_\mu^{24}\,dx = \int m\Bigl[\psi_\mu^6f,\psi_\mu^6f,\psi_\mu^6f,\tfrac{2-\p}{4\vk^2 - \p^2}(\psi_\mu^6f)\Bigr]\,dx,
\end{align*}
where \(m\in S_{\loc}(4)\) and we recall that \(u = \frac q{4\vk^2 - \p^2}\). Applying \eqref{m4 type II bound} and \eqref{m4 type III bound} then gives us
\begin{align}
&\int \Biggl|\int \Bigl[\tfrac1{\sqrt\vk}\bigl(\tfrac{g_{21}}{2+\gamma}\bigr)(\vk) + \tfrac1{\sqrt{-\vk}}\bigl(\tfrac{g_{21}}{2+\gamma}\bigr)(-\vk)\Bigr]\,|q|^2q\,\psi_\mu^{24}\,dx\Biggr|\,e^{-\frac1{200}|h-\mu|}\,d\mu\label{DNLS frac bound III}\\
&\qquad\qquad\qquad\qquad\qquad\qquad\qquad\lesssim |\vk|^{-1}\|q\|_{F^{\frac12}(h)}^2\|q\|_{E_{2\sigma,\vk}^\sigma}^2.\notag
\end{align}

Another application of \eqref{paraproduct for frac geq} gives us
\begin{align*}
\int \sqrt\vk\bigl(\tfrac{g_{21}}{2+\gamma}\bigr)\sbrack{\geq 5}(\vk)\, q\,\psi_\mu^{24}\,dx = \int m\Bigl[\psi_\mu^4f,\psi_\mu^4f,\psi_\mu^4f,\psi_\mu^4f,\tfrac{\psi_\mu^4f}{2\vk - \p},\tfrac{\psi_\mu^4f}{2\vk - \p}\Bigr]\,dx,
\end{align*}
where \(m\in S_{\loc}(6)\). Further, from \eqref{g21 frac 3 expansion} we have
\begin{align*}
\int \Bigl[\sqrt\vk\bigl(\tfrac{g_{21}}{2+\gamma}\bigr)\sbrack{3}(\vk)- 4\vk^5|u|^2\bar u\Bigr]\,q\,\psi_\mu^{24}\,dx= \int m\Bigl[\psi_\mu^6f,\psi_\mu^6f,\tfrac{\psi_\mu^6f}{2\vk-\p},\tfrac{2-\p}{2\vk - \p}(\psi_\mu^6f)\Bigr]\,dx,
\end{align*}
where \(m\in S_{\loc}(4)\). Once again we use \eqref{m4 type II bound} and \eqref{m4 type III bound} to bound
\begin{align*}
\int \Biggl|\int \Bigl[\sqrt\vk\bigl(\tfrac{g_{21}}{2+\gamma}\bigr)\sbrack{\geq 3}(\vk) - 4\vk^5|u|^2\bar u\Bigr]\,q\,\psi_\mu^{24}\,dx\Biggr|\,e^{-\frac1{200}|h-\mu|}\,d\mu\lesssim |\vk|^{-1}\|q\|_{F^{\frac12}(h)}^2\|q\|_{E_{2\sigma,\vk}^\sigma}^2.
\end{align*}
In particular,
\begin{align}\label{DNLS frac bound I}
\int \Biggl|\int \Bigl[\sqrt\vk\bigl(\tfrac{g_{21}}{2+\gamma}\bigr)\sbrack{\geq 3}(\vk) &+ \sqrt{-\vk}\bigl(\tfrac{g_{21}}{2+\gamma}\bigr)\sbrack{\geq 3}(-\vk)\Bigr]\,q\,\psi_\mu^{24}\,dx\Biggr|\,e^{-\frac1{200}|h-\mu|}\,d\mu\notag\\
&\lesssim |\vk|^{-1}\|q\|_{F^{\frac12}(h)}^2\|q\|_{E_{2\sigma,\vk}^\sigma}^2.
\end{align}

The estimate \eqref{jDNLS geq4} then follows from combining \eqref{DNLS frac bound II} through \eqref{DNLS frac bound I}.
\end{proof}

We are now in a position to complete the:

\begin{proof}[Proof of Proposition~\ref{p:LS}]

Combining the identity \eqref{IBP in dt rho} with Lemmas~\ref{L:rho} and~\ref{l:DNLS LS helper} we obtain
\begin{align}\label{2:07}
\int_\kappa^\infty\int\Biggl|\Im\int_{-1}^1 & \int j_{\DNLS}\sbrack{2}(\vk)\,\psi_\mu^{24}\,dx\, dt\Biggr|\,e^{-\frac1{200}|h - \mu|}\,d\mu\, d\vk\notag\\
&\lesssim \|q\|_{L^\infty_t E_{\sigma,\kappa}^\sigma}^2\Bigl[1 + \|q\|_{L^\infty_tL^2}^2+\|q\|_{X^{\frac12}}^2\Bigr].
\end{align}

It remains to consider the quadratic terms in  $j_{\DNLS}$. A computation yields
\begin{align}\label{j2}
\Im j_{\DNLS}\sbrack 2 = 2\Re\Bigl\{\tfrac{q'}{4\vk^2 - \p^2} \bar q'\Bigr\} - \Re\Bigl\{\tfrac{q'}{4\vk^2 - \p^2}\bar q\Bigr\}'.
\end{align}
We may then integrate by parts to obtain
\begin{align*}
\Im \int j_{\DNLS}\sbrack 2(\vk)\,\psi_\mu^{24}\,dx &= 2\|(\psi_\mu^{12}q)'\|_{H^{-1}_\varkappa}^2 + 2\Re\int [\psi_\mu^{12},\tfrac{\p}{4\vk^2 - \p^2}]q \,(\psi_\mu^{12}\bar q)'\,dx
\end{align*}
To estimate the error, we use that
\[
[\psi_\mu^{12},\tfrac{\p}{4\vk^2 - \p^2}] = -\tfrac1{4\vk^2 - \p^2}\bigl[(\psi_\mu^{12})''+2(\psi_\mu^{12})'\p\bigr]\tfrac{\p}{4\vk^2 - \p^2} - \tfrac1{4\vk^2 - \p^2}(\psi_\mu^{12})'
\]
together with \eqref{E loc} to bound
\begin{align*}
\biggl|\int [\psi_\mu^{12},\tfrac{\p}{4\vk^2 - \p^2}]q \,(\psi_\mu^{12}\bar q)'\,dx\biggr|&\lesssim \vk^{- 1}\|q\|_{E_{2\sigma,\vk}^\sigma}^2.
\end{align*}
Thus,
\begin{align*}
\|\psi_\mu^{12}q\|_{E_{1,\vk}^1}^2\lesssim\Biggl|\Im \int j_\DNLS\sbrack{2}(\varkappa) \,\psi_\mu^{24} \, dx\Biggr| + \vk^{-1}\|q\|_{E_{2\sigma, \vk}^\sigma}^2.
\end{align*}
Integrating in $\vk$ over $[\kappa,\infty)$, using \eqref{E int} and then \eqref{2:07} we get
\begin{align*}
\kappa \int\|\psi_\mu^{12}q\|_{L^2_tE_{\frac12,\kappa}^1}^2\,e^{-\frac1{200}|h-\mu|}\,d\mu
\lesssim \|q\|_{L_t^\infty E_{\sigma,\kappa}^\sigma}^2\Bigl[ 1+ \|q\|_{L^\infty_tL_x^2}^2 + \|q\|_{X^{\frac12}}^2\Bigr].
\end{align*}
Estimating
\begin{align*}
\bigl\|\tfrac{\psi_\mu^{12}q}{\sqrt{4\kappa^2 - \p^2}}\bigr\|_{L^2_tH^{\frac32}}^2 &\lesssim \bigl\|\tfrac{P_{\leq 1}(\psi_\mu^{12}q)}{\sqrt{4\kappa^2 - \p^2}}\bigr\|_{L^2_tH^{\frac32}}^2 + \bigl\|\tfrac{P_{>1}(\psi_\mu^{12}q)}{\sqrt{4\kappa^2 - \p^2}}\bigr\|_{L^2_tH^{\frac32}}^2\\
&\lesssim \kappa^{-2}\|q\|_{L^\infty_tL^2_x}^2 + \kappa\|\psi_\mu^{12}q\|_{L^2_tE_{\frac12,\kappa}^1}^2,
\end{align*}
we then get
\begin{equation}\label{final LS}
\|q\|_{X_\kappa^{\frac12}}^2\lesssim \|q\|_{L^\infty_tE_{\sigma,\kappa}^\sigma}^2\|q\|_{X^{\frac12}}^2 + \|q\|_{L^\infty_tE_{\sigma,\kappa}^\sigma}^2\bigl[ 1+ \|q\|_{L^\infty_tL_x^2}^2\bigr] + \kappa^{-2}\|q\|_{L^\infty_tL^2_x}^2
\end{equation}

To prove \eqref{local smoothing} we first take $\kappa=1$ and use \eqref{goodness} to absorb the first term on the right-hand side of \eqref{final LS} into the left-hand side, and then invoke the conservation of the $L^2$ norm. The estimate \eqref{refined local smoothing} follows from using \eqref{local smoothing} in \eqref{final LS} and then applying \eqref{E equi prop}.
\end{proof}

\section{Tightness}\label{S:Tight}

The goal of this section is to show that the family of orbits emanating from an $L^2$-precompact set of Schwartz initial data remains tight, at least for times $t\in[-1,1]$.  We begin by constructing a suitable function $\phi_R$ for localizing to the spatial region $|x|\geq R$ for given $R\geq 100$.  For such $R$, we first define
\[
\chi_R(x) = \tfrac1R\int_{R\leq |\mu|\leq 2R}\sgn(\mu)\psi_\mu^{24}(x)\,d\mu.
\]
Note that \(\chi_R\) is odd and vanishes at \(x=0\). We then define
\[
\phi_R(x) = \int_0^x\chi_R(y)\,dy,
\]
which is even and everywhere positive. Indeed
\[
\phi_R(x)\gtrsim 1\qtq{uniformly for}|x|\geq 2R\geq 200.
\]
In view of this property of \(\phi_R\), a bounded subset $Q\subset L^2$ is tight in $L^2$ if
\begin{align*}
    \int \phi_R(x) |q(x)|^2\,dx\to 0\quad\text{as $R\to\infty$, uniformly for $q\in Q$.}
\end{align*}

\begin{proposition}[Tightness for \eqref{DNLS}]\label{P:tight}
Suppose \(Q\subset \Schw\) is an \(L^2\) bounded and equicontinuous set for which
\[
Q_* = \bigl\{e^{tJ\nabla H}q:|t|\leq 1\text{ and }q\in Q\bigr\}
\]
is $\delta$-good for some sufficiently small $\delta$.  If $Q$ is also tight, then so too is $Q_*$.
\end{proposition}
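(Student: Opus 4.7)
The starting point is to integrate the microscopic mass conservation law \eqref{micro M} against $\phi_R$, which gives
\[
\Phi_R(t):=\int\phi_R|q(t)|^2\,dx=\Phi_R(0)+\int_0^t\int \chi_R\bigl[2\Im(q'\bar q)+\tfrac{3}{2}|q|^4\bigr]\,dx\,ds.
\]
The term $\Phi_R(0)$ is uniformly small in $q(0)\in Q$ for large $R$ by the tightness of $Q$, so the task is to bound the flux integral uniformly. Writing $\chi_R=\tfrac{1}{R}\int_{R\leq|\mu|\leq 2R}\sgn(\mu)\psi_\mu^{24}\,d\mu$, the Plancherel identity $\Im\int \psi_\mu^{24}q'\bar q\,dx=\int \xi\,|\widehat{\psi_\mu^{12}q}(\xi)|^2\,d\xi$ bounds the quadratic flux by $\|\psi_\mu^{12}q\|_{H^{1/2}}^2$, while the 1D Sobolev embedding $H^{1/4}\hookrightarrow L^4$ combined with interpolation bounds the quartic flux by $\|q\|_{L^2}^2\|\psi_\mu^{12}q\|_{H^{1/2}}^2$. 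Combined,
\[
\Bigl|\int\chi_R J\,dx\Bigr|\lesssim (1+M)\cdot\tfrac{1}{R}\int_{R\leq|\mu|\leq 2R}\|\psi_\mu^{12}q\|_{H^{1/2}}^2\,d\mu,\qquad M:=\|q\|_{L^2}^2.
\]

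Smallness in $R$ is extracted from the refined local smoothing \eqref{refined local smoothing} via the split
\[
\|\psi_\mu^{12}q\|_{H^{1/2}}^2\lesssim \Bigl\|\tfrac{\psi_\mu^{12}q}{\sqrt{4\kappa^2-\p^2}}\Bigr\|_{H^{3/2}}^2+\kappa\|\psi_\mu^{12}q\|_{L^2}^2.
\]
Covering $[R,2R]$ by unit intervals and exploiting that the exponential weight in the definition of $X_\kappa^{\frac12}$ is bounded below on each, averaging the first term over $|\mu|\sim R$ and integrating in $t$ yields at most $\|q\|_{X_\kappa^{\frac12}}^2$, which tends to $0$ as $\kappa\to\infty$ uniformly in $q(0)\in Q$. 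The second term integrates to $\kappa\int_0^t\int\tilde\phi_R|q|^2\,dx\,ds$ with $\tilde\phi_R(x)=\tfrac{1}{R}\int_{|\mu|\in[R,2R]}\psi_\mu^{24}(x)\,d\mu$ dominated (up to an $e^{-R/200}$ error in $\|q\|_{L^2}^2$) by $\phi_{R/c}$ for a universal constant $c>1$. Setting $\bar\Phi_R:=\sup_{|t|\leq 1}\Phi_R(t)$, we arrive at the self-improvement
\[
\bar\Phi_R\leq \Phi_R(0)+C(1+M)\|q\|_{X_\kappa^{\frac12}}^2+C(1+M)\kappa\,\bar\Phi_{R/c}.
\]

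The proof concludes by a finite iteration. Given $\epsilon_0>0$, first fix $\kappa$ large enough via \eqref{refined local smoothing} that $C(1+M)\|q\|_{X_\kappa^{\frac12}}^2<\epsilon_0^2/(CM)$ uniformly in $q(0)\in Q$; set $B=C(1+M)\kappa$ and pick $n$ with $B^n M<\epsilon_0$; finally use tightness of $Q$ to take $R$ so large that $\Phi_{R/c^{k}}(0)<\epsilon_0/(nB^{n-1})$ for all $0\leq k\leq n-1$. Iterating the displayed inequality $n$ times then gives $\bar\Phi_R<\epsilon_0$ for every $q(0)\in Q$, as desired. The main obstacle is the low-frequency term: the factor $\kappa$ is paired with the very tail-mass quantity we seek to prove vanishes, which would cause naive iteration to amplify by $\kappa^n$. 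The resolution is the staged choice $\kappa\to n\to R$; since $n$ depends only logarithmically on $M/\epsilon_0$, the required initial tightness radius $R/c^{n-1}$ grows only polynomially in $M/\epsilon_0$, which is furnished by the assumed tightness of $Q$.
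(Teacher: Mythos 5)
Your setup --- integrating \eqref{micro M} against $\phi_R$, splitting the flux into quadratic and quartic pieces, bounding both by $\|\psi_\mu^{12}q\|^2_{L^2_tH^{1/2}}$ via Plancherel and Gagliardo--Nirenberg, and splitting at frequency $\kappa$ so that the high-frequency piece is controlled by $\|q\|_{X_\kappa^{1/2}}^2$ and \eqref{refined local smoothing} --- is exactly the paper's argument. The genuine gap is in your treatment of the low-frequency term $\kappa\|\psi_\mu^{12}q\|_{L^2}^2$. By discarding the $\tfrac1R$ normalization when you dominate $\tilde\phi_R$ by $\phi_{R/c}$, you arrive at the recursion $\bar\Phi_R\leq\Phi_R(0)+C(1+M)\|q\|_{X_\kappa^{1/2}}^2+B\,\bar\Phi_{R/c}$ with $B=C(1+M)\kappa\gg1$, and this cannot be closed: iterating $n$ times leaves a terminal term $B^n\bar\Phi_{R/c^n}$ whose only a priori bound is $B^n\cdot CM$, and your prescription ``pick $n$ with $B^nM<\epsilon_0$'' is unsatisfiable, since $B>1$ forces $B^nM\geq M$ for every $n\geq 0$. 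The inequality also points the wrong way structurally: it bounds the tail mass outside radius $2R$ by a large multiple of the tail mass outside the \emph{smaller} radius $2R/c$, which is precisely the quantity you are trying to prove small.

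The repair is to keep the $\tfrac1R$: by Fubini and \eqref{psi int},
\[
\int\tilde\phi_R\,|q|^2\,dx=\tfrac1R\int_{R\leq|\mu|\leq2R}\|\psi_\mu^{12}q\|_{L^2_x}^2\,d\mu\leq\tfrac1R\int_\R\|\psi_\mu^{12}q\|_{L^2_x}^2\,d\mu=\tfrac{512}{7R}\,\|q\|_{L^2}^2,
\]
so the low-frequency contribution to the time-integrated flux is $\lesssim\tfrac{\kappa}{R}(1+M)M$ --- total mass divided by $R$, not tail mass multiplied by $\kappa$. One then chooses $\kappa$ large first (making the $X_\kappa^{1/2}$ term small uniformly over $Q$ via \eqref{refined local smoothing}) and $R$ large afterwards; no iteration is needed. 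This is precisely how the paper concludes.
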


\begin{proof}
From the microscopic conservation law \eqref{micro M} we obtain
\begin{align}\label{alice}
    \frac{d}{dt}\int \phi_R|q|^2\,dx &= \tfrac1R\int_{R\leq |\mu|\leq 2R} \sgn(\mu) \int \bigl[2 \Im  (q' \bar q) +\tfrac{3}{2} |q|^4\bigr]\,\psi_\mu^{24} \,dx\,d\mu.
\end{align}
Our goal is to deduce the tightness of $Q_*$ from that of $Q$ by estimating the right-hand side above in \(L^1([-1,1];dt)\) and showing that it converges to zero as \(R\to \infty\).

For the first term on \(\RHS{alice}\) we have
\begin{align*}
\int_{-1}^1\biggl|\int q'\bar q\,\psi_\mu^{24}\,dx\biggl|\,dt \lesssim \|\psi_\mu^{12}q\|_{L^2_tH^{\frac12}}^2 \lesssim \kappa\|\psi_\mu^{12}q\|_{L_{t,x}^2}^2 + \bigl\|\tfrac{\psi_\mu^{12}q}{\sqrt{4\kappa^2 - \p^2}}\bigr\|_{L^2_tH^{\frac32}}^2,
\end{align*}
where the second inequality follows from decomposing into frequencies \(\leq \kappa\) and \(>\kappa\). For the second term on \(\RHS{alice}\), we apply the Gagliardo--Nirenberg inequality to bound
\begin{align*}
\|\psi_\mu^6 q\|_{L^4_{t,x}}^4\lesssim \|\psi_\mu^6 q\|_{L^2_tH^{\frac12}}^2\|\psi_\mu^6 q\|_{L^\infty_tL^2_x}^2\lesssim \Bigl[ \kappa\|\psi_\mu^6 q\|_{L^2_{t,x}}^2 + \bigl\|\tfrac{\psi_\mu^6q}{\sqrt{4\kappa^2 - \p^2}}\bigr\|_{L^2_tH^{\frac32}}^2\Bigr]\|\psi_\mu^6 q\|_{L^\infty_tL^2_x}^2.
\end{align*}
In this way we deduce that
\[
\|\RHS{alice}\|_{L^1_t} \lesssim\Bigl[ \tfrac\kappa R\|\psi_\mu^{12}q\|_{L^2_\mu L^2_tL^2_x}^2 + \bigl\|\tfrac{\psi_\mu^{12}q}{\sqrt{4\kappa^2 - \p^2}}\bigr\|_{L^\infty_\mu L^2_tH^{\frac32}}^2\Bigr]\bigl[1 + \|q\|_{L^\infty_tL^2_x}\bigr].
\]
By Fubini, \eqref{psi int}, and \eqref{LS alt} we then have
\[
\|\RHS{alice}\|_{L^1_t} \lesssim \Bigl[\tfrac\kappa R\|q\|_{L^\infty_tL^2_x}^2 + \|q\|_{X_\kappa^{\frac12}}^2\Bigr]\bigl[1 + \|q\|_{L^\infty_tL^2_x}\bigr],
\]
which can be made arbitrarily small by first choosing \(\kappa\) large and applying \eqref{refined local smoothing}, and then choosing \(R\) large.
\end{proof}

\section{Local smoothing for the difference flow}\label{sect: diff local smoothing}

In this section we prove local smoothing for Schwartz solutions of the difference flow.

\begin{proposition}[Local smoothing for the difference flow] \label{p: diff local smoothing}
Let \(Q\subset \Schw\) be an \(L^2\) bounded and equicontinuous set such that
\[
Q_* = \bigl\{q(t; \kappa) = e^{tJ\nabla (H - H_\kappa)}q:\, t\in \R, \, q\in Q,\text{ and } \kappa\geq 2\bigr\}
\]
is a $\delta$-good set for a sufficiently small $\delta>0$. Then  the local smoothing estimate
\begin{equation}\label{degenerate local smoothing}
\|q\|_{X_\kappa^{\frac12}}\lesssim \|q(0)\|_{L^2}.
\end{equation}
holds uniformly for \(\kappa\geq 2\) and $q(0)\in Q$.
\end{proposition}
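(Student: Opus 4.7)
The plan is to parallel the proof of Proposition~\ref{p:LS}, now applied to the microscopic conservation law $\partial_t \rho(\vk;q) + \partial_x j_\diff(\vk,\kappa;q) = 0$ of Proposition~\ref{P:micro laws}. The crucial new ingredient is that $j_\diff$ carries the denominators $\kappa \pm \vk$, so the integration in the auxiliary spectral parameter $\vk$ must avoid the singular set $\{\vk = \pm\kappa\}$. I would therefore integrate $\vk$ over $I_\kappa = [1,\tfrac{\kappa}{2}] \cup [2\kappa,\infty)$, exploiting \eqref{E int 2} to recover the full $E^\sigma_\sigma$-scale from this restricted range, while noting that on $I_\kappa$ we have $|\kappa\pm\vk| \gtrsim \max(\kappa,\vk)$, so the singular prefactors in \eqref{j diff} become genuinely harmless.

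Concretely, I would pair $\partial_t\rho + \partial_x j_\diff = 0$ with $\psi_\mu^{24}$, integrate by parts in $x$, integrate in time over $[-1,1]$, and then integrate against $e^{-|h-\mu|/200}\,d\mu$ and a suitable $\vk$-weight over $I_\kappa$. The boundary-in-time contributions from $\rho$ are controlled by an adaptation of \eqref{rho 1241}, giving a bound of the form $\|q\|_{L^\infty_t E^\sigma_{\sigma,\kappa}}^2[1+\|q\|_{L^2}^2]$; since the $L^2$-norm is conserved under the difference flow, this reduces to $\|q(0)\|_{L^2}^2$. For $j_\diff$ itself, I would split into the quadratic piece $j_\diff^{[2]}$ (coercive plus manifest error) and the higher-order pieces $j_\diff^{[\geq 4]}$. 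The quadratic part comprises (i) the DNLS current $j_\DNLS^{[2]}$, handled verbatim as in Section~\ref{S:Local} and producing after $\vk$-integration the coercive norm of $\tfrac{\psi_\mu^{12}q}{\sqrt{4\kappa^2-\partial^2}}$ in $L^2_tH^{3/2}$ that is the substance of $\|q\|_{X_\kappa^{1/2}}^2$, and (ii) new quadratic contributions $-\tfrac{\kappa^2}{\kappa-\vk}\gamma^{[2]}(\kappa) + \tfrac{\kappa^2}{\kappa+\vk}\gamma^{[2]}(-\kappa)$ together with the cross-terms pairing $\bigl(\tfrac{g_{12/21}}{2+\gamma}\bigr)^{[1]}(\vk)$ against $g_{21/12}^{[1]}(\pm\kappa)$. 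Using the explicit formulas \eqref{linear} and the fact that on $I_\kappa$ the denominators $\kappa\pm\vk$ produce only favorable $\kappa^{-1}$- or $\vk^{-1}$-decay, these extra quadratic terms should either combine algebraically with $j_\DNLS^{[2]}$ or else be absorbed into an equicontinuity error of the form $\|q\|_{L^\infty_tE^\sigma_{\sigma,\kappa}}^2[1+\|q\|_{L^2}^2]$.

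For the higher-order part $j_\diff^{[\geq 4]}$, I would expand using the $[2\ell+1]$-representations \eqref{paraproduct for g geq}--\eqref{paraproduct for frac geq} applied at both spectral parameters $\vk$ and $\kappa$. The "pure-$\vk$" terms have exactly the shape treated in Lemma~\ref{l:DNLS LS helper}, contributing $|\vk|^{-1}\|q\|_{F^{1/2}_\kappa(h)}^2\|q\|_{E^\sigma_{2\sigma,\vk}}^2$ via the paraproduct machinery of Lemma~\ref{l:paraproducts I} (upgraded so that the ambient $F$-space carries the parameter $\kappa$ rather than $1$), which after $\vk$-integration yields $\|q\|_{X_\kappa^{1/2}}^2\|q\|_{L^\infty_tE^\sigma_{\sigma,\kappa}}^2$. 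The genuinely new mixed-parameter terms require an analogous but more elaborate paraproduct analysis: I would display, for each such term, one factor estimated in $F^{1/2}_\kappa(h)$ (the local-smoothing quantity to be bootstrapped), a second factor estimated in $F^{1/2}_\kappa(h)$ as well, and the remaining factors estimated in $E^\sigma_{2\sigma,\vk}$ or $E^\sigma_{2\sigma,\kappa}$ via Corollary~\ref{c:g}, Lemma~\ref{l:g E}, and Lemma~\ref{l:g F}; the singular prefactor $\tfrac{1}{\kappa\pm\vk}$ then supplies the missing $|\vk|^{-1}$- or $\kappa^{-1}$-scaling needed for summability.

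The main obstacle will be the mixed-parameter analysis. Unlike the DNLS case, where all factors live at the single scale $\vk$, each term of $j_\diff^{[\geq 4]}$ involves up to three frequency scales ($\vk$, $\kappa$, and the implicit Littlewood--Paley scale), and for the non-resonance estimates to close one must simultaneously find, in every single such term, two factors that can absorb local-smoothing norms and one further factor that delivers equicontinuity smallness, all while correctly tracking the interaction of the $\frac{1}{\kappa\pm\vk}$ denominator with the paraproduct frequency weights. This is the analogue of the "great many non-resonance phenomena" emphasized in the introduction. Once all error terms are collected, a bootstrap exactly as at the end of Proposition~\ref{p:LS} produces
\[
\|q\|_{X_\kappa^{1/2}}^2 \lesssim \|q\|_{L^\infty_tE^\sigma_{\sigma,\kappa}}^2\bigl[\|q\|_{X_\kappa^{1/2}}^2 + 1 + \|q\|_{L^\infty_tL^2_x}^2\bigr] + \kappa^{-2}\|q\|_{L^\infty_tL^2_x}^2,
\]
and the first right-hand term is absorbed into the left-hand side using the $\delta$-goodness of $Q_*$ for $\delta$ sufficiently small, yielding \eqref{degenerate local smoothing} after invoking conservation of the $L^2$ norm.
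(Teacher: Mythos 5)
Your overall architecture is the paper's: pair the microscopic law $\partial_t\rho(\vk)+\partial_x j_\diff(\vk,\kappa)=0$ with $\psi_\mu^{24}$, integrate $\vk$ over $I_\kappa=[1,\frac\kappa2]\cup[2\kappa,\infty)$ using \eqref{E int 2}, control the boundary terms via Lemma~\ref{L:rho}, extract a coercive quadratic term, estimate $j_\diff\sbrack{\geq4}$ by paraproducts exhibiting two local-smoothing factors plus equicontinuity smallness, and close by absorbing via $\delta$-goodness. Two points, however, deserve correction, one of which is a genuine gap.

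First, a structural imprecision in the quadratic part: $j_\DNLS\sbrack2$ is \emph{not} handled verbatim and then corrected by small errors. The extra quadratic terms $-\tfrac{\kappa^2}{\kappa-\vk}\gamma\sbrack2(\kappa)+\tfrac{\kappa^2}{\kappa+\vk}\gamma\sbrack2(-\kappa)$ and the cross terms are the \emph{same size} as $j_\DNLS\sbrack2$ and cancel its low-frequency part; only the full combination, rewritten in terms of $u(\kappa)=\tfrac{q}{4\kappa^2-\p^2}$ as $\Re\{16\kappa^2\tfrac{u(\kappa)''}{4\vk^2-\p^2}\bar u(\kappa)''+2\tfrac{u(\kappa)'''}{4\vk^2-\p^2}\bar u(\kappa)'''\}+\text{(total derivatives)}$, is coercive --- and only for the $\kappa$-smoothed quantity $\vk^2\|\tfrac{\psi_\mu^{12}q}{\sqrt{4\kappa^2-\p^2}}\|_{E_{1,\vk}^2}^2$. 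This is forced: the difference flow is non-dispersive at frequencies $\lesssim\kappa$, so no bound on the unsmoothed $L^2_tH^{1/2}$ norm can hold. You reach the right coercive quantity but attribute it to the wrong mechanism.

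The genuine gap is in $j_\diff\sbrack{\geq4}$. Your generic scheme (two factors in $F_\kappa^{\frac12}(h)$, the rest in $E^\sigma_{2\sigma,\vk}$ or $E^\sigma_{2\sigma,\kappa}$, with the prefactor $\tfrac1{\kappa\pm\vk}$ supplying decay) fails on the leading-order resonant terms. For example, the expansion produces $\tfrac{3\vk^2 u(\vk)'}{\kappa^2-\vk^2}|q|^2\bar q$ and $\tfrac{3\kappa^2u(\kappa)'}{\kappa^2-\vk^2}|q|^2\bar q$ separately; each is a quartic expression with three unsmoothed factors and a prefactor $\sim\kappa^{-2}$, which is exactly borderline and not summable in $\vk$ by any estimate of the form appearing in Lemma~\ref{l:paraproducts II}. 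The paper must first isolate these terms (the quantities $J_1$, $J_2$) using the explicit expansions of Proposition~\ref{p:Asymptotics} and then exploit an exact algebraic cancellation between the $\vk$- and $\kappa$-contributions, e.g.
\[
\tfrac{\kappa^2u(\kappa)'-\vk^2u(\vk)'}{\kappa^2-\vk^2}=-\tfrac{q'''}{(4\kappa^2-\p^2)(4\vk^2-\p^2)},
\qquad
\tfrac{\kappa^4|u(\kappa)|^2-\vk^4|u(\vk)|^2}{\kappa^2-\vk^2}=-\kappa^2\bar u(\kappa)\tfrac{q''}{(4\kappa^2-\p^2)(4\vk^2-\p^2)}-\vk^2u(\vk)\tfrac{\bar q''}{(4\kappa^2-\p^2)(4\vk^2-\p^2)},
\]
which converts the apparent singularity into a doubly-smoothed expression amenable to \eqref{303030} and \eqref{m4 diff type II bound}. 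Without identifying and cancelling these resonant pieces, the argument does not close. Relatedly, your error bookkeeping records only errors of type $\vk^{-1}\|q\|_{E^\sigma_{2\sigma,\vk}}^2$; the actual bound \eqref{j diff geq 4 bound} also requires errors of type $\tfrac{\kappa}{\kappa^2+\vk^2}\|q\|_{E_\sigma^\sigma}^2$ (arising when the smallness is supplied by the $\kappa$-localized factors), which are integrated in $\vk$ over $I_\kappa$ by the elementary bound $\int_{I_\kappa}\tfrac{\kappa\,d\vk}{\kappa^2+\vk^2}\lesssim1$ rather than by \eqref{E int 2}.
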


Our proof will mirror that of Proposition~\ref{p:LS}. Once again, we take \(0<\sigma<\frac12\) and \(Q_*\) as in the hypothesis of Proposition~\ref{p: diff local smoothing}. Taking \(\phi_\mu\) to be defined as in \eqref{phi h} and using Propostion~\ref{P:micro laws}, we integrate by parts to obtain
\begin{align}\label{IBP diff}
\Im\int_{-1}^1\int j_{\diff}(q;\vk,\kappa)\,\psi_\mu^{24}\,dx\,dt = \Im\int\rho(q;\vk)\phi_\mu\,dx\Big|_{t=-1}^{t=1}.
\end{align}

Once again, our main challenge will be to estimate the remainder terms \(j_{\diff}\sbrack{\geq 4}\). To this end, we start with the following collection of paraproduct estimates:

\begin{lemma}\label{l:paraproducts II} The following paraproduct estimates hold uniformly for \(q\in Q_*\), \(h\in \R\), \(|\vk|,|\kappa|\geq 1\), and functions $f$ that are admissible in the sense of \eqref{admissible}$:$

i) Quartic paraproducts. Let \(m\in S_{\loc}(4)\). Then we have the estimates
\begin{align}
&\int \biggl|\int m\Bigl[\psi_\mu^6f,\tfrac{\psi_\mu^6 f}{2\vk - \p},\tfrac{2-\p}{2\vk - \p}(\psi_\mu^6 f),\tfrac{4-\p^2}{4\vk^2 - \p^2}(\psi_\mu^6 f)\Bigr]\,dx\biggr|\,e^{-\frac1{200}|h-\mu|}\,d\mu\label{m4 type a bound}\\
&\quad + \int\biggl|\int m\Bigl[\tfrac{\psi_\mu^6 f}{2\vk - \p},\tfrac{2-\p}{2\vk - \p}(\psi_\mu^6 f),\tfrac{2-\p}{2\vk - \p}(\psi_\mu^6 f),\tfrac{2-\p}{2\vk - \p}(\psi_\mu^6 f)\Bigr]\,dx\biggr|\,e^{-\frac1{200}|h-\mu|}\,d\mu\notag\\
&\quad + \int \biggl|\int m\Bigl[\psi_\mu^6f,\psi_\mu^6 f,\tfrac{2-\p}{4\vk^2 - \p^2}(\psi_\mu^6 f),\tfrac{4-\p^2}{4\vk^2 - \p^2}(\psi_\mu^6 f)\Bigr]\,dx\biggr|\,e^{-\frac1{200}|h-\mu|}\,d\mu\notag\\
&\quad + \int \biggl|\int m\Bigl[\psi_\mu^6f,\tfrac{\psi_\mu^6 f}{2\vk - \p},\tfrac{\psi_\mu^6 f}{2\vk - \p},\tfrac{(2 - \p)^3}{4\vk^2 - \p^2}(\psi_\mu^6 f)\Bigr]\,dx\biggr|\,e^{-\frac1{200}|h-\mu|}\,d\mu\notag\\
&\qquad\qquad\lesssim|\vk|^{-1}\tfrac{\kappa^2 + \vk^2}{\vk^2}\|q\|_{F_\kappa^{\frac12}(h)}^2\|q\|_{E_{2\sigma, \vk}^\sigma}^2,\notag
\end{align}\begin{align} 
&\int\biggl|\int m\Bigl[\psi_\mu^6 f,\psi_\mu^6 f,\tfrac{2-\p}{2\kappa - \p}(\psi_\mu^6f),\tfrac{2-\p}{2\kappa - \p}(\psi_\mu^6f)\Bigr]\,dx\biggr|\,e^{-\frac1{200}|h-\mu|}\,d\mu\label{m4 type a' bound}\\
&\quad + \int \biggl|\int m\Bigl[\psi_\mu^6 f,\psi_\mu^6 f,\psi_\mu^6 f,\tfrac{4-\p^2}{4\kappa^2 - \p^2}(\psi_\mu^6f)\Bigr]\,dx\biggr|\,e^{-\frac1{200}|h-\mu|}\,d\mu\notag\\
&\qquad\qquad\lesssim \|q\|_{F_\kappa^{\frac12}(h)}^2\|q\|_{E_\sigma^\sigma}^2,\notag
\end{align}\begin{align} 
\label{303030}
&\int\biggl|\int m\Bigl[\psi_\mu^6f,\tfrac{\psi_\mu^6 f}{2\vk-\p},\tfrac{2-\p}{2\vk-\p}(\psi_\mu^6f),\tfrac{4-\p^2}{4\kappa^2 - \p^2}(\psi_\mu^6 f)\Bigr]\,dx\biggr|\,e^{-\frac1{200}|h-\mu|}\,d\mu\\
&\quad + \int\biggl|\int m\Bigl[\psi_\mu^6f,\psi_\mu^6f,\tfrac{\psi_\mu^6 f}{2\vk-\p},\tfrac{(2-\p)^3}{(2\vk - \p)(4\kappa^2 - \p^2)}(\psi_\mu^6 f)\Bigr]\,dx\biggr|\,e^{-\frac1{200}|h-\mu|}\,d\mu\notag\\
&\quad +\int\biggl|\int m\Bigl[\psi_\mu^6f,\tfrac{2-\p}{2\kappa - \p}(\psi_\mu^6f),\tfrac{2-\p}{2\kappa - \p}(\psi_\mu^6f),\tfrac{2-\p}{4\vk^2 - \p^2}(\psi_\mu^6f)\Bigr]\,dx\biggr|\,e^{-\frac1{200}|h-\mu|}\,d\mu\notag\\
&\quad + \int\biggl|\int m\Bigl[\psi_\mu^6f,\psi_\mu^6f,\tfrac{4-\p^2}{4\kappa^2 - \p^2}(\psi_\mu^6f),\tfrac{2-\p}{4\vk^2 - \p^2}(\psi_\mu^6f)\Bigr]\,dx\biggr|\,e^{-\frac1{200}|h-\mu|}\,d\mu\notag\\
&\quad + \int \biggl|\int m\Bigl[\psi_\mu^6f,\psi_\mu^6f,\psi_\mu^6f,\tfrac{(2-\p)^3}{(4\kappa^2 - \p^2)(4\vk^2 - \p^2)}(\psi_\mu^6f)\Bigr]\,dx\biggr|\,e^{-\frac1{200}|h-\mu|}\,d\mu\notag\\
&\quad + \int \biggl|\kappa^4\int m\Bigl[\tfrac{\psi_\mu^6f}{4\kappa^2 - \p^2},\tfrac{\psi_\mu^6f}{4\kappa^2 - \p^2},\tfrac{2-\p}{4\kappa^2 - \p^2}({\psi_\mu^6f}),\tfrac{4-\p^2}{4\vk^2 - \p^2}({\psi_\mu^6f})\Bigr]\,dx\biggr|\,e^{-\frac1{200}|h-\mu|}\,d\mu\notag\\
&\qquad\qquad\lesssim |\vk|^{-1}\|q\|_{F_\kappa^{\frac12}(h)}^2\|q\|_{E_{2\sigma,\vk}^\sigma}^2.\notag
\end{align}

ii) Sextic paraproducts. If \(m\in S_{\loc}(6)\) then we have the estimates
\begin{align}\label{m6 diff a bound}
&\int \biggl|\int m\Bigl[\psi_\mu^4f,\psi_\mu^4f,\psi_\mu^4f,\tfrac{\psi_\mu^4 f}{2\vk - \p},\tfrac{\psi_\mu^4 f}{2\vk - \p},\tfrac{4-\p^2}{4\vk^2 - \p^2}(\psi_\mu^4 f)\Bigr]\,dx\biggr|\,e^{-\frac1{200}|h-\mu|}\,d\mu\\
&\quad + \int\biggl|\int m\Bigl[\psi_\mu^4f,\psi_\mu^4f,\tfrac{\psi_\mu^4 f}{2\vk - \p},\tfrac{\psi_\mu^4 f}{2\vk - \p},\tfrac{2-\p}{2\vk - \p}(\psi_\mu^4 f),\tfrac{2-\p}{2\vk - \p}(\psi_\mu^4 f)\Bigr]\,dx\biggr|\,e^{-\frac1{200}|h-\mu|}\,d\mu\notag\\
&\qquad\qquad \lesssim |\vk|^{-1}\tfrac{\kappa^2 + \vk^2}{\vk^2}\|q\|_{F_\kappa^{\frac12}(h)}^2\|q\|_{E_\sigma^\sigma}^2\|q\|_{E_{2\sigma,\vk}^\sigma}^2,\notag
\end{align}\begin{align} 
\label{m6 diff b bound}
&\int \biggl|\int m\Bigl[\psi_\mu^4 f,\psi_\mu^4 f,\psi_\mu^4 f,\tfrac{\psi_\mu^4 f}{2\kappa - \p},\tfrac{\psi_\mu^4 f}{2\kappa - \p},\tfrac{2-\p}{2\kappa - \p}(\psi_\mu^4f)\Bigr]\,dx\biggr|\,e^{-\frac1{200}|h-\mu|}\,d\mu\\
&\qquad\lesssim|\kappa|^{-1}\|q\|_{F_\kappa^{\frac12}(h)}^2\|q\|_{E_\sigma^\sigma}^4,\notag
\end{align}\begin{align} 
\label{m4 diff type II bound}
&\int \biggl|\int m\Bigl[\psi_\mu^4f,\psi_\mu^4f,\psi_\mu^4f,\tfrac{\psi_\mu^4 f}{2\vk-\p},\tfrac{\psi_\mu^4 f}{2\vk-\p},\tfrac{4-\p^2}{4\kappa^2 - \p^2}(\psi_\mu^4 f)\Bigr]\,dx\biggr|\,e^{-\frac1{200}|h-\mu|}\,d\mu\\
&\quad + \int \biggl|\int m\Bigl[\psi_\mu^4f,\psi_\mu^4f,\tfrac{\psi_\mu^4f}{2\vk - \p},\tfrac{\psi_\mu^4f}{2\vk - \p},\tfrac{2-\p}{2\kappa - \p}(\psi_\mu^4f),\tfrac{2-\p}{2\kappa - \p}(\psi_\mu^4f)\Bigr]\,dx\biggr|\,e^{-\frac1{200}|h-\mu|}\,d\mu\notag\\
&\quad + \int\biggl|\kappa^2\int m\Bigl[\psi_\mu^4f,\psi_\mu^4f,\psi_\mu^4f,\psi_\mu^4f,\tfrac{\psi_\mu^4f}{4\kappa^2 - \p^2},\tfrac{4-\p^2}{(4\vk^2 - \p^2)(4\kappa^2 - \p^2)}(\psi_\mu^4f)\Bigr]\,dx\biggr|\,e^{-\frac1{200}|h-\mu|}\,d\mu\notag\\
&\quad + \int\biggl|\vk^2\int m\Bigl[\psi_\mu^4f,\psi_\mu^4f,\psi_\mu^4f,\psi_\mu^4f,\tfrac{\psi_\mu^4f}{4\vk^2 - \p^2},\tfrac{4-\p^2}{(4\vk^2 - \p^2)(4\kappa^2 - \p^2)}(\psi_\mu^4f)\Bigr]\,dx\biggr|\,e^{-\frac1{200}|h-\mu|}\,d\mu\notag\\
&\quad + \int\biggl|\kappa^8\int m\Bigl[\tfrac{\psi_\mu^4f}{4\kappa^2 - \p^2},\tfrac{\psi_\mu^4f}{4\kappa^2 - \p^2},\tfrac{\psi_\mu^4f}{4\kappa^2 - \p^2},\tfrac{\psi_\mu^4f}{4\kappa^2 - \p^2},\tfrac{\psi_\mu^4f}{4\kappa^2 - \p^2},\tfrac{4-\p^2}{4\vk^2 - \p^2}(\psi_\mu^4f)\Bigr]\,dx\biggr|\,e^{-\frac1{200}|h-\mu|}\,d\mu\notag\\
&\quad\qquad\lesssim|\vk|^{-1}\|q\|_{F_\kappa^{\frac12}(h)}^2\|q\|_{E_{\sigma}^\sigma}^2\|q\|_{E_{2\sigma, \vk}^\sigma}^2,\notag
\end{align}\begin{align} 
\label{200x200}
&\int\biggl|\kappa\int m\Bigl[\psi_\mu^4f,\psi_\mu^4f,\tfrac{\psi_\mu^4f}{2\kappa - \p},\tfrac{\psi_\mu^4f}{2\kappa - \p},\tfrac{2-\p}{2\kappa - \p}(\psi_\mu^4f),\tfrac{2-\p}{4\vk^2 - \p^2}(\psi_\mu^4f)\Bigr]\,dx\biggr|\,e^{-\frac1{200}|h-\mu|}\,d\mu\\
&\quad + \int\Biggl|\kappa\int m\Bigl[\psi_\mu^4f,\tfrac{\psi_\mu^4 f}{2\vk - \p},\tfrac{2-\p}{2\vk - \p}(\psi_\mu^4 f),\tfrac{\psi_\mu^4 f}{2\kappa - \p},\tfrac{\psi_\mu^4 f}{2\kappa - \p},\tfrac{2-\p}{2\kappa - \p}(\psi_\mu^4 f)\Bigr]\,dx\Biggr|\,e^{-\frac1{200}|h-\mu|}\,d\mu\notag\\
&\qquad\qquad\lesssim|\vk|^{-1}\|q\|_{F_\kappa^{\frac12}(h)}^2\|q\|_{E_\sigma^\sigma}^2\|q\|_{E_{2\sigma,\vk}^\sigma}^2.\notag
\end{align}

iii) Octic paraproducts. If \(m\in S_{\loc}(8)\) then we have the estimates
\begin{align}\label{19-5}
&\int\biggl| \vk\int m\Bigl[\psi_\mu^3f,\psi_\mu^3f,\psi_\mu^3f,\tfrac{\psi_\mu^3 f}{2\vk - \p},\tfrac{\psi_\mu^3 f}{2\vk - \p},\tfrac{\psi_\mu^3 f}{2\vk - \p},\tfrac{\psi_\mu^3 f}{2\vk - \p},\tfrac{2-\p}{2\vk - \p}(\psi_\mu^3 f)\Bigr]\,dx\biggr|\,e^{-\frac1{200}|h-\mu|}\,d\mu\\
&\qquad\lesssim |\vk|^{-1}\tfrac{\kappa^2 + \vk^2}{\vk^2}\|q\|_{F_\kappa^{\frac12}(h)}^2\|q\|_{E_\sigma^\sigma}^4\|q\|_{E_{2\sigma,\vk}^\sigma}^2,\notag
\end{align}\begin{align} 
\label{15-2}
&\int\Biggl|\kappa\!\int m\Bigl[\psi_\mu^3 f,\psi_\mu^3 f,\psi_\mu^3 f,\tfrac{\psi_\mu^3 f}{2\vk - \p},\tfrac{\psi_\mu^3 f}{2\vk - \p},\tfrac{\psi_\mu^3 f}{2\kappa - \p},\tfrac{\psi_\mu^3 f}{2\kappa - \p},\tfrac{2-\p}{2\kappa - \p}(\psi_\mu^3 f)\Bigr]\,dx\Biggr|\,e^{-\frac1{200}|h-\mu|}\,d\mu\\
&\quad+\int\Biggl|\kappa^2\!\int m\Bigl[\psi_\mu^3f,\psi_\mu^3f,\tfrac{\psi_\mu^3 f}{2\vk - \p},\tfrac{2-\p}{2\vk - \p}(\psi_\mu^3 f),\tfrac{\psi_\mu^3 f}{2\kappa - \p},\tfrac{\psi_\mu^3 f}{2\kappa - \p},\tfrac{\psi_\mu^3 f}{2\kappa - \p},\tfrac{\psi_\mu^3 f}{2\kappa - \p}\Bigr]\,dx\Biggr|\,e^{-\frac1{200}|h-\mu|}\,d\mu\notag\\
&\quad + \int \biggl|\kappa^2\!\int m\Bigl[\psi_\mu^3f,\psi_\mu^3f,\psi_\mu^3f,\tfrac{\psi_\mu^3f}{2\kappa - \p},\tfrac{\psi_\mu^3f}{2\kappa - \p},\tfrac{\psi_\mu^3f}{2\kappa - \p},\tfrac{\psi_\mu^3f}{2\kappa - \p},\tfrac{2-\p}{4\vk^2 - \p^2}(\psi_\mu^3f)\Bigr]\,dx\biggr|\,e^{-\frac1{200}|h-\mu|}\,d\mu\notag\\
&\qquad\qquad\lesssim|\vk|^{-1}\|q\|_{F_\kappa^{\frac12}(h)}^2\|q\|_{E_\sigma^\sigma}^4\|q\|_{E_{2\sigma,\vk}^\sigma}^2.\notag
\end{align}

iv) Decic paraproducts. If \(m\in S_{\loc}(10)\) then we have the estimates
\begin{align}\label{19-1}
&\int\biggl|\vk^2\!\int m\Bigl[\underbrace{\psi_\mu^2f,\dots,\psi_\mu^2 f}_4,\underbrace{\tfrac{\psi_\mu^2 f}{2\vk - \p},\dots,\tfrac{\psi_\mu^2 f}{2\vk - \p}}_6\Bigr]\,\psi_\mu^4\,dx\biggr|\,e^{-\frac1{200}|h-\mu|}\,d\mu\\
&\qquad\lesssim |\vk|^{-1}\tfrac{\kappa^2 + \vk^2}{\vk^2}\|q\|_{F_\kappa^{\frac12}(h)}^2\|q\|_{E_\sigma^\sigma}^6\|q\|_{E_{2\sigma,\vk}^\sigma}^2,\notag
\end{align}\begin{align} 
\label{S10}
&\int\Biggl|\kappa^2\!\!\int\! m\Bigl[\underbrace{\psi_\mu^2 f,\dots,\psi_\mu^2 f}_4,\tfrac{\psi_\mu^2 f}{2\vk - \p},\tfrac{\psi_\mu^2 f}{2\vk - \p},\underbrace{\tfrac{\psi_\mu^2 f}{2\kappa - \p},\dots,\tfrac{\psi_\mu^2 f}{2\kappa - \p}}_4\Bigr]\,\psi_\mu^4\,dx\Biggr|\,e^{-\frac1{200}|h-\mu|}\,d\mu\\
&\qquad\lesssim |\vk|^{-1}\|q\|_{F_\kappa^{\frac12}(h)}^2\|q\|_{E_\sigma^\sigma}^6\|q\|_{E_{2\sigma,\vk}^\sigma}^2.\notag
\end{align}

\end{lemma}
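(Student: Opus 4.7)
The plan is to follow closely the template set by the proof of Lemma~\ref{l:paraproducts I}. For each of the listed paraproducts, I will decompose every argument via Littlewood--Paley: each $\psi_\mu^\ell f$ becomes $\sum_N P_N(\psi_\mu^\ell f)$, and the various resolvent weights $\tfrac1{2\vk-\p}$, $\tfrac1{2\kappa-\p}$, $\tfrac1{4\vk^2-\p^2}$, $\tfrac1{4\kappa^2-\p^2}$, $\tfrac{2-\p}{2\vk-\p}$, $\tfrac{4-\p^2}{4\vk^2-\p^2}$, and mixtures thereof contribute frequency-localized weights of the form $\tfrac{(1+N)^a}{(|\vk|+N)^b (|\kappa|+N)^c}$ at frequency $N$. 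By the paraproduct H\"older inequality \eqref{paraHolder}, I will order the frequencies $N_1\geq N_2\geq\cdots\geq N_n$ (using the symmetry \eqref{parasym} from Lemma~\ref{l:paraproperties}(ii)) and then place the two highest-frequency factors in $L^2$ and the rest in $L^\infty$.

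The $L^2$ factors will be absorbed by the local smoothing norm via \eqref{LS loc}, yielding a bound of the form
\[
\sum_{N_1\geq N_2}\tfrac{(1+N_2)^{s+1}}{(\kappa+N_2)}\|P_{N_1}(\psi_\mu^\ell f)\|_{L^2}\|P_{N_2}(\psi_\mu^\ell f)\|_{L^2}\,\bigl\{\text{weight}\bigr\}
\]
after applying Schur's test and integrating in $\mu$ against $e^{-\frac1{200}|h-\mu|}$. The low-frequency factors will be summed using Lemma~\ref{L:Fmu}, which provides $L^\infty$ bounds in terms of $\|q\|_{E_\sigma^\sigma}$ or $\|q\|_{E_{2\sigma,\vk}^\sigma}$ depending on whether the accompanying resolvent is at frequency $\vk$ or not. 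The distinction between the two classes of equicontinuity bounds is exactly what accounts for the different powers of $\|q\|_{E_\sigma^\sigma}$ versus $\|q\|_{E_{2\sigma,\vk}^\sigma}$ appearing on the right-hand sides.

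An essential complication (already present in Lemma~\ref{l:paraproducts I}) is that in some terms a derivative $\p$ sits on what turns out to be a low-frequency factor, forcing us to move the derivative onto a higher-frequency factor via the Leibniz rule \eqref{paraLeibniz} before employing the $L^2$-$L^\infty$ dichotomy. I therefore split each summation into two subcases: first, when the derivative-bearing argument happens to be one of the two highest frequencies (immediate $L^2$ placement); second, when it is of lower frequency, in which case \eqref{paraLeibniz} redistributes $(2-\p)$ onto a high-frequency factor, producing acceptable new paraproducts in $S_{\loc}(n)$ by Lemma~\ref{l:paraproperties}(iv). The factor $\tfrac{\kappa^2+\vk^2}{\vk^2}$ in bounds like \eqref{m4 type a bound} and \eqref{m6 diff a bound} is the imprint of the Bernstein-type estimate $\tfrac{1+N}{|\vk|+N}\lesssim 1+\tfrac{N}{|\vk|+N}$ which, after pairing with a $\tfrac{1}{|\kappa|+N}$ weight and using $N\leq \max\{|\kappa|,|\vk|\}\cdot\frac{|\kappa|+|\vk|}{|\vk|}$ in the summation, extracts that ratio.

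The main obstacle will be the purely combinatorial one: Lemma~\ref{l:paraproducts II} contains on the order of twenty distinct estimates, each requiring its own careful enumeration of frequency orderings and placement of derivatives, together with a bespoke Schur sum. The pattern is uniform---take the largest two frequencies in $L^2$ against local smoothing and the rest in $L^\infty$ against equicontinuity---but the bookkeeping of resolvent weights in the presence of both parameters $\vk$ and $\kappa$ must be handled on a case-by-case basis. Once the weights are correctly tallied, the Schur sums converge by the strict inequalities $0\leq\sigma<\tfrac12$ and $0\leq s<\sigma+\tfrac12$, exactly as in the proof of Lemma~\ref{l:paraproducts I}. No new ideas beyond those encapsulated in Lemmas~\ref{l:mult comm}--\ref{L:F product}, Lemma~\ref{l:paraproperties}, Lemma~\ref{L:Fmu}, and Corollary~\ref{c:g} are required.
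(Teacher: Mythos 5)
Your overall architecture matches the paper's: Littlewood--Paley decomposition, \eqref{paraHolder} with the two highest frequencies in $L^2$ against \eqref{LS loc} and the rest in $L^\infty$ against Lemma~\ref{L:Fmu}, followed by a Schur sum; and the distinction between $\|q\|_{E_\sigma^\sigma}$ and $\|q\|_{E_{2\sigma,\vk}^\sigma}$ is correctly attributed to which resolvent accompanies each low-frequency factor. However, you have the one genuinely non-routine step --- the use of the Leibniz rule \eqref{paraLeibniz} --- exactly backwards. The paper integrates by parts precisely when a derivative-bearing factor sits at the \emph{highest} frequency, in order to move the derivative onto a \emph{lower}-frequency factor (where $(1+N)$ is harmless); when the derivative already sits at low frequency, no redistribution is needed. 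You propose the opposite: leave derivatives alone when they land on one of the top two frequencies ("immediate $L^2$ placement") and move them from low frequency up to high frequency. The first half of this fails concretely: for a factor such as $\tfrac{(2-\p)^3}{4\vk^2-\p^2}(\psi_\mu^6 f)$ at the top frequency $N_1$, the $L^2$ estimate produces the weight $\tfrac{(1+N_1)^3}{(|\vk|+N_1)^2}\|P_{N_1}(\psi_\mu^6f)\|_{L^2}$, and comparing against the local smoothing building block $\tfrac{(1+N_1)^{3/2}}{|\vk|+N_1}\|P_{N_1}(\psi_\mu^6f)\|_{L^2}$ leaves an unpaired factor $\tfrac{(1+N_1)^{3/2}}{|\vk|+N_1}\sim N_1^{1/2}$, which destroys the Schur sum. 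The same problem occurs for $\tfrac{4-\p^2}{4\vk^2-\p^2}$ and $\tfrac{4-\p^2}{4\kappa^2-\p^2}$ at top frequency. The second half of your prescription would actively worsen the good cases. Since this redistribution is the only idea in the proof beyond bookkeeping, the inversion is a real gap, not a cosmetic one.

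A secondary point: your explanation of the factor $\tfrac{\kappa^2+\vk^2}{\vk^2}$ is not where it actually comes from. In \eqref{m4 type a bound}, \eqref{m6 diff a bound}, \eqref{19-5}, and \eqref{19-1} the Schur sum naturally closes against $\bigl\|\tfrac{\psi_\mu^\ell f}{\sqrt{4\vk^2-\p^2}}\bigr\|_{H^{3/2}}^2$ (the $\vk$-resolvent), and the ratio $\tfrac{\kappa^2+\vk^2}{\vk^2}$ is simply the cost of converting this to the $\kappa$-resolvent norm $\bigl\|\tfrac{\psi_\mu^\ell f}{\sqrt{4\kappa^2-\p^2}}\bigr\|_{H^{3/2}}^2$ that defines $F_\kappa^{1/2}(h)$, via $\sup_\xi\tfrac{4\kappa^2+\xi^2}{4\vk^2+\xi^2}\lesssim\tfrac{\kappa^2+\vk^2}{\vk^2}$.
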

\begin{proof}

For all of the ensuing estimates, we follow the argument of Lemma~\ref{l:paraproducts I}:
\begin{enumerate}
\item Decompose into Littlewood--Paley pieces.
\item If derivatives fall at high frequency, integrate by parts using \eqref{paraLeibniz}.
\item Estimate the two highest frequency terms in \(L^2\) and the remaining terms in \(L^\infty\) using \eqref{paraHolder}.
\item Estimate the low frequency terms using Lemma~\ref{L:Fmu}.
\item Bound the highest two frequency terms using \eqref{LS loc}.
\end{enumerate}

We illustrate this in detail with the first term on \(\LHS{m4 type a bound}\). Decomposing into Littlewood--Paley pieces, we consider
\[
\sum_{M_1,M_2,M_3,M_4}\int m\Bigl[P_{M_1}(\psi_\mu^6f),\tfrac{P_{M_2}(\psi_\mu^6 f)}{2\vk - \p},\tfrac{2-\p}{2\vk - \p}P_{M_3}(\psi_\mu^6 f),\tfrac{4-\p^2}{4\vk^2 - \p^2}P_{M_4}(\psi_\mu^6 f)\Bigr]\,dx.
\]
We now decompose the sum into three parts.

The first summand is where \(M_3,M_4<\max\{M_j\}\). Here, we apply \eqref{paraHolder}, Bernstein's inequality, and Lemma~\ref{L:Fmu} to estimate
\begin{align*}
&\sum_{\substack{M_1,M_2,M_3,M_4\\M_3,M_4<\max\{M_j\}}}\left|\int m\Bigl[P_{M_1}(\psi_\mu^6f),\tfrac{P_{M_2}(\psi_\mu^6 f)}{2\vk - \p},\tfrac{2-\p}{2\vk - \p}P_{M_3}(\psi_\mu^6 f),\tfrac{4-\p^2}{4\vk^2 - \p^2}P_{M_4}(\psi_\mu^6 f)\Bigr]\,dx\right|\\
&\quad\lesssim \sum_{N_1\geq \dots \geq N_4}\tfrac{(1+N_2)^2(1+N_3)}{(|\vk| + N_2)^2(|\vk| + N_3)(|\vk|+N_4)}\prod_{j=1}^2\|P_{N_j}(\psi_\mu^6 f)\|_{L^2}\prod_{j=3}^4\|P_{N_j}(\psi_\mu^6 f)\|_{L^\infty}\\
&\quad\lesssim \sum_{N_1\geq N_2} \tfrac{|\vk|^{-1}N_2^{1-2\sigma}(1+N_2)^{3}}{(|\vk| + N_2)^{3-2\sigma}}\|P_{N_1}(\psi_\mu^6 f)\|_{L^2}\|P_{N_2}(\psi_\mu^6 f)\|_{L^2}\|q\|_{E_{2\sigma,\vk}^\sigma}^2\\
&\quad\lesssim |\vk|^{-1}\Bigl\|\tfrac{\psi_\mu^6 f}{\sqrt{4\vk^2 - \p^2}}\Bigr\|_{H^{\frac32}}^2\|q\|_{E_{2\sigma,\vk}^\sigma}^2\lesssim |\vk|^{-1}\tfrac{\kappa^2 + \vk^2}{\vk^2}\Bigl\|\tfrac{\psi_\mu^6 f}{\sqrt{4\kappa^2 - \p^2}}\Bigr\|_{H^{\frac32}}^2\|q\|_{E_{2\sigma,\vk}^\sigma}^2,
\end{align*}
where we note that the \(M_j\) have been permuted in the first inequality to account fo the largest contribution. This is acceptable after integrating with respect to \(e^{-\frac1{200}|h-\mu|}\,d\mu\) and applying \eqref{LS loc}.

The second summand is where \(M_3\leq M_4 = \max\{M_j\}\). Here, we use \eqref{paraLeibniz} to integrate by parts and then proceed as for the first summand to obtain
\begin{align*}
&\sum_{\substack{M_1,M_2,M_3,M_4\\M_3\leq M_4=\max\{M_j\}}}\!\left|\int m\Bigl[P_{M_1}(\psi_\mu^6f),\tfrac{P_{M_2}(\psi_\mu^6 f)}{2\vk - \p},\tfrac{2-\p}{2\vk - \p}P_{M_3}(\psi_\mu^6 f),\tfrac{4-\p^2}{4\vk^2 - \p^2}P_{M_4}(\psi_\mu^6 f)\Bigr]\,dx\right|\\
&\quad\lesssim \sum_{N_1\geq \dots \geq N_4}\tfrac{(1+N_2)^3}{(|\vk| + N_1)^2(|\vk| + N_2)(|\vk| + N_4)}\prod_{j=1}^2\|P_{N_j}(\psi_\mu^6 f)\|_{L^2}\prod_{j=3}^4\|P_{N_j}(\psi_\mu^6 f)\|_{L^\infty}\\
&\quad\lesssim \sum_{N_1\geq N_2} \tfrac{N_2^{1-2\sigma}(1+N_2)^{3}}{|\vk|^{\frac12+\sigma}(|\vk| + N_2)^{\frac 72-3\sigma}}\|P_{N_1}(\psi_\mu^6 f)\|_{L^2}\|P_{N_2}(\psi_\mu^6 f)\|_{L^2}\|q\|_{E_{2\sigma,\vk}^\sigma}^2\\
&\quad\lesssim |\vk|^{-1}\Bigl\|\tfrac{\psi_\mu^6 f}{\sqrt{4\vk^2 - \p^2}}\Bigr\|_{H^{\frac32}}^2\|q\|_{E_{2\sigma,\vk}^\sigma}^2\lesssim |\vk|^{-1}\tfrac{\kappa^2 + \vk^2}{\vk^2}\Bigl\|\tfrac{\psi_\mu^6 f}{\sqrt{4\kappa^2 - \p^2}}\Bigr\|_{H^{\frac32}}^2\|q\|_{E_{2\sigma,\vk}^\sigma}^2,
\end{align*}
which is again acceptable.

The final summand, where \(M_4<M_3=\max\{M_j\}\), is estimated in a similar way, using \eqref{paraLeibniz} to obtain a contribution of
\begin{align*}
&\sum_{\substack{M_1,M_2,M_3,M_4\\M_4< M_3=\max\{M_j\}}}\!\left|\int m\Bigl[P_{M_1}(\psi_\mu^6f),\tfrac{P_{M_2}(\psi_\mu^6 f)}{2\vk - \p},\tfrac{2-\p}{2\vk - \p}P_{M_3}(\psi_\mu^6 f),\tfrac{4-\p^2}{4\vk^2 - \p^2}P_{M_4}(\psi_\mu^6 f)\Bigr]\,dx\right|\\
&\qquad\lesssim \sum_{N_1\geq \dots \geq N_4}\tfrac{(1+N_2)^3}{(|\vk| + N_1)(|\vk| + N_2)^2(|\vk| + N_4)}\prod_{j=1}^2\|P_{N_j}(\psi_\mu^6 f)\|_{L^2}\prod_{j=3}^4\|P_{N_j}(\psi_\mu^6 f)\|_{L^\infty},
\end{align*}
which is again acceptable, as before.

The remaining terms on \(\LHS{m4 type a bound}\) are estimated similarly. In each case, their contribution is bounded by
\begin{align*}
\sum_{N_1\geq \dots\geq N_4}\tfrac{(1+N_2)^3}{(|\vk| + N_2)^2(|\vk| + N_3)(|\vk| + N_4)}\prod_{j=1}^2\|P_{N_j}(\psi_\mu^6 f)\|_{L^2}\prod_{j=3}^4\|P_{N_j}(\psi_\mu^6 f)\|_{L^\infty},
\end{align*}
which is acceptable after summation.

For \eqref{m4 type a' bound}, we proceed similarly, to obtain a bound of
\begin{align*}
&\sum_{N_1\geq \dots \geq N_4}\tfrac{(1+N_2)^2}{(|\kappa| + N_2)^2}\prod_{j=1}^2\|P_{N_j}(\psi_\mu^6 f)\|_{L^2}\prod_{j=3}^4\|P_{N_j}(\psi_\mu^6 f)\|_{L^\infty}\\
&\qquad\lesssim \sum_{N_1\geq N_2} \tfrac{N_2^{1-2\sigma}(1+N_2)^{2+2\sigma}}{(|\kappa| + N_2)^2}\|P_{N_1}(\psi_\mu^6 f)\|_{L^2}\|P_{N_2}(\psi_\mu^6 f)\|_{L^2}\|q\|_{E_\sigma^\sigma}^2\\
&\qquad\lesssim \Bigl\|\tfrac{\psi_\mu^6 f}{\sqrt{4\kappa^2 - \p^2}}\Bigr\|_{H^{\frac32}}^2\|q\|_{E_\sigma^\sigma}^2,
\end{align*}
which is acceptable.

For the estimate \eqref{303030} we argue as before to obtain a bound of
\begin{align*}
&\sum_{N_1\geq \dots\geq N_4}\tfrac{(1+N_2)^3}{(|\kappa|+N_2)^2(|\vk| + N_3)(|\vk| + N_4)}\prod_{j=1}^2\|P_{N_j}(\psi_\mu^6 f)\|_{L^2}\prod_{j=3}^4\|P_{N_j}(\psi_\mu^6 f)\|_{L^\infty}\\
&\qquad\lesssim \sum_{N_1\geq N_2} \tfrac{|\vk|^{-1}N_2^{1-2\sigma}(1+N_2)^3}{(|\kappa| + N_2)^2(|\vk| + N_2)^{1-2\sigma}}\|P_{N_1}(\psi_\mu^6 f)\|_{L^2}\|P_{N_2}(\psi_\mu^6 f)\|_{L^2}\|q\|_{E_{2\sigma,\vk}^\sigma}^2\\
&\qquad\lesssim |\vk|^{-1}\Bigl\|\tfrac{\psi_\mu^6 f}{\sqrt{4\kappa^2 - \p^2}}\Bigr\|_{H^{\frac32}}^2\|q\|_{E_{2\sigma,\vk}^\sigma}^2,
\end{align*}
which is once again acceptable.

The sextic terms are estimated similarly. For \eqref{m6 diff a bound}, after integrating by parts we obtain a bound of
\begin{align*}
&\sum_{N_1\geq\dots\geq N_6}\tfrac{(1+N_2)^2}{(|\vk|+N_2)^2(|\vk|+N_5)(|\vk|+N_6)}\prod_{j=1}^2\|P_{N_j}(\psi_\mu^4 f)\|_{L^2}\prod_{j=3}^6\|P_{N_j}(\psi_\mu^4 f)\|_{L^\infty}\\
&\qquad\lesssim \sum_{N_1\geq N_2} \tfrac{|\vk|^{-1}N_2^{2-4\sigma}(1+N_2)^{2+2\sigma}}{(|\vk| + N_2)^{3-2\sigma}}\|P_{N_1}(\psi_\mu^4 f)\|_{L^2}\|P_{N_2}(\psi_\mu^4 f)\|_{L^2}\|q\|_{E_\sigma^\sigma}^2\|q\|_{E_{2\sigma,\vk}^\sigma}^2\\
&\qquad\lesssim |\vk|^{-1}\tfrac{\kappa^2 + \vk^2}{\vk^2}\Bigl\|\tfrac{\psi_\mu^4 f}{\sqrt{4\kappa^2 - \p^2}}\Bigr\|_{H^{\frac32}}^2\|q\|_{E_\sigma^\sigma}^2\|q\|_{E_{2\sigma,\vk}^\sigma}^2,
\end{align*}
which is acceptable.

Similarly, we may bound $\LHS{m6 diff b bound}$ by
\begin{align*}
&\sum_{N_1\geq\dots\geq N_6}\tfrac{(1+N_2)}{(|\kappa|+N_2)(|\kappa|+N_5)(|\kappa|+N_6)}\prod_{j=1}^2\|P_{N_j}(\psi_\mu^4 f)\|_{L^2}\prod_{j=3}^6\|P_{N_j}(\psi_\mu^4 f)\|_{L^\infty}\\
&\qquad\lesssim \sum_{N_1\geq N_2} \tfrac{N_2^{2-4\sigma}(1+N_2)^{1+4\sigma}}{|\kappa|(|\kappa| + N_2)^2}\|P_{N_1}(\psi_\mu^4 f)\|_{L^2}\|P_{N_2}(\psi_\mu^4 f)\|_{L^2}\|q\|_{E_\sigma^\sigma}^4\\
&\qquad\lesssim |\kappa|^{-1}\Bigl\|\tfrac{\psi_\mu^4 f}{\sqrt{4\kappa^2 - \p^2}}\Bigr\|_{H^{\frac32}}^2\|q\|_{E_\sigma^\sigma}^4,
\end{align*}
which is acceptable.

For \eqref{m4 diff type II bound}, after integrating by parts we may bound each term by
\begin{align*}
&\sum_{N_1\geq \dots \geq N_6}\tfrac{(1+N_2)^2}{(|\kappa| + N_2)^2(|\vk|+N_5)(|\vk| + N_6)}\prod_{j=1}^2\|P_{N_j}(\psi_\mu^4 f)\|_{L^2}\prod_{j=3}^6\|P_{N_j}(\psi_\mu^4 f)\|_{L^\infty}\\
&\qquad\lesssim \sum_{N_1\geq N_2}\tfrac{|\vk|^{-1}N_2^{2-4\sigma}(1+N_2)^{2+2\sigma}}{(|\kappa|+N_2)^2(|\vk| + N_2)^{1-2\sigma}} \|P_{N_1}(\psi_\mu^4f)\|_{L^2}\|P_{N_2}(\psi_\mu^4 f)\|_{L^2}\|q\|_{E_\sigma^\sigma}^2\|q\|_{E_{2\sigma,\vk}^\sigma}^2\\
&\qquad\lesssim |\vk|^{-1}\Bigl\|\tfrac{\psi_\mu^6 f}{\sqrt{4\kappa^2 - \p^2}}\Bigr\|_{H^{\frac32}}^2\|q\|_{E_\sigma^\sigma}^2\|q\|_{E_{2\sigma,\vk}^\sigma}^2,
\end{align*}
which is again acceptable.

Turning to \eqref{200x200}, our basic technique gives us a bound of
\begin{align*}
\sum_{\tau\in \mathfrak S}\sum_{N_1\geq \dots \geq N_6}&\tfrac{|\kappa|(1+N_2)^2}{(|\kappa| + N_2)(|\kappa| + N_{\tau(3)})(|\kappa| + N_{\tau(4)})(|\vk| + N_{\tau(5)})(|\vk| + N_{\tau(6)})}\\
&\qquad\qquad\qquad\prod_{j=1}^2\|P_{N_j}(\psi_\mu^4 f)\|_{L^2}\prod_{j=3}^6\|P_{N_j}(\psi_\mu^4 f)\|_{L^\infty},
\end{align*}
where \(\mathfrak S\) is the set of permutations of \(\{3,4,5,6\}\). Estimating the \(N_{\tau(3)},N_{\tau(4)}\) terms in \(E_\sigma^\sigma\) using \eqref{Fmu-E1} and the \(N_{\tau(5)},N_{\tau(6)}\) terms in \(E_{2\sigma,\vk}^\sigma\) using \eqref{Fmu-E2}, we obtain a bound of
\begin{align*}
\sum_{N_1\geq N_2}\tfrac{|\vk|^{-1}N_2^{2-4\sigma}(1+N_2)^{2+2\sigma}}{(|\kappa| + N_2)^2(|\vk|+N_2)^{1-2\sigma}}\prod_{j=1}^2\|P_{N_j}(\psi_\mu^4 f)\|_{L^2}\|q\|_{E_\sigma^\sigma}^2\|q\|_{E_{2\sigma,\vk}^\sigma}^2,
\end{align*}
which is acceptable.

Turning next to the octic estimates, we may bound \(\LHS{19-5}\) by
\begin{align*}
&\sum_{N_1\geq \dots\geq N_8} \tfrac{|\vk|(1+N_2)}{(|\vk|+N_2)}\prod_{j=5}^8\tfrac1{|\vk| + N_j}\prod_{j=1}^2\|P_{N_j}(\psi_\mu^3 f)\|_{L^2}\prod_{j=3}^8\|P_{N_j}(\psi_\mu^3 f)\|_{L^\infty}\\
&\qquad\lesssim \sum_{N_1\geq N_2} \tfrac{|\vk|^{-1}N_2^{3-6\sigma}(1+N_2)^{1+4\sigma}}{(|\vk| + N_2)^{3-2\sigma}}\prod_{j=1}^2\|P_{N_j}(\psi_\mu^3 f)\|_{L^2}\|q\|_{E_\sigma^\sigma}^4\|q\|_{E_{2\sigma,\vk}^\sigma}^2\\
&\qquad\lesssim |\vk|^{-1}\tfrac{\kappa^2 + \vk^2}{\vk^2}\Bigl\|\tfrac{\psi_\mu^3 f}{\sqrt{4\kappa^2 - \p^2}}\Bigr\|_{H^{\frac32}}^2\|q\|_{E_\sigma^\sigma}^4\|q\|_{E_{2\sigma,\vk}^\sigma}^2,
\end{align*}
which is acceptable.

For \eqref{15-2}, we argue as in \eqref{200x200} so that after estimating the low frequency terms in \(E_\sigma^\sigma\) or \(E_{2\sigma,\vk}^\sigma\) (depending on the associated denominator) we obtain a bound of
\begin{align*}
\sum_{N_1\geq N_2}\!\! \Bigl\{\tfrac{|\vk|^{-1}N_2^{3-6\sigma}(1+N_2)^{1+4\sigma}}{(|\kappa| + N_2)^2(|\vk| + N_2)^{1-2\sigma}} +\tfrac{|\vk|^{-\frac12-\sigma}N_2^{3-2\sigma}(1+N_2)^{1+\sigma}}{(|\kappa| + N_2)^2(|\vk| + N_2)^{\frac32-2\sigma}}\Bigr\}\prod_{j=1}^2\|P_{N_j}(\psi_\mu^3 f)\|_{L^2}\|q\|_{E_\sigma^\sigma}^4\|q\|_{E_{2\sigma,\vk}^\sigma}^2
\end{align*}
which is acceptable.

Again applying our basic technique to \eqref{19-1}, we obtain a bound of
\begin{align*}
&\sum_{N_1\geq \dots\geq N_{10}} \!\!|\vk|^2\,\prod_{j=5}^{10}\tfrac{1}{|\vk| + N_j}\prod_{j=1}^2\|P_{N_j}(\psi_\mu^2 f)\|_{L^2}\prod_{j=3}^{10}\|P_{N_j}(\psi_\mu^2 f)\|_{L^\infty}\\
&\qquad\lesssim \sum_{N_1\geq N_2} \tfrac{|\vk|^{-1}N_2^{4-8\sigma}(1+N_2)^{6\sigma}}{(|\vk| + N_2)^{3-2\sigma}}\prod_{j=1}^2\|P_{N_j}(\psi_\mu^2 f)\|_{L^2}\|q\|_{E_\sigma^\sigma}^6\|q\|_{E_{2\sigma,\vk}^\sigma}^2\\
&\qquad\lesssim |\vk|^{-1}\tfrac{\kappa^2 + \vk^2}{\vk^2}\Bigl\|\tfrac{\psi_\mu^2 f}{\sqrt{4\kappa^2 - \p^2}}\Bigr\|_{H^{\frac32}}^2\|q\|_{E_\sigma^\sigma}^6\|q\|_{E_{2\sigma,\vk}^\sigma}^2,
\end{align*}
which is acceptable.

Finally, \eqref{S10} again follows the argument of \eqref{200x200}, \eqref{15-2}, estimating the low frequency terms in \(E_\sigma^\sigma\) or \(E_{2\sigma,\vk}^\sigma\) to obtain an acceptable bound of
\begin{align*}
\sum_{N_1\geq N_2} \tfrac{|\vk|^{-1}N_2^{4-8\sigma}(1+N_2)^{6\sigma}}{(|\kappa| + N_2)^2(|\vk| + N_2)^{1-2\sigma}}\prod_{j=1}^2\|P_{N_j}(\psi_\mu^2 f)\|_{L^2}\|q\|_{E_\sigma^\sigma}^6\|q\|_{E_{2\sigma,\vk}^\sigma}^2.
\end{align*}

This completes the proof of the lemma.
\end{proof}

Combining Propositions~\ref{p:Asymptotics} and Lemma~\ref{l:paraproducts II}, we obtain the following:

\begin{lemma}\label{L:rewriting}
Let \(q\in Q_*\), \(h\in \R\), \(\kappa\geq 1\), and \(\vk \in I_\kappa = [1,\frac\kappa 2]\cup[2\kappa,\infty)\). Then we have the estimate
\begin{align}
\int \biggl|\Im \int j_{\diff}\sbrack{\geq 4}\,\psi_\mu^{24}\,dx\biggr|\,&e^{-\frac1{200}|h-\mu|}\,d\mu\label{j diff geq 4 bound}\\
&\qquad\lesssim \|q\|_{F_\kappa^{\frac12}(h)}^2\Bigl[\vk^{- 1}\|q\|_{E_{2\sigma,\vk}^\sigma}^2 + \tfrac\kappa{\kappa^2 + \vk^2}\|q\|_{E_\sigma^\sigma}^2\Bigr].\notag
\end{align}
\end{lemma}

\begin{proof}
From \eqref{linear}, we have
\begin{align*}
\tfrac1\kappa q - \Bigl[\tfrac1{\sqrt\kappa} g_{12}\sbrack 1(\kappa) - \tfrac1{\sqrt{-\kappa}}g_{12}\sbrack 1(-\kappa) \Bigr] &= - \tfrac1\kappa u(\kappa)'',\\
\tfrac1{2\kappa}q' - \Bigl[ \sqrt\kappa g_{12}\sbrack 1(\kappa) - \sqrt{-\kappa}g_{12}\sbrack 1(-\kappa) \Bigr] &=  - \tfrac1{2\kappa}u(\kappa)'''.
\end{align*}
We may then use \eqref{j diff} and \eqref{sym} to decompose
\begin{align}
\Im j_{\diff}\sbrack{\geq 4} = \Im \Biggl\{&\notag\\
& -\tfrac{2\vk^2}{\kappa^2 - \vk^2}\Bigl[\sqrt\vk\bigl(\tfrac{g_{21}}{2+\gamma}\bigr)\sbrack{\geq 3}(\vk) + \sqrt{-\vk}\bigl(\tfrac{g_{21}}{2+\gamma}\bigr)\sbrack{\geq 3}(-\vk)\Bigr] q\label{l I}\\
& -\tfrac{\vk^2}{\kappa^2 - \vk^2} \Bigl[\tfrac1{\sqrt\vk}\bigl(\tfrac{g_{21}}{2+\gamma}\bigr)\sbrack{\geq 3}(\vk) + \tfrac1{\sqrt{-\vk}}\bigl(\tfrac{g_{21}}{2+\gamma}\bigr)\sbrack{\geq 3}(-\vk)\Bigr] q'\label{l II}\\
& - \tfrac{i\vk^2}{\kappa^2 - \vk^2}\Bigl[\tfrac1{\sqrt\vk}\bigl(\tfrac{g_{21}}{2+\gamma}\bigr)(\vk) + \tfrac1{\sqrt{-\vk}}\bigl(\tfrac{g_{21}}{2+\gamma}\bigr)(-\vk)\Bigr]|q|^2q\label{l III}\\
&- \tfrac{2\kappa^2}{\kappa^2 - \vk^2}\Bigl[\sqrt\vk\bigl(\tfrac{g_{21}}{2+\gamma}\bigr)\sbrack{\geq 3}(\vk) + \sqrt{-\vk}\bigl(\tfrac{g_{21}}{2+\gamma}\bigr)\sbrack{\geq 3}(-\vk)\Bigr]u(\kappa)''\label{l IV}\\
& -\tfrac{\kappa^2}{\kappa^2 - \vk^2} \Bigl[\tfrac1{\sqrt\vk}\bigl(\tfrac{g_{21}}{2+\gamma}\bigr)\sbrack{\geq 3}(\vk) + \tfrac1{\sqrt{-\vk}}\bigl(\tfrac{g_{21}}{2+\gamma}\bigr)\sbrack{\geq 3}(-\vk)\Bigr]u(\kappa)'''\label{l V}\\
&-\tfrac{2\kappa^3}{\kappa^2 - \vk^2}\Bigl[\sqrt\vk\bigl(\tfrac{g_{21}}{2+\gamma}\bigr)(\vk) + \sqrt{-\vk}\bigl(\tfrac{g_{21}}{2+\gamma}\bigr)(-\vk)\Bigr]\label{l VI}\\
&\qquad\qquad\qquad\qquad\quad\qquad\qquad\qquad\times\Bigl[\tfrac1{\sqrt\kappa}g_{12}\sbrack{\geq 3}(\kappa) - \tfrac1{\sqrt{-\kappa}}g_{12}\sbrack{\geq 3}(-\kappa)\Bigr]\notag\\
& -\tfrac{2\kappa^3}{\kappa^2 - \vk^2} \Bigl[\tfrac1{\sqrt\vk}\bigl(\tfrac{g_{21}}{2+\gamma}\bigr)(\vk) + \tfrac1{\sqrt{-\vk}}\bigl(\tfrac{g_{21}}{2+\gamma}\bigr)(-\vk)\Bigr]\label{l VII}\\
&\qquad\qquad\qquad\qquad\quad\qquad\times\Bigl[\sqrt\kappa g_{12}\sbrack{\geq 3}(\kappa) - \sqrt{-\kappa}g_{12}\sbrack{\geq 3}(-\kappa)-\tfrac i{2\kappa}|q|^2q\Bigr]\notag\\
&-\tfrac{\kappa^3}{\kappa^2 - \vk^2}\Bigl[\gamma\sbrack{\geq 4}(\kappa) - \gamma\sbrack{\geq 4}(-\kappa)\Bigr]\label{l VIII}\\
& \Biggr\}.\notag
\end{align}

We now proceed to use Proposition~\ref{p:Asymptotics} and Lemma~\ref{l:paraproducts II} to remove the leading order terms from each line as follows:

For \eqref{l I}, we first use \eqref{paraproduct for frac geq} to write
\[
\int \sqrt\vk\bigl(\tfrac{g_{21}}{2+\gamma}\bigr)\sbrack{\geq 9}(\vk)\,q\,\psi_\mu^{24}\,dx = \vk^2\int m\Bigl[\underbrace{\psi_\mu^2f,\dots,\psi_\mu^2 f}_4,\underbrace{\tfrac{\psi_\mu^2 f}{2\vk - \p},\dots,\tfrac{\psi_\mu^2 f}{2\vk - \p}}_6\Bigr]\,\psi_\mu^4\,dx,
\]
where \(m\in S_{\loc}(10)\), to which we apply \eqref{19-1}. Next, we use \eqref{g21 frac 7 expansion} to write
\begin{align*}
&\int \Bigl[\sqrt\vk\bigl(\tfrac{g_{21}}{2+\gamma}\bigr)\sbrack{7}(\vk) + \sqrt{-\vk}\bigl(\tfrac{g_{21}}{2+\gamma}\bigr)\sbrack{7}(-\vk)\Bigr]\,q\,\psi_\mu^{24}\,dx\\
&\qquad = \vk\int m\Bigl[\psi_\mu^3f,\psi_\mu^3f,\psi_\mu^3f,\tfrac{\psi_\mu^3 f}{2\vk - \p},\tfrac{\psi_\mu^3 f}{2\vk - \p},\tfrac{\psi_\mu^3 f}{2\vk - \p},\tfrac{\psi_\mu^3 f}{2\vk - \p},\tfrac{2-\p}{2\vk - \p}(\psi_\mu^3 f)\Bigr]\,dx,
\end{align*}
where \(m\in S_{\loc}(8)\), which we bound using \eqref{19-5}. Similarly, we use \eqref{g21 frac 5 expansion} to write
\begin{align*}
&\int \Bigl[\sqrt\vk\bigl(\tfrac{g_{21}}{2+\gamma}\bigr)\sbrack{5}(\vk) + \sqrt{-\vk}\bigl(\tfrac{g_{21}}{2+\gamma}\bigr)\sbrack{5}(-\vk) - 64i\vk^8|u(\vk)|^4\bar u(\vk)\Bigr]\,q\,\psi_\mu^{24}\,dx\\
&\qquad = \int m\Bigl[\psi_\mu^4f,\psi_\mu^4f,\psi_\mu^4f,\tfrac{\psi_\mu^4 f}{2\vk - \p},\tfrac{\psi_\mu^4 f}{2\vk - \p},\tfrac{4-\p^2}{4\vk^2 - \p^2}(\psi_\mu^4 f)\Bigr]\,dx\\
&\qquad\quad + \int m\Bigl[\psi_\mu^4f,\psi_\mu^4f,\tfrac{\psi_\mu^4 f}{2\vk - \p},\tfrac{\psi_\mu^4 f}{2\vk - \p},\tfrac{2-\p}{2\vk - \p}(\psi_\mu^4 f),\tfrac{2-\p}{2\vk - \p}(\psi_\mu^4 f)\Bigr]\,dx,
\end{align*}
where each \(m\in S_{\loc}(6)\), which can be estimated using \eqref{m6 diff a bound}. Finally, we use the second expression on $\RHS{g21 frac 3 expansion}$ to obtain
\begin{align*}
&\int \Bigl[\sqrt\vk\bigl(\tfrac{g_{21}}{2+\gamma}\bigr)\sbrack{3}(\vk) + \sqrt{-\vk}\bigl(\tfrac{g_{21}}{2+\gamma}\bigr)\sbrack{3}(-\vk)\\
&\qquad\qquad\qquad\qquad\qquad\qquad+16\vk^4|u(\vk)|^2\bar u(\vk)'+4\vk^4\bar u(\vk)^2u(\vk)'\Bigr]\,q\,\psi_\mu^{24}\,dx\\
&\qquad = \int m\Bigl[\psi_\mu^6f,\tfrac{\psi_\mu^6 f}{2\vk - \p},\tfrac{2-\p}{2\vk - \p}(\psi_\mu^6 f),\tfrac{4-\p^2}{4\vk^2 - \p^2}(\psi_\mu^6 f)\Bigr]\,dx\\
&\qquad\quad + \int m\Bigl[\tfrac{\psi_\mu^6 f}{2\vk - \p},\tfrac{2-\p}{2\vk - \p}(\psi_\mu^6 f),\tfrac{2-\p}{2\vk - \p}(\psi_\mu^6 f),\tfrac{2-\p}{2\vk - \p}(\psi_\mu^6 f)\Bigr]\,dx,
\end{align*}
where each \(m\in S_{\loc}(4)\) and then apply \eqref{m4 type a bound}. Combining these estimates and using \eqref{goodness} gives us
\begin{align}
\int \Biggl|\int \biggl[\eqref{l I} & - \tfrac{2\vk^2}{\kappa^2 - \vk^2}\Bigl[16\vk^4|u(\vk)|^2\bar u(\vk)'+ 4\vk^4\bar u(\vk)^2u(\vk)'\notag\\
& - 64i\vk^8|u(\vk)|^4\bar u(\vk)\Bigr]q\biggr]\,\psi_\mu^{24}\,dx\Biggr|\,e^{-\frac1{200}|h-\mu|}\,d\mu\lesssim |\vk|^{-1}\|q\|_{F_\kappa^{\frac12}(h)}^2\|q\|_{E_{2\sigma,\vk}^\sigma}^2.\label{X-A}
\end{align}

We estimate the contribution of  \eqref{l II} similarly. From \eqref{paraproduct for frac geq}, we have
\begin{align*}
&\int \tfrac1{\sqrt\vk}\bigl(\tfrac{g_{21}}{2+\gamma}\bigr)\sbrack{\geq 7}(\vk)\,q'\,\psi_\mu^{24}\,dx\\
&\qquad = \vk\int m\Bigl[\psi_\mu^3f,\psi_\mu^3f,\psi_\mu^3f,\tfrac{\psi_\mu^3 f}{2\vk - \p},\tfrac{\psi_\mu^3 f}{2\vk - \p},\tfrac{\psi_\mu^3 f}{2\vk - \p},\tfrac{\psi_\mu^3 f}{2\vk - \p},\tfrac{2-\p}{2\vk - \p}(\psi_\mu^3 f)\Bigr]\,dx,
\end{align*}
where \(m\in S_{\loc}(8)\), whereas from \eqref{g21 frac 5 expansion} we have
\begin{align*}
&\int \Bigl[\tfrac1{\sqrt\vk}\bigl(\tfrac{g_{21}}{2+\gamma}\bigr)\sbrack{5}(\vk) + \tfrac1{\sqrt{-\vk}}\bigl(\tfrac{g_{21}}{2+\gamma}\bigr)\sbrack{5}(-\vk)\Bigr]\,q'\,\psi_\mu^{24}\,dx\\
&\qquad =  \int m\Bigl[\psi_\mu^4f,\psi_\mu^4f,\tfrac{\psi_\mu^4 f}{2\vk - \p},\tfrac{\psi_\mu^4 f}{2\vk - \p},\tfrac{2-\p}{2\vk - \p}(\psi_\mu^4 f),\tfrac{2-\p}{2\vk - \p}(\psi_\mu^4 f)\Bigr]\,dx,
\end{align*}
where each \(m\in S_{\loc}(6)\).  Using the second expression on $\RHS{g21 frac 3 expansion}$ we get
\begin{align*}
&\int \Bigl[\tfrac1{\sqrt\vk}\bigl(\tfrac{g_{21}}{2+\gamma}\bigr)\sbrack{3}(\vk) + \tfrac1{\sqrt{-\vk}}\bigl(\tfrac{g_{21}}{2+\gamma}\bigr)\sbrack{3}(-\vk)-8\vk^4|u(\vk)|^2\bar u(\vk)\Bigr]\,q'\,\psi_\mu^{24}\,dx\\
&\qquad = \int m\Bigl[\psi_\mu^6f,\tfrac{\psi_\mu^6 f}{2\vk - \p},\tfrac{2-\p}{2\vk - \p}(\psi_\mu^6 f),\tfrac{4-\p^2}{4\vk^2 - \p^2}(\psi_\mu^6 f)\Bigr]\,dx\\
&\qquad\quad + \int m\Bigl[\tfrac{\psi_\mu^6 f}{2\vk - \p},\tfrac{2-\p}{2\vk - \p}(\psi_\mu^6 f),\tfrac{2-\p}{2\vk - \p}(\psi_\mu^6 f),\tfrac{2-\p}{2\vk - \p}(\psi_\mu^6 f)\Bigr]\,dx,
\end{align*}
where each \(m\in S_{\loc}(4)\). Again applying \eqref{19-5}, \eqref{m6 diff a bound}, \eqref{m4 type a bound}, respectively, and using \eqref{goodness} we have the estimate
\begin{align}
\int \biggl|\int \Bigl[\eqref{l II} - \tfrac{\vk^2}{\kappa^2 - \vk^2}\Bigl[-8\vk^4|u(\vk)|^2\bar u(\vk)\Bigr]q'\Bigr]\,&\psi_\mu^{24}\,dx\biggr|\,e^{-\frac1{200}|h-\mu|}\,d\mu\label{X-B}\\
&\lesssim |\vk|^{-1}\|q\|_{F_\kappa^{\frac12}(h)}^2\|q\|_{E_{2\sigma,\vk}^\sigma}^2.\notag
\end{align}

For \eqref{l III} we first write
\[
|q|^2q = 16\vk^4 |u(\vk)|^2 q + m\Bigl[f,f,\tfrac{f''}{4\vk^2 - \p^2}\Bigr],
\]
where \(m\in S(3)\). From \eqref{paraproduct for frac geq}, we then have
\begin{align*}
&\int \tfrac1{\sqrt\vk}\bigl(\tfrac{g_{21}}{2+\gamma}\bigr)\sbrack{\geq 3}(\vk) \Bigl[|q|^2 - 16\vk^4|u(\vk)|^2\Bigr]q\,\psi_\mu^{24}\,dx\\
&\qquad= \int m\Bigl[\psi_\mu^4f,\psi_\mu^4f,\psi_\mu^4f,\tfrac{\psi_\mu^4 f}{2\vk - \p},\tfrac{\psi_\mu^4 f}{2\vk - \p},\tfrac{4-\p^2}{4\vk^2 - \p^2}(\psi_\mu^4 f)\Bigr]\,dx,
\end{align*}
where \(m\in S_{\loc}(6)\), to which we can apply \eqref{m6 diff a bound}. Next, we use \eqref{paraproduct for frac geq} to write
\begin{align*}
&\int \tfrac1{\sqrt\vk}\bigl(\tfrac{g_{21}}{2+\gamma}\bigr)\sbrack{\geq 7}(\vk)\,16\vk^4|u(\vk)|^2q\,\psi_\mu^{24}\,dx\\
&\qquad= \vk^2\int m\Bigl[\underbrace{\psi_\mu^2f,\dots,\psi_\mu^2 f}_4,\underbrace{\tfrac{\psi_\mu^2 f}{2\vk - \p},\dots,\tfrac{\psi_\mu^2 f}{2\vk - \p}}_6\Bigr]\,\psi_\mu^4\,dx,
\end{align*}
where \(m\in S_{\loc}(10)\), which can be estimated using \eqref{19-1}. Similarly, using \eqref{g21 frac 5 expansion} we have
\begin{align*}
&\int \Bigl[\tfrac1{\sqrt\vk}\bigl(\tfrac{g_{21}}{2+\gamma}\bigr)\sbrack{5}(\vk) + \tfrac1{\sqrt{-\vk}}\bigl(\tfrac{g_{21}}{2+\gamma}\bigr)\sbrack{5}(-\vk)\Bigr]\,16\vk^4 |u(\vk)|^2 q\,\psi_\mu^{24}\,dx\\
&\qquad = \vk\int m\Bigl[\psi_\mu^3f,\psi_\mu^3f,\psi_\mu^3f,\tfrac{\psi_\mu^3 f}{2\vk - \p},\tfrac{\psi_\mu^3 f}{2\vk - \p},\tfrac{\psi_\mu^3 f}{2\vk - \p},\tfrac{\psi_\mu^3 f}{2\vk - \p},\tfrac{2-\p}{2\vk - \p}(\psi_\mu^3 f)\Bigr]\,dx,
\end{align*}
where \(m\in S_{\loc}(6)\), which we estimate using \eqref{m6 diff a bound}. From \eqref{g21 frac 3 expansion} we have
\begin{align*}
&\int \Bigl[\tfrac1{\sqrt\vk}\bigl(\tfrac{g_{21}}{2+\gamma}\bigr)\sbrack{3}(\vk) + \tfrac1{\sqrt{-\vk}}\bigl(\tfrac{g_{21}}{2+\gamma}\bigr)\sbrack{3}(-\vk) - 8\vk^4|u(\vk)|^2\bar u(\vk)\Bigr]\,16\vk^4|u(\vk)|^2q\,\psi_\mu^{24}\,dx\\
&\qquad = \int m\Bigl[\psi_\mu^4f,\psi_\mu^4f,\psi_\mu^4f,\tfrac{\psi_\mu^4 f}{2\vk - \p},\tfrac{\psi_\mu^4 f}{2\vk - \p},\tfrac{4-\p^2}{4\vk^2 - \p^2}(\psi_\mu^4 f)\Bigr]\,dx\\
&\qquad\quad + \int m\Bigl[\psi_\mu^4f,\psi_\mu^4f,\tfrac{\psi_\mu^4 f}{2\vk - \p},\tfrac{\psi_\mu^4 f}{2\vk - \p},\tfrac{2-\p}{2\vk - \p}(\psi_\mu^4 f),\tfrac{2-\p}{2\vk - \p}(\psi_\mu^4 f)\Bigr]\,dx,
\end{align*}
where \(m\in S_{\loc}(4)\), and can then apply \eqref{m6 diff a bound}. Finally, using that
\[
\tfrac1{\sqrt\vk}\bigl(\tfrac{g_{21}}{2+\gamma}\bigr)\sbrack 1(\vk) + \tfrac1{\sqrt{-\vk}}\bigl(\tfrac{g_{21}}{2+\gamma}\bigr)\sbrack 1(-\vk) = i\bar u(\vk)'
\]
and \eqref{goodness}, we obtain the estimate
\begin{align}
\int \Bigl|\int \Bigl[\eqref{l III} &- \tfrac{i\vk^2}{\kappa^2 - \vk^2}\Bigl[-i\bar u(\vk)' - 8\vk^4|u(\vk)|^2\bar u(\vk)\Bigr]16\vk^4|u(\vk)|^2q\Bigr]\,\psi_\mu^{24}\,dx\Bigr|\,e^{-\frac1{200}|h-\mu|}\,d\mu\notag\\
&\lesssim |\vk|^{-1}\|q\|_{F_\kappa^{\frac12}(h)}^2\|q\|_{E_{2\sigma,\vk}^\sigma}^2.\label{X-C}
\end{align}

For \eqref{l IV}, we use \eqref{paraproduct for frac geq} to write
\begin{align*}
\int \sqrt\vk \bigl(\tfrac{g_{21}}{2+\gamma}\bigr)\sbrack{\geq 5}(\vk)\,u(\kappa)''\,\psi_\mu^{24}\,dx = \int m\Bigl[\psi_\mu^4f,\psi_\mu^4f,\psi_\mu^4f,\tfrac{\psi_\mu^4 f}{2\vk-\p},\tfrac{\psi_\mu^4 f}{2\vk-\p},\tfrac{4-\p^2}{4\kappa^2 - \p^2}(\psi_\mu^4 f)\Bigr]\,dx,
\end{align*}
where \(m\in S_{\loc}(6)\), and \eqref{g21 frac 3 expansion} to write
\begin{align*}
&\int\Bigl[\sqrt\vk \bigl(\tfrac{g_{21}}{2+\gamma}\bigr)\sbrack{3}(\vk) + \sqrt{-\vk} \bigl(\tfrac{g_{21}}{2+\gamma}\bigr)\sbrack{3}(-\vk)\Bigr]u(\kappa)''\,\psi_\mu^{24}\,dx\\
&\qquad = \int m\Bigl[\psi_\mu^6f,\tfrac{\psi_\mu^6 f}{2\vk-\p},\tfrac{2-\p}{2\vk-\p}(\psi_\mu^6f),\tfrac{4-\p^2}{4\kappa^2 - \p^2}(\psi_\mu^6 f)\Bigr]\,dx,
\end{align*}
where \(m\in S_{\loc}(4)\). Similarly, for \eqref{l V} we use \eqref{paraproduct for frac geq} to write
\begin{align*}
\int \tfrac1{\sqrt\vk} \bigl(\tfrac{g_{21}}{2+\gamma}\bigr)\sbrack{\geq 3}(\vk)\,u(\kappa)'''\,\psi_\mu^{24}\,dx = \int m\Bigl[\psi_\mu^6f,\psi_\mu^6f,\tfrac{\psi_\mu^6 f}{2\vk-\p},\tfrac{(2-\p)^3}{(2\vk - \p)(4\kappa^2 - \p^2)}(\psi_\mu^6 f)\Bigr]\,dx.
\end{align*}
Applying \eqref{303030} and \eqref{m4 diff type II bound} gives us
\begin{align}
\int \left|\int \Bigl[\eqref{l IV} + \eqref{l V}\Bigr]\,\psi_\mu^{24}\,dx\right|\,e^{-\frac1{200}|h-\mu|}\,d\mu\lesssim |\vk|^{-1}\|q\|_{F_\kappa^{\frac12}(h)}^2\|q\|_{E_{2\sigma,\vk}^\sigma}^2.\label{X-D}
\end{align}

Turning to \eqref{l VIII}, we first take a test function \(w\in L^\infty\) and use \eqref{mult comm}, \eqref{I8}, \eqref{op norm bound}, and \eqref{goodness} to bound
\begin{align*}
\left|\int w\,\gamma\sbrack{\geq 8}(\pm\kappa)\,\psi_\mu^{24}\,dx\right| &\lesssim \sum_{\ell=0}^\infty\kappa^{4} \|\Lambda(w)\|_{\op}\|\Lambda(q \psi_\mu^3)\|_{\I_8}^8\bigl(C\sqrt \kappa \|\Lambda(q)\|_{\op}\bigr)^{2\ell}\\
&\lesssim \kappa^{-2}\Bigl\|\tfrac{\psi_\mu^3q}{\sqrt{4\kappa^2 - \p^2}}\Bigr\|_{H^{\frac32}}^2\|q\|_{E_\sigma^\sigma}^6\|w\|_{L^\infty}. 
\end{align*}
By duality, this yields
\[
\|\gamma\sbrack{\geq 8}(\pm\kappa)\,\psi_\mu^{24}\|_{L^1}\lesssim \kappa^{-2}\Bigl\|\tfrac{\psi_\mu^3q}{\sqrt{4\kappa^2 - \p^2}}\Bigr\|_{H^{\frac32}}^2\|q\|_{E_\sigma^\sigma}^6.
\]
For the remaining terms, first use \eqref{gamma 6 expansion} to write
\begin{align*}
&\int \Bigl[\gamma\sbrack 6(\kappa) - \gamma\sbrack 6(-\kappa) + 2560i\kappa^9|u(\kappa)|^6\Bigr]\psi_\mu^{24}\,dx\\
&\qquad = \kappa^{-1}\int m\Bigl[\psi_\mu^4 f,\psi_\mu^4 f,\psi_\mu^4 f,\tfrac{\psi_\mu^4 f}{2\kappa - \p},\tfrac{\psi_\mu^4 f}{2\kappa - \p},\tfrac{2-\p}{2\kappa - \p}(\psi_\mu^4f)\Bigr]\,dx,
\end{align*}
which we bound using \eqref{m6 diff b bound}. Second, apply \eqref{gamma 4 expansion} to obtain
\begin{align*}
&\int \Bigl[\gamma\sbrack 4(\kappa) - \gamma\sbrack 4(-\kappa) + 192\kappa^5|u(\kappa)|^2\bigl[u(\kappa)'\bar u(\kappa) - u(\kappa)\bar u(\kappa)'\bigr]\Bigr]\psi_\mu^{24}\,dx\\
&\qquad = \kappa^{-2}\int m\Bigl[\psi_\mu^6 f,\psi_\mu^6 f,\tfrac{2-\p}{2\kappa - \p}(\psi_\mu^6f),\tfrac{2-\p}{2\kappa - \p}(\psi_\mu^6f)\Bigr]\,dx\\
&\qquad\quad + \kappa^{-2}\int m\Bigl[\psi_\mu^6 f,\psi_\mu^6 f,\psi_\mu^6 f,\tfrac{4-\p^2}{4\kappa^2 - \p^2}(\psi_\mu^6f)\Bigr]\,dx,
\end{align*}
which we estimate by \eqref{m4 type a' bound}. Together, these give us
\begin{align}
&\int \Biggl|\int \biggl[\eqref{l VIII}-\tfrac{\kappa^3}{\kappa^2 - \vk^2}\Bigl[192\kappa^5|u(\kappa)|^2\bigl[u(\kappa)'\bar u(\kappa) - u(\kappa)\bar u(\kappa)'\bigr]\label{X-E}\\
&\qquad\qquad\qquad\qquad\qquad\qquad+2560i\kappa^9|u(\kappa)|^6\Bigr]\biggr]\,\psi_\mu^{24}\,dx\Biggr|\,e^{-\frac1{200}|h-\mu|}\,d\mu\notag\\
&\qquad\lesssim \tfrac \kappa{\kappa^2 + \vk^2}\|q\|_{F_\kappa^{\frac12}(h)}^2\|q\|_{E_\sigma^\sigma}^2.\notag
\end{align}

For \eqref{l VI}, we take \(h = \sqrt{\pm \vk}\bigl(\tfrac{g_{21}}{2+\gamma}\bigr)(\pm \vk)\) and again apply \eqref{mult comm}, \eqref{I8}, \eqref{op norm bound}, \eqref{goodness}, and Corollary~\ref{c:g} to estimate
\begin{align*}
\left|\int h\,\tfrac1{\sqrt{\pm\kappa}}g_{12}\sbrack{\geq 7}(\pm\kappa)\,\psi_\mu^{24}\,dx\right|&\lesssim \sum_{\ell=0}^\infty\kappa^{3} \|\Lambda(h\psi_\mu^3)\|_{\I_8}\|\Lambda(q \psi_\mu^3)\|_{\I_8}^7\bigl(C\sqrt\kappa\|\Lambda(q)\|_{\op}\bigr)^{2\ell}\\
&\lesssim \kappa^{-2}\Bigl\|\tfrac{\psi_\mu^3q}{\sqrt{4\kappa^2 - \p^2}}\Bigr\|_{H^{\frac32}}^2\|q\|_{E_\sigma^\sigma}^6,
\end{align*}
which is acceptable. For the lower order terms, we use \eqref{g12 3 expansion} and \eqref{g12 5 expansion} to write
\begin{align*}
&\int h\,\Bigl[\tfrac1{\sqrt\kappa}g_{12}\sbrack 3(\kappa) - \tfrac1{\sqrt{-\kappa}}g_{12}\sbrack 3(-\kappa)-48i\kappa^3|u(\kappa)|^2u(\kappa)'\Bigr]\,\psi_\mu^{24} \,dx\\
&\qquad =  \kappa^{-2}\int m\Bigl[\psi_\mu^6 f,\psi_\mu^6 f,\tfrac{2-\p}{2\kappa - \p}(\psi_\mu^6f),\tfrac{2-\p}{2\kappa - \p}(\psi_\mu^6f)\Bigr]\,dx\\
&\qquad\quad + \kappa^{-2}\int m\Bigl[\psi_\mu^6 f,\psi_\mu^6 f,\psi_\mu^6 f,\tfrac{4-\p^2}{4\kappa^2 - \p^2}(\psi_\mu^6f)\Bigr]\,dx,
\end{align*}
where each \(m\in S_{\loc}(4)\), and 
\begin{align*}
&\int h\,\Bigl[\tfrac1{\sqrt\kappa}g_{12}\sbrack 5(\kappa) - \tfrac1{\sqrt{-\kappa}}g_{12}\sbrack 5(-\kappa)+384\kappa^7|u(\kappa)|^4u(\kappa)\Bigr]\,\psi_\mu^{24} \,dx\\
&\qquad =  \kappa^{-1}\int m\Bigl[\psi_\mu^4 f,\psi_\mu^4 f,\psi_\mu^4 f,\tfrac{\psi_\mu^4 f}{2\kappa - \p},\tfrac{\psi_\mu^4 f}{2\kappa - \p},\tfrac{2-\p}{2\kappa - \p}(\psi_\mu^4f)\Bigr]\,dx,
\end{align*}
where each \(m\in S_{\loc}(6)\). As a consequence, if we introduce
\begin{align}
=\;\tfrac{2\kappa^3}{\kappa^2 - \vk^2}\Bigl[\sqrt\vk\bigl(\tfrac{g_{21}}{2+\gamma}\bigr)(\vk) + \sqrt{-\vk}&\bigl(\tfrac{g_{21}}{2+\gamma}\bigr)(-\vk)\Bigr]\label{l VI correction}\\
&\times\Bigl[-48i\kappa^3|u(\kappa)|^2u(\kappa)' + 384\kappa^7|u(\kappa)|^4u(\kappa)\Bigr],\notag
\end{align}
we may apply \eqref{m4 type a' bound} and \eqref{m6 diff b bound} to obtain
\begin{align}
&\int \left|\int \Bigl[\eqref{l VI} - \eqref{l VI correction}\Bigr]\,\psi_\mu^{24}\,dx\right|\,e^{-\frac1{200}|h-\mu|}\,d\mu\lesssim \tfrac \kappa{\kappa^2 + \vk^2}\|q\|_{F_\kappa^{\frac12}(h)}^2\|q\|_{E_\sigma^\sigma}^2.\label{X-F}
\end{align}

To estimate \eqref{l VI correction}, we first use \eqref{paraproduct for frac geq} to write
\begin{align*}
&\int \sqrt\vk\bigl(\tfrac{g_{21}}{2+\gamma}\bigr)\sbrack{\geq 5}(\vk)\,\kappa^3|u(\kappa)|^2u(\kappa)'\,\psi_\mu^{24}\,dx& \\
&\qquad= \int m\Bigl[\psi_\mu^3 f,\psi_\mu^3 f,\psi_\mu^3 f,\tfrac{\psi_\mu^3 f}{2\vk - \p},\tfrac{\psi_\mu^3 f}{2\vk - \p},\tfrac{\psi_\mu^3 f}{2\kappa - \p},\tfrac{\psi_\mu^3 f}{2\kappa - \p},\tfrac{2-\p}{2\kappa - \p}(\psi_\mu^3 f)\Bigr]\,dx,
\end{align*}
where \(m\in S_{\loc}(8)\), to which we apply \eqref{15-2}. Similarly,
\begin{align*}
&\int \sqrt\vk\bigl(\tfrac{g_{21}}{2+\gamma}\bigr)\sbrack{\geq 5}(\vk)\,\kappa^7|u(\kappa)|^4u(\kappa)\,\psi_\mu^{24}\,dx& \\
&\qquad= \kappa\int m\Bigl[\underbrace{\psi_\mu^2 f,\dots,\psi_\mu^2 f}_4,\tfrac{\psi_\mu^2 f}{2\vk - \p},\tfrac{\psi_\mu^2 f}{2\vk - \p},\underbrace{\tfrac{\psi_\mu^2 f}{2\kappa - \p},\dots,\tfrac{\psi_\mu^2 f}{2\kappa - \p}}_4\Bigr]\,\psi_\mu^4\,dx,
\end{align*}
where \(m\in S_{\loc}(10)\), which can be bounded using \eqref{S10}. Further, from \eqref{g21 frac 3 expansion} we have
\begin{align*}
&\int \Bigl[\sqrt\vk\bigl(\tfrac{g_{21}}{2+\gamma}\bigr)\sbrack{3}(\vk) + \sqrt{-\vk}\bigl(\tfrac{g_{21}}{2+\gamma}\bigr)\sbrack{ 3}(-\vk)\Bigr]\,\kappa^3|u(\kappa)|^2u(\kappa)'\,\psi_\mu^{24}\,dx& \\
&\qquad=\int m\Bigl[\psi_\mu^4f,\tfrac{\psi_\mu^4 f}{2\vk - \p},\tfrac{2-\p}{2\vk - \p}(\psi_\mu^4 f),\tfrac{\psi_\mu^4 f}{2\kappa - \p},\tfrac{\psi_\mu^4 f}{2\kappa - \p},\tfrac{2-\p}{2\kappa - \p}(\psi_\mu^4 f)\Bigr]\,dx,
\end{align*}
where \(m\in S_{\loc}(6)\) to which we apply \eqref{200x200}, and
\begin{align*}
&\int \Bigl[\sqrt\vk\bigl(\tfrac{g_{21}}{2+\gamma}\bigr)\sbrack{3}(\vk) + \sqrt{-\vk}\bigl(\tfrac{g_{21}}{2+\gamma}\bigr)\sbrack{ 3}(-\vk)\Bigr]\,\kappa^7|u(\kappa)|^4u(\kappa)\,\psi_\mu^{24}\,dx& \\
&\qquad=\kappa\int m\Bigl[\psi_\mu^3f,\psi_\mu^3f,\tfrac{\psi_\mu^3 f}{2\vk - \p},\tfrac{2-\p}{2\vk - \p}(\psi_\mu^3 f),\tfrac{\psi_\mu^3 f}{2\kappa - \p},\tfrac{\psi_\mu^3 f}{2\kappa - \p},\tfrac{\psi_\mu^3 f}{2\kappa - \p},\tfrac{\psi_\mu^3 f}{2\kappa - \p}\Bigr]\,dx,
\end{align*}
which we estimate with \eqref{15-2}. Observing that
\[
\sqrt\vk\bigl(\tfrac{g_{21}}{2+\gamma}\bigr)\sbrack 1(\vk) + \sqrt{-\vk}\bigl(\tfrac{g_{21}}{2+\gamma}\bigr)\sbrack 1(-\vk) = -2i\vk^2\bar u(\vk),
\]
and once again using \eqref{goodness}, these bounds combine to give us the estimate
\begin{align}
\int \biggl|\int \biggl[\eqref{l VI correction}+&\tfrac{2\kappa^3}{\kappa^2 - \vk^2}\Bigl[-2i\vk^2\bar u(\vk)\Bigr]\notag\\
&\times\Bigl[48i\kappa^3|u(\kappa)|^2u(\kappa)' - 384\kappa^7|u(\kappa)|^4u(\kappa)\Bigr]
\biggr]\,\psi_\mu^{24}\,dx\biggr|\,e^{-\frac1{200}|h-\mu|}\,d\mu\label{X-G}\\
&\lesssim |\vk|^{-1}\|q\|_{F_\kappa^{\frac12}(h)}^2\|q\|_{E_\sigma^\sigma}^2\|q\|_{E_{2\sigma,\vk}^\sigma}^2.\notag
\end{align}

It remains to extract the leading order terms from \eqref{l VII}. We first use \eqref{useful IDs} and \eqref{paraproduct for frac geq} to write
\begin{align}
\tfrac1{\sqrt\vk}\bigl(\tfrac{g_{21}}{2+\gamma}\bigr)\sbrack 1(\vk) + \tfrac1{\sqrt{-\vk}}\bigl(\tfrac{g_{21}}{2+\gamma}\bigr)\sbrack 1(-\vk) &= i\bar u(\vk)'\label{spigot}\\
\tfrac1{\sqrt\vk}\bigl(\tfrac{g_{21}}{2+\gamma}\bigr)\sbrack 3(\vk) + \tfrac1{\sqrt{-\vk}}\bigl(\tfrac{g_{21}}{2+\gamma}\bigr)\sbrack 3(-\vk)&=\tfrac \vk{2\vk + \p}m\Bigl[f,\tfrac f{2\vk - \p},\tfrac f{2\vk - \p}\Bigr],\label{faucet}
\end{align}
where \(m\in S(3)\). From \eqref{paraproduct for g geq}, we then have
\begin{align*}
&\int i\bar u(\vk)'\sqrt{\pm\kappa}g_{12}\sbrack{\geq 7}(\pm \kappa)\,\psi_\mu^{24}\,dx\\
&\qquad = \kappa\int m\Bigl[\psi_\mu^3f,\psi_\mu^3f,\psi_\mu^3f,\tfrac{\psi_\mu^3f}{2\kappa - \p},\tfrac{\psi_\mu^3f}{2\kappa - \p},\tfrac{\psi_\mu^3f}{2\kappa - \p},\tfrac{\psi_\mu^3f}{2\kappa - \p},\tfrac{2-\p}{4\vk^2 - \p^2}(\psi_\mu^3f)\Bigr]\,dx,
\end{align*}
where \(m\in S_{\loc}(8)\) and
\begin{align*}
&\int \Bigl[\tfrac1{\sqrt\vk}\bigl(\tfrac{g_{21}}{2+\gamma}\bigr)\sbrack{\geq 3}(\vk) + \tfrac1{\sqrt{-\vk}}\bigl(\tfrac{g_{21}}{2+\gamma}\bigr)\sbrack{\geq 3}(\vk)\Bigr]\sqrt{\pm\kappa}g_{12}\sbrack{\geq 7}(\pm \kappa)\,\psi_\mu^{24}\,dx\\
&\qquad = \kappa\int m\Bigl[\underbrace{\psi_\mu^2f,\dots,\psi_\mu^2f}_4,\tfrac{\psi_\mu^2f}{2\vk - \p},\tfrac{\psi_\mu^2f}{2\vk - \p},\underbrace{\tfrac{\psi_\mu^2f}{2\kappa - \p},\dots,\tfrac{\psi_\mu^2f}{2\kappa - \p}}_4\Bigr]\,\psi_\mu^4\,dx,
\end{align*}
where \(m\in S_{\loc}(10)\). These terms can be estimated using \eqref{15-2} and  \eqref{S10}, respectively. Applying \eqref{g12 5 expansion} we have
\begin{align*}
&\int i\bar u(\vk)'\Bigl[\sqrt{\kappa}g_{12}\sbrack{5}(\kappa) - \sqrt{-\kappa}g_{12}\sbrack 5(-\kappa)\Bigr]\,\psi_\mu^{24}\,dx\\
&\qquad = \int m\Bigl[\psi_\mu^4f,\psi_\mu^4f,\tfrac{\psi_\mu^4f}{2\kappa - \p},\tfrac{\psi_\mu^4f}{2\kappa - \p},\tfrac{2-\p}{2\kappa - \p}(\psi_\mu^4f),\tfrac{2-\p}{4\vk^2 - \p^2}(\psi_\mu^4f)\Bigr]\,dx,
\end{align*}
where \(m\in S_{\loc}(6)\), which we bound using \eqref{200x200}. Similarly, we have
\begin{align*}
&\int \Bigl[\tfrac1{\sqrt\vk}\bigl(\tfrac{g_{21}}{2+\gamma}\bigr)\sbrack{\geq 3}(\vk) + \tfrac1{\sqrt{-\vk}}\bigl(\tfrac{g_{21}}{2+\gamma}\bigr)\sbrack{\geq 3}(-\vk)\Bigr]\Bigl[\sqrt{\kappa}g_{12}\sbrack{5}(\kappa) - \sqrt{-\kappa}g_{12}\sbrack 5(-\kappa)\Bigr]\,\psi_\mu^{24}\,dx\\
&\qquad = \int m\Bigl[\psi_\mu^3f,\psi_\mu^3f,\psi_\mu^3f,\tfrac{\psi_\mu^3f}{2\vk - \p},\tfrac{\psi_\mu^3f}{2\vk - \p},\tfrac{\psi_\mu^3f}{2\kappa - \p},\tfrac{\psi_\mu^3f}{2\kappa - \p},\tfrac{2-\p}{2\kappa - \p}(\psi_\mu^3f)\Bigr]\,dx,
\end{align*}
where \(m\in S_{\loc}(8)\), which is bounded using \eqref{15-2}. Using \eqref{g12 3 expansion} we have
\begin{align*}
&\int i\bar u(\vk)'\Bigl[\sqrt{\kappa}g_{12}\sbrack{3}(\kappa) - \sqrt{-\kappa}g_{12}\sbrack 3(-\kappa) - 32i\kappa^5|u(\kappa)|^2u(\kappa)\Bigr]\,\psi_\mu^{24}\,dx\\
&\qquad = \kappa^{-1}\int m\Bigl[\psi_\mu^6f,\tfrac{2-\p}{2\kappa - \p}(\psi_\mu^6f),\tfrac{2-\p}{2\kappa - \p}(\psi_\mu^6f),\tfrac{2-\p}{4\vk^2 - \p^2}(\psi_\mu^6f)\Bigr]\,dx\\
&\qquad\quad + \kappa^{-1}\int m\Bigl[\psi_\mu^6f,\psi_\mu^6f,\tfrac{4-\p^2}{4\kappa^2 - \p^2}(\psi_\mu^6f),\tfrac{2-\p}{4\vk^2 - \p^2}(\psi_\mu^6f)\Bigr]\,dx
\end{align*}
where \(m\in S_{\loc}(4)\). This is estimated using \eqref{303030}. Also using \eqref{g12 3 expansion} we have
\begin{align*}
&\int \Bigl[\tfrac1{\sqrt\vk}\bigl(\tfrac{g_{21}}{2+\gamma}\bigr)\sbrack{\geq 3}(\vk) + \tfrac1{\sqrt{-\vk}}\bigl(\tfrac{g_{21}}{2+\gamma}\bigr)\sbrack{\geq 3}(-\vk)\Bigr]\\
&\qquad\qquad\qquad\qquad\qquad\qquad\Bigl[\sqrt{\kappa}g_{12}\sbrack{3}(\kappa) - \sqrt{-\kappa}g_{12}\sbrack 3(-\kappa) - 32i\kappa^5|u(\kappa)|^2u(\kappa)\Bigr]\,\psi_\mu^{24}\,dx\\
&\qquad = \kappa^{-1}\int m\Bigl[\psi_\mu^4f,\psi_\mu^4f,\tfrac{\psi_\mu^4f}{2\vk - \p},\tfrac{\psi_\mu^4f}{2\vk - \p},\tfrac{2-\p}{2\kappa - \p}(\psi_\mu^4f),\tfrac{2-\p}{2\kappa - \p}(\psi_\mu^4f)\Bigr]\,dx\\
&\qquad\quad + \kappa^{-1}\int m\Bigl[\psi_\mu^4f,\psi_\mu^4f,\psi_\mu^4f,\tfrac{\psi_\mu^4f}{2\vk - \p},\tfrac{\psi_\mu^4f}{2\vk - \p},\tfrac{4-\p^2}{4\kappa^2 - \p^2}(\psi_\mu^4f)\Bigr]\,dx,
\end{align*}
where \(m\in m_\mu(6)\). A final application of \eqref{m4 diff type II bound}  and \eqref{goodness} gives us the estimate
\begin{align}
&\int \left|\int \Bigl[\eqref{l VII} - \eqref{l VII correction}\Bigr]\,\psi_\mu^{24}\,dx\right|\,e^{-\frac1{200}|h-\mu|}\,d\mu\lesssim |\vk|^{-1}\|q\|_{F_\kappa^{\frac12}(h)}^2\|q\|_{E_{2\sigma,\vk}^\sigma}^2,\label{X-H}
\end{align}
where we define
\begin{align}
& =\;\tfrac{2\kappa^3}{\kappa^2 - \vk^2} \Bigl[\tfrac1{\sqrt\vk}\bigl(\tfrac{g_{21}}{2+\gamma}\bigr)(\vk) + \tfrac1{\sqrt{-\vk}}\bigl(\tfrac{g_{21}}{2+\gamma}\bigr)(-\vk)\Bigr]\Bigl[\tfrac i{2\kappa}|q|^2q-32i\kappa^5|u(\kappa)|^2u(\kappa)\Bigr].\label{l VII correction}
\end{align}
However, by writing
\begin{align*}
\tfrac i{2\kappa}|q|^2q-32i\kappa^5|u(\kappa)|^2u(\kappa) = -\tfrac i{2\kappa}|q|^2u(\kappa)''- 2i\kappa u(\kappa)\bar u(\kappa)''q - 8i\kappa^3|u(\kappa)|^2u(\kappa)'',
\end{align*}
and using \eqref{spigot}, \eqref{faucet}, we obtain
\begin{align*}
&\Bigl[\tfrac1{\sqrt\vk}\bigl(\tfrac{g_{21}}{2+\gamma}\bigr)\sbrack 1(\vk) + \tfrac1{\sqrt{-\vk}}\bigl(\tfrac{g_{21}}{2+\gamma}\bigr)\sbrack 1(-\vk)\Bigr]\Bigl[\tfrac i{2\kappa}|q|^2q-32i\kappa^5|u(\kappa)|^2u(\kappa)\Bigr]\\
&\qquad= \kappa^{-1}m\Bigl[f,f,\tfrac{f'}{4\vk^2 - \p^2},\tfrac{f''}{4\kappa^2 - \p^2}\Bigr],\\
&\Bigl[\tfrac1{\sqrt\vk}\bigl(\tfrac{g_{21}}{2+\gamma}\bigr)\sbrack{\geq 3}(\vk) + \tfrac1{\sqrt{-\vk}}\bigl(\tfrac{g_{21}}{2+\gamma}\bigr)\sbrack{\geq 3}(-\vk)\Bigr]\Bigl[\tfrac i{2\kappa}|q|^2q-32i\kappa^5|u(\kappa)|^2u(\kappa)\Bigr]\\
&\qquad= \kappa^{-1}m\Bigl[f,f,f,\tfrac{f}{2\vk - \p},\tfrac{f}{2\vk - \p},\tfrac{f''}{4\kappa^2 - \p^2}\Bigr],
\end{align*}
for \(m\in S(4)\) and \(S(6)\), respectively. As a consequence, we may apply \eqref{303030} and  \eqref{m4 diff type II bound} to obtain
\begin{align}
\int \left|\int \eqref{l VII correction}\,\psi_\mu^{24}\,dx\right|\,e^{-\frac1{200}|h-\mu|}\,d\mu\lesssim |\vk|^{-1}\|q\|_{F_\kappa^{\frac12}(h)}^2\|q\|_{E_{2\sigma,\vk}^\sigma}^2,\label{X-I}
\end{align}

We now collect the leading order terms from our above estimates into quartic and sextic contributions, as folllows
\begin{align*}
J_1 = \tfrac1{\kappa^2 - \vk^2}\Bigl\{&-48\vk^6|u(\vk)|^2\bar q u(\vk)' + 8\vk^6q\bar u(\vk)^2u(\vk)'- 8\vk^6|u(\vk)|^2\bar u(\vk) q'\\
&-192\kappa^6\vk^2|u(\kappa)|^2\bar u(\vk)u(\kappa)'+384\kappa^8|u(\kappa)|^2\bar u(\kappa)u(\kappa)'\Bigr\},\\
J_2 = \tfrac1{\kappa^2 - \vk^2}\Bigl\{& -256i\vk^{10}|u(\vk)|^4\bar u(\vk) q-1536i\kappa^{10}\vk^2|u(\kappa)|^4u(\kappa)\bar u(\vk)\\
& +2560i\kappa^{12}|u(\kappa)|^6\Bigr\}.
\end{align*}
Combining the estimates \eqref{X-A}--\eqref{X-E}, \eqref{X-F}, \eqref{X-G}, \eqref{X-H}, \eqref{X-I} gives us
\begin{align}
\int \biggl|\Im \int \Bigl[j_{\diff}\sbrack{\geq 4} - J_1 - J_2\Bigr]&\,\psi_\mu^{24}\,dx\biggr|\,e^{-\frac1{200}|h-\mu|}\,d\mu\label{j diff geq 4 perturbative}\\
&\lesssim \|q\|_{F_\kappa^{\frac12}(h)}^2\Bigl[|\vk|^{ - 1}\|q\|_{E_{2\sigma,\vk}^\sigma}^2 + \tfrac\kappa{\kappa^2 + \vk^2}\|q\|_{E_\sigma^\sigma}^2\Bigr].\notag
\end{align}

To estimate \(J_1\), we write
\begin{align*}
-48\vk^6|u(\vk)|^2\bar q u(\vk)' &= -3\vk^2|q|^2\bar q u(\vk)' - 3\vk^2 |q|^2\bar u(\vk)''u(\vk)' - 12\vk^4 u(\vk)''\bar q \bar u(\vk)u(\vk)'\\
&= -3\vk^2|q|^2\bar q u(\vk)' + \vk^2 m\Bigl[f,f,\tfrac{f'}{4\vk^2 - \p^2},\tfrac{f''}{4\vk^2 - \p^2}\Bigr],
\end{align*}
where \(m\in S(4)\). Similarly, we have
\begin{align*}
8\vk^6q\bar u(\vk)^2u(\vk)' &= \tfrac12\vk^2|q|^2\bar qu(\vk)' + \vk^2m\Bigl[f,f,\tfrac{f'}{4\vk^2 - \p^2},\tfrac{f''}{4\vk^2 - \p^2}\Bigr],\\
- 8\vk^6|u(\vk)|^2\bar u(\vk) q' &= -\tfrac12\vk^2|q|^2\bar qu(\vk)'+ \vk^2m\Bigl[f,\tfrac f{2\vk - \p},\tfrac{f}{2\vk - \p},\tfrac{f'''}{4\vk^2 - \p^2}\Bigr]\\
&\quad + \vk^2m\Bigl[f,f,\tfrac{f'}{4\vk^2 - \p^2},\tfrac{f''}{4\vk^2 - \p^2}\Bigr],\\
-192\kappa^6\vk^2|u(\kappa)|^2\bar u(\vk)u(\kappa)' &= -48\kappa^6|u(\kappa)|^2 \bar q u(\kappa)' - 48\kappa^6|u(\kappa)|^2 \bar u(\vk)'' u(\kappa)'\\
&= -3\kappa^2 |q|^2\bar q u(\kappa)' + \kappa m\Bigl[f,f,f,\tfrac{f''}{4\kappa^2 - \p^2}\Bigr]\\
&\quad + \kappa^6 m\Bigl[\tfrac f{4\kappa^2 - \p^2},\tfrac f{4\kappa^2 - \p^2},\tfrac {f'}{4\kappa^2 - \p^2},\tfrac{f''}{4\vk^2 - \p^2}\Bigr],\\
384 \kappa^8 |u(\kappa)|^2\bar u(\kappa)u(\kappa)' &= 6\kappa^2|q|^2\bar qu(\kappa)' + \kappa m\Bigl[f,f,f,\tfrac{f''}{4\kappa^2 - \p^2}\Bigr],
\end{align*}
where each \(m \in S(4)\). Applying \eqref{m4 type a bound}, \eqref{m4 type a' bound}, and \eqref{303030} we then have
\begin{align*}
&\int \left|\int \Bigl[J_1-\tfrac{3[\kappa^2u(\kappa)' - \vk^2u(\vk)']}{\kappa^2 - \vk^2}|q|^2\bar q\Bigr]\,\psi_\mu^{24}\,dx\right|\,e^{-\frac1{200}|h-\mu|}\,d\mu\\
&\qquad\lesssim \|q\|_{F_\kappa^{\frac12}(h)}^2\Bigl[|\vk|^{ - 1}\|q\|_{E_{2\sigma,\vk}^\sigma}^2 + \tfrac\kappa{\kappa^2 + \vk^2}\|q\|_{E_\sigma^\sigma}^2\Bigr].
\end{align*}
For the remaining term in \(J_1\), we compute that
\[
\tfrac{\kappa^2u(\kappa)' - \vk^2u(\vk)'}{\kappa^2 - \vk^2} = -\tfrac{q'''}{(4\kappa^2-\p^2)(4\vk^2-\p^2)},
\]
and applying \eqref{303030} gives us
\begin{align*}
&\int \left|\int \tfrac{\kappa^2u(\kappa)' - \vk^2u(\vk)'}{\kappa^2 - \vk^2}|q|^2\bar q\,\psi_\mu^{24}\,dx\right|\,e^{-\frac1{200}|h-\mu|}\,d\mu\lesssim |\vk|^{ - 1}\|q\|_{F_\kappa^{\frac12}(h)}^2\|q\|_{E_{2\sigma,\vk}^\sigma}^2.
\end{align*}
Collecting these bounds gives us
\begin{align}
\int \left|\int J_1\,\psi_\mu^{24}\,dx\right|\,e^{-\frac1{200}|h-\mu|}\,d\mu\lesssim \|q\|_{F_\kappa^{\frac12}(h)}^2\Bigl[|\vk|^{ - 1}\|q\|_{E_{2\sigma,\vk}^\sigma}^2 + \tfrac\kappa{\kappa^2 + \vk^2}\|q\|_{E_\sigma^\sigma}^2\Bigr].\label{J1}
\end{align}

Finally, we consider \(J_2\). Arguing as for \(J_1\) we have
\begin{align*}
-256i\vk^{10}|u(\vk)|^4\bar u(\vk) q&= -4i\vk^4|q|^4|u(\vk)|^2 + \vk^2 m\Bigl[f,f,f,\tfrac f{2\vk - \p},\tfrac f{2\vk - \p},\tfrac{f''}{4\vk^2 - \p^2}\Bigr],\\
-1536i\kappa^{10}\vk^2|u(\kappa)|^4u(\kappa)\bar u(\vk)&= -6i\kappa^4|q|^4|u(\kappa)|^2 + \kappa^2m\Bigl[f,f,f,\tfrac f{2\kappa - \p},\tfrac f{2\kappa - \p},\tfrac{f'}{2\kappa - \p}\Bigr]\\
&\quad + \kappa^{10} m\Bigl[\tfrac f{4\kappa^2 - \p^2},\tfrac f{4\kappa^2 - \p^2},\tfrac f{4\kappa^2 - \p^2},\tfrac f{4\kappa^2 - \p^2},\tfrac f{4\kappa^2 - \p^2},\tfrac{f''}{4\vk^2 - \p^2}\Bigr],\\
2560i\kappa^{12}|u(\kappa)|^6 &= 10i\kappa^4|q|^4|u(\kappa)|^2 + \kappa^2m\Bigl[f,f,f,\tfrac f{2\kappa - \p},\tfrac f{2\kappa - \p},\tfrac{f'}{2\kappa - \p}\Bigr],
\end{align*}
where each \(m\in S(6)\). Applying \eqref{m6 diff a bound}, \eqref{m6 diff b bound}, and \eqref{m4 diff type II bound} gives us
\begin{align*}
&\int \left|\int \Bigl[J_2-\tfrac{4i[\kappa^4|u(\kappa)|^2 - \vk^4|u(\vk)|^2]}{\kappa^2 - \vk^2}|q|^4\Bigr]\,\psi_\mu^{24}\,dx\right|\,e^{-\frac1{200}|h-\mu|}\,d\mu\\
&\qquad\lesssim \|q\|_{E_\sigma^\sigma}^2\|q\|_{F_\kappa^{\frac12}(h)}^2\Bigl[|\vk|^{- 1}\|q\|_{E_{2\sigma,\vk}^\sigma}^2 + \tfrac\kappa{\kappa^2 + \vk^2}\|q\|_{E_\sigma^\sigma}^2\Bigr].
\end{align*}
For the remaining term we compute that
\[
\tfrac{\kappa^4|u(\kappa)|^2 - \vk^4|u(\vk)|^4}{\kappa^2 - \vk^2} = - \kappa^2\bar u(\kappa)\tfrac{q''}{(4\kappa^2 - \p^2)(4\vk^2 - \p^2)} - \vk^2 u(\vk)\tfrac{\bar q''}{(4\kappa^2 - \p^2)(4\vk^2 - \p^2)},
\]
and can then apply \eqref{m4 diff type II bound} to conclude that
\begin{align*}
\int \left|\int \tfrac{4i[\kappa^4|u(\kappa)|^2 - \vk^4|u(\vk)|^2]}{\kappa^2 - \vk^2}|q|^4\,\psi_\mu^{24}\,dx\right|\,e^{-\frac1{200}|h-\mu|}\,d\mu\lesssim \tfrac{1}{|\vk|}\|q\|_{F_\kappa^{\frac12}(h)}^2\|q\|_{E_\sigma^\sigma}^2\|q\|_{E_{2\sigma,\vk}^\sigma}^2.
\end{align*}
Together these, yield
\begin{align}
\int \left|\int J_2\,\psi_\mu^{24}\,dx\right|\,e^{-\frac1{200}|h-\mu|}\,d\mu\lesssim \|q\|_{E_\sigma^\sigma}^2\|q\|_{F_\kappa^{\frac12}(h)}^2\Bigl[|\vk|^{ - 1}\|q\|_{E_{2\sigma,\vk}^\sigma}^2 + \tfrac\kappa{\kappa^2 + \vk^2}\|q\|_{E_\sigma^\sigma}^2\Bigr].\label{J2}
\end{align}

The estimate \eqref{j diff geq 4 bound} now follows from combining \eqref{j diff geq 4 perturbative}, \eqref{J1}, and \eqref{J2}.
\end{proof}

\begin{proof}[Proof of Proposition~\ref{p: diff local smoothing}]
Recall \eqref{IBP diff}. Using Lemma~\ref{L:rho} and \eqref{E int}, we estimate
\begin{equation}\label{rho 1228}
\left|\int_{I_\kappa}\int \Im\int\rho(\vk)\phi_\mu\,dx\Big|_{t=-1}^{t=1}\,e^{-\frac1{200}|h-\mu|}\,d\mu\,d\vk\right|\lesssim\|q\|_{L^\infty_tE_{\sigma}^\sigma}^2\Bigl[1 + \|q\|_{L^\infty_tL^2}^2\Bigr].
\end{equation}

Turning to $\LHS{IBP diff}$ and recalling that \(u(\kappa) = \tfrac q{4\kappa^2 - \p^2}\), we use \eqref{j diff} and \eqref{linear} to write
\begin{align}\label{j2diff}
\Im j_\diff\sbrack{2}(\varkappa, \kappa) &= \Re\Big\{16\kappa^2 \tfrac{u(\kappa)''}{4\vk^2 - \p^2}\bar u(\kappa)''+2\tfrac{u(\kappa)'''}{4\vk^2 - \p^2} \bar u(\kappa)'''\Big\}\notag\\
&\quad - \Re\Bigl\{8\kappa^2\tfrac{u(\kappa)''}{4\vk^2 - \p^2}\bar u(\kappa)'+\tfrac{u(\kappa)'''}{4\vk^2 - \p^2} \bar u(\kappa)''\Bigr\}'\notag\\
&\quad - 8\kappa^2\Re\Bigl\{\tfrac{u(\kappa)'}{4\vk^2 - \p^2}\bar u(\kappa)'\Bigr\}'' + 4\kappa^2\Re\Bigl\{\tfrac{u(\kappa)'}{4\vk^2 - \p^2}\bar u(\kappa)\Bigr\}'''.
\end{align}
Integrating by parts and then applying \eqref{E loc} we may bound the contribution of the last three summands as follows:
\begin{align*}
&\left|\int \Bigl\{8\kappa^2\tfrac{u(\kappa)''}{4\vk^2 - \p^2}\bar u(\kappa)'+\tfrac{u(\kappa)'''}{4\vk^2 - \p^2} \bar u(\kappa)''\Bigr\}'\,\psi_\mu^{24}\,dx\right|\\
&\qquad\lesssim \kappa^2\vk^{-1} \|u(\kappa)'\|_{E^{\frac12}_{1,\vk}}\|(\psi_\mu^{24})'u(\kappa)'\|_{E^{\frac12}_{1,\vk}} +\vk^{-1} \|u(\kappa)''\|_{E^{\frac12}_{1,\vk}}\|(\psi_\mu^{24})'u(\kappa)''\|_{E^{\frac12}_{1,\vk}}\\
&\qquad\lesssim \vk^{-1}\|q\|_{E_{2\sigma,\vk}^\sigma}^2,\\
&\left|\int \Bigl\{8\kappa^2\tfrac{u(\kappa)'}{4\vk^2 - \p^2}\bar u(\kappa)'\Bigr\}''\,\psi_\mu^{24}\,dx\right| \\
&\qquad\lesssim \kappa^2\vk^{-1} \|u(\kappa)\|_{E_{1,\vk}^{\frac12}}\|(\psi_\mu^{24})'' u(\kappa)'\|_{E_{1,\vk}^{\frac12}}\lesssim \kappa^{-1}\vk^{-1}\|q\|_{E_{2\sigma,\vk}^\sigma}^2,\\
&\left|\int \Bigl\{4\kappa^2\tfrac{u(\kappa)'}{4\vk^2 - \p^2}\bar u(\kappa)\Bigr\}'''\,\psi_\mu^{24}\,dx\right|\\
&\qquad\lesssim \kappa^2\vk^{-1}\|u(\kappa)\|_{E_{1,\vk}^{\frac12}}\|(\psi_\mu^{24})'''u(\kappa)\|_{E_{1,\vk}^{\frac12}}\lesssim \kappa^{-2}\vk^{-1}\|q\|_{E_{2\sigma,\vk}^\sigma}^2.
\end{align*}
For the remaining term, we write
\begin{align*}
&\int \Bigl\{16\kappa^2 \tfrac{u(\kappa)''}{4\vk^2 - \p^2}\bar u(\kappa)''+2\tfrac{u(\kappa)'''}{4\vk^2 - \p^2} \bar u(\kappa)'''\Bigr\}\,\psi_\mu^{24}\,dx\\
&\qquad= \int \Bigl\{16\kappa^2 \tfrac{(\psi_\mu^{12}\,q)''}{(4\vk^2 - \p^2)(4\kappa^2 - \p^2)}\tfrac{(\psi_\mu^{24}\,\bar q)''}{4\kappa^2 - \p^2} + 2\tfrac{(\psi_\mu^{12}\,q)'''}{(4\vk^2 - \p^2)(4\kappa^2 - \p^2)}\tfrac{(\psi_\mu^{12}\,\bar q)'''}{4\kappa^2 - \p^2}\Bigr\}\,dx\\
&\qquad\quad + \int \Bigl\{16\kappa^2 \bigl[\psi_\mu^{12},\tfrac{\p^2}{(4\vk^2 - \p^2)(4\kappa^2 - \p^2)}\bigr]q\cdot\tfrac{(\psi_\mu^{24}\,\bar q)''}{4\kappa^2 - \p^2}\Bigr\}\,dx\\
&\qquad\quad + \int \Bigl\{2\bigl[\psi_\mu^{12},\tfrac{\p^3}{(4\vk^2 - \p^2)(4\kappa^2 - \p^2)}\bigr]\cdot\tfrac{(\psi_\mu^{12}\,\bar q)'''}{4\kappa^2 - \p^2}\Bigr\}\,dx\\
&\qquad\quad + \int \Bigl\{16\kappa^2 \tfrac{u(\kappa)''}{4\vk^2 - \p^2}q\cdot\bigl[\psi_\mu^{12},\tfrac{\p^2}{4\kappa^2 - \p^2}\bigr]\bar q + 2\tfrac{u(\kappa)'''}{4\vk^2 - \p^2}\cdot\bigl[\psi_\mu^{12},\tfrac{\p^3}{4\kappa^2 - \p^2}\bigr]\bar q\Bigr\}\,\psi_\mu^{12}\,dx.
\end{align*}
Using that
\begin{align*}
\bigl[\psi_\mu^{12},\tfrac1{4\vk^2 - \p^2}\bigr] &= -\tfrac1{4\vk^2 - \p^2}\bigl\{2(\psi_\mu^{12})'\p + (\psi_\mu^{12})''\bigr\}\tfrac1{4\vk^2 - \p^2},\\
\bigl[\psi_\mu^{12},\tfrac{\p^2}{4\kappa^2 - \p^2}\bigr] &= - \tfrac{4\kappa^2}{4\kappa^2 - \p^2}\bigl\{2(\psi_\mu^{12})'\p + (\psi_\mu^{12})''\bigr\}\tfrac1{4\kappa^2 - \p^2},\\
\bigl[\psi_\mu^{12},\tfrac{\p^3}{4\kappa^2 - \p^2}\bigr] &= - \tfrac{4\kappa^2}{4\kappa^2 - \p^2}\bigl\{2(\psi_\mu^{12})'\p + (\psi_\mu^{12})''\bigr\}\tfrac\p{4\kappa^2 - \p^2}- \tfrac{\p^2}{4\kappa^2 - \p^2}(\psi_\mu^{12})',
\end{align*}
we may again use \eqref{E loc} to bound
\begin{align*}
&\left|\int \Bigl\{16\kappa^2 \bigl[\psi_\mu^{12},\tfrac{\p^2}{(4\vk^2 - \p^2)(4\kappa^2 - \p^2)}\bigr]q\cdot\tfrac{(\psi_\mu^{24}\,\bar q)''}{4\kappa^2 - \p^2}\Bigr\}\,dx\right|\\
&\quad\lesssim \kappa^2\Biggl\{\vk\Bigl\|\bigl[\psi_\mu^{12},\tfrac1{4\vk^2 - \p^2}\bigr]u(\kappa)''\Bigr\|_{E_{-1,\vk}^{\frac12}} + \vk^{-1}\Bigl\|\bigl[\psi_\mu^{12},\tfrac{\p^2}{4\kappa^2 - \p^2}]q\Bigr\|_{E_{1,\vk}^{\frac12}}\Biggr\}\Bigl\|\tfrac{(\psi_\mu^{24}\,\bar q)'}{4\kappa^2 - \p^2}\Bigr\|_{E_{1,\vk}^{\frac12}}\\
&\quad\lesssim \vk^{-1}\|q\|_{E_{2\sigma,\vk}^\sigma}^2,\\
&\left|\int \Bigl\{2\bigl[\psi_\mu^{12},\tfrac{\p^3}{(4\vk^2 - \p^2)(4\kappa^2 - \p^2)}\bigr]q\cdot\tfrac{(\psi_\mu^{12}\,\bar q)'''}{4\kappa^2 - \p^2}\Bigr\}\,dx\right|\\
&\quad\lesssim \Biggl\{\vk\Bigl\|\bigl[\psi_\mu^{12},\tfrac1{4\vk^2 - \p^2}\bigr]u(\kappa)'''\Bigr\|_{E_{-1,\vk}^{\frac12}} + \vk^{-1}\Bigl\|\bigl[\psi_\mu^{12},\tfrac{\p^3}{4\kappa^2 - \p^2}]q\Bigr\|_{E_{1,\vk}^{\frac12}}\Biggr\}\Bigl\|\tfrac{(\psi_\mu^{24}\,\bar q)''}{4\kappa^2 - \p^2}\Bigr\|_{E_{1,\vk}^{\frac12}}\\
&\quad\lesssim \vk^{-1}\|q\|_{E_{2\sigma,\vk}^\sigma}^2,\\
&\left|\int \Bigl\{16\kappa^2 \tfrac{u(\kappa)''}{4\vk^2 - \p^2}q\cdot\bigl[\psi_\mu^{12},\tfrac{\p^2}{4\kappa^2 - \p^2}\bigr]\bar q\Bigr\}\,\psi_\mu^{12}\,dx\right|\\
&\quad\lesssim \kappa^2\vk^{-1}\|u(\kappa)'\|_{E_{1,\vk}^{\frac12}}\Bigl\|\psi_\mu^{12}\bigl[\psi_\mu^{12},\tfrac{\p^2}{4\kappa^2 - \p^2}\bigr]q\Bigr\|_{E_{1,\vk}^{\frac12}}\lesssim \vk^{-1}\|q\|_{E_{2\sigma,\vk}^\sigma}^2,\\
&\left|\int \Bigl\{2\tfrac{u(\kappa)'''}{4\vk^2 - \p^2}\cdot\bigl[\psi_\mu^{12},\tfrac{\p^3}{4\kappa^2 - \p^2}\bigr]\bar q\Bigr\}\,\psi_\mu^{12}\,dx\right|\\
&\quad\lesssim \vk^{-1}\|u(\kappa)''\|_{E_{1,\vk}^{\frac12}}\Bigl\|\psi_\mu^{12}\bigl[\psi_\mu^{12},\tfrac{\p^3}{4\kappa^2 - \p^2}\bigr]q\Bigr\|_{E_{1,\vk}^{\frac12}}\lesssim \vk^{-1}\|q\|_{E_{2\sigma,\vk}^\sigma}^2.
\end{align*}
Combining these estimates gives us
\begin{equation}\label{j diff quadratic}
\vk^2\Bigl\|\tfrac{\psi_\mu^{12} q}{\sqrt{4\kappa^2 - \p^2}}\Bigr\|_{E_{1,\vk}^2}^2 \lesssim \left|\Im \int j_{\diff}(\vk,\kappa)\sbrack 2\,\psi_\mu^{24}\,dx\right| + \vk^{-1}\|q\|_{E_{2\sigma,\vk}^\sigma}^2.
\end{equation}

Collecting \eqref{rho 1228}, \eqref{j diff quadratic}, \eqref{j diff geq 4 bound}, and using \eqref{E int 2}, we have
\begin{align*}
&\int\Bigl\|\tfrac{\psi_\mu^{12}q}{\sqrt{4\kappa^2 - \p^2}}\Bigr\|_{L^2_tE_{\frac12}^2}^2\,e^{-\frac1{200}|h-\mu|}\,d\mu\\
&\qquad\lesssim \Biggl|\int_{-1}^1\int_{I_\kappa}\int\Im j_{\diff}\sbrack{2}(\vk,\kappa)\,\psi_\mu^{12}\,dx\,e^{-\frac1{200}|h-\mu|}\,d\mu\,d\vk\,dt\Biggr|+ \|q\|_{L^\infty_tE_{\sigma}^\sigma}^2\\
&\qquad\lesssim \|q\|_{L_t^\infty E_\sigma^\sigma}^2\Bigl[ 1+ \|q\|_{L^\infty_tL_x^2}^2 + \|q\|_{X^{\frac12}_\kappa}^2\Bigr].
\end{align*}
Estimating
\begin{align*}
\bigl\|\tfrac{\psi_\mu^{12}q}{\sqrt{4\kappa^2 - \p^2}}\bigr\|_{L^2_tH^{\frac32}}^2 &\lesssim \bigl\|\tfrac{P_{\leq 1}(\psi_\mu^{12}q)}{\sqrt{4\kappa^2 - \p^2}}\bigr\|_{L^2_tH^{\frac32}}^2 + \bigl\|\tfrac{P_{>1}(\psi_\mu^{12}q)}{\sqrt{4\kappa^2 - \p^2}}\bigr\|_{L^2_tH^{\frac32}}^2\\
&\lesssim \kappa^{-2}\|q\|_{L^\infty_tL^2_x}^2 + \Bigl\|\tfrac{\psi_\mu^{12}q}{\sqrt{4\kappa^2 - \p^2}}\Bigr\|_{L^2_tE_{\frac12}^2}^2,
\end{align*}
we then get
\[
\|q\|_{X_\kappa^{\frac12}}^2\lesssim \|q\|_{L^\infty_tE_\sigma^\sigma}^2\|q\|_{X_\kappa^{\frac12}}^2 + \bigl(\|q\|_{L^\infty_tE_\sigma^\sigma}^2+\kappa^{-2}\bigr)\bigl(1+\|q\|_{L^\infty_tL^2_x}^2\bigr) .
\]
Using \eqref{goodness} to absorb the first term on the right-hand side into the left-hand side, \eqref{degenerate local smoothing} then follows from the conservation of the $L^2$ norm.
\end{proof}

\section{Convergence of the difference flow}\label{S:Convergence}

Our main goal in this section is to prove that as $\kappa\to \infty$, the flow determined by the difference of the Hamiltonians $H_\kappa^{\rm{diff}} = H-H_\kappa$ converges to the identity, locally in spacetime, uniformly over $L^2$-bounded and equicontinuous sets.

\begin{theorem}[Difference flow converges to the identity] \label{T: diff flow convergence}
Let $Q\subset \Schw(\R)$ be an $L^2$-bounded and equicontinuous set such that
\begin{align*}
        Q_{*}&=\bigl\{e^{tJ\nabla H_\kappa^{\rm{diff}}} q: q\in Q,\  |t|\leq 1, \text{ and } \kappa\geq2 \bigr\}
\end{align*}
is a $\delta$-good set for a sufficiently small $\delta>0$. Then 
\begin{align*}
    \lim_{\kappa\to\infty}\sup_{q\in Q}\,\sup_{\mu\in\mathbb R}\,\sup_{|t|\leq 1}\,\bigl\|\psi_\mu^{12} e^{tJ\nabla H_\kappa^{\rm{diff}}} q-\psi_\mu^{12} q\bigr\|_{L^2_x}=0.
\end{align*}
\end{theorem}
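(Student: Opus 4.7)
The plan is to express $\psi_\mu^{12}\bigl[q(t)-q(0)\bigr]$ as a time integral of the difference-flow vector field applied to $q$, and then to estimate this integral using the local smoothing bound (Proposition~\ref{p: diff local smoothing}) in concert with equicontinuity (Lemma~\ref{L:basic prop}).

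First I would compute the vector field explicitly. Using the Poisson structure \eqref{Poisson} together with the identifications $\delta A/\delta\bar q = i\sqrt\kappa\, g_{12}$ and $\delta A/\delta q = -\sqrt\kappa\, g_{21}$, the evolution under the difference flow can be written as
\[
\partial_t q = \partial_x\mathcal F(q;\kappa), \qquad \mathcal F(q;\kappa) := \tfrac{\delta H}{\delta\bar q} - \tfrac{\delta H_\kappa}{\delta\bar q}.
\]
Expanding via \eqref{linear} and the identities in Proposition~\ref{p:Asymptotics}, the linear-in-$q$ part of $\mathcal F$ reduces after cancellations to $-iq'''/(4\kappa^2-\partial^2)$, while the cubic and higher-order parts admit paraproduct representations in the classes $S_{\loc}(n)$ of Section~\ref{sect: diff local smoothing}.

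Second, by duality, for any $\phi\in H^1(\R)$ with $\|\phi\|_{L^2}\leq 1$,
\[
\int \phi\,\psi_\mu^{12}\bigl[q(t)-q(0)\bigr]\,dx = -\int_0^t\!\!\int \bigl(\phi\,\psi_\mu^{12}\bigr)'\,\mathcal F(q(s);\kappa)\,dx\,ds,
\]
and I would estimate each term in the expansion of $\mathcal F$ separately. For the linear contribution, a Littlewood--Paley split at frequency $\kappa$ isolates two regimes: frequencies much larger than $\kappa$ are absorbed by the factor $\|q\|_{L^\infty_tE^\sigma_{\sigma,\kappa}}$, which tends to zero as $\kappa\to\infty$ by Lemma~\ref{L:basic prop}; frequencies $\lesssim\kappa$ gain a factor $\kappa^{-1}$ from the multiplier $\partial^3/(4\kappa^2-\partial^2)$, and the resulting expression is controlled by $\|q\|_{X_\kappa^{1/2}}\lesssim\|q(0)\|_{L^2}$ via Proposition~\ref{p: diff local smoothing}. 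The higher-order paraproduct contributions are handled by direct analogues of the estimates developed in Lemma~\ref{l:paraproducts II}, where each term comes with either an explicit factor $\kappa^{-1}$ or an extra factor $\|q\|_{E^\sigma_{\sigma,\kappa}}$ that supplies the required smallness. Taking the supremum over admissible $\phi$, then over $\mu\in\R$ and $|t|\leq 1$, and approximating general $L^2$ test functions by $H^1$ functions, yields the desired convergence.

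The main obstacle will be the careful bookkeeping of the higher-order contributions. The cancellations between $\delta H/\delta\bar q$ and $\delta H_\kappa/\delta\bar q$ occur term by term in the paraproduct expansion (paralleling the cancellations already exploited for $j_\diff$ in Section~\ref{sect: diff local smoothing}), and one must verify that every remainder can be paired with a vanishing factor of equicontinuity while its remaining factors are absorbable into $\|q\|_{X_\kappa^{1/2}}^2$. Since Theorem~\ref{T:HGKV:equi} ensures that $Q_*$ is $L^2$-equicontinuous, Lemma~\ref{L:basic prop} guarantees $\|q\|_{E^\sigma_{\sigma,\kappa}}\to 0$ uniformly over $q\in Q_*$, which closes the argument.
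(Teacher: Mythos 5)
Your skeleton matches the paper's: reduce via the fundamental theorem of calculus (plus equicontinuity of $Q_*$) to showing the difference-flow vector field $F_\kappa(q)=-q'-i|q|^2q+2\kappa[\sqrt\kappa g_{12}(\kappa)-\sqrt{-\kappa}g_{12}(-\kappa)]$ is $o(1)$ in a weak spacetime norm, split it into linear, cubic, and quintic-and-higher pieces using \eqref{linear} and \eqref{g12 3 expansion}, and beat each piece with the local smoothing bound of Proposition~\ref{p: diff local smoothing} together with equicontinuity. The linear and cubic contributions you describe are handled essentially as in the paper (the paper's cubic estimates run through the dedicated $L^4_{t,x}$ and $L^2_tL^4_x$ bounds \eqref{L4 est}, \eqref{L24 deriv}, which trade powers $\kappa^{\pm1/6}$; your Littlewood--Paley treatment of the linear term is more elaborate than the paper's direct $\kappa^{-2}$ multiplier bound but is fine).

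The genuine gap is in the quintic-and-higher tail $2\kappa^{3/2}g_{12}\sbrack{\geq 5}(\kappa)$. You dispatch it with ``direct analogues of the estimates developed in Lemma~\ref{l:paraproducts II},'' claiming each term carries ``either an explicit factor $\kappa^{-1}$ or an extra factor $\|q\|_{E^\sigma_{\sigma,\kappa}}$.'' Neither half of that claim is right as stated. First, by \eqref{paraproduct for g geq} the tail is $\kappa^{5/2}$ times a paraproduct with three resolvent factors $(2\kappa-\p)^{-1}f$, so the positive powers of $\kappa$ exactly cancel the gains from the resolvents: there is no leftover explicit $\kappa^{-1}$, and \emph{all} of the required smallness must be extracted from equicontinuity while two slots are simultaneously spent on the (merely bounded) local smoothing norm. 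Second, the estimates of Lemma~\ref{l:paraproducts II} are not direct analogues: they bound integrals of the two-parameter current $j_\diff$ against $\psi_\mu^{24}$ after integration in $\mu$, with every slot occupied by a solution component, whereas here one slot is an external test function and the bound must be uniform in $\mu$. This is exactly where the paper does new work: Lemma~\ref{L:LS conseq} proves the Schatten-class bound \eqref{I5} on $\|\Lambda(\psi_\mu q)\|_{\I_5}$ (via an $\I_6$ frequency decomposition and interpolation), which produces two copies of the local smoothing norm \emph{and} the vanishing factor $\kappa^{-1/4}\|q\|_{L^2}+\|(\psi_\mu q)_{>\kappa^{1/3}}\|_{L^2}$; the tail is then handled by duality against $w\in H^4$ and summation of the determinant series. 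Your proposal omits this mechanism entirely. A smaller point: your closing step --- taking the sup over $\phi$ with $\|\phi\|_{L^2}\leq 1$ and ``approximating by $H^1$ functions'' --- does not close by itself, since your bounds depend on derivatives of $\phi$; you must invoke the $L^2$-equicontinuity of the family $\psi_\mu^{12}[q(t)-q(0)]$ to upgrade convergence in $H^{-4}$ to convergence in $L^2$, which is how the paper frames the reduction from the outset.
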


\begin{proof}
The $L^2$-boundedness and equicontinuity of the set $Q_*$ extends readily to the set
$$
\{\psi_\mu^{j} q: q\in Q_{*},\ 1\leq|j|\leq 12,\ \mu\in\mathbb R\}.
$$
In view of this equicontinuity property and employing the fundamental theorem of calculus, the proof of the theorem reduces to showing that
\begin{align}\label{diff flow conv}
    \lim_{\kappa\to\infty}\sup_{q\in Q}\,\sup_{\mu\in\mathbb R}\, \bigl\|\tfrac{d}{dt}\bigl(\psi_\mu^{12} e^{tJ\nabla H_\kappa^{\rm{diff}}} q\bigr)\bigr\|_{L^1_t([-1,1];H^{-5}_x)}=0.
\end{align}
A quick computation reveals that
\begin{align*}
i\tfrac{d}{dt}\bigl(\psi_\mu^{12} e^{tJ\nabla H_\kappa^{\rm{diff}}} q\bigr) = \psi_\mu^{12}\bigl[F_\kappa(e^{tJ\nabla H_\kappa^{\rm{diff}}} q\bigr)\bigr]'
\end{align*}
where 
\begin{align*}
    F_\kappa(q)=-q'-i|q|^2q+ 2\kappa \bigl[\sqrt\kappa g_{12}(\kappa)- \sqrt{-\kappa} g_{12}(-\kappa)\bigr].
\end{align*}
Thus, \eqref{diff flow conv} will follow from
\begin{align}\label{diff flow conv'}
    \lim_{\kappa\to\infty}\sup_{q\in Q_*}\,\sup_{\mu\in\mathbb R}\,\bigl\|\psi_\mu^{12} F_\kappa(q)\bigr\|_{L^1_t([-1,1];H^{-4}_x)}=0.
\end{align}

Employing \eqref{linear} and \eqref{g12 3 expansion}, we decompose
\begin{align*}
F_\kappa(q) &= \tfrac{\partial^3}{4\kappa^2- \partial^2}q + m\bigl[f,f,\tfrac{f''}{4\kappa^2-\p^2} \bigr] + m\bigl[f,\tfrac {f'}{2\kappa-\p},\tfrac{f'}{2\kappa-\p} \bigr] \\
&\quad + 2\kappa \bigl[\sqrt\kappa g_{12}\sbrack{\geq 5} (\kappa)- \sqrt{-\kappa} g_{12}\sbrack{\geq 5}(-\kappa)\bigr],
\end{align*}
where $f$ satisfies \eqref{admissible} and each paraproduct $m$ lies in $S(3)$.

The contribution of the linear term is easily estimated via
\begin{align*}
    \bigl\|\psi_\mu^{12} \tfrac{\partial^3}{4\kappa^2- \partial^2}q\bigr\|_{L_t^1 H^{-4}_x}\lesssim \kappa^{-2}\|\psi_\mu^{12}\|_{H^{4}_x} \|q \|_{L_t^\infty L^2_x}
\end{align*}
which converges to $0$ as $\kappa\to\infty$, uniformly for all $q\in Q_*$ and all $\mu\in \mathbb R$, in view of the conservation of the $L^2$ norm.

To estimate the contribution of the cubic and higher order terms in $F_\kappa(q)$, we rely on the consequences of the local smoothing estimates proved in Proposition~\ref{p: diff local smoothing}.  To simplify our bounds, we introduce
\begin{align*}
    \|q\|_{\DNLS_\kappa}:=\|q\|_{L_t^\infty L^2_x}+ \|q\|_{X_\kappa^{\frac 12}}
\end{align*}
and note that $L^2$-conservation and Proposition~\ref{p: diff local smoothing} yield
\begin{align}\label{2:09}
\| e^{tJ\nabla H_\kappa^{\rm{diff}}} q\|_{\DNLS_\kappa}\lesssim \|q\|_{L^2}
\end{align}
uniformly for $\kappa\geq 2$ and $q\in Q$.

\begin{lemma}[Local smoothing estimates]\label{L:LS conseq} For $f$ satisfying \eqref{admissible}, we have 
\begin{align}
\sup_{\mu\in\mathbb R}\left\| \tfrac{\partial (\psi_\mu^4 f) }{2\kappa\pm\partial} \right\|_{L_t^2 L^4_x}&\lesssim \kappa^{-\frac 16} \|q\|_{\DNLS_\kappa}, \label{L24 deriv}\\
\sup_{\mu\in\mathbb R}\left\| \psi_\mu^4 f\right\|_{L_{t,x}^4}&\lesssim \kappa^{\frac 16} \|q\|_{\DNLS_\kappa}. \label{L4 est}
\end{align}
Moreover, the following estimate holds uniformly for $\kappa\geq 2$, $q\in Q_*$, and~$\mu\in \R:$ 
\begin{align}\label{I5}
\int_{-1}^1\|\Lambda (\psi_\mu q)\|_{\mathfrak I_5}^5\, dt &\lesssim  \kappa^{-3}\left[\kappa^{-\frac{1}{4}}\|q\|_{L^2} +\bigl\|(\psi_\mu q)_{>\kappa^{\frac 13}}\bigr\|_{L^2} \right]\|q\|_{\DNLS_\kappa}^4.
\end{align}
\end{lemma}

\begin{proof}
Decomposing into low and high frequencies and using Bernstein's inequality for frequencies $\leq \kappa^{\frac{2}{3}}$, and Proposition~\ref{p: diff local smoothing} for frequencies $>\kappa^{\frac{2}{3}}$, we may bound
\begin{align}\label{L2H1/2 est}
\left\| \psi_\mu^4 f\right\|_{L_t^2 H^{\frac12}} \lesssim \kappa^{\frac13} \|f\|_{L_t^\infty L_x^2} + \kappa^{\frac13} \bigl\| \tfrac{\psi_\mu f}{\sqrt{4\kappa^2-\p^2}} \bigr\|_{L_t^2 H^{\frac32}_x}
\lesssim \kappa^{\frac 13} \|q\|_{\DNLS_\kappa}.
\end{align}
A parallel argument yields
\begin{align*}
\left\|\tfrac{(\psi_\mu^4 f)' }{2\kappa\pm\partial} \right\|_{L_{t,x}^2}\lesssim \kappa^{-\frac 13} \|q\|_{\DNLS_\kappa}.
\end{align*}
Using this latter bound, Sobolev embedding, and interpolation, we obtain
\begin{align*}
\left\| \tfrac{\partial (\psi_\mu^4 f) }{2\kappa\pm\partial} \right\|_{L_t^2 L^4_x}^2
&\lesssim \left\| |\p|^{\frac14}\tfrac{\partial (\psi_\mu^4 f) }{2\kappa\pm\partial} \right\|_{L_{t,x}^2}^2
\lesssim \left\|\tfrac{\psi_\mu^4 f }{2\kappa\pm\partial} \right\|_{L_t^2 H^{\frac32}_x}\left\|\tfrac{(\psi_\mu^4 f)' }{2\kappa\pm\partial} \right\|_{L_{t,x}^2}
\lesssim \kappa^{-\frac13}\|q\|_{\DNLS_\kappa}^2,
\end{align*}
which settles \eqref{L24 deriv}.

Arguing similarly and using \eqref{L2H1/2 est}, we may bound
\begin{align*}
\int_{-1}^1\left\| \psi_\mu^4 f\right\|_{L_x^4}^4\, dt &\lesssim \int_{-1}^1 \left\| |\p|^{\frac14}(\psi_\mu^4 f)\right\|_{L_x^2}^4\, dt
\lesssim\left\|\psi_\mu^4 f\right\|_{L^2_tH^{\frac12}_x}^2\left\|\psi_\mu^4 f\right\|_{L_{t,x}^2}^2\lesssim \kappa^{\frac 23} \|q\|_{\DNLS_\kappa}^4.
\end{align*}

We now turn to the proof of \eqref{I5}. By the Bernstein inequality and Lemma~\ref{L:op est}, 
\begin{align*}
    &\sum_{N\leq N_0}\|\Lambda_N(q)\|_{\op} \lesssim 
    \begin{cases} 
    \tfrac{\sqrt{N_0}}{\kappa}\|q\|_{L^2}\qquad&\text{for}\,\,N_0\leq\kappa^{\frac 13}\\
    \kappa^{-\frac 5 6} \|q\|_{L^2} +\tfrac{\sqrt{N_0}}{\kappa}\|q_{>\kappa^{\frac 13}}\|_{L^2}\qquad&\text{for}\,\,\kappa^{\frac 13}<N_0\leq \kappa\\
    \tfrac{1}{\sqrt{\kappa}}\|q\|_{L^2}\qquad&\text{for}\,\,N_0>\kappa.
    \end{cases}
\end{align*}
Employing this estimate and Lemma~\ref{L:HS}, we may bound
\begin{align*}
&\|\Lambda (\psi_\mu q)\|_{\mathfrak I_6}^6= \sum_{N_1\sim N_2} \bigl\|\Lambda\bigl[(\psi_\mu q)_{N_1}\bigr]\bigr\|_{\hs}  \bigl\|\Lambda\bigl[(\psi_\mu q)_{N_2}\bigr]\bigr\|_{\hs} \Bigl[\sum_{N_3\leq N_2} \bigl\|\Lambda\bigl[(\psi_\mu q)_{N_3}\bigr]\bigr\|_{\op}\Bigr]^4\\
& \lesssim \sum_{\kappa^{\frac 1 3} \geq N_1\sim N_2} \tfrac{N^2_2}{\kappa^5} \|(\psi_\mu q)_{N_1}\|_{L^2} \|(\psi_\mu q)_{N_2}\|_{L^2} \|q\|_{L^2}^4\\
&\quad + \sum_{\kappa^{\frac 1 3} <N_1\sim N_2\leq  \kappa^{\frac 2 3}} \tfrac{1}{\kappa} \|(\psi_\mu q)_{N_1}\|_{L^2} \|(\psi_\mu q)_{N_2}\|_{L^2} \left[\kappa^{-\frac {10} {3}} \|q\|_{L^2}^4 + \tfrac{N_2^2}{\kappa^4}\bigl\|(\psi_\mu q)_{>\kappa^{\frac 13}}\bigr\|_{L^2}^4\right]\\
&\quad + \sum_{\kappa^{\frac 2 3} <N_1\sim N_2\leq  \kappa}  \! \! \tfrac{\kappa} {N_2^{3}}\left\|\tfrac{(\psi_\mu q)_{N_1}}{\sqrt{4\kappa^2-\partial^2}}\right\|_{H^{\frac 32}} \! \left\|\tfrac{(\psi_\mu q)_{N_2}}{\sqrt{4\kappa^2-\partial^2}}\right\|_{H^{\frac 32}} \! \left[\kappa^{-\frac {10} {3}} \|q\|_{L^2}^4 + \tfrac{N_2^2}{\kappa^4}\bigl\|(\psi_\mu q)_{>\kappa^{\frac 13}}\bigr\|_{L^2}^4\right]\\
&\quad + \sum_{N_1\sim N_2>  \kappa} \tfrac{1}{\kappa^2N_2^{2}} \log\left(4+ \tfrac{N_2^2}{\kappa^2}\right) \left\|\tfrac{(\psi_\mu q)_{N_1}}{\sqrt{4\kappa^2-\partial^2}}\right\|_{H^{\frac 32} } \left\|\tfrac{(\psi_\mu q)_{N_2}}{\sqrt{4\kappa^2-\partial^2}}\right\|_{H^{\frac 32} } \|q\|_{L^2}^4\\
&\lesssim \left[\kappa^{-\frac{12}{3}}\|q\|_{L^2}^2 + \kappa^{-\frac{11}{3}} \bigl\|(\psi_\mu q)_{>\kappa^{\frac 13}}\bigr\|_{L^2}^2 \right] \|q\|_{L^2}^2 \left[\|q\|_{L^2}^2+ \left\|\tfrac{\psi_\mu q}{\sqrt{4\kappa^2-\partial^2}}\right\|_{H^{\frac 32} }^2 \right].
\end{align*}
The estimate \eqref{I5} now follows by interpolation between this bound and
$$
\|\Lambda (\psi_\mu q)\|_{\hs}\lesssim \kappa^{-\frac12}\|q\|_{L^2}. 
$$
This completes the proof of the lemma.
\end{proof}

Returning to the contribution of the cubic terms in $F_\kappa(q)$ to LHS\eqref{diff flow conv'}, we employ Lemmas~\ref{l:paraproperties} and~\ref{L:LS conseq} as well as \eqref{LS alt} to estimate
\begin{align*}
&\bigl\|\psi_\mu^{12} m\bigl[f,f,\tfrac{f''}{4\kappa^2-\p^2} \bigr]\bigr\|_{L^1_t([-1,1];H^{-4}_x)} \\
&\qquad\lesssim \bigl\|\psi_\mu^{12} m\bigl[f,f,\tfrac{f''}{4\kappa^2-\p^2} \bigr]\bigr\|_{L^1_{t,x}}\lesssim \|\psi_\mu^4f\|_{L_{t,x}^4}^2\bigl\| \tfrac{(\psi_\mu f)''}{4\kappa^2-\p^2} \bigr\|_{L_{t,x}^2}\lesssim \kappa^{\frac13}\kappa^{-\frac12}\|q\|_{\DNLS_\kappa}^3,\\
&\bigl\|\psi_\mu^{12} m\bigl[f,\tfrac {f'}{2\kappa-\p},\tfrac{f'}{2\kappa-\p} \bigr]\bigr\|_{L^1_t([-1,1];H^{-4}_x)} \\
&\qquad\lesssim \bigl\|\psi_\mu^{12} m\bigl[f,\tfrac {f'}{2\kappa-\p},\tfrac{f'}{2\kappa-\p} \bigr]\bigr\|_{L^1_{t,x}}\lesssim \|\psi_\mu^4f\|_{L_t^\infty L_x^2}\bigl\| \tfrac{(\psi_\mu f)'}{2\kappa-\p} \bigr\|_{L_t^2L_x^4}\lesssim \kappa^{-\frac13}\|q\|_{\DNLS_\kappa}^3.
\end{align*}
In view of \eqref{2:09}, the contribution of these cubic terms is acceptable.

To estimate the contribution of the quintic and higher order terms in $F_\kappa(q)$, we argue by duality.  To this end, fix $w\in H^4_x$.
Using Lemma~\ref{l:mult comm} and \eqref{I5}, we estimate
\begin{align*}
\int_{-1}^1\biggl|  \int  \psi_\mu^{12} \kappa^{\frac32} g_{12}\sbrack{\geq 5} (\kappa) \,w \,dx \biggr|\,dt
&\lesssim \sum_{\ell\geq 2}\kappa^4\|\Lambda(\psi_\mu q)\|_{\mathfrak I_5}^5 \bigl(\sqrt\kappa\|\Lambda(q)\|_{\op}\bigr)^{2\ell-4}  \|\Lambda(w\psi_\mu^7)\|_{\op}\\
&\lesssim \|w\|_{L^\infty} \left[\kappa^{-\frac{1}{4}}\|q\|_{L^2} + \bigl\|(\psi_\mu q)_{>\kappa^{\frac 13}}\bigr\|_{L^2} \right] \|q\|_{\DNLS_\kappa}^4,
\end{align*}
where we used Corollary~\ref{C:large kappa} combined with the fact that $Q_*$ is $\delta$-good in order to sum in $\ell\geq 2$.  By \eqref{2:09} and equicontinuity, the contribution of these terms to LHS\eqref{diff flow conv'} is also acceptable.
\end{proof}

\section{Proofs of the main theorems} \label{S:end}

All the main difficulties have already been addressed in previous sections, albeit under the assumption that the solutions remain in a $\delta$-good set for some universally small $\delta$ and only for the time interval $[-1,1]$.  In this section, we put all the pieces together and show how to circumvent these illusory restrictions.    

\begin{proof}[Proof of Theorem~\ref{t:main}]
The fundamental question to settle is this: Given an $L^2$-Cauchy sequence of initial data $q_n(0)\in\Schw$ and a Cauchy sequence of times $t_n\in\R$, show that their evolutions $q_n(t_n)$ under \eqref{DNLS} form an $L^2$-Cauchy sequence.

Evidently, the set $\{q_n(0)\}$ is $L^2$-precompact.  Thus by Corollary~\ref{C:descendants}, there is a uniform rescaling parameter $\lambda$ so that not only are the rescaled initial data $\delta$-good, but so are their evolutions under \eqref{DNLS} as well as any other dynamics preserving $A(\vk;q)$.  This rescaling does not meaningfully alter our original ambition --- we just replace the original sequences of solutions and times by their rescaled values (for which we reuse the original names).

It suffices to treat the case where $|t_n|\leq 1$, because larger values can be treated by iterating the argument.  For example, if $t_n\to\frac32$, then we first run the argument with $t_n\equiv 1$ and then use $q_n(1)$, which we now know to be convergent, as initial data to extend up to the chosen $t_n\to\frac32$.

Assuming now that $|t_n|\leq 1$,  Theorem~\ref{T:HGKV:equi} guarantees that $\{q_n(t_n)\}$ is equicontinuous and Proposition~\ref{P:tight} guarantees that it is tight.  Thus every subsequence has an $L^2$-convergent subsequence; we just need to verify that all such subsequential limits agree.  For this purpose, it suffices to test against some fixed $w\in C^\infty_c(\R)$.

It is at this moment that we employ the commutativity of \eqref{DNLS} and the $H_\kappa$ flows.  Using \eqref{E:comm and diff}, our task is reduced to verifying the following two claims:
\begin{gather}
\sup_{\kappa\geq 1} \ \limsup_{m,n\to\infty} \ \bigl| \big\langle w,  e^{ t_n J \nabla H_\kappa} q_n(0) - e^{ t_m J \nabla H_\kappa} q_m(0) \bigr\rangle\bigr| = 0 \\
\limsup_{\kappa\to\infty} \ \sup_{q_0 \in Q_*} \ \sup_{|t|\leq 1} \ \bigl| \big\langle w,  \bigl[ e^{ t J \nabla (H-H_\kappa)} -\Id \bigr] (q_0) \bigr\rangle\bigr| = 0
\end{gather}
where $Q_*$ is defined via \eqref{QsI} with the choice $Q=\{q_n(0):n\in\N\}$.

The first of these two claims follows from the $L^2$-wellposedness of the $H_\kappa$ flow shown already in \cite[Cor.~5.4]{KNV}.  The second was addressed by Theorem~\ref{T: diff flow convergence}.
\end{proof}

\begin{proof}[Proof of Corollary~\ref{C:main}] Recall that local well-posedness for $s\geq \frac12$ was proved already by Takaoka in \cite{MR1693278}.  This result is rendered global by the a priori bounds shown in \cite{BLP,BP}.

Consider now $0\leq s <\frac12$.  Evidently, the existence of solutions follows immediately from Theorem~\ref{t:main}, as does continuous dependence in the $L^2$ metric.  Continuous dependence in the $H^s$ metric follows from this and $H^s$-equicontinuity, which was shown in \cite[Th.~5.6]{KNV} contingent on the equicontinuity conjecture that was subsequently resolved in \cite{HGKV:equi}.
\end{proof}

\begin{proof}[Proof of Theorem~\ref{t: local smoothing}] By Corollary~\ref{C:descendants}, there is a uniform rescaling of the set of initial data $Q$ so that not only are the rescaled initial data $\delta$-good, but so are their entire \eqref{DNLS} evolutions.  This allows us to invoke \eqref{local smoothing} from Proposition~\ref{p:LS} to obtain the local smoothing estimate (over any unit time interval) for the rescaled solutions.  The estimate \eqref{E:naiveLS} for the unrescaled solutions follows by a simple covering argument.  Naturally the resulting constant depends on the rescaling parameter; however, this is dictated solely by $Q$. 
\end{proof}

\begin{proof}[Proof of Corollary~\ref{C:distrib}] Given initial data $q(0)\in L^2$, we choose Schwartz-class initial data $q_n(0)$ that converge to it in $L^2$.  By Theorem~\ref{t:main}, the solutions $q_n(t)$ converge to $q(t)$ in $C([-T,T];L^2(\R))$.  To deduce that $q(t)$ is a distributional solution, we need another form of convergence to handle the nonlinearity.  This can be obtained with the aid of Theorem~\ref{t: local smoothing} and a Gagliardo--Nirenberg inequality:
$$
\| \psi^{8} [q_n -q] \|_{L^3_{t,x}}^3
\lesssim \| q_n -q \|_{L^\infty_t L^2_x}
	\Bigl\{\|\psi^{12}q_n\|_{L^{2\vphantom/}_t H^{1/2}_{x\vphantom t}}^2 +  \|\psi^{12}q\|_{L^{2\vphantom/}_t H^{1/2}_{x\vphantom t}}^2 \Bigr\}
\to 0\ \ \text{as $n\to\infty$}.
$$
Here all norms are taken over the spacetime slab $[-T,T]\times\R$.
\end{proof}

\bibliographystyle{habbrv}
\bibliography{refs}

\end{document}